\documentclass[10pt,a4paper]{article}

\usepackage[utf8]{inputenc}
\usepackage{fullpage}
\usepackage{comment}
\usepackage[dvipsnames]{xcolor}
\usepackage{amsmath}
\usepackage{amssymb}
\usepackage{amsfonts}
\usepackage{amsthm}
\usepackage{graphicx}
\usepackage{amscd}
\usepackage{bbm}

\usepackage{tikz}
\usepackage{tikz-cd}
\usepackage{tikz-3dplot} \usetikzlibrary{calc}

\usepackage{enumitem}
\setitemize[1]{left=0.0cm}

\usepackage{bm} \usepackage[normalem]{ulem} \setlength{\marginparwidth}{2cm}

\iftrue

\fi

\usepackage[colorlinks=true]{hyperref}         \hypersetup{
    colorlinks=true,
    linkcolor=Maroon,    citecolor=red,    urlcolor=blue,     urlbordercolor={1 1 1},
    pdfborder={0.5 0.5 0}  }

\newcommand{\recindex}{\jmath}

\newtheorem{theorem}{Theorem}[section]
\newtheorem{lemma}[theorem]{Lemma}

\newtheorem{remark}[theorem]{Remark}
\newtheorem{example}[theorem]{Example}
\newtheorem{proposition}[theorem]{Proposition}

\makeatletter
\newcommand\suchthat{\@ifstar
  {\mathrel{}\middle|\mathrel{}}
  {\mid}}
\makeatother

\newcommand{\Ceins}[1]{C_{1,#1}}
\newcommand{\Czwei}[1]{C_{2,#1}}

\newcommand{\Cfive}[2]{C_{5,#1,#2}}
\newcommand{\Cfiveprime}[2]{C'_{5,#1,#2}}
\newcommand{\Cdetfive}[2]{C^{\det}_{5,#1,#2}}
\newcommand{\Cfivetrafo}[4]{C_{5,#1,#2,#3,#4}}

\newcommand{\Csechs}[2]{C_{6,#1,#2}}
\newcommand{\Csechsprime}[2]{C'_{6,#1,#2}}
\newcommand{\Cdetsechs}[2]{C^{\det}_{6,#1,#2}}
\newcommand{\Csechstrafo}[4]{C_{6,#1,#2,#3,#4}}

\newcommand{\Csieb}[2]{C_{7,#1,#2}}
\newcommand{\Csiebprime}[2]{C'_{7,#1,#2}}
\newcommand{\Cdetsieb}[2]{C^{\det}_{7,#1,#2}}
\newcommand{\Csiebtrafo}[4]{C_{7,#1,#2,#3,#4}}

\newcommand{\Cacht}[2]{C_{8,#1,#2}}
\newcommand{\Cachtprime}[2]{C'_{8,#1,#2}}
\newcommand{\Cdetacht}[2]{C^{\det}_{8,#1,#2}}
\newcommand{\Cachttrafo}[4]{C_{8,#1,#2,#3,#4}}

\newcommand{\fooL}{L}

\newcommand{\PF}{{\rm PF}}

\newcommand{\minus}{-}

\newcommand{\aspectratio}{\kappa_{\rm A}}
\newcommand{\algebraicshapemeasure}{\kappa_{\rm M}}

\newcommand{\volumeratio}{C_\rho}
\newcommand{\diameterratio}{C_\theta}
\newcommand{\reflectionestimate}{C_\xi}

\newcommand{\Xieins}{\Xi_{1}}
\newcommand{\Xizwei}{\Xi_{2}}

\newcommand{\vecnorm}[1]{\| #1 \|}
\newcommand{\matnorm}[1]{\| #1 \|_{2}}

\newcommand{\Poinc}{\calP}
\newcommand{\Bogov}{\calB}

\newcommand{\mollifier}{\frakm}

\newcommand{\eps}{\epsilon}

\newcommand{\Id}{{\rm Id}}

\newcommand{\diff}{\mathop{}\!\mathrm{d}}

\newcommand{\mia}{k}
\newcommand{\jeza}{\ell}

\newcommand{\UPatch}{U}

\renewcommand{\restriction}{\mathord{|}}

\DeclareMathOperator{\adj}{adj}
\newcommand{\inv}{{-1}}

\newcommand{\Jacobian}{{\mathbf{J}}}

\newcommand{\signum}{\operatorname{sgn}}

\newcommand{\grad}{\operatorname{grad}}

\newcommand{\curl}{\operatorname{curl}}

\newcommand{\rot}{\operatorname{rot}}
\newcommand{\divergence}{\operatorname{div}}

\newcommand{\cartan}{d}
\newcommand{\cartanx}{dx}

\DeclareMathOperator*{\argmin}{{\rm argmin}}

\newcommand{\carapace}{{\partial \rm st}}

\newcommand{\supp}{\operatorname{supp}}

\newcommand{\vol}{\operatorname{vol}}

\newcommand{\diam}{{\delta}}
\newcommand{\height}{h}

\newcommand{\patch}{\operatorname{st}}

\newcommand{\trace}{\operatorname{tr}}

\newcommand{\subsimplex}{\calS^{\downarrow}}

\newcommand{\Vertices}{\calV}

\newcommand{\Faces}{\calF}
\newcommand{\Ball}{\calB}

\newcommand{\underlying}[1]{\left| #1 \right|}

\newcommand{\Lebesgue}{L}

\newcommand{\Alt}{\Lambda}

\newcommand{\bbR}{{\mathbb R}}

\newcommand{\bbZ}{{\mathbb Z}}

\newcommand{\bfL}{\bm{L}}

\newcommand{\bfW}{\bm{W}}

\newcommand{\bff}{\bm{f}}
\newcommand{\bfg}{\bm{g}}

\newcommand{\bfu}{\bm{u}}
\newcommand{\bfv}{\bm{v}}
\newcommand{\bfw}{\bm{w}}

\newcommand{\calB}{{\mathcal B}}
\newcommand{\calC}{{\mathcal C}}

\newcommand{\calF}{{\mathcal F}}
\newcommand{\calG}{{\mathcal G}}

\newcommand{\calK}{{\mathcal K}}

\newcommand{\calM}{{\mathcal M}}

\newcommand{\calP}{{\mathcal P}}

\newcommand{\calR}{{\mathcal R}}
\newcommand{\calS}{{\mathcal S}}
\newcommand{\calT}{{\mathcal T}}

\newcommand{\calV}{{\mathcal V}}

\newcommand{\frakm}{{\mathfrak m}}

\newcommand{\disk}{ball}

\title{Computable Poincar\'e--Friedrichs constants for the $L^{p}$ de~Rham complex over convex domains and domains with shellable triangulations}
\author{
    Th\'eophile Chaumont-Frelet\thanks{Inria Univ. Lille and Laboratoire Paul Painlev\'e, 59655 Villeneuve-d'Ascq, France, \texttt{theophile.chaumont@inria.fr}} \and
    Martin Werner Licht\thanks{Corresponding author. \'Ecole Polytechnique F\'ed\'erale de Lausanne, CH-1015 Lausanne, Switzerland, \texttt{martin.licht@epfl.ch}} \and
    Martin Vohral\'ik\thanks{Inria, 48 rue Barrault, 75647 Paris, France \& CERMICS, Ecole des Ponts, 77455 Marne-la-Vall\'ee, France, \texttt{martin.vohralik@inria.fr}}
}
\date{}

\begin{document}

\maketitle
\begin{abstract}
    We construct potentials for the exterior derivative, in particular, for the gradient, the curl, and the divergence operators, over domains with shellable triangulations.
    Notably, the class of shellable triangulations includes local patches (stars) in two or three dimensions.
    The operator norms of our potentials satisfy explicitly computable bounds that depend only on the geometry.
    We thus compute upper bounds for constants in Poincar\'e--Friedrichs inequalities and lower bounds for the eigenvalues of vector Laplacians.
    As an additional result with independent standing, we establish Poincar\'e--Friedrichs inequalities with computable constants for the $L^{p}$ de~Rham complex over bounded convex domains, derived as explicit operator norms of regularized Poincar\'e and Bogovski\u{\i} potential operators.
    We express all our main results in the calculus of differential forms and treat the gradient, curl, and divergence operators as instances of the exterior derivative.
    Computational examples illustrate the theoretical findings.
\end{abstract}

\tableofcontents

\section{Introduction}\label{section:intro}

Potentials for the differential operators of vector calculus and exterior calculus are of fundamental importance.
The operator norms of these potentials are upper bounds for the \emph{Poincar\'e--Friedrichs constants},
which quantify the fundamental stability properties of {numerous} partial differential equations and enter the stability and convergence theory of numerical methods.
Upper bounds for the Poincar\'e--Friedrichs constants also provide lower bounds for the eigenvalues of the associated Laplacians.
However, while potentials and Poincar\'e--Friedrichs constants for the gradient have been subject to extensive study,
quantifiable results regarding the curl and divergence operators, or more generally, the exterior derivative, are largely unavailable.

This manuscript contributes to the theory of computable estimates for Poincar\'e--Friedrichs constants for the differential operators of vector calculus.
How to construct potentials for the gradient over triangulated domains is well-documented in the literature.
Here, we extend this construction to the curl and divergence operators, treating them as instances of the exterior derivative.
However, to proceed in the general exterior derivative case, we restrict our efforts to so-called \emph{shellable triangulations}.
The class of shellable triangulations includes practically relevant triangulations:
for example, local triangulations around simplices within a larger triangulation (the so-called local patches or stars) are shellable in dimensions two and three; see also Figure~\ref{figure:patches}.
Only contractible domains can ever admit a shellable triangulation, but having computable upper bounds for such domains is an important stepping stone toward more general situations.
Additionally, we include a study of regularized Poincar\'e and Bogovski\u{\i} operators that leads to new Poincar\'e--Friedrichs inequalities with computable constants for the whole $L^{p}$ de~Rham complex over bounded convex domains.

\subsection{Conceptual overview}

We give a conceptual overview of the topic before we outline the known results in the literature and our contributions in more detail. 
Our conceptual point of reference is the Poincar\'e--Friedrichs inequality for the gradient of scalar functions, which has been subject to extensive research.

\subsubsection{Potentials and Poincar\'e--Friedrichs inequalities for the gradient}\label{section:intro:potential}

For the purpose of illustration, we let $\Omega \subseteq \bbR^3$ be a bounded connected open set.
We let $L^{p}(\Omega)$ denote the Lebesgue space over $\Omega$ with integrability exponent $1 \leq p \leq \infty$, 
and we write $W^{1,p}(\Omega)$ for the first-order Sobolev space over $\Omega$ with integrability exponent $p$.

We are interested in finding a constant $C_{\Omega,\grad,p} > 0$ such that the following holds:
for every gradient vector-valued field $\bff \in \nabla W^{1,p}(\Omega)$ 
there exists a scalar potential $u \in W^{1,p}(\Omega)$ (meaning a preimage under the gradient)
such that $\nabla u = \bff$ and 
\begin{align}\label{math:intro:inequalitypf:gradient} 
    \| u \|_{L^{p}(\Omega)}
    \leq 
    C_{\Omega,\grad,p} 
    \| \bff \|_{L^{p}(\Omega)}
    .
\end{align}
This inequality is called \emph{Poincar\'e--Friedrichs inequality} and the constant $C_{\Omega,\grad,p}$ is called the \emph{Poincar\'e--Friedrichs constant} with exponent $p$. 
The question is therefore whether we can always find a gradient potential of sufficiently small Lebesgue norm.
One possible choice is the norm-minimizing gradient potential
\begin{align}\label{math:intro:gradientpotential}
    \Phi_{\grad}( \bff ) := \argmin\limits_{ \substack{ u \in W^{1,p}(\Omega) \\ \nabla u = \bff } } \| u \|_{L^{p}(\Omega)}, \qquad \bff \in \nabla W^{1,p}(\Omega).
\end{align}
In fact, its operator norm is the best Poincar\'e--Friedrichs constant: 
\begin{align}\label{math:intro:gradient:potentialnorm}
    C_{\Omega,\grad,p} := \max\limits_{ u \in W^{1,p}(\Omega) \setminus \bbR } 
    \frac{ \| \Phi_{\grad}(\nabla u) \|_{L^{p}(\Omega)} }{ \| \nabla u \|_{\fooL^{p}(\Omega)} }.
\end{align}
In the present setting, where $\Omega$ is connected, the different possible gradient potentials only differ by constants.
Thus, $\Phi_{\grad}( \bff )$ solves a one-dimensional convex minimization problem, and the constant $C_{\Omega,\grad,p}$ is the best possible constant in the inequality
\begin{align}\label{math:intro:poincarefriedrichs:grad}
    \min\limits_{c\in\bbR}
    \| u - c \|_{L^{p}(\Omega)}
    \leq 
    C_{\Omega,\grad,p} \| \nabla u \|_{L^{p}(\Omega)},
    \quad 
    \forall
    u \in W^{1,p}(\Omega) 
    .\tag{PF} 
\end{align}
Finding the norm-minimizing potential and the optimal Poincar\'e--Friedrichs constant is generally challenging
because $\Phi_{\grad}$ is a nonlinear operator unless $p = 2$.
For that reason, research has focused on \emph{linear} potential operators. 
Indeed, any \emph{linear} potential operator $\Phi_{} : \nabla W^{1,p}(\Omega) \rightarrow W^{1,p}(\Omega)$ satisfying $\nabla \Phi_{}(\bff) = \bff$ for any $\bff \in W^{1,p}(\Omega)$ satisfies
\begin{align}
    \max\limits_{ u \in W^{1,p}(\Omega) \setminus \bbR } 
    \frac{ \| \Phi_{\grad}(\nabla u) \|_{L^{p}(\Omega)} }{ \| \nabla u \|_{\fooL^{p}(\Omega)} }
    \leq 
    \max\limits_{ u \in W^{1,p}(\Omega) \setminus \bbR } 
    \frac{ \| \Phi_{}(\nabla u) \|_{L^{p}(\Omega)} }{ \| \nabla u \|_{\fooL^{p}(\Omega)} }
    .
\end{align}
Thus, its operator norm serves as an upper bound for the Poincar\'e--Friedrichs constant $C_{\Omega,\grad,p}$.
This is a general principle: upper bounds for Poincar\'e--Friedrichs constants are easily obtained from linear potentials. 
We highlight this perspective because it is our theoretical underpinning when generalizing the discussion to the curl and divergence operators below. 

One straightforward example for such a bounded linear potential operator is the average-free potential.
This potential operator $\Phi_{\varnothing}$ is defined by 
\begin{align}\label{math:intro:poincarepotential}
    \Phi_{\varnothing}( \nabla u ) 
    := 
    u - u_{\Omega}, 
    \qquad 
    u \in W^{1,p}(\Omega)
    ,
\end{align} 
where $u_\Omega$ denotes the average of $u$ over $\Omega$. 
Its operator norm is the Poincar\'e constant $C_{\Omega,\varnothing,p} > 0$,
which satisfies the inequality \begin{align}\label{math:intro:poincare:grad}
    \| u - u_{\Omega} \|_{L^{p}(\Omega)}
    \leq 
    C_{\Omega,\varnothing,p} \| \nabla u \|_{L^{p}(\Omega)},
    \quad 
    \forall 
    u \in W^{1,p}(\Omega)
    . \tag{P} 
\end{align}
We emphasize that the average-free gradient potential \eqref{math:intro:poincarepotential} is generally not the norm-minimizing gradient potential unless $p = 2$.
Hence, the optimal Poincar\'e constant in \eqref{math:intro:poincare:grad} generally differs from the optimal Poincar\'e--Friedrichs in \eqref{math:intro:poincarefriedrichs:grad}.
Considerable research efforts have gone into determining the best possible values for these constants. 
Upper estimates for these constants also correspond to lower bounds for the spectra of Neumann--Laplacians over those domains.
The relationship between the Poincar\'e constant and the Poincar\'e--Friedrichs constant will be further elaborated upon in later sections of this manuscript.

\subsubsection{Potentials and Poincar\'e--Friedrichs inequalities for the curl and divergence}

The study of potentials and Poincar\'e--Friedrichs inequalities for the curl or divergence operators in vector calculus is substantially different, with much fewer results available in the literature.
Consider the spaces of vector fields
\begin{align}
    \bfW^{p}(\curl,\Omega) &:= \left\{ \bfu \in L^{p}(\Omega)^{3} : \curl \bfu \in L^{p}(\Omega)^{3} \right\}
    \label{math:intro:Wcurl}
    ,
    \\
    \bfW^{p}(\divergence,\Omega) &:= \left\{ \bfu \in L^{p}(\Omega)^{3} : \divergence \bfu \in L^{p}(\Omega) \right\}
    \label{math:intro:Wdiv}
    .
\end{align}
Their members are those vector-valued fields in Lebesgue spaces whose distributional curls and divergences, respectively, are in Lebesgue spaces. 
In contrast to the gradient, this only requires that certain sums of distributional partial derivatives are integrable, 
which means that these spaces are not classical Sobolev spaces of vector-valued fields. 
Our objective is to find bounded potential operators (i.e., right inverses) for the differential operators
\begin{align*}
    \curl : \bfW^{p}(\curl,\Omega) \rightarrow L^{p}(\Omega)^{3},
    \qquad 
    \divergence : \bfW^{p}(\divergence,\Omega) \rightarrow L^{p}(\Omega).
\end{align*}
We are interested in the natural analogs for the Poincar\'e--Friedrichs inequality of the gradient~\eqref{math:intro:poincarefriedrichs:grad},
which for the curl and divergence are the inequalities 
\begin{align}
    \min\limits_{ \substack{ \bfv \in \bfW^{p}(\curl,\Omega) \\ \curl \bfv = \bff } } 
    \| \bfv \|_{L^{p}(\Omega)}
    &\leq 
    C_{\Omega,\curl,p}
    \| \bff \|_{L^{p}(\Omega)},
    \qquad 
    \forall \bff \in \curl \bfW^{p}(\curl,\Omega)
    \label{math:intro:Wcurl:pf}
    ,
    \\\min\limits_{ \substack{ \bfv \in \bfW^{p}(\divergence,\Omega) \\ \divergence \bfv = f } } 
    \| \bfv \|_{L^{p}(\Omega)}
    &\leq 
    C_{\Omega,\divergence,p}
    \| f \|_{L^{p}(\Omega)}
    \qquad 
    \forall \bff \in \divergence \bfW^{p}(\divergence,\Omega)
    \label{math:intro:Wdiv:pf}
    .
\end{align}
The fundamental difference to the gradient is that the curl and divergence have infinite-dimensional kernels.
The kernel of the gradient is the one-dimensional space of constant functions, and it is thus trivially complemented for all $p$, with a canonical choice of projection.
By contrast, the kernels of the curl and divergence operators are generally infinite-dimensional.
Correspondingly, the curl and the divergence do not admit any easily available analog to the Poincar\'e inequality~\eqref{math:intro:poincare:grad}.
Moreover, it is not immediately evident that the kernels of the curl and divergence operators are complemented in the Banach space case when $p \neq 2$,
and a canonical projection only exists here in the Hilbert setting $p=2$.
In the light of these facts, there generally is no natural analog to the Poincar\'e inequality~\eqref{math:intro:poincare:grad} for the curl and divergence.

In the Banach space case, not even the existence of norm-minimizing potentials is trivial.
We are therefore interested in any potentials 
\begin{gather}
    \Phi_{\curl} : \curl \bfW^{p}(\curl,\Omega) \rightarrow \bfW^{p}(\curl,\Omega)
    ,
    \\
    \Phi_{\divergence} : \divergence \bfW^{p}(\divergence,\Omega) \rightarrow \bfW^{p}(\divergence,\Omega),
\end{gather}
and their operator norms. We remark that upper bounds for the Poincar\'e--Friedrichs inequality of the curl operator correspond to lower bounds for the so-called Maxwell eigenvalues. 
In sharp contrast to the extensive research on gradient potentials, not many results seems available on computable constants in such Poincar\'e--Friedrichs inequalities for the curl and divergence operators.

\subsubsection{Potentials and Poincar\'e--Friedrichs inequalities for the exterior derivative}

Though the above discussion is presented in vector calculus, our main results and arguments are given in the formalism of exterior calculus.
Exterior calculus~\cite{greub1967multilinear,lee2012smooth} is used ubiquitously in the mathematical literature of physics and engineering and has found widespread adoption in the theoretical and numerical analysis for vector-valued partial differential equations~\cite{hiptmair2002finite,gross2004electromagnetic,arnold2006finite,arnold2009geometric,arnold2010finite,demlow2014posteriori,licht2021local,arnold2021complexes}. 
This formalism is independent of the spatial dimension and highlights the underlying geometric structures common to the gradient, curl, and divergence operators in three space dimensions.
In this setting, the Poincar\'e--Friedrichs inequalities we are interested in read as follows:
For every $u \in W^{p}\Alt^{k}(\Omega)$, there exists $w \in W^{p}\Alt^{k}(\Omega)$ with the same exterior derivative $\cartan w = \cartan u$ and such that\footnote{All the notation is fixed in detail in Section~\ref{section:calculus} later.}
\begin{align}\label{math:intro:exterior:pf} 
    \| w \|_{L^{p}(\Omega)} \leq  C_{\Omega,k,p} \| \cartan w \|_{L^{p}(\Omega)}.
\end{align} 
For the purpose of our discussion, this formalism allows us to leverage results from a larger body of literature in differential geometry and functional analysis.

\subsection{Literature review}

We review the literature on Poincar\'e--Friedrichs inequalities. We also identify the obstructions inherent to presently known results that we wish to overcome with the present contribution. 

\subsubsection{General results}

The qualitative existence of Poincar\'e--Friedrichs inequalities for the differential operators of vector calculus and exterior calculus is known for a large class of domains.
We emphasize that non-constructive arguments, such as the Rellich embedding theorem, generally do not yield explicitly computable constants.

There is a vast body of literature on the Poincar\'e inequality for the gradient and its numerous variants, which may include weighted integrals or boundary terms.
The divergence operator has received less attention.
In the Hilbert space case $p=2$, the Friedrichs inequality~\cite{burenkov1998sobolev}
over the Sobolev space $W^{1,2}(\Omega)$ with homogeneous Dirichlet boundary conditions
along $\partial \Omega$ yields the Poincar\'e--Friedrichs constant for the divergence
operator in~\eqref{math:intro:Wdiv:pf} by duality.
However, that easy duality argument, which yields an explicit upper bound proportional to the domain diameter, seems inherently restricted to $p=2$.

The core challenges are found in the discussion of the curl operator in three dimensions,
where Poincar\'e--Friedrichs inequalities have appeared under different names.
Let us assume momentarily that $\Omega$ is a weakly Lipschitz domain with trivial topology and consider only the Hilbert case $p=2$.
Then the constant in~\eqref{math:intro:Wcurl:pf} agrees with the constant in the so-called Poincar\'e--Friedrichs--Weber inequality
\begin{align}
    \label{math:intro:weber}
    \| \bfu\|_{L^{2}(\Omega)}   \leq C_{\Omega,\curl,p} \| \curl \bfu \|_{L^{2}(\Omega)},
\end{align}
valid for all $\bfu \in \bfW^{2}(\curl,\Omega) \cap \bfW^{2}(\divergence,\Omega)$ that satisfy $\divergence \bfu = 0$ and have vanishing normal or vanishing tangential trace along $\partial \Omega$. 
Equivalently,~\eqref{math:intro:weber} is valid for all $\bfu \in \bfW^{2}(\curl,\Omega)$ that are $L^{2}$-orthogonal to the gradients of scalar-valued fields in $W^{1,2}(\Omega)$
or that have vanishing tangential trace and are $L^{2}$-orthogonal to the gradients of those scalar-valued fields in $W^{1,2}(\Omega)$ that satisfy Dirichlet boundary conditions.
We refer to Equation~(5) in~\cite{Fried_diff_forms_55}, Equation~(2) in~\cite{Gaff_Hilbert_harm_55}, \cite{Web_compact_Maxw_80} as well as~\cite[Lemmas~3.4 and~3.6]{Gir_Rav_NS_86}, \cite[Proposition~7.4]{Fer_Gil_Maxw_BC_97}, \cite{paulyvaldman} and the references therein. 
The general case of $L^p$ differential forms over Lipschitz manifolds subject to partial boundary conditions is discussed in~\cite{goldshtein2011hodge}.
However, while many of the above results rely on non-constructive estimates, 
we are interested in practically computable upper bounds.

\subsubsection{Analytical constants over convex domains}

Considerable research effort has gone into computing explicit upper estimates for the constants in the Poincar\'e--Friedrichs inequalities over convex domains.
Notably, if the constants are required to depend on the convex domain only via its diameter,
then the optimal gradient Poincar\'e--Friedrichs and Poincar\'e constants for the entire range of Lebesgue exponent $1 \leq p < \infty$ are known explicitly~\cite{Pay_Wei_Poin_conv_60,bebendorf2003note,acosta2004optimal,esposito2013poincare,ferone2012remark}, see also \cite{chua2006estimates}.

The literature on Poincar\'e--Friedrichs constants and potentials of curls and divergences is less extensive than for potentials of gradients, even over convex domains.
Guerini and Savo~\cite{guerini2004eigenvalue} address the spectrum of the Hodge--Laplace operator on bounded convex domains with smooth boundary in the Hilbert space case $p=2$. Among their results is the observation that the Poincar\'e--Friedrichs constant for the gradient already estimates the corresponding constants for the curl and divergence operators.
They also provide explicit (but not necessarily optimal) upper bounds for Poincar\'e--Friedrichs constants that depend only on the dimension and diameter of the convex domain.
A duality argument then yields upper estimates of the Poincar\'e--Friedrichs constants for the gradient, curl, and divergence operators subject to Dirichlet, tangential, and normal boundary conditions, respectively, along the entire boundary.
However, no results as those of~\cite{guerini2004eigenvalue} are known over bounded convex Lipschitz domains and for general Lebesgue exponents $1 \leq p \leq \infty$.

\subsubsection{Domains star-shaped with respect to a ball}

When the domain is not necessarily convex but star-shaped with respect to a ball,
then several estimates for Poincar\'e--Friedrichs constants are known.\footnote{In fact, estimates for Poincar\'e--Friedrichs constants even hold over any star-shaped open bounded set; see~\cite[Theorem~3.1]{hurri1988poincare}.} Polynomial interpolation estimates already imply the gradient Poincar\'e--Friedrichs inequality~\cite{brenner2008mathematical,ern2021finite}.
We pay particular attention to the regularized Poincar\'e and Bogovski\u{\i} potential operators for the exterior derivative,
such as those of Costabel and McIntosh~\cite{costabel2010bogovskiui}.
If we conceive these potentials as mappings between Lebesgue spaces of differential forms, then their operator norms are upper estimates for the Poincar\'e--Friedrichs constants of the domain.
We are aware of estimates for the higher-order seminorms of these potentials~\cite{guzman2021estimation},
but estimates in Lebesgue norms have not been made explicit in the literature yet, to the best of our knowledge.
We particularly emphasize that all these estimates for domains star-shaped with respect to a ball have in common that they rely on upper bounds for the \emph{eccentricity} of the domain.

Let us briefly discuss the practical limitations of the aforementioned estimates for Poincar\'e--Friedrichs constants for bounded convex domains or domains that are star-shaped with respect to a ball.
Recall that the main objective of this manuscript is bounding Poincar\'e--Friedrichs constants over domains with shellable triangulations, with a key application being local stars within triangulated domains.
Not all local stars describe convex subdomains.
Even if the local stars are star-shaped with respect to a ball,
which would enable the averaged Poincar\'e and Bogovski\u{\i} operators~\cite{costabel2010bogovskiui},
the estimates that rely on this geometric condition deteriorate when the aforementioned ball has a radius much smaller than the domain diameter.
While this is not as much a problem over local patches (stars) around interior subsimplices, where the size of the interior ball only depends on the shape regularity of the triangulation, the interior ball can be arbitrarily small when the local patch is around a boundary simplex, even if the triangulation displays decent shape regularity.
This occurs most prominently when the domain has sharp reentrant corners, illustrative limit cases including the slit domain~\cite{veeser2012poincare} and the crossed bricks domain~\cite{licht2019smoothed},
which contain local finite element patches that are not star-shaped with respect to any ball.
In view of this, we refrain from treating local patches (stars) in triangulations as domains star-shaped with respect to a ball.

\subsubsection{Triangulated domains}

Geometric settings that admit finite triangulations enable different pathways to estimate the constant in Poincar\'e--Friedrichs inequalities.
We review the main outcomes.

Computable estimates for Laplacian eigenvalues over triangulated domains have received much attention.
The constant in~\eqref{math:intro:inequalitypf:gradient} for $p=2$ corresponds to a lower bound for the Laplace eigenvalues and quantifies the stability properties of the Laplacian on the domain $\Omega$.
Similarly, the constant in~\eqref{math:intro:Wcurl:pf} for $p=2$ corresponds to a lower bound for the Maxwell eigenvalues and quantifies the stability properties of the Maxwell system on $\Omega$.
Thus, computable upper bounds on the Poincar\'e--Friedrichs constants also give computable lower bounds for the eigenvalues of the associated Laplacians and vice versa.
Prominent methods numerically compute guaranteed upper bounds on the Poincar\'e--Friedrichs constants upon solving a finite element system over a sufficiently fine triangulation and using some clever post-processing estimates.
This approach has led to estimates for scalar Laplacian eigenvalues~\cite{Cars_Ged_LB_eigs_14,Liu_fram_eigs_15,zbMATH06969657} and vector Laplacian eigenvalues~\cite{gallistl2023computational}.
For the purposes of this manuscript, however, we aim for computable upper bounds that do not rely on the solution of (global) finite element systems.

There are numerous estimates for Poincar\'e--Friedrichs constants that only rely on locally computable geometric quantities, such as the diameter and volumes of simplices.
Veeser and Verf\"urth~\cite{veeser2012poincare} provide computable upper bounds in the case of the classical Sobolev space $W^{1,p}(\Omega)$ over vertex stars. Naturally, their estimates depend on the shape regularity of the mesh.
A whole class of upper bounds for Poincar\'e--Friedrichs inequalities uses some form of passing through the triangulation and constructs the potentials step-by-step.
The underlying idea is that we first construct a potential for the gradient over an initial simplex.
Every time we have found a potential over a subdomain, we construct a potential over a neighboring simplex or patch:
along the interfacing intersection, the two potentials will only differ by a constant,
and that difference can easily be removed to ensure continuity across that interface.
Cell by cell, the potential is constructed over increasing subdomains, matching along the interfacing intersections until the entire domain is exhausted.
The method is known in the finite element literature~\cite{ern2021finite}.
It was previously used in the context of finite volume methods~\cite{Eym_Gal_Her_00}, broken (weakly continuous) Sobolev spaces~\cite{vohralik2005discrete}, or more recently in continuous--discrete comparison results~\cite{Brae_Pill_Sch_p_rob_09,ern2020stable,Chaum_Voh_p_rob_3D_H_curl_24,Voh_loc_glob_H1_25}.
This sequential procedure applies to general triangulated domains, not only local stars.
Most importantly, these sequential estimates of Poincar\'e--Friedrichs constants generally circumvent the effect of low boundary regularity and only rely on the mesh's shape regularity.

While we thus know Poincar\'e--Friedrichs constants over local stars for scalar-valued Sobolev spaces,
we are not aware of computable estimates for the case of $\bfW^{p}(\curl,\Omega)$ and $\bfW^{p}(\divergence,\Omega)$ over finite element stars.

\subsection{Objectives and methodology}

The main objective of this manuscript is the construction of potentials for the gradient, curl, and divergence operators, and, more generally, the exterior derivative.
The operator norms of our potentials satisfy computable upper bounds, thus yielding computable upper estimates of the Poincar\'e--Friedrichs constants as well.
In what follows, we focus on domains with shellable triangulations, which include local patches (stars) in two and three dimensions as important special cases.
We also devote an important effort specifically to convex domains.

\subsubsection{Convex domains}

Our main result for convex domains in the exterior calculus setting is the construction of regularized Poincar\'e and Bogovski\u{\i} potential operators with explicitly bounded operator norms, as summarized in Theorem~\ref{theorem:bogovpoinc}.
These upper bounds for the Poincar\'e--Friedrichs constants are proportional to the domain diameter and are bounded in terms of the domain's eccentricity.
The bounds are independent of the Lebesgue exponent $1 \leq p \leq \infty$, though the space dimension and the form degree enter the estimates.

The reason for our study of Poincar\'e--Friedrichs constants over convex domains is twofold:
firstly, they are of independent interest, and secondly, we will need them as a component for our main results on triangulations.
Our exposition of regularized Poincar\'e and Bogovski\u{\i}-type potential operators follows the general methodology of Costabel and McIntosh~\cite{costabel2010bogovskiui}.
By comparison, we simplify the potential operators:
we only study them over convex domains instead of domains star-shaped with respect to a ball. Moreover, we use simpler (constant) weight functions.
While the resulting potentials feature generally lower regularity, they are conducive to our purposes.
Crucially, this allows us to estimate their operator norms easily and thus bound Poincar\'e--Friedrichs constants.

\subsubsection{Potentials subject to partial boundary conditions}

We also address the construction of potentials for the gradient, curl, and divergence operators, being special cases of the exterior derivative, over a simplex subject to partial boundary conditions.
We are not aware of explicit estimates or regularized potentials for these boundary conditions in the published literature.
For example, given a divergence-free vector-valued field over a tetrahedron with vanishing normal trace along three of the tetrahedron's faces,
we want to find the vector-valued field potential that not only is a preimage under the curl operator but also has a vanishing tangential trace along the same three faces.
We achieve this by constructing an auxiliary problem subject to full boundary conditions, building upon the regularized Bogovski\u{\i} operators.
We thus obtain Poincar\'e--Friedrichs inequalities over simplices and subject to partial boundary conditions.
Again, we address the entire range $1 \leq p \leq \infty$ of Lebesgue exponents, and our Poincar\'e--Friedrichs constants are explicitly computable.

\subsubsection{Main results}

Our main objective remains to find potentials for the differential operators of vector calculus and exterior calculus, including the curl and divergence, over a broad class of triangulated domains.
The operator norms of these potentials will serve as our computable Poincar\'e--Friedrichs constants.
In light of the different approaches discussed above,
we seek constants that are explicitly computed in terms of mesh geometry, that do not require the solution of global finite element problems,
and that do not depend on the boundary regularity of the domain.

As discussed earlier, the sequential construction of potentials for gradient vector-valued fields is well-established in the literature and serves as our conceptual blueprint.
Gradient potentials are easily computed over each individual simplex, but the constants of integration generally do not match, so the scalar piecewise potential will belong to a broken Sobolev space.
The local potentials will differ by a constant along the simplex boundaries.
However, we can sequentially construct a potential in Sobolev spaces over increasingly larger intermediate subdomains: at each step, we select a simplex that shares at least one face with one of the previously processed simplices and adjust the constant integration of the local potential.
The global potential is built cell by cell until the entire domain is covered.
Our main results in this context are Theorem~\ref{theorem:poincarefriedrichsestimate:grad} and Theorem~\ref{theorem:fullrecursivesum:grad}.

We generalize this sequential construction of potentials to the curl, the divergence, and, more generally, the exterior derivative.
However, we need to overcome new challenges that arise due to the infinite-dimensional kernels of these differential operators, as we now explain in more detail.
The basic inductive strategy remains the same.
We start by constructing, for instance, a curl potential over a single simplex.
Having already defined a potential operator over a subtriangulation, we select a neighboring simplex that shares at least one face with the preceding simplices.
We then construct a potential for the curl operator whose tangential traces along the shared faces match those of the already existing potential.
Repeating this procedure eventually exhausts the original triangulation.
Our main results here are Theorem~\ref{theorem:poincarefriedrichsestimate:exterior} and Theorem~\ref{theorem:fullrecursivesum:exterior}.

However, unlike in the potential construction of the gradient, it is not immediately evident whether constructing the local curl potential with given tangential traces is a well-posed auxiliary problem.
For the case of the gradient, it is sufficient that the sequential traversal of the triangulation satisfies that each new simplex shares at least one face with one of the previous simplices.
For the differential operators of vector calculus and exterior calculus, it seems that we must be more restrictive:
we require the new simplex to intersect with the existing subtriangulation along a boundary submanifold of dimension $n-1$, which means a collection of faces of this simplex but no individual edges or vertices, allowing us to define a well-posed auxiliary problem and to extend the existing curl potential to the new simplex.
Whether a triangulation admits such a particular traversal is a non-trivial condition and defines the class of \emph{shellable} triangulations.

Shellable simplicial complexes, and more generally polytopal complexes, are a well-established notion in discrete geometry and combinatorics; see, e.g., Kozlov~\cite{kozlov2008combinatorial} and Ziegler~\cite{ziegler1995lectures}, and the references therein.
As already highlighted, local patches (stars) in 2D and 3D triangulations are shellable. 
Generally speaking, domains that have a shellable triangulation must be contractible. 
Not all triangulations of contractible domains are shellable, but the exceptions are rare and requiring the triangulation to be shellable poses little restriction in practice.

\subsection{Basic notation}

Whenever $x \in \bbR^{n}$ is a vector, we write $\|x\| = \|x\|_{2}$ for its Euclidean norm, 
and whenever $A \in \bbR^{n \times n}$, we let $\| A \|_{2}$ be its operator norm with respect to the Euclidean norm. 
Furthermore, $\Jacobian F$ always denotes the Jacobian of any mapping $F$.

\subsection{Organization of this manuscript}

The remainder of this manuscript is structured as follows.
We review Poincar\'e--Friedrichs constants for the gradient over convex domains in Section~\ref{section:poincare},
where we also discuss the difference with the Poincar\'e inequality and motivate our interest in linear potentials.
We review basic notions of triangulations in Section~\ref{section:triangulations}.
Subsequently, we develop the computable upper bounds for the Poincar\'e--Friedrichs constants for the gradient over face-connected triangulated domains in Section~\ref{section:gradient}.
Sobolev spaces in vector calculus and the calculus of differential forms are reviewed in Section~\ref{section:calculus}. 
We then introduce our regularized potentials over convex sets in Section~\ref{section:potentialoperator}, 
giving rise to computable Poincar\'e--Friedrichs constants as their operator norms. 
We subsequently review shellable triangulations of manifolds in Section~\ref{section:advancedtriangulations},
and we construct an important geometric reflection operator in Section~\ref{section:extension}. 
Finally, we provide computable upper bounds for Poincar\'e--Friedrichs constants for the exterior derivative over shellable triangulations in Section~\ref{section:poincarefriedrichs} and present numerical examples in Section~\ref{section:numericalexamples}.
We conclude with an outlook in Section~\ref{section:outlook}.

\section{Review of Poincar\'e and Poincar\'e--Friedrichs inequalities}\label{section:poincare}

This section surveys variations of the Poincar\'e--Friedrichs inequalities for the gradient operator, with emphasis on analytical upper bounds over bounded convex domains.
Continuing the discussion from the introduction (Section~\ref{section:intro:potential}), 
we detail the difference between the Poincar\'e--Friedrichs inequality, which addresses the norm-minimizing potential, and the Poincar\'e inequality, which addresses the potential with mean value zero.
This survey serves as a building block in constructing computable constants over triangulated domains in a combinatorial way below (Section~\ref{section:gradient}), but we believe it is also of independent interest.
\\

Let $\Omega \subseteq \bbR^{n}$ be a connected open set. 
Given any $p \in [1,\infty]$, we let $L^{p}(\Omega)$ denote the Lebesgue space over $\Omega$ with integrability exponent $p$, and we write $\bfL^{p}(\Omega) := L^{p}(\Omega)^{n}$ for the corresponding Lebesgue space of vector-valued fields. 
We also write $W^{1,p}(\Omega)$ for the first-order Sobolev space over $\Omega$ with integrability exponent $p$.

We say that a domain $\Omega \subseteq \bbR^{n}$ satisfies the \emph{Poincar\'e--Friedrichs inequality} with exponent $p \in [1,\infty]$
if there exists a constant $C_{\Omega,\grad,p} \geq 0$ such that the following holds:
for every vector-valued field $\bff \in \nabla W^{1,p}(\Omega)$ there exists $u \in W^{1,p}(\Omega)$
such that $\nabla u = \bff$ and 
\begin{align}\label{math:inequalitypf:gradient}
    \| u \|_{L^{p}(\Omega)}
    \leq 
    C_{\Omega,\grad,p} 
    \| \bff \|_{L^{p}(\Omega)}
    .
\end{align}
Since $\Omega$ is connected, this is equivalent to  
\begin{align*}
    \min_{ c \in \bbR } \| u - c \|_{L^{p}(\Omega)}
    \leq 
    C_{\Omega,\grad,p} 
    \| \nabla u \|_{L^{p}(\Omega)},
    \quad 
    \forall 
    u \in W^{1,p}(\Omega)
    .
\end{align*}
We call $C_{\Omega,\grad,p}$ the \emph{Poincar\'e--Friedrichs constant} with exponent $p$.

\subsection{Relationship with Poincar\'e inequalities}

We wish to clarify the relationship between the Poincar\'e--Friedrichs inequality, in the sense introduced above, with other inequalities that are known as Poincar\'e inequality (or also Poincar\'e--Wirtinger or Friedrichs inequality) in the literature~\cite[Remark~3.32]{ern2021finite}. Given $p \in [1,\infty]$ and a domain $\Omega \subseteq \bbR^{n}$ of finite measure, 
we say that $\Omega$ satisfies the Poincar\'e inequality with exponent $p$ 
if there exists $C_{\varnothing,p} \geq 0$ such that 
\begin{align*}
    \| u - u_{\Omega} \|_{L^{p}(\Omega)}
    \leq 
    C_{\Omega,\varnothing,p} 
    \| \nabla u \|_{L^{p}(\Omega)}
    ,
    \quad 
    \forall u \in W^{1,p}(\Omega)
    ,
\end{align*}
where $u_{\Omega}$ is the \emph{average} of $u$ over $\Omega$, that is,
\begin{align*}
    u_{\Omega} := \vol(\Omega)^{-1} \int_{\Omega} u(x) \;dx.
\end{align*}
Clearly, this Poincar\'e inequality implies the Poincar\'e--Friedrichs inequality and we have 
\[
    C_{\Omega,\grad,p} \leq C_{\Omega,\varnothing,p}.
\]
Towards a converse inequality, 
let us first observe that the average of any $u \in W^{1,p}(\Omega)$ with $p < \infty$ satisfies the bound 
\begin{align}\label{math:aver_norm}
    \| u_\Omega \|_{L^{p}(\Omega)}^{p}
    = 
    \int_{\Omega} \left( \vol(\Omega)^{-1} \int_{\Omega} |u(x)| \;dx \right)^{p}
    \leq 
    \int_{\Omega} \vol(\Omega)^{-1} \int_{\Omega} |u(x)|^{p} \;dx
    = 
    \| u \|_{L^{p}(\Omega)}^{p}
    .
\end{align}
Here, we have used H\"older's or Jensen's inequality. 
In the case $p = \infty$, any $u \in L^{\infty}(\Omega)$ satisfies $\| u_\Omega \|_{L^{\infty}(\Omega)} \leq \| u \|_{L^{\infty}(\Omega)}$. 
We conclude that taking the average is a projection within Lebesgue spaces with unit norm. 
The triangle inequality now shows that 
\begin{align*}
    \| u - u_\Omega \|_{L^{p}(\Omega)} 
    \leq
    2
    \| u \|_{L^{p}(\Omega)},
    \quad 
    \forall
    u \in L^{p}(\Omega)
    .
\end{align*}
Thus, the Poincar\'e--Friedrichs inequality implies the Poincar\'e inequality with 
\begin{align}\label{math:p_is_smaller_than_pf}
    C_{\Omega,\grad,p} \leq 2 C_{\Omega,\varnothing,p}.
\end{align}
In the special case $p=2$, taking the average is an orthogonal projection, 
and so this improves to $\| u - u_\Omega \|_{L^{2}(\Omega)} \leq \| u \|_{L^{2}(\Omega)}$ for any $u \in L^{2}(\Omega)$. 
Hence,
\begin{align}\label{math:pf_equals_p_in_hilbertspace}
    C_{\Omega,\varnothing,2} = C_{\Omega,\grad,2}. 
\end{align}
This improvement also follows from the projection estimate (see, e.g.,~\cite{xu2003some}).
Stern's generalized projection estimate~\cite[Theorem~4.1,Remark~5.1]{stern2015banach} implies improved estimate for all $L^{p}$ spaces with $1 \leq p \leq \infty$:
since taking the average is a projection onto the constants functions with unit norm, from~\eqref{math:aver_norm} it now follows that 
\begin{align*}
    \| u - u_\Omega \|_{L^{p}(\Omega)}
    \leq 
    \min\left( 2, 2^{|2/p-1|} \right)
    \| u \|_{L^{p}(\Omega)}
    = 
    2^{|2/p-1|} 
    \| u \|_{L^{p}(\Omega)}
    ,
    \quad 
    \forall 
    u \in L^{p}(\Omega)
    .
\end{align*}
Here, we have used $1 \leq 2^{|2/p-1|} \leq 2$ for $1 \leq p \leq \infty$.
We thus conclude 
\begin{align}\label{math:p_vs_pf}
    C_{\Omega,\grad,p} \leq 2^{|2/p-1|} C_{\Omega,\varnothing,p}.
\end{align}
In the limit cases $p = 1$ and $p = \infty$ we reproduce~\eqref{math:p_is_smaller_than_pf}, 
and in the case $p = 2$ we achieve the identity~\eqref{math:pf_equals_p_in_hilbertspace} once again.
In summary, our notion of Poincar\'e--Friedrichs constant is equivalent to the common notion of Poincar\'e constant, up to a numerical factor that only depends on $1 \leq p \leq \infty$ and that is at most $2$.

As discussed in Section~\ref{section:intro}, the notion of Poincar\'e--Friedrichs inequality suits our discussion better than the Poincar\'e inequality.
The kernel of the gradient is the one-dimensional space of constant functions, and is thus complemented in the Lebesgue spaces with a canonical choice of projection.
By contrast, the curl and divergence operators have infinite-dimensional kernels, and so it is not even trivial whether these kernels are complemented subspaces and admit a projection onto them, not to mention a canonical projection.

\subsection{Analytical constants in Poincar\'e--Friedrichs inequalities over bounded convex domains}\label{subsection:PX_convex}

We collect examples for Poincar\'e and Poincar\'e--Friedrichs inequalities for the important special case of bounded convex domains. 
We have the Poincar\'e inequalities~\cite{Pay_Wei_Poin_conv_60,bebendorf2003note,acosta2004optimal} (or~\cite[Lemma~3.24]{ern2021finite}) 
\begin{align}\label{math:acostaduran}
    \| u - u_{\Omega} \|_{L^{1}(\Omega)}
    \leq 
    \frac{\diam(\Omega)}{2}
    \| \nabla u \|_{L^{1}(\Omega)}
    ,
    \quad 
    \forall 
    u \in W^{1,1}(\Omega)
    ,
    \\
    \| u - u_{\Omega} \|_{L^{2}(\Omega)}\label{math:bebendorf}
    \leq 
    \frac{\diam(\Omega)}{\pi}
    \| \nabla u \|_{L^{2}(\Omega)}
    ,
    \quad 
    \forall 
    u \in W^{1,2}(\Omega)
    ,
\end{align}
where $\diam(\Omega)$ is the diameter of the domain $\Omega$.
These two estimates are the best possible Poincar\'e inequalities in the cases $p=1$ and $p=2$, respectively, in terms of the diameter alone. 
Upper bounds for the Poincar\'e constant over bounded convex domains with $1 < p < \infty$ are known in the literature~\cite[Theorem~1.1, Theorem~1.2]{chua2006estimates}:
\begin{align}\label{math:chuawheeden}
    \| u - u_{\Omega} \|_{L^{p}(\Omega)}
    \leq 
    C_{{\rm CW},p}
    \diam(\Omega)
    \| \nabla u \|_{L^{p}(\Omega)}
    ,
    \quad 
    \forall 
    u \in W^{1,p}(\Omega)
    ,
\end{align}
where we use an upper bound by Chua and Wheeden:
\begin{align}
    C_{{\rm CW},p} 
    := 
    \sup\limits_{ v \in C^\infty([0,1]) \setminus \bbR } 
    \frac{ 
        \| v - v_{[0,1]} \|_{L^{p}([0,1])} 
    }{ 
        \| \nabla v \|_{L^{p}([0,1])} 
    }
    \leq 
    \sqrt[p]{p} 2^{1-\frac 1 p}
    =
    2
    \left( \frac p 2 \right)^{\frac 1 p}
    .
\end{align} 
Note that~\eqref{math:chuawheeden} is generally not optimal among the upper bounds that only depend on the domain diameter and the Lebesgue exponent.
As discussed above, these Poincar\'e inequalities imply Poincar\'e--Friedrichs inequalities.

We know optimal Poincar\'e--Friedrichs constants over bounded convex domains~(\cite[Theorem~1.1]{ferone2012remark},~\cite[Theorem~1.1]{esposito2013poincare}): 
when $1 < p < \infty$, one can show that 
\begin{align}\label{math:esposito:pf}
    \min\limits_{ c \in \bbR }
    \| u - c \|_{L^{p}(\Omega)}
    \leq 
    C_{{\rm EFNT},p}
    \diam(\Omega)
    \| \nabla u \|_{L^{p}(\Omega)}
    ,
    \quad 
    \forall 
    u \in W^{1,p}(\Omega)
    ,
\end{align}
where $C_{{\rm EFNT},p}$ is the best possible constant that only depends on $p$ and equals 
\begin{align*}
    C_{{\rm EFNT},p}
    :=
    \frac{ p \sin(\pi/p) }{ 2\pi \sqrt[p]{p-1} }
    .
\end{align*}
Note that the last inequalities from~\eqref{math:p_vs_pf} imply, again when $1 < p < \infty$, the Poincar\'e inequalities
\begin{align}\label{math:esposito:poincare}
    \| u - u_{\Omega} \|_{L^{p}(\Omega)}
    \leq 
    2^{|1-\frac 2 p|}C_{{\rm EFNT},p}
    \diam(\Omega)
    \| \nabla u \|_{L^{p}(\Omega)}
    ,
    \quad 
    \forall 
    u \in W^{1,p}(\Omega)
    .
\end{align}
When $p=1$, then the optimal Poincar\'e constant also bounds the Poincar\'e--Friedrichs constant:
\begin{align}\label{math:l1estimate}
    \min\limits_{ c \in \bbR }
    \| u - c \|_{L^{1}(\Omega)}
    \leq 
    \frac{ \diam(\Omega) }{2}
    \| \nabla u \|_{L^{1}(\Omega)}
    ,
    \quad 
    \forall 
    u \in W^{1,1}(\Omega)
    .
\end{align}
When $p=\infty$, since bounded convex domains are Lipschitz domains, Rademacher's theorem leads to 
\begin{align}\label{math:lipschitzestimate}
    \min\limits_{ c \in \bbR }
    \| u - c \|_{L^{\infty}(\Omega)}
    \leq 
    \diam(\Omega)
    \| \nabla u \|_{L^{\infty}(\Omega)}
    ,
    \quad 
    \forall 
    u \in W^{1,\infty}(\Omega)
    .
\end{align}

\begin{remark}
Any estimate for the Poincar\'e--Friedrichs constant implies an estimate for the Poincar\'e constant, via~\eqref{math:p_vs_pf}. 
    Let us compare $C_{{\rm CW},p}$ for the Poincar\'e inequality
    with $C_{{\rm EFNT},p}$ for the Poincar\'e--Friedrichs inequality. 
    In the case $2 \leq p$, 
    \begin{align*}
        2^{1-\frac 2 p}
        C_{{\rm EFNT},p}
        =
        \frac{ 2^{1-\frac 2 p} }{2}
        \frac{ \sin(\pi/p) }{ \pi/p }
        \frac{ 1 }{ \sqrt[p]{p-1} }
        \leq 
        4^{-\frac 1 p}
        \leq
        C_{{\rm CW},p} 
        .
    \end{align*}
    In the case $p \leq 2$,
    \begin{align*}
        2^{\frac 2 p - 1}
        C_{{\rm EFNT},p}
        =
        \frac{ 2^{\frac 2 p - 1} }{2}
        \frac{ \sin(\pi/p) }{ \pi/p }
        \frac{ 1 }{ \sqrt[p]{p-1} }
        \leq
        \frac{ 2^{\frac 2 p-1} }{2}
        &
        =
        2^{\frac 2 p - 2}
=
        4^{\frac 1 p - 1}
        \leq
        C_{{\rm CW},p} 
.
    \end{align*}
    It follows that~\eqref{math:esposito:poincare} is generally a tighter estimate than~\eqref{math:chuawheeden} for $1 < p < \infty$.
\end{remark}

\begin{remark}
    The above Poincar\'e and Poincar\'e--Friedrichs constants are optimal for the class of bounded convex domains, 
    but individual bounded convex domains may allow for better constants.
    We refer to~\cite{Liu_Kik_interp_10,Cars_Ged_Rim_expl_cnst_12,matculevich2016explicit} for discussions;
    for example, triangles allow the reduction of the constant by 20\%.
\end{remark}

\section{Basic notions of triangulations}\label{section:triangulations}

We gather basic notions and definitions concerning simplicial meshes.

A ${k}$-dimensional \emph{simplex} $T$ is the convex hull of ${k}+1$ affinely independent points $v_0, v_1, \ldots, v_{{k}} \in \mathbb{R}^{n}$. We call these points the \emph{vertices} of the simplex $T$.
The strictly positive convex combinations of the vertices of the simplex constitute the \emph{ interior} of the simplex,
and its remaining points constitute the \emph{boundary} of the simplex.
If $S$ is a simplex whose vertices are also vertices of another simplex $T$, in which case $S \subseteq T$,
then we call $S$ a \emph{subsimplex} of $T$ and call $T$ a \emph{supersimplex} of $S$.

A finite family of simplices $\calT$ is a \emph{simplicial complex} or \emph{triangulation} if it satisfies the following conditions:
(i) $\calT$ contains all the subsimplices of its members (ii) any non-empty intersection of two members of $\calT$ is a common subsimplex of each.
We say that a simplicial complex $\calT$ has \emph{dimension $n$} or is \emph{$n$-dimensional} if each of its simplices is a subset of an $n$-dimensional member of that triangulation.\footnote{Simplicial complexes that we call $n$-dimensional are called purely $n$-dimensional in the literature on polytopes~(cf.~\cite{ziegler1995lectures}) and simply ``simplicial meshes'' in the finite element literature.}
We also write $\underlying{\calT}$ for the \emph{underlying set} of the simplicial complex $\calT$,
which is the union $\underlying{\calT} = \bigcup \calT$ of all simplices in $\calT$.
One calls any set triangulable if it is the underlying set of some triangulation.

\begin{comment}
\begin{remark}
    Our main interest in this manuscript are finite triangulations of compact sets. 
    There is another reason why we insist on the triangulation being finite: 
    if we do not require our simplicial complexes to be finite,
    then a reasonable definition of simplicial complex would need additional topological conditions. 
For example, if we modify our definition of simplicial complex to be infinite, 
    then any subset of Euclidean space would give rise to an ``infinite $0$-dimensional complex''. But this obviously does not reflect the topology. 
A more interesting such example is this:
    the Cantor set $\calC \subseteq [0,1]$ is a compact set 
    that underlies an ``infinite $0$-dimensional simplicial complex''.
    If we place $\calC$ on the x-axis of a 2D coordinate system 
    and connect each member of $\calC$ to $(0,1) \in \bbR^{2}$ via a straight line segment, 
    then the resulting set is still compact, even path-connected,  
    but has an ``infinite $1$-dimensional triangulation''.
However, none of these infinite families are infinite simplicial complexes in the sense of geometric topology~\cite{lee2011topological}. 
\end{remark}
\end{comment}

Given any simplex $T$, we write $\subsimplex(T)$ for the simplicial complex that contains all subsimplices of $T$,
and $\subsimplex_{{k}}(T) \subseteq \subsimplex(T)$ denotes the set of $k$-dimensional subsimplices of $T$. 
We write $\Vertices(T) := \subsimplex_{0}(T)$ for the set of vertices of $T$.
Whenever $\calT$ is a simplicial complex, 
the set of $k$-dimensional simplices in $\calT$ is denoted as $\subsimplex_{{k}}(\calT)$. 
Similarly, the notations $\Vertices(\calT) := \subsimplex_{0}(\calT)$ and $\Faces(\calT) := \subsimplex_{n-1}(\calT)$ refer to the vertices and the faces (that is, members with codimension one) of this triangulation.\footnote{
    Our use of the term \emph{face} as is common in classical geometry and the finite element literature~\cite{brenner2008mathematical}
    and is synonymous with \emph{facet} as used in the literature on polyhedral combinatorics~\cite{schrijver1998theory}.
    Notably, this terminology differs from the uses \emph{face} and \emph{facet} in the theory of polyhedra~\cite{ziegler1995lectures}.
}
In practice, we may identify points and singleton simplices.

When $\calT$ is a triangulation and $T \in \calT$, then $\patch_{\calT}(T)$ denotes the \emph{local patch} or \emph{local star} of $T$, 
which is the simplicial subcomplex of $\calT$ that contains all supersimplices of $T$ and their subsimplices. 
We write $\carapace_{\calT}(T)$ for the subset of the local patch whose members do not contain $T$ itself. 
Formally,
\begin{gather*}
\patch_{\calT}(T) := \bigcup_{ \substack{ T' \in \subsimplex_{n}(\calT) \\ T \subseteq T' } } \subsimplex(T'),
    \qquad 
    \carapace_{\calT}(T) := \bigcup_{ \substack{ T' \in \patch_{\calT}(T) \\ T \nsubseteq T' } } \subsimplex(T').
\end{gather*}
We also write $\UPatch_T := | \patch_{\calT}(T) |$ for the closed underlying set of the local patch. 
A crucial structural observation is the following.

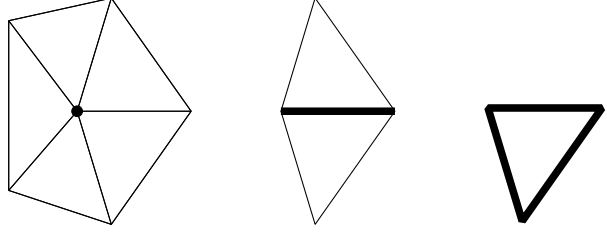
\begin{figure}[t]
\caption{From left to right: local patches around a vertex, and edge, and a triangle. The local patch of any full-dimensional simplex only consists of that simplex itself (and its subsimplices).}\label{figure:patches}
\centering
\begin{tabular}{ccc}
  \begin{tikzpicture}
[line join=bevel,x={( 1.5cm, 0mm)},y={( 0mm, 1.5cm)},z={( 1.5*3.85mm, -1.5*3.85mm)}]
    
    \coordinate (O) at (  0.0,  0.0, 0.0);
    \coordinate (A) at (  1.0,  0.0, 0.0);
    \coordinate (B) at (  0.3,  1.0, 0.0);
    \coordinate (C) at ( -0.6,  0.8, 0.0);
    \coordinate (D) at ( -0.6, -0.7, 0.0);
    \coordinate (E) at (  0.3, -1.0, 0.0);
    \coordinate (F) at (  0.3, -1.0, 0.0);
    
    \draw (O) -- (A) -- (B) -- cycle;
    \draw (O) -- (B) -- (C) -- cycle;
    \draw (O) -- (C) -- (D) -- cycle;
    \draw (O) -- (D) -- (E) -- cycle;
    \draw (O) -- (E) -- (A) -- cycle;
    \draw (A) -- (B) -- (C) -- (D) -- (E) -- cycle;
    \draw (0,0) circle[radius=2pt];
    \fill (0,0) circle[radius=2pt];
  \end{tikzpicture}\qquad&\qquad
  \begin{tikzpicture}
[line join=bevel,x={( 1.5cm, 0mm)},y={( 0mm, 1.5cm)},z={( 1.5*3.85mm, -1.5*3.85mm)}]
    
    \coordinate (O) at (  0.0,  0.0, 0.0);
    \coordinate (A) at (  1.0,  0.0, 0.0);
    \coordinate (B) at (  0.3,  1.0, 0.0);
    \coordinate (C) at ( -0.6,  0.8, 0.0);
    \coordinate (D) at ( -0.6, -0.7, 0.0);
    \coordinate (E) at (  0.3, -1.0, 0.0);
    \coordinate (F) at (  0.3, -1.0, 0.0);
    
    \draw (O) -- (A) -- (B) -- cycle;
    \draw (O) -- (E) -- (A) -- cycle;
    \draw[line width=0.1cm] (O) -- (A);
  \end{tikzpicture}\qquad&\qquad
  \begin{tikzpicture}
[line join=bevel,x={( 1.5cm, 0mm)},y={( 0mm, 1.5cm)},z={( 1.5*3.85mm, -1.5*3.85mm)}]
    
    \coordinate (O) at (  0.0,  0.0, 0.0);
    \coordinate (A) at (  1.0,  0.0, 0.0);
    \coordinate (B) at (  0.3,  1.0, 0.0);
    \coordinate (C) at ( -0.6,  0.8, 0.0);
    \coordinate (D) at ( -0.6, -0.7, 0.0);
    \coordinate (E) at (  0.3, -1.0, 0.0);
    \coordinate (F) at (  0.3, -1.0, 0.0);
    
    \draw[line width=0.1cm] (O) -- (A) -- (E) -- cycle;
  \end{tikzpicture}
\end{tabular} 
\end{figure}

\begin{lemma}
 Let $\calT$ be an $n$-dimensional simplicial complex and let $S,S' \in \calT$.
Then either $\patch_{\calT}(S)$ and $\patch_{\calT}(S')$ 
 only intersect in simplices of dimension at most $n-1$ or there exists $S'' \in \calT$
 such that 
 \begin{align*}
    \patch_{\calT}(S) \cap \patch_{\calT}(S') = \patch_{\calT}(S''),
    \qquad 
    \Vertices(S) \cup \Vertices(S') = \Vertices(S'').
 \end{align*}
\end{lemma}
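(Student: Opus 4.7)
The plan is to split on whether $\patch_{\calT}(S) \cap \patch_{\calT}(S')$ contains an $n$-dimensional simplex; if not, we are immediately in the first alternative of the dichotomy, and there is nothing further to prove.

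In the contrary case, I would pick any $n$-simplex $T_0$ in the intersection and use it to manufacture $S''$. Since $T_0 \supseteq S$ and $T_0 \supseteq S'$ as simplices, the vertex set $\Vertices(S) \cup \Vertices(S')$ is contained in $\Vertices(T_0)$ and hence affinely independent. It therefore spans a unique simplex $S''$ with $\Vertices(S'') = \Vertices(S) \cup \Vertices(S')$, and because $S''$ is a subsimplex of $T_0 \in \calT$, the closure axiom of the simplicial complex forces $S'' \in \calT$. This already establishes the second equality claimed in the statement.

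It remains to prove the set equality $\patch_{\calT}(S) \cap \patch_{\calT}(S') = \patch_{\calT}(S'')$. The inclusion $\supseteq$ is immediate from $S, S' \subseteq S''$: any $n$-simplex $T'$ containing $S''$ contains both $S$ and $S'$, so $\subsimplex(T')$ lies in each of the two patches. The converse inclusion is the technical core and asserts that any simplex $R$ in the intersection fits into some common $n$-simplex that also contains $S''$. I would argue it by choosing $n$-simplices $T_R^1, T_R^2 \in \calT$ with $R \cup S \subseteq T_R^1$ and $R \cup S' \subseteq T_R^2$, noting that the simplicial intersection axiom forces $T_R^1 \cap T_R^2$ to be a common subsimplex containing $R$, and then fusing this datum with $T_0$ through the affinely independent vertex set $\Vertices(R) \cup \Vertices(S'')$ to locate a single top-dimensional simplex realizing $R \cup S''$. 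This last combinatorial fusion is the step I expect to be the main obstacle, since it is where one genuinely uses the structural hypotheses on $\calT$ beyond the bare simplicial axioms; the argument exploits that the $n$-simplices of $\calT$ tile a subset of $\bbR^{n}$ so that $T_R^1, T_R^2$ and $T_0$ cannot meet only at $R$ and at $S''$ separately without sharing a common supersimplex through $S''$.
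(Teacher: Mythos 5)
Your argument handles the first alternative, the construction of $S''$, and the inclusion $\supseteq$ correctly, and you are right to flag the final fusion step as the crux: that is indeed where the argument breaks. Unfortunately, it is not a gap that can be filled by further structural input from $\calT$ — the claim made there is false, and with it your $\subseteq$ direction. The vertex set $\Vertices(R)\cup\Vertices(S'')$ need not be affinely independent and need not be carried by any $n$-simplex of $\calT$. For a concrete counterexample, take the planar complex on four vertices $A,B,C,D$ with triangles $ABC$, $ABD$, $ACD$ (the full vertex star around $A$), and set $S=B$, $S'=C$. The triangle $ABC$ lies in both patches, so one is in the second alternative with $S''=BC$ and $\patch_{\calT}(BC)=\subsimplex(ABC)$. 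Yet the edge $R=AD$ and the vertex $D$ belong to $\patch_{\calT}(B)$ via $ABD$ and to $\patch_{\calT}(C)$ via $ACD$, while neither is a subsimplex of $ABC$. So $\patch_{\calT}(S)\cap\patch_{\calT}(S')$ strictly contains $\patch_{\calT}(S'')$, and there is no triangle carrying all four vertices $A,B,C,D$; the heuristic that the tiling structure forces a common supersimplex through $S''$ fails already here.

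For comparison, the paper's proof is considerably more modest than what you set out to establish: it works only at the level of $n$-simplices, observing that an $n$-simplex $T$ lies in both patches if and only if $\Vertices(T)\supseteq\Vertices(S)\cup\Vertices(S')=\Vertices(S'')$, i.e.\ if and only if $T\in\patch_{\calT}(S'')$. This yields the equality of the $n$-simplex sets on the two sides and says nothing about the lower-dimensional simplices, which is exactly where the discrepancy lives. The full subcomplex equality $\patch_{\calT}(S)\cap\patch_{\calT}(S')=\patch_{\calT}(S'')$ does not hold for general $n$-dimensional simplicial complexes, so no fusion argument can close your $\subseteq$; the conclusion should be read (and proved) as an equality of the $n$-simplices of the two patches, which is precisely what the paper's short argument delivers.
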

\begin{proof}
 Let $T \in \calT$ be $n$-dimensional.
 We have $T \in \patch_{\calT}(S )$ if and only if all vertices of $S $ are vertices of $T$.
 Similarly, $T \in \patch_{\calT}(S')$ if and only if all vertices of $S'$ are vertices of $T$.
 Consequently, $T \in \patch_{\calT}(S) \cap \patch_{\calT}(S')$ if and only if $T \in \patch_{\calT}(S'')$,
 where $S'' \in \calT$ is the convex closure of $S$ and $S'$.
\end{proof}

We introduce a specific notion of connectivity when we are given an $n$-dimensional simplicial complex $\calT$. 
We call two $n$-dimensional simplices $S, S' \in \calT$ \emph{face-neighboring} if $S \cap S'$ is a common face of both of them. 
We call $n$-simplices $S,S' \in \calT$ \emph{face-connected in $\calT$} if there exists a sequence $S=S_0,S_1,\dots,S_m=S' \in \calT$ such that $S_{i}$ and $S_{i-1}$ are face-neighboring for all $1 \leq i \leq m$. 
Such a sequence is called a \emph{face path} from $S$ to $S'$ in $\calT$. 
Clearly, face-connected in $\calT$ is an equivalence relation among simplices. 
A \emph{face-connected component} of $\calT$ is an equivalence class under this equivalence relation, 
and we call $\calT$ \emph{face-connected} if it has only one face-connected component.

\subsection{Shape measures and related quantities}

We introduce several quantities that measure the regularity of a triangulation. 
These have in common that they can be computed from purely local information. 

We write $\diam(T)$ and $\vol(T)$ for the diameter and $n$-dimensional volume of any $n$-simplex $T$.
Moreover, $\height(T)$ refers to the smallest height of any vertex of the simplex $T$,
where the height of a vertex is defined as the distance to the affine span of its opposing face.
For the purpose of the usual scaling arguments,
the \emph{$n$-dimensional reference simplex} $\Delta^n \subseteq \bbR^n$ is the convex closure of the origin and the $n$ canonical unit vectors.

Whenever $T$ is any $n$-dimensional simplex $T$,
we define the \emph{aspect shape measure} $\aspectratio(T)$,
and 
the \emph{algebraic shape measure} $\algebraicshapemeasure(T)$
by 
\begin{align}\label{math:shapemeasure}
    \aspectratio(T)
    := 
    \frac{ \diam(T) }{ \height(T) }
    ,
    \qquad 
    \algebraicshapemeasure(T)
    := 
    \sup_{ \varphi : \Delta^n \rightarrow T } 
\matnorm{\Jacobian \varphi} \matnorm{ \Jacobian \varphi^{-1} }
    ,
\end{align}
where the last supremum is taken over all affine transformation from the reference $n$-simplex onto the $n$-simplex $T$. 
When $\calT$ is an $n$-dimensional simplicial complex, we naturally define 
\begin{align}\label{math:shapemeasure:triangulation}
    \aspectratio(\calT) := \sup_{ T \in \subsimplex_{n}(\calT) } \aspectratio(T)
    ,
    \quad 
    \algebraicshapemeasure(\calT) := \sup_{ T \in \subsimplex_{n}(\calT) } \algebraicshapemeasure(T)
    .
\end{align}
We call these the aspect and algebraic shape measure, respectively, of the triangulation. 

\begin{remark}
    The ratio $\aspectratio(T)$ measures the ``shape quality'' of an $n$-dimensional simplex $T$ and is an instance of a so-called \emph{shape measure}.
    For example, the reference triangle has aspect shape measure $2$ and the reference tetrahedron has aspect shape measure $\sqrt{6}$. 
    Numerous alternative shape measures have been used throughout the literature of numerical analysis and computational geometry to quantify the quality of simplices (see~\cite[p.61, Definition 5.1]{braess2001finite}, 
   ~\cite[p.97, Definition {\char`\(}4.2.16{\char`\)} ]{brenner2008mathematical}, \cite[Definition~11.2]{ern2021finite}). 
\end{remark}

We gather a few relationships between geometric and algebraic entities and compare the different shape measures of a single simplex.

\begin{lemma}\label{lemma:measurerelationships}
    Let $T$ be an $n$-simplex and let $\varphi : \Delta^{n} \rightarrow T$ be an affine diffeomorphism from the reference $n$-simplex. Then 
    \begin{gather*} \matnorm{ \Jacobian \varphi } 
        \leq 
        \Ceins{n} \cdot \diam(T),
        \qquad 
        \matnorm{ \Jacobian \varphi^{-1} } 
        \leq 
        \Czwei{n} \cdot \aspectratio(T) \diam(T)^{-1}
        ,
        \\
        \frac{1}{\sqrt{2n}} \aspectratio(T) \leq \algebraicshapemeasure(T) \leq n \aspectratio(T).
    \end{gather*}
    Here, $\Ceins{n}, \Czwei{n} \leq \sqrt n$. \end{lemma}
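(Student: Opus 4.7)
The plan is to analyze an arbitrary affine diffeomorphism $\varphi$ by writing $\varphi(\hat x) = M \hat x + v_0$, where $M = \Jacobian\varphi$ and $v_0, v_1, \ldots, v_n$ are the images of the reference vertices $0, e_1, \ldots, e_n$. With this convention, the $i$-th column of $M$ equals $v_i - v_0$, which has Euclidean length at most $\diam(T)$. The bound on $\matnorm{\Jacobian\varphi}$ follows immediately from the Frobenius estimate $\matnorm{M} \leq \|M\|_{\rm F} \leq \sqrt{n}\,\diam(T)$, so $\Ceins{n} \leq \sqrt n$ suffices.

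For the bound on $\matnorm{\Jacobian\varphi^{-1}}$, the key step is to identify the rows of $M^{-1}$ geometrically. Writing $r_i^T$ for the $i$-th row, the duality $r_i \cdot (v_j - v_0) = \delta_{ij}$ forces $r_i$ to be orthogonal to the affine span of the face of $T$ opposite $v_i$, with length determined by $r_i \cdot (v_i - v_0) = 1$. Hence $r_i = n_i / h_i$, where $n_i$ is the unit normal to that face and $h_i$ is the height of $v_i$ above it. Since $h_i \geq \height(T)$ for every $i$, the Frobenius bound yields $\matnorm{M^{-1}} \leq \|M^{-1}\|_{\rm F} = \bigl(\textstyle\sum_i h_i^{-2}\bigr)^{1/2} \leq \sqrt{n}/\height(T)$, and rewriting $1/\height(T) = \aspectratio(T)/\diam(T)$ gives $\Czwei{n} \leq \sqrt n$.

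The two-sided comparison of $\algebraicshapemeasure(T)$ and $\aspectratio(T)$ then follows. Multiplying the two estimates above over every admissible $\varphi$ yields $\algebraicshapemeasure(T) \leq n\,\aspectratio(T)$. For the opposite inequality I would exploit the freedom in $\algebraicshapemeasure = \sup_\varphi$ by selecting a specific $\varphi$ in which $v_0$ is not the vertex realizing $\height(T)$, which is possible since $T$ has $n + 1 \geq 2$ vertices. For such a $\varphi$, applying $M$ to $e_i$ and to $(e_i - e_j)/\sqrt 2$ reproduces every pair of vertices of $T$ (including the diametral one), so $\matnorm{M} \geq \diam(T)/\sqrt 2$. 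In parallel, the row identification gives $\matnorm{M^{-1}} = \matnorm{(M^{-1})^T} \geq \|r_{i^*}\|_2 = 1/\height(T)$, where $i^* \in \{1, \ldots, n\}$ is chosen to realize the minimum height among $v_1, \ldots, v_n$. Combining produces $\matnorm{M}\,\matnorm{M^{-1}} \geq \aspectratio(T)/\sqrt 2 \geq \aspectratio(T)/\sqrt{2n}$.

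The main obstacle is the geometric identification of the rows of $M^{-1}$ with rescaled face normals; once this is in hand, everything else reduces to the definitions of $\aspectratio$ and $\height$ together with the elementary inequalities $\matnorm{A} \leq \|A\|_{\rm F}$ and $\matnorm{A} \geq \|Au\|_2/\|u\|_2$ applied to carefully chosen test vectors.
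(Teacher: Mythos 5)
Your proof is correct and follows essentially the same route as the paper's: you identify the rows of $M^{-1}$ with the gradients of the barycentric coordinates of $T$ (unit face normals scaled by inverse heights) and control the operator norms of $M$ and $M^{-1}$ via Frobenius-type bounds and column/row test vectors. The only minor deviation is in the lower bound on $\algebraicshapemeasure(T)$: you exploit the supremum over $\varphi$ by choosing $v_0$ to avoid the minimum-height vertex, giving the sharper intermediate bound $\algebraicshapemeasure(T)\ge\aspectratio(T)/\sqrt{2}$, whereas the paper derives $\matnorm{\Jacobian\varphi^{-1}}\ge 1/(\sqrt{n}\,\height(T))$ uniformly in $\varphi$ by pulling back the reference heights $\height_\Delta=1/\sqrt{n}$; both arguments suffice for the stated inequality.
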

\begin{proof}
    Let $\varphi : \Delta^{n} \rightarrow T$ be an affine transformation. 
    We abbreviate $J := \Jacobian\phi$ for its Jacobian.
    We begin with observing that the largest $\ell^{2}$-norm of any column of $J$,
    which we denote here by $c_{\max}(J)$,
    equals the maximum of the quotient $\| J x \|_{\ell^{2}} / \| x \|_{\ell^{1}}$ over all non-zero $x \in \bbR^{n}$.
    The diameter of $T$ is the length of its longest edge. 
    The first inequality is implied by the following comparisons:
    \begin{align*}
        \frac{\diam(T)}{\sqrt 2} \leq \matnorm{ J } \leq \sqrt{n} \cdot c_{\max}(J) \leq \sqrt{n} \cdot \diam(T).
    \end{align*}
    The columns of the matrix $J^{-1}$ are the gradients of the barycentric coordinates of the vertices of $T$,
    except for the vertex $\varphi(0) \in T$. It immediately follows that 
    \begin{align*}
        \matnorm{ J^{-1} } \leq \sqrt{n} \cdot c_{\max}(J^{-1}) \leq \sqrt{n} \cdot \height(T)^{-1}
        .
    \end{align*}
    The smallest height in the reference simplex $\Delta^{n}$ is $\height_{\Delta} = 1/\sqrt{n}$, 
    whence $\height(T) \geq \matnorm{J^{-1}}^{-1} / \sqrt{n}$. This yields the second inequality.
Lastly, the last comparison is now obvious.
\end{proof}

We will also need the maximal ratio of volumes between face-neighboring $n$-simplices, written $\volumeratio(\calT)$,
and the ratio of the diameters of any intersecting simplices, written $\diameterratio(\calT)$. Formally, 
\begin{align}\label{math:volumeratio}
    \volumeratio(\calT) 
    := 
    \sup_{ \substack{ T, T' \in \subsimplex_{n}(\calT) \\ T \cap T' \in \subsimplex_{n-1}(\calT) } } 
\vol(T) / \vol(T')
    ,
\end{align}
\begin{align}\label{math:diameterratio}
    \diameterratio(\calT) 
    := 
    \sup_{ \substack{ T, T' \in \subsimplex_{n}(\calT) \\ T \cap T' \neq \emptyset } }
\diam(T) / \diam(T')
    .
\end{align}
Finally, whenever $T, T'$ are two $n$-simplices that share a common face $F$ of codimension $1$, we let $\Xi_{T,T'} : T \rightarrow T'$ denote the affine diffeomorphism 
that preserves $F$. We then define 
\begin{align}\label{math:reflectionmeasure}
   ~\reflectionestimate(\calT) 
    := 
    \sup_{ \substack{ T, T' \in \subsimplex_{n}(\calT) \\ T \cap T' \in \subsimplex_{n-1}(\calT) } }
    \matnorm{ \Jacobian \Xi_{T,T'} }
\end{align}
to be the maximum of the operator norm of the Jacobian of any such diffeomorphism. 
This indicator quantifies how much reflection across the shared face distorts the geometry. 

\begin{lemma}\label{lemma:volumecomparison}
    Let $T_1$ and $T_2$ be two $n$-simplices that share a common face $F$. Then 
    \begin{align*}
        \diam(T_1)
        \leq 
\aspectratio(T_1)
        \diam(F)
        \leq 
        \aspectratio(T_1)
        \diam(T_2)
        ,
        \qquad 
        \frac{ \vol(T_1) }{ \vol(T_2) }
\leq 
        \frac{ \diam(T_1) }{ \diam(T_2) } \aspectratio(T_2)
        \leq 
        \aspectratio(T_1) \aspectratio(T_2)
        .
    \end{align*}
    If $\Xi : T_1 \rightarrow T_2$ is the affine diffeomorphism that is the identity over $F$, then 
    at least $n-2$ of its Jacobian's singular values equal $1$, and we have 
    \begin{align*}
        \matnorm{ \Jacobian \Xi^{  } } 
        &
        \leq 
        \frac{1}{2} 
\sqrt{ \left( \frac{ \diam(T_2) }{ \diam(T_1) } \aspectratio(T_{1}) + 1 \right)^{2} + \aspectratio(T_{1})^{2} }
        +
        \frac{1}{2} 
        \sqrt{ \left( \frac{ \diam(T_2) }{ \diam(T_1) } \aspectratio(T_{1}) - 1 \right)^{2} + \aspectratio(T_{1})^{2} }
        ,
        \\
        \matnorm{ \Jacobian \Xi^{-1} } 
        &
        \leq 
        \frac{1}{2} 
\sqrt{ \left( \frac{ \diam(T_1) }{ \diam(T_2) } \aspectratio(T_{2}) + 1 \right)^{2} + \aspectratio(T_{2})^{2} }
        +
        \frac{1}{2} 
        \sqrt{ \left( \frac{ \diam(T_1) }{ \diam(T_2) } \aspectratio(T_{2}) - 1 \right)^{2} + \aspectratio(T_{2})^{2} }
        ,
        \\
        \det(\Jacobian \Xi)
        &
        =
        \frac{ \vol(T_2) }{ \vol(T_1) }
        \leq 
        \frac{ \diam(T_2) }{ \diam(T_1) } \aspectratio(T_1)
        .
    \end{align*}
\end{lemma}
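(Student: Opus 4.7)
The plan is to prove the four groups of claims in sequence.

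\textbf{Diameter chain.} Since $\aspectratio(T_1) = \diam(T_1)/\height(T_1)$, the inequality $\diam(T_1) \leq \aspectratio(T_1)\diam(F)$ is equivalent to $\height(T_1) \leq \diam(F)$, and the inequality $\diam(F) \leq \diam(T_2)$ is immediate from $F \subseteq T_2$. For the key step $\height(T_1) \leq \diam(F)$, I would fix any vertex $v \in \Vertices(F)$; the face of $T_1$ opposite $v$ contains every vertex of $T_1$ other than $v$, and in particular some other vertex $w \in \Vertices(F) \setminus \{v\}$, which exists because $F$ has $n \geq 2$ vertices. The associated vertex height $h_v$ is the distance from $v$ to that opposite face and therefore satisfies $h_v \leq |v - w| \leq \diam(F)$, whence $\height(T_1) \leq h_v \leq \diam(F)$.

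\textbf{Volume chain.} The pyramid volume formula gives $\vol(T_i) = h_F^{(i)} \vol_{n-1}(F)/n$, where $h_F^{(i)}$ is the height of the apex of $T_i$ above $F$, so that $\vol(T_1)/\vol(T_2) = h_F^{(1)}/h_F^{(2)}$. Bounding $h_F^{(1)} \leq \diam(T_1)$ (since the apex of $T_1$ is within $\diam(T_1)$ of any vertex of $F$) and $h_F^{(2)} \geq \height(T_2) = \diam(T_2)/\aspectratio(T_2)$ (since $h_F^{(2)}$ is a vertex height of $T_2$) gives the first inequality of the chain; the second inequality then follows by inserting the previously proved $\diam(T_1) \leq \aspectratio(T_1)\diam(T_2)$.

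\textbf{The map $\Xi$.} The plan is to choose Euclidean coordinates so that the affine hull of $F$ is $\{x_n = 0\}$ and a vertex of $F$ lies at the origin. Since $\Xi$ fixes $F$ pointwise and the origin is in $F$, $\Xi$ is linear with matrix $M = \Jacobian\Xi$ of block form $M = \bigl(\begin{smallmatrix} I_{n-1} & a \\ 0 & b \end{smallmatrix}\bigr)$, where $a = (v_0^{(2),\ast} - v_0^{(1),\ast})/h_1 \in \bbR^{n-1}$ is the rescaled horizontal displacement between the projections of the apices onto $\{x_n = 0\}$ and $b = -h_2/h_1 \in \bbR$ is negative because $T_1$ and $T_2$ lie on opposite sides of the hyperplane of $F$. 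The $(n-2)$-dimensional subspace $a^\perp \times \{0\}$ is pointwise fixed by $M$, yielding $n-2$ singular values equal to $1$. The remaining two singular values come from the $2 \times 2$ block obtained by restricting $M^T M$ to the orthogonal complement; that block has characteristic polynomial $\lambda^2 - (1+|a|^2+b^2)\lambda + b^2$, and a direct computation using $\sqrt{\lambda_+} \pm \sqrt{\lambda_-} = \sqrt{(1 \pm |b|)^2 + |a|^2}$ yields the asserted half-sum and half-difference formulas. Furthermore $|\det M| = |b| = h_2/h_1 = \vol(T_2)/\vol(T_1)$ by the pyramid formula, recovering the determinant identity.

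The main obstacle is matching the specific norm bounds to the block-SVD formulas. For $|b|$, combining $h_2 \leq \diam(T_2)$ with $h_1 \geq \height(T_1) = \diam(T_1)/\aspectratio(T_1)$ gives $|b| \leq (\diam(T_2)/\diam(T_1))\aspectratio(T_1)$, which also yields the stated determinant bound. The bound on $|a|$ is geometrically more delicate: one controls the planar distance $|v_0^{(2),\ast} - v_0^{(1),\ast}|$ by routing through a vertex of $F$ and exploiting the Pythagorean identity $|v_0^{(i)} - w|^2 = |v_0^{(i),\ast} - w|^2 + h_i^2$ together with $|v_0^{(i)} - w| \leq \diam(T_i)$ for $w \in \Vertices(F)$. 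Substituting these bounds into the explicit SVD formulas produces the asserted estimate for $\matnorm{\Jacobian\Xi}$, and the analogous estimate for $\matnorm{\Jacobian\Xi^{-1}}$ follows from the same reasoning applied to $\Xi^{-1}$, which is the affine diffeomorphism from $T_2$ to $T_1$ fixing $F$, thereby exchanging the roles of $T_1$ and $T_2$.
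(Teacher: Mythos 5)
Your diameter chain, volume chain, and the singular-value analysis of $\Xi$ (reduction of the Jacobian to a $2 \times 2$ block in the $\{\hat h, \widehat{b_2 - b_1}\}$-plane, with the complementary $(n-2)$-dimensional subspace pointwise fixed, and the half-sum/half-difference formula for $\sqrt{\lambda_\pm}$) all match the paper's own route, and the first three groups of claims are proved essentially the same way. The one place where both you and the paper are thin is the last step: turning the explicit formula $\sigma_+ = \tfrac12\bigl(\sqrt{(1+|b|)^2 + |a|^2} + \sqrt{(1-|b|)^2 + |a|^2}\bigr)$ into the stated constant.

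Here is the concrete gap. By monotonicity of $\sigma_+$ in $|b|$ and $|a|$, the stated bound requires $|b| = h_2/h_1 \leq \frac{\diam(T_2)}{\diam(T_1)}\aspectratio(T_1)$ (which you prove cleanly from $h_2 \leq \diam(T_2)$ and $h_1 \geq \height(T_1)$) \emph{and} $|a| = \|b_2 - b_1\|/h_1 \leq \aspectratio(T_1)$. Your proposed route for $|a|$ — triangle inequality through a vertex $w \in \Vertices(F)$ together with the Pythagorean identity — yields only
\[
    |a| \;\leq\; \frac{\sqrt{\diam(T_1)^2 - h_1^2} + \sqrt{\diam(T_2)^2 - h_2^2}}{h_1}.
\]
The second summand scales like $\diam(T_2)/h_1$, not $\diam(T_1)/h_1$, so for $\diam(T_2) \gg \diam(T_1)$ it is far larger than $\aspectratio(T_1)$, and substituting this into the singular-value formula does not reproduce the lemma's constant. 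So "substituting these bounds produces the asserted estimate" is not correct as written. To be fair, the paper itself simply writes ``we notice that this provides the upper bound'' without tracing the $|a|$ estimate, so you have faithfully reproduced the paper's argument including its terseness; but if you want a complete proof you need either a genuinely sharper argument that $\|b_2 - b_1\|/h_1 \leq \aspectratio(T_1)$, or a joint argument that exploits the coupling between $|a|$ and $|b|$ (for instance via $|a|^2 + |b|^2 = \|z_2 - b_1\|^2 / h_1^2$ and the constraint that $z_1$, $z_2$ are both within the respective diameters of every vertex of $F$) rather than bounding the two parameters independently.

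Two minor remarks. First, your choice of origin at a vertex of $F$ is a small but genuine simplification over the paper, which only places the origin somewhere in $F$; it makes the block structure of the matrix immediate. Second, you might flag that $\det \Jacobian\Xi = -h_2/h_1 < 0$ (as $T_1$ and $T_2$ lie on opposite sides of $F$), so the identity in the lemma should be read as $|\det \Jacobian\Xi| = \vol(T_2)/\vol(T_1)$; your own computation $|b| = h_2/h_1$ already makes this clear.
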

\begin{proof}
    The diameter of $F$ is at least as large as the height $h_S$ of some other vertex of $F$ in $T_{1}$.
    To see this, recall that the height of a vertex is minimum distance to the affine hull of the opposing subsimplex.
    Naturally, the height of the vertex in $T_{1}$ will be smaller than the height in $F$.
    Now, 
    \begin{align*}
\diam(T_{1}) \aspectratio(T_{1})^{-1} = \height(T_{1}) \leq h_S \leq \diam(F).
    \end{align*}
    The first estimate follows. 
    As for the second estimate,
    let $h_1, h_2 > 0$ be the heights of $F$ in the simplices $T_1$ and $T_2$, respectively.
    By the volume formula for simplices, $\vol(T_1) = h_1 \vol(F)/n$ and $\vol(T_2) = h_2 \vol(F)/n$, and thus $\vol(T_1)/\vol(T_2) = h_1/h_2$. Thus follows the second estimate: 
    \begin{align*}
        \frac{ \vol(T_1) }{ \vol(T_2) }
        = 
        \frac{ h_1 }{ h_2 }
        \leq
        \frac{ \diam(T_1) }{ h_2 }
        \leq
        \aspectratio(T_{1}) \frac{ \diam(F) }{ h_2 }
        \leq
        \aspectratio(T_{1}) \aspectratio(T_{2})
        ,
        \qquad 
        \frac{ \diam(T_1) }{ h_2 }
        =
        \frac{ \diam(T_1) }{ \diam(T_2) } \aspectratio(T_{2})
        .
    \end{align*}
    Lastly, let $\Xi : T_1 \rightarrow T_2$ be as stated. 
    To estimate the Lipschitz constant of $\Xi$, we study the singular values of its Jacobian. 
    Without loss of generality, $F$ lies in the span of the first $n-1$ coordinates and contains the origin. 
    Write $z_1 \in T_1$ and $z_2 \in T_2$ for the two vertices not contained in $F$.
    There exists a unit height vector $\hat h_0$ on the $n$-th coordinate direction
    such that there exist $b_1, b_2 \in \bbR^{n}$ in the affine hull of $F$
    satisfying $z_1 = b_1 + h_1 \hat h_0$ and $z_2 = b_2 - h_2 \hat h_0$.
    The mapping $\Xi$ is linear, being the identity over $F$ and mapping $z_1$ to $z_2$. Hence,
    \begin{align*}
        \Xi( \hat h_0 ) = h_1^{-1} ( b_2 - b_1 ) - h_2 h_{1}^{-1} \hat h_0.
    \end{align*}
    We see that $\Xi$ equals the identity over the orthogonal complement of the span of $\hat h_0$ and $b_2 - b_1$.
    The only singular values of its Jacobian are the two singular values $\sigma_{-} \leq 1 \leq \sigma_{+}$ of the matrix 
    \begin{align*}
        \begin{pmatrix}
            a & 0 
            \\
            c & 1
        \end{pmatrix},
        \qquad 
        a = - h_2 / h_1,
        \qquad 
        c = \vecnorm{ b_2-b_1 } / h_1
        .
    \end{align*}
    These are 
    \begin{align*}
        \sigma_{\pm} 
        &
        = 
        \frac{1}{\sqrt{2}} \sqrt{ 1 + a^2 + c^2 \pm \sqrt{ \left( 1 + a^2 + c^2 \right)^{2} - 4a^{2} } } 
        \\&
        =
        \frac{1}{\sqrt{2}} \sqrt{ 1 + a^2 + c^2 \pm \sqrt{ \left( (1+a)^2 + c^2 \right)\left( (1-a)^2 + c^2 \right) } } 
        \\&
        =
        \frac{1}{2} 
        \left( \sqrt{ (1 + a)^2 + c^2 } \pm \sqrt{ (1 - a)^2 + c^2 } \right)
        .
    \end{align*}
    We notice that this provides the upper bound
    \begin{align*}
        \sigma_{+} 
        &
        \leq 
        \frac{1}{2} 
\sqrt{ \left( \frac{ \diam(T_2) }{ \diam(T_1) } \aspectratio(T_{1}) + 1 \right)^{2} + \aspectratio(T_{1})^{2} }
        +
        \frac{1}{2} 
        \sqrt{ \left( \frac{ \diam(T_2) }{ \diam(T_1) } \aspectratio(T_{1}) - 1 \right)^{2} + \aspectratio(T_{1})^{2} }
        .
    \end{align*}
    The desired estimates are shown.
\end{proof}

\begin{remark}
    While we will utilize $\diameterratio(\calT)$ at numerous places throughout the manuscript, 
    $\volumeratio(\calT)$ and $\reflectionestimate(\calT)$ will only be used throughout the following Section~\ref{section:gradient}.
    Lemma~\ref{lemma:volumecomparison} expresses that $\volumeratio(\calT)$ of~\eqref{math:volumeratio} is controlled by the shape measure.
    In a face-connected triangulation where we have an upper bound for the number of simplices sharing a vertex,
    this Lemma also indicates, at least in principle, control of $\diameterratio(\calT)$ of~\eqref{math:diameterratio}.
\end{remark}

\section{Poincar\'e--Friedrichs inequalities over triangulated domains}\label{section:gradient}

In this section, we develop stepwise computable estimates for Poincar\'e--Friedrichs constants of triangulated domains.
The following very classical procedure serves as our inspiration: given a gradient vector field, we can reconstruct a gradient potential by fixing a starting point and integrating the gradient field along lines emanating from that point; the potential is unique up to an additive constant.
We perform a discrete analogue of this procedure over triangulated domains:
having fixed a starting simplex, we first define the scalar potential on this simplex.
Step by step, we traverse the triangulation along face-neighboring simplices, each time extending the scalar potential to the next simplex and controlling its Lebesgue norm.
As we construct the scalar potential over increasingly larger subdomains, we always pick a gradient potential on the next simplex and fix the integration constant using the value already known along the connecting face, thereby ensuring the correct Sobolev continuity.
This basic idea has appeared in various forms before, for instance recently in~\cite{Brae_Pill_Sch_p_rob_09,ern2020stable,Chaum_Voh_p_rob_3D_H_curl_24,Voh_loc_glob_H1_25}.

We begin with an auxiliary result with independent relevance, where we estimate the Poincar\'e--Friedrichs inequality when homogeneous boundary conditions hold along at least one face of the boundary.

\begin{lemma}\label{lemma:mixedbconsimplex}
    Let $T$ be an $n$-simplex with a face $F$ and $p \in [1,\infty]$. 
    If $u \in W^{1,p}(T)$ with $\trace_{F} u = 0$, then 
    \begin{align*}
        \| u \|_{L^{p}(T)}
        &
        \leq 
        C_{\PF,T,F,p} \| \nabla u \|_{L^{p}(T)}
        ,
    \end{align*}
    where $C_{\PF,T,F,p} = p^{-\frac 1 p} \diam(T)$ for $p < \infty$ and $C_{\PF,T,F,\infty} = \diam(T)$. 
\end{lemma}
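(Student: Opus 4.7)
The plan is to reduce the $n$-dimensional estimate to a family of one-dimensional Poincar\'e--Friedrichs inequalities via Fubini's theorem. The one-dimensional ingredient is the following: for any $f \in W^{1,p}((0,L))$ with $f(0) = 0$ and $1 \leq p < \infty$, one has $\|f\|_{L^{p}((0,L))} \leq L p^{-1/p} \|f'\|_{L^{p}((0,L))}$, while $\|f\|_{L^{\infty}((0,L))} \leq L \|f'\|_{L^{\infty}((0,L))}$. This follows from $f(x) = \int_0^x f'(t)\,dt$ combined with the H\"older estimate $|f(x)|^p \leq x^{p-1} \int_0^x |f'|^p$, followed by integration over $x \in (0,L)$.

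The key geometric step is to select a unit direction $\xi \in \bbR^n$ for which every line parallel to $\xi$ meeting $T$ has one endpoint of its chord on $F$. Let $v_0$ be the vertex of $T$ opposite $F$, fix any vertex $y_0$ of $F$, and set $\xi := (v_0 - y_0)/\|v_0 - y_0\|$. Then $v_0$ and $y_0$ project to the same point of $\xi^{\perp}$, so the vertex sets of $T$ and of $F$ have identical projections along $\xi$; passing to convex hulls, the projections of $T$ and of $F$ themselves coincide. Writing $H_F$ for the affine hyperplane of $F$, the simplex satisfies $T \cap H_F = F$ and lies in one closed half-space bounded by $H_F$; moreover $\xi$ has nonzero component transverse to $H_F$ because $v_0 \notin H_F$. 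Consequently, every line $\ell$ parallel to $\xi$ meeting $T$ intersects $H_F$ in a single point, which by the projection identity lies in $F$; this is the unique $F$-endpoint of the chord $\ell \cap T$, and $u$ vanishes there in the trace sense.

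Next I would invoke Fubini's theorem in the decomposition $\bbR^n = \xi^{\perp} \oplus \bbR\xi$, writing $x = z + s\xi$. For a.e.\ $z$ in the projection of $T$, the slice $\ell_z \cap T = \{z + s\xi : s \in [s_-(z), s_+(z)]\}$ has length at most $\diam(T)$, the one-dimensional trace of $u$ vanishes at the $F$-endpoint, and $s \mapsto u(z + s\xi)$ belongs to $W^{1,p}$ of the slice. Applying the 1D inequality just recalled and the pointwise bound $|\partial_\xi u| \leq |\nabla u|$ yields
\[
    \int_{s_-(z)}^{s_+(z)} |u(z + s\xi)|^p \, ds \leq \frac{\diam(T)^p}{p} \int_{s_-(z)}^{s_+(z)} |\nabla u(z + s\xi)|^p \, ds.
\]
Integrating in $z$ and using Fubini a second time produces $\|u\|_{L^p(T)}^p \leq p^{-1}\diam(T)^p \|\nabla u\|_{L^p(T)}^p$; taking $p$-th roots delivers the claimed constant. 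The endpoint case $p = \infty$ is identical in structure, with the supremum version of the one-dimensional estimate replacing the $L^p$ one.

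The main technical point is the geometric claim that the direction $\xi$ above renders every $\xi$-chord of $T$ an interval whose $F$-sided endpoint actually lies on $F$ rather than merely on $H_F$; everything else is a standard Fubini/slicing argument combined with a sharp scalar Poincar\'e--Friedrichs inequality on an interval.
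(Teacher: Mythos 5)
Your proposal follows essentially the same strategy as the paper's proof: slicing along a fixed direction, applying a one-dimensional Poincar\'e--Friedrichs estimate with a vanishing endpoint, and integrating via Fubini. The only structural difference is the choice of slicing direction: you take $\xi$ from a vertex $y_0$ of $F$ to the opposite vertex $v_0$, whereas the paper takes the direction from the midpoint (barycenter) of $F$ to $v_0$. Both choices satisfy the crucial property you identify -- namely, that $\pi(v_0)$ lands inside $\pi(F)$, forcing $\pi(T)=\pi(F)$ so that every $\xi$-chord of $T$ has its entry endpoint on $F$. In fact you make this geometric point considerably more explicit than the paper does; the paper simply asserts the normalization ``minimal first coordinate of $F$ equals $0$'' and lets the reader supply the same reasoning.

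One step you elide, which the paper handles carefully, is the justification that the one-dimensional trace of $u$ vanishes at the $F$-endpoint of almost every chord. For a genuine $u\in W^{1,p}(T)$ with $\trace_F u=0$, this is not immediate from the definition of the trace: one must either invoke the ACL characterization of Sobolev functions together with a compatibility statement between the boundary trace and the slice-wise endpoint limit, or -- as the paper does -- first prove the inequality for $u\in C^\infty(T)$ with $\supp u$ disjoint from $F$ (where the endpoint vanishing is trivial) and then conclude by density, which requires a short argument showing that such functions are dense in $\{u\in W^{1,p}(T):\trace_F u=0\}$. Your proof would be complete with one or two sentences to that effect; as written, the phrase ``the one-dimensional trace of $u$ vanishes at the $F$-endpoint'' silently absorbs exactly this technical work.
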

\begin{proof}
\begin{comment}
    There exists $w \in W^{1,p}(T)$ with $\nabla w = \nabla u$ and 
    \begin{gather*}
        \| w \|_{L^{p}(T)}
        \leq 
        C_{\PF,T,p} 
        \| \nabla w \|_{L^{p}(T)} .
    \end{gather*}
    Then $w-u$ is constant, and thus $\gamma := \trace_{F} (w-u) = \trace_{F} w$. 
    We use that 
    \begin{gather*}
        \| \gamma \|_{L^{p}(T)}
        =
        \gamma \vol(T)^\frac{1}{p}
        =
        \left( \frac{ \vol(T) }{ \vol(F) } \right)^\frac{1}{p}
        \cdot 
        \gamma 
        \vol(F)^\frac{1}{p}
        =
        \left( \frac{ \vol(T) }{ \vol(F) } \right)^\frac{1}{p}
        \| \gamma \|_{L^{p}(F)}
        .
    \end{gather*}
    Using a trace inequality~\cite[Lemma~2.8]{veeser2012poincare} when $1 \leq p < \infty$, we find 
    \begin{align*}
        \| \gamma \|_{L^{p}(F)}^{p}
        &
        \leq 
        \frac{ \vol(F) }{ \vol(T) }
        \| w \|_{L^{p}(T)}^{p}
        +
        p
        \cdot 
        \diam(T)
        \frac{ \vol(F) }{ \vol(T) }
        \| w \|_{L^{p}(T)}^{p-1}
        \| \nabla w \|_{L^{p}(T)}
        \\&
        \leq 
        \left( \frac{ \vol(F) }{ \vol(T) } \right)
        \left( C_{\PF,T,p}^{p} + p \cdot C_{\PF,T,p}^{p-1} \right) 
        \diam(T)^{p}
        \| \nabla w \|_{L^{p}(T)}^{p}
        \\&
        \leq 
        \left( \frac{ \vol(F) }{ \vol(T) } \right)
        \left( C_{\PF,T,p}^{p} + p \cdot \diam(T) C_{\PF,T,p}^{p-1} \right) 
\| \nabla w \|_{L^{p}(T)}^{p}
        .
    \end{align*}
    Recall that $u = w - \gamma$. The first inequality follows. 
\end{comment}
Since the inequality follows from Rademacher's theorem in the limit case $p = \infty$,
    we assume $1 \leq p < \infty$. 
    Let $u \in C^{\infty}(T)$ have support disjoint from $F$.
    We tacitly extend this by zero to a function $u \in \Lebesgue^{\infty}(\bbR^{n})$. 
Without loss of generality, 
    the segment from the midpoint of $F$ to the opposing vertex lies on the first coordinate axis,
    and the minimal first coordinate among all the points of $F$ equals $0$. 
    We write $\bfg$ for the trivial extension of $\nabla u$ over the entire $\bbR^{n}$.
Using the fundamental theorem of calculus and H\"older's inequality, 
    \begin{align*}
        \int_{T} |u(x)|^{p} \diff x \;dx
        &\leq
        \int_{\bbR^{n-1}} \int_{0}^{\diam(T)} |u( x_{1}, \overline x )|^{p} \;dx_{1} \;d\overline{x}
        \\&
        \leq
        \int_{\bbR^{n-1}} \int_{0}^{\diam(T)} \left| \int_{0}^{x_{1}} |\bfg( y, \overline x )| \,dy \right|^{p} \;dx_{1} \,d\overline{x}
        \\&
        \leq
        \int_{0}^{\diam(T)} x_{1}^{p-1} \int_{\bbR^{n-1}} \int_{0}^{x_{1}} \left| \bfg( y, \overline x ) \right|^{p} \;dy \,d\overline{x} \,dx_{1}
        \\&
        \leq
        \int_{0}^{\diam(T)} x_{1}^{p-1} \;dx_1 
        \cdot 
        \int_{T} \left| \bfg( y, \overline x ) \right|^{p} \;dy \,d\overline{x} 
        \leq
        \frac{\diam(T)^{p}}{p} \int_{T} \left| \nabla u( x ) \right|^{p} \,dx
.
    \end{align*}
    If $u \in W^{1,p}(T)$ has vanishing trace along $F$ but is not necessarily smooth, 
    then we conclude $\|u\|_{L^{p}(T)} \leq \diam(T) p^{-\frac 1 p} \|\nabla u\|_{L^{p}(T)}$ 
    from approximation via members of $C^{\infty}(T)$ whose support is disjoint from $F$. 
    We very briefly verify that density argument: 
    There exists an affine diffeomorphism $\varphi : \Delta^{n} \rightarrow T$ from the reference simplex onto $T$ that maps the convex closure of the $n$ unit vectors onto the face $F$.
    We let $\hat u := u \circ \varphi$.
    Let $\hat U$ be the unit ball of the $\ell^1$ metric, which contains $\Delta^{n}$.
    We let $\tilde u$ be the extension of $\hat u$ onto $\hat U$ by reflection across the coordinate axes.
    Then $\tilde u \in W^{1,p}_{0}(\hat U)$, and $\tilde u$ is the limit of a sequence $u_{m} \in C^{\infty}_{c}(\hat U)$.
    Now $u_{m} \circ \varphi^{-1} \in C^{\infty}(T)$ approximates $u$ within the Banach space $W^{1,p}(T)$
    and has the desired support property. 
\end{proof}

\begin{remark}\label{remark:mixedbconsimplex:hilbert}
    We can improve Lemma~\ref{lemma:mixedbconsimplex} in the special case $p=2$.
    The variational formulation of the Poincar\'e constant over a convex domain reveals that $C_{\PF,T,F,p}$ lies between the Poincar\'e constant without boundary conditions and with full boundary conditions. 
    In particular,
    \begin{align*}
        C_{\PF,T,F,2} 
        \leq 
        \frac{ \diam(T) }{ \pi }
    \end{align*}
    is an improved Poincar\'e inequality.
\end{remark}

The next auxiliary result establishes Poincar\'e--Friedrichs constants over face patches within simplicial triangulations. 
We emphasize that face patches are not necessarily convex, but we can still extend the results on convex domains from Section~\ref{subsection:PX_convex}. The following result is reasonably sharp when the two simplices have similar volumes and diameters.

\begin{lemma}\label{lemma:poincarefriedrichsoverfacepatch}
    Let $\calT$ be a triangulation.
    Let $T_{1}, T_{2} \in \calT$ be two $n$-simplices whose intersection is a common face $F := T_1 \cap T_2$. 
    Write $U_{F} := T_1 \cup T_2$.
    If $p \in [1,\infty]$ and $u \in W^{1,p}(\Omega)$,
    then 
    \begin{align*}
        \min\limits_{ c \in \bbR }
        \| u - c \|_{L^{p}(U_{F})}
        &
\leq 
        C_{{\PF},T_1 \cup T_2,p}
        \| \nabla u \|_{L^{p}(U_{F})}
        .
    \end{align*}
    Here, 
    \[ 
        C_{{\PF},T_1 \cup T_2,p} \leq 2 \Ceins{n} C_{{\rm EFNT},p} \volumeratio(\calT)^{\frac 1 p} \max\left( \diam(T_1), \diam(T_2) \right).
    \]
    If $U_{F}$ is convex, then $C_{{\PF},T_1 \cup T_2,p} \leq C_{{\rm EFNT},p} \diam( U_{F} )$.
\end{lemma}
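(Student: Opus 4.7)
The plan is to handle the convex case immediately via~\eqref{math:esposito:pf}, applied with $\diam(U_F)$, and to concentrate on the genuinely nonconvex situation where we must patch together local estimates on $T_1$ and $T_2$. The overarching strategy is standard: choose optimal local shifts $c_i$ on each convex simplex and bridge them through a trace-based control of $|c_1-c_2|$.

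First, by~\eqref{math:esposito:pf} applied to each convex $T_i$, there exist constants $c_i\in\bbR$ with
\begin{align*}
    \| u - c_i \|_{L^{p}(T_i)}
    \leq
    C_{{\rm EFNT},p}\,\diam(T_i)\,\| \nabla u \|_{L^{p}(T_i)},
    \qquad i=1,2.
\end{align*}
I take $c:=c_1$ as the candidate constant on $U_F$, so that
\begin{align*}
    \min_{c\in\bbR} \|u-c\|_{L^{p}(U_F)}^{p}
    \leq
    \| u - c_1 \|_{L^{p}(T_1)}^{p} + \| u - c_1 \|_{L^{p}(T_2)}^{p},
\end{align*}
and further bound the second term by
\begin{align*}
    \| u - c_1 \|_{L^{p}(T_2)}
    \leq
    \| u - c_2 \|_{L^{p}(T_2)} + |c_1-c_2|\,\vol(T_2)^{1/p}.
\end{align*}

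The main step, and the one I expect to be the main obstacle to obtaining clean constants, is the control of $|c_1-c_2|$. Since $c_1-c_2$ is a constant, I use the triangle inequality on the common face:
\begin{align*}
    |c_1-c_2|\,\vol(F)^{1/p}
    =
    \|c_1-c_2\|_{L^{p}(F)}
    \leq
    \|u-c_1\|_{L^{p}(F)} + \|u-c_2\|_{L^{p}(F)}.
\end{align*}
Each trace term is then estimated by a standard multiplicative trace inequality of the form
\begin{align*}
    \|v\|_{L^{p}(F)}^{p}
    \leq
    \frac{\vol(F)}{\vol(T_i)} \Bigl( \|v\|_{L^{p}(T_i)}^{p} + p\,\diam(T_i)\,\|v\|_{L^{p}(T_i)}^{p-1}\|\nabla v\|_{L^{p}(T_i)}\Bigr),
\end{align*}
applied with $v=u-c_i$ and combined with the local Poincaré--Friedrichs bound. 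This yields
\begin{align*}
    |c_1-c_2|\,\vol(F)^{1/p}
    \lesssim
    C_{{\rm EFNT},p}\Bigl(\tfrac{\vol(F)}{\vol(T_1)}\Bigr)^{1/p}\diam(T_1)\|\nabla u\|_{L^{p}(T_1)}
    + C_{{\rm EFNT},p}\Bigl(\tfrac{\vol(F)}{\vol(T_2)}\Bigr)^{1/p}\diam(T_2)\|\nabla u\|_{L^{p}(T_2)}.
\end{align*}

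Finally, multiplying through by $\vol(T_2)^{1/p}/\vol(F)^{1/p}$ introduces the factors $(\vol(T_2)/\vol(T_i))^{1/p}$, which are bounded by $\volumeratio(\calT)^{1/p}$ (trivially for $i=2$, by definition of $\volumeratio$ for $i=1$). The dimensional constant $\Ceins{n}$ from Lemma~\ref{lemma:measurerelationships} enters when I bound the absolute constants in the trace inequality by a scaling argument from the reference simplex. Putting these ingredients together and using $\max(\diam(T_1),\diam(T_2))$ uniformly yields the stated bound. The delicate part of the execution is tracking the volume ratios carefully so that only $\volumeratio(\calT)^{1/p}$ survives, and ensuring the absolute numerical factor does not exceed $2\Ceins{n}C_{{\rm EFNT},p}$.
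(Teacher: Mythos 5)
Your approach is genuinely different from the paper's, and the distinction is instructive. The paper does not patch local estimates on $T_1$ and $T_2$ at all. Instead, it maps $U_F$ onto a \emph{convex} reference patch $\hat U := \Delta_1 \cup \Delta_2$, where $\Delta_1=\Delta^n$ and $\Delta_2$ is its reflection across the face opposite the $n$-th coordinate. Since $\hat U$ is convex with $\diam(\hat U)=2$, a single application of~\eqref{math:esposito:pf} on $\hat U$ gives a potential $\hat w$ with $\|\hat w\|_{L^p(\hat U)} \leq 2C_{{\rm EFNT},p}\|\nabla\hat u\|_{L^p(\hat U)}$, and the stated constant then falls out of the pullback/pushforward norm bounds for the piecewise-affine map $\varphi$: $\Ceins{n}\diam(T_i)$ bounds $\matnorm{\Jacobian\varphi_i}$ (Lemma~\ref{lemma:measurerelationships}), and $\volumeratio(\calT)^{1/p}$ arises as the worst ratio of Jacobian determinants. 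No trace inequality appears.

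Your route---local Poincar\'e--Friedrichs on each $T_i$ plus a trace-based bound on $|c_1-c_2|$---does give a bound of the right general shape, but there are two concrete gaps. First, the multiplicative trace inequality produces the factor $\bigl(\vol(F)/\vol(T_i)\bigr)\bigl(\|v\|^{p}_{L^p(T_i)} + p\,\diam(T_i)\|v\|^{p-1}_{L^p(T_i)}\|\nabla v\|_{L^p(T_i)}\bigr)$; after inserting the local bounds and taking $p$-th roots you obtain $C_{{\rm EFNT},p}^{1-1/p}(C_{{\rm EFNT},p}+p)^{1/p}$ in place of $C_{{\rm EFNT},p}$, i.e.\ an extra factor $\bigl(1 + p/C_{{\rm EFNT},p}\bigr)^{1/p}$, and after the two triangle inequalities (on $F$ and on $T_2$) you accumulate further additive terms. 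None of these are unbounded in $p$, but they do not reduce to the precise factor $2\Ceins{n}$ of the statement, and you would need to track them explicitly to obtain a competitive constant. Second, your explanation for where $\Ceins{n}$ comes from---``a scaling argument from the reference simplex in the trace inequality''---does not fit your own construction: the trace inequality you invoke is already stated on the physical element $T_i$ and contains $\vol(F)/\vol(T_i)$ and $\diam(T_i)$ directly, so $\Ceins{n}$ never enters. In the paper's proof, $\Ceins{n}$ has a precise origin as the operator-norm bound for the affine chart. So while your plan is sound in spirit (and the paper's own commented-out earlier draft of Lemma~\ref{lemma:mixedbconsimplex} tried exactly this kind of trace-based argument), executing it to the stated constant requires more careful bookkeeping than you indicate, and you would land on a constant of a genuinely different form rather than recovering $2\Ceins{n}C_{{\rm EFNT},p}$.
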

\begin{proof}
    The convex case follows immediately from~\eqref{math:esposito:pf}.
    Otherwise, without loss of generality, $F$ has the vertices $v_0, \dots, v_{n-1}$,
    and $z_{1} \in T_{1}$ and $z_{2} \in T_{2}$ are the remaining vertices of the two triangles. 
    Let $\Delta_{1} = \Delta^{n}$ be the reference $n$-simplex and let $\Delta_{2}$ be obtained from it by flipping the $n$-th coordinate.
    We let $\varphi_{1} : \Delta_{1} \rightarrow T_{1}$ and $\varphi_{2} : \Delta_{2} \rightarrow T_{2}$
    be affine transformations 
    that map the origin to $v_0$, that map each unit vector $e_{i}$ to $v_{i}$ for $i = 1, \dots, n-1$,
    and that satisfy $\varphi_{1}(e_{n}) = z_{1}$ and $\varphi_{2}(-e_{n}) = z_{2}$.
    Write $\hat U := \Delta_1 \cup \Delta_2$.
    We have a bi-Lipschitz mapping $\varphi : \hat U \rightarrow U_{F}$.
    
    Suppose that $u \in W^{1,p}(U_{F})$. Then $\hat u := u \circ \varphi \in W^{1,p}(\hat U)$.
    We observe 
\begin{align*}
        \| \nabla \hat u \|_{L^{p}(\hat U)}
        &\leq 
        \max\left( 
            |\det(\Jacobian \varphi_1)|^{-\frac 1 p} \matnorm{ \Jacobian \varphi_1 }
            ,
            |\det(\Jacobian \varphi_2)|^{-\frac 1 p} \matnorm{ \Jacobian \varphi_2 }
        \right)
        \| \nabla u \|_{L^{p}(U_{F})}
        .
    \end{align*}
    Notice that $\hat U$ has diameter $2$ and is (crucially) convex. 
    Thus, due to Poincar\'e--Friedrichs inequality~\eqref{math:esposito:pf}, there exists $\hat w \in W^{1,p}(\hat U)$ such that $\nabla \hat w = \nabla \hat u$ and 
    \begin{align*}
        \| \hat w \|_{L^{p}(\hat U)}
        \leq 
        2 C_{{\rm EFNT},p}
        \| \nabla \hat u \|_{L^{p}(\hat U)}
        .
    \end{align*}
    Next, setting $w := \hat w \circ \varphi^{-1}$, we find $\nabla w = \nabla u$ and 
    \begin{align*}
        \| w \|_{L^{p}(U_{F})}
        \leq 
        \max\left( 
            |\det(\Jacobian \varphi_1)|
            ,
            |\det(\Jacobian \varphi_2)|
        \right)^{\frac 1 p}
        \| \hat w \|_{L^{p}(\hat U)}
        .
    \end{align*}
    For both $i = 1,2$, we now recall the well-known equation $|\det(\Jacobian\varphi_i)| = n! \vol(T_i)$, 
and the estimate $\matnorm{ \Jacobian \varphi_i } \leq \Ceins{n} \diam(T_i)$, which is given in Lemma~\ref{lemma:measurerelationships}. We obtain the desired result.
\begin{comment}
Abbreviate $D := \max\left( \diam(T_1), \diam(T_2) \right)$. In combination, 
    \begin{align*}
        \| w \|_{L^{p}(U)}
        &\leq 
        2 C_{{\rm EFNT},p}
        \max\left( 
            |\det(\Jacobian \varphi_1)|
            ,
            |\det(\Jacobian \varphi_2)|
        \right)^{\frac 1 p}
        \\&\qquad 
        \max\left( 
            |\det(\Jacobian \varphi_1)|^{-\frac 1 p} 
            ,
            |\det(\Jacobian \varphi_2)|^{-\frac 1 p} 
        \right)
        D
        \| \nabla u \|_{L^{p}(U)}
        \\&
        \leq 
        2 C_{{\rm EFNT},p}
        \max\left( 
            |\det(\Jacobian \varphi_1)|
            ,
            |\det(\Jacobian \varphi_2)|
        \right)^{\frac 1 p}
        \\&\qquad 
        \min\left( 
            |\det(\Jacobian \varphi_1)|
            ,
            |\det(\Jacobian \varphi_2)| 
        \right)^{-\frac 1 p} 
        D
        \| \nabla u \|_{L^{p}(U)}
        \\&
        \leq 
        2 C_{{\rm EFNT},p}
        \max\left( 
            \frac{ |\det(\Jacobian \varphi_1)| }{ |\det(\Jacobian \varphi_2)| }
            ,
            \frac{ |\det(\Jacobian \varphi_2)| }{ |\det(\Jacobian \varphi_1)| }
        \right)^{\frac 1 p}
        D
        \| \nabla u \|_{L^{p}(U)}
        \\&
        \leq 
        2 C_{{\rm EFNT},p}
        \max\left( 
            \frac{ \vol(T_1) }{ \vol(T_2) }
            ,
            \frac{ \vol(T_2) }{ \vol(T_1) }
        \right)^{\frac 1 p}
        D
        \| \nabla u \|_{L^{p}(U)}
        .
    \end{align*}
    This gives the desired result. 
\end{comment}
\end{proof}

The main result of this section constructs a potential and gives an upper bound for the Poincar\'e--Friedrichs constant.  
It follows the same underlying principle as the ``discrete mean Poincar\'e inequality'' of~\cite[Lemma~3.7]{Eym_Gal_Her_00}.
This procedure serves as the blueprint for constructing potentials of the curl and divergence operators in later sections. 

Two different variations of the underlying idea are analyzed, yielding slightly different estimates. 
In the sequential sweep over the simplices in the triangulation $\calT$, we can either employ Poincar\'e--Friedrichs inequalities over the individual simplices while relying on gluing with the previously visited face neighbor via Lemma~\ref{lemma:mixedbconsimplex}, or we consider the face patch (star) formed by the current and the previous simplex and employ Lemma~\ref{lemma:poincarefriedrichsoverfacepatch}.
Both estimates of Poincar\'e--Friedrichs constants capture the correct asymptotic behavior as $p$ grows to infinity.
We first prepare a recursion result in the forthcoming Theorem~\ref{theorem:poincarefriedrichsestimate:grad} that we subsequently convert in our final estimate in Theorem~\ref{theorem:fullrecursivesum:grad}.

\begin{comment}
For the remainder of this section, the following combinatorial notion of triangulations is used. 
Whenever $\calT$ is a face-connected $n$-dimensional triangulation, 
a \emph{spanning tree} is an enumeration $T_{0}, \dots, T_{M}$ of all its $n$-dimensional simplices
where for each index $1 \leq m \leq M$ we have a fixed index $0 \leq \iota(m) \leq m-1$ 
such that $T_{m}$ and $T_{\iota(m)}$ are face-neighboring.
We call $T_{\iota(m)}$ the \emph{precursor} of $T_{m}$ within the spanning tree. 
The reader is referred to Figure~\ref{figure:spanningtree} for an illustration. 
\end{comment}

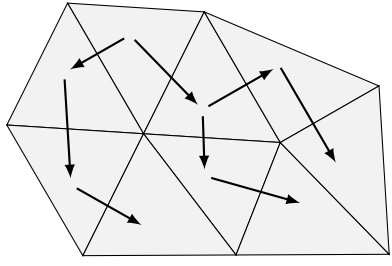
\begin{figure}[t]
\caption{Face-connected triangulation of a domain. The arrows depict a spanning tree in the face-connection graph.}\label{figure:spanningtree}
\begin{center}
\begin{tikzpicture}[rotate=-60]
    \coordinate (A) at (0,0);
    \coordinate (B) at (2,0);
    \coordinate (C) at (1,1.5);
    \coordinate (D) at (3,1.5); 
    \coordinate (E) at (3,3); 
    \coordinate (F) at (4,0.25); 
    \coordinate (X) at (1,-1.5);
    \coordinate (Y) at (3,-1.5); 
    \coordinate (T) at (5,2); 
    
    \coordinate (CentroidABC) at ($(A)!0.3333!(B)!0.3333!(C)$);
    \coordinate (CentroidBCD) at ($(B)!0.3333!(C)!0.3333!(D)$);
    \coordinate (CentroidCDE) at ($(C)!0.3333!(D)!0.3333!(E)$);
    \coordinate (CentroidBDF) at ($(B)!0.3333!(D)!0.3333!(F)$);
    \coordinate (CentroidABX) at ($(A)!0.3333!(B)!0.3333!(X)$);
    \coordinate (CentroidBXY) at ($(Y)!0.3333!(B)!0.3333!(X)$);
    \coordinate (CentroidBFY) at ($(Y)!0.3333!(B)!0.3333!(F)$);
    \coordinate (CentroidDFT) at ($(D)!0.3333!(F)!0.3333!(T)$);
    \coordinate (CentroidDET) at ($(D)!0.3333!(E)!0.3333!(T)$);
    
    \draw[fill=gray!10] (A) -- (B) -- (C) -- cycle; 
    \draw[fill=gray!10] (B) -- (C) -- (D) -- cycle; 
    \draw[fill=gray!10] (C) -- (D) -- (E) -- cycle; 
    \draw[fill=gray!10] (B) -- (D) -- (F) -- cycle; 
    \draw[fill=gray!10] (A) -- (B) -- (X) -- cycle; 
    \draw[fill=gray!10] (Y) -- (B) -- (X) -- cycle; 
    \draw[fill=gray!10] (Y) -- (B) -- (F) -- cycle; 
    \draw[fill=gray!10] (T) -- (D) -- (F) -- cycle; 
    \draw[fill=gray!10] (T) -- (D) -- (E) -- cycle; 
    
    \draw[-latex, thick, shorten <=2.5pt, shorten >=2.5pt] (CentroidABC) -- (CentroidBCD); 
    \draw[-latex, thick, shorten <=2.5pt, shorten >=2.5pt] (CentroidBCD) -- (CentroidCDE); 
    \draw[-latex, thick, shorten <=2.5pt, shorten >=2.5pt] (CentroidBCD) -- (CentroidBDF); 
    \draw[-latex, thick, shorten <=2.5pt, shorten >=2.5pt] (CentroidABC) -- (CentroidABX); 
    \draw[-latex, thick, shorten <=2.5pt, shorten >=2.5pt] (CentroidABX) -- (CentroidBXY); 
    \draw[-latex, thick, shorten <=2.5pt, shorten >=2.5pt] (CentroidBXY) -- (CentroidBFY); 
    \draw[-latex, thick, shorten <=2.5pt, shorten >=2.5pt] (CentroidBDF) -- (CentroidDFT); 
    \draw[-latex, thick, shorten <=1.0pt, shorten >=2.0pt] (CentroidCDE) -- (CentroidDET); 
\end{tikzpicture}
\end{center}
\end{figure}

\begin{theorem}\label{theorem:poincarefriedrichsestimate:grad}
    Let $\calT$ be a face-connected $n$-dimensional finite triangulation and let the domain $\Omega$ be the interior of the underlying set of $\calT$.
    Let $1 \leq p, q \leq \infty$ with $1 = 1/p + 1/q$.
    Then for any $u \in W^{1,p}(\Omega)$
    there exists $w \in W^{1,p}(\Omega)$ with $\nabla w = \nabla u$ 
    and satisfying the following estimates:
    \begin{enumerate}
    \item 
    There exists an $n$-simplex $T_0 \in \calT$ with 
    \begin{gather}\label{math::poincarefriedrichsestimate:grad:start}
        \| w \|_{L^{p}(T_{0})} \leq C_{{\PF},T_{0},p} \| \nabla u \|_{L^{p}(T_{0})}.
    \end{gather}
    \item 
    For any $n$-simplex $T_M \in \calT$, there exists a face path $T_0, T_1, \dots, T_M$
    such that for all $1 \leq m \leq M$, we have the two recursive estimates:
    \begin{align}
        \label{math::poincarefriedrichsestimate:grad:glue}
        \| w \|_{L^{p}(T_{m})}
        &
        \leq  
        A_{m}
        \| w \|_{L^{p}(T_{m-1})} 
        +
        B'_{m}
        \| \nabla u \|_{L^{p}(T_{m  })} 
        + 
        B''_{m}
        \| \nabla u \|_{L^{p}(T_{m-1})} 
        ,
        \\
        \label{math::poincarefriedrichsestimate:grad:patch}
        \| w \|_{L^{p}(T_{m})}
        &
        \leq  
        A_{m}
        \| w \|_{L^{p}(T_{m-1})} 
        +
        B^\star_{m}
        \| \nabla u \|_{L^{p}(\UPatch_{F_m})} , 
    \end{align}
    with the constants  
    \begin{gather*}
        A_{m} 
        \leq
        \volumeratio(\calT)^{\frac 1 p} 
        ,
        \quad 
        B'_{m} 
        \leq 
        C_{\PF,T_{m},F_{m},p} 
        ,
        \quad 
        B''_{m} 
        \leq 
        C_{\PF,T_{m},F_{m},p} \volumeratio(\calT)^{\frac 1 p}~\reflectionestimate(\calT)  ,
        \\
        B_{m}^\star
        \leq 
        \left( 1 + \volumeratio(\calT)^{\frac q p} \right)^{\frac 1 q}
        C_{{\PF},\UPatch_{F_\ell},p}  .
    \end{gather*}
    \end{enumerate}
    Here, $F_m := T_m \cap T_{m-1}$ and $U_m := T_m \cup T_{m-1}$ for any $1 \leq m \leq M$.
\end{theorem}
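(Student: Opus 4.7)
I will construct the potential $w$ by sweeping the triangulation: fix a spanning tree of the face-connectivity graph of $\calT$ rooted at an arbitrarily chosen initial $n$-simplex $T_{0}$. Face-connectedness guarantees that for each target $T_{M} \in \subsimplex_{n}(\calT)$ there is a face path $T_{0}, T_{1}, \dots, T_{M}$ along this tree, and I will extend the definition of $w$ one simplex at a time, ensuring $W^{1,p}$-continuity across each newly introduced shared face $F_{m} := T_{m} \cap T_{m-1}$.

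For the base case, I invoke the Poincar\'e--Friedrichs inequality on the convex simplex $T_{0}$ from Section~\ref{section:poincare} to obtain $w|_{T_{0}} \in W^{1,p}(T_{0})$ with $\nabla w|_{T_{0}} = \nabla u|_{T_{0}}$ and the bound~\eqref{math::poincarefriedrichsestimate:grad:start}. For the inductive step, the essential observation is that any two local gradient potentials of $\nabla u$ agree on $F_{m}$ up to an additive constant; since $u \in W^{1,p}(\Omega)$ provides a globally consistent potential, there exists $w|_{T_{m}} \in W^{1,p}(T_{m})$ with $\nabla w|_{T_{m}} = \nabla u|_{T_{m}}$ whose trace on $F_{m}$ matches that of $w|_{T_{m-1}}$, yielding $W^{1,p}$-continuity across $F_{m}$.

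To establish~\eqref{math::poincarefriedrichsestimate:grad:glue}, I introduce the reflected auxiliary function $v(x) := w|_{T_{m-1}}\bigl(\Xi_{T_{m},T_{m-1}}(x)\bigr)$ for $x \in T_{m}$. Since $\Xi_{T_{m},T_{m-1}}$ fixes $F_{m}$ pointwise, $v|_{F_{m}} = w|_{F_{m}}$, so $w|_{T_{m}} - v$ has vanishing trace on $F_{m}$ and Lemma~\ref{lemma:mixedbconsimplex} gives
\begin{align*}
    \|w|_{T_{m}} - v\|_{L^{p}(T_{m})}
    \leq
    C_{\PF,T_{m},F_{m},p}\,
    \|\nabla u - \nabla v\|_{L^{p}(T_{m})}.
\end{align*}
Standard change-of-variables computations using the definitions~\eqref{math:volumeratio} and~\eqref{math:reflectionmeasure} together with $\nabla w = \nabla u$ on $T_{m-1}$ provide $\|v\|_{L^{p}(T_{m})} \leq \volumeratio(\calT)^{1/p}\|w\|_{L^{p}(T_{m-1})}$ and $\|\nabla v\|_{L^{p}(T_{m})} \leq \reflectionestimate(\calT)\,\volumeratio(\calT)^{1/p}\|\nabla u\|_{L^{p}(T_{m-1})}$. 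The triangle inequality then produces the advertised constants $A_{m}$, $B'_{m}$, $B''_{m}$.

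For the patch-based bound~\eqref{math::poincarefriedrichsestimate:grad:patch}, I instead apply Lemma~\ref{lemma:poincarefriedrichsoverfacepatch} to the face patch $U_{m} = T_{m} \cup T_{m-1}$, obtaining a potential $\tilde w \in W^{1,p}(U_{m})$ of $\nabla u$ with $\|\tilde w\|_{L^{p}(U_{m})} \leq C_{\PF,U_{m},p}\,\|\nabla u\|_{L^{p}(U_{m})}$ after a suitable additive shift. Since $w - \tilde w$ is constant on $T_{m-1}$, calling that constant $c$ and redefining $w|_{T_{m}} := \tilde w|_{T_{m}} + c$ preserves $\nabla w|_{T_{m}} = \nabla u|_{T_{m}}$ and the trace match across $F_{m}$. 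Bounding $|c|\vol(T_{m-1})^{1/p} \leq \|w\|_{L^{p}(T_{m-1})} + \|\tilde w\|_{L^{p}(T_{m-1})}$ and applying $\ell^{q}$--$\ell^{p}$ H\"older duality to the sum $\|\tilde w\|_{L^{p}(T_{m})} + \rho \|\tilde w\|_{L^{p}(T_{m-1})}$ with $\rho = (\vol(T_{m})/\vol(T_{m-1}))^{1/p}$ produces the factor $(1 + \volumeratio(\calT)^{q/p})^{1/q}$ and hence the required $B_{m}^{\star}$. The main technical hurdle throughout is the bookkeeping of constants in both recursive steps; the H\"older-pairing trick in the patch estimate is essential to retain the correct asymptotic behavior as $p \to \infty$.
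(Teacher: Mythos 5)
Your proposal is correct and follows essentially the same approach as the paper's proof: you build $w$ by trace-matching along a face path (the paper does this more compactly by setting $w := u + c_0$ globally, but the two are equivalent since a gradient potential is determined up to a constant, and the spanning-tree construction you describe reproduces the same function); you use the affine reflection $\Xi_{T_m,T_{m-1}}$ and Lemma~\ref{lemma:mixedbconsimplex} for~\eqref{math::poincarefriedrichsestimate:grad:glue}; and you use Lemma~\ref{lemma:poincarefriedrichsoverfacepatch}, the constant $c = w - \tilde w$, and the $\ell^q$--$\ell^p$ H\"older pairing to obtain the factor $(1 + \volumeratio(\calT)^{q/p})^{1/q}$ in~\eqref{math::poincarefriedrichsestimate:grad:patch}. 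One cosmetic remark: your "redefining $w|_{T_m} := \tilde w|_{T_m} + c$" is actually a no-op (the already-constructed $w|_{T_m}$ coincides with $\tilde w|_{T_m} + c$ by uniqueness of the trace-matched potential), so it would be cleaner to phrase this as an identity rather than a redefinition, which also makes clear that the two recursive estimates hold for one and the same $w$.
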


\begin{proof}
    Let $u \in W^{1,p}(\Omega)$. 
    We start with the Poincar\'e--Friedrichs inequality on the first simplex $T_{0}$. 
    Via~\eqref{math:inequalitypf:gradient}, 
    there exists $w_0 \in W^{1,p}(T_{0})$ satisfying $\nabla w_0 = \nabla u$ over $T_{0}$ together with 
    \begin{gather*}
        \| w_0 \|_{L^{p}(T_{0})} \leq C_{{\PF},T_{0},p} \| \nabla u \|_{L^{p}(T_{0})}.
    \end{gather*}
    In particular, $c_{0} := ( w_0 - u ) \restriction_{T_{0}}$ is a constant function. 
    We then define 
    \begin{align*}
        w := u + c_0
    \end{align*} 
    as our potential.
    Clearly, $w \in W^{1,p}(\Omega)$ with $\nabla w = \nabla u$.
    By construction $w\restriction_{T_{0}} = w_0$ and~\eqref{math::poincarefriedrichsestimate:grad:start} holds.
    In the rest of the proof, we verify that $w$ satisfies the desired recursive estimates~\eqref{math::poincarefriedrichsestimate:grad:glue} and~\eqref{math::poincarefriedrichsestimate:grad:patch}.
    Suppose that $T_0, T_1, \dots, T_M$ is a face path in $\calT$
    and that $1 \leq m \leq M$.
    Recall that we write $F_m := T_m \cap T_{m-1}$, which is a face of dimension $n-1$ shared by the $n$-simplices $T_{m}$ and $T_{m-1}$,
    and that we write $\UPatch_{F_m} := T_m \cup T_{m-1}$. 
    
    We study two constructions, beginning as follows. 
    We define $w'_{m} := w\restriction_{T_{m-1}} \circ \Xi \in W^{1,p}(T_{m})$,
    where $\Xi : T_{m} \rightarrow T_{m-1}$ is the unique affine diffeomorphism that leaves $F_{m}$ invariant. 
    By construction, $w'_{m} \in W^{1,p}(T_{m})$ with 
    \begin{align*}
        \trace_{F_{m}} w'_{m} = \trace_{F_{m}} w\restriction_{T_{m-1}}
        .
    \end{align*}
    We now define $w''_{m} \in W^{1,p}(T_{m})$ via 
    \begin{align}\label{equation:w''}
        w''_{m} 
        := 
w\restriction_{T_{m}} - w'_{m} = u\restriction_{T_{m}} - u\restriction_{T_{m-1}} \circ \Xi.
    \end{align}
    We crucially note that $w''_{m}$ is trace-free along $F_{m}$ since
    \begin{align*}
        \trace_{F_{m}} w''_{m} 
        = 
        \trace_{F_{m}} \left( 
            w\restriction_{T_{m}} - w'_{m} 
        \right) 
        =
        \trace_{F_{m}} w\restriction_{T_{m}}
        -
        \trace_{F_{m}} w\restriction_{T_{m-1}}
        =
        \trace_{F_{m}} u\restriction_{T_{m}}
        -
        \trace_{F_{m}} u\restriction_{T_{m-1}}
= 0.
    \end{align*}
    An application of Lemma~\ref{lemma:mixedbconsimplex} to the first expression in~\eqref{equation:w''} gives 
    \begin{align*}
        \| w''_{m} \|_{L^{p}(T_{m})} 
        &
        \leq 
        C_{\PF,T_{m},F_{m},p} 
        \left( 
            \| \nabla w \|_{L^{p}(T_{m})} 
            + 
            \| \nabla w'_{m} \|_{L^{p}(T_{m})} 
        \right) 
        \\&= 
        C_{\PF,T_{m},F_{m},p} 
        \left( 
            \| \nabla u \|_{L^{p}(T_{m})} 
            + 
            \| \nabla w'_{m} \|_{L^{p}(T_{m})} 
        \right) 
        .
    \end{align*}
    Using Lemma~\ref{lemma:volumecomparison} as well as Definitions~\eqref{math:volumeratio} and~\eqref{math:reflectionmeasure}, we find 
    \begin{align*}
        \| \nabla w'_{m} \|_{L^{p}(T_{m})}
        &
        \leq 
        |\det(\Jacobian \Xi  )|^{-\frac 1 p} 
        \matnorm{ \Jacobian \Xi }
        \| \nabla w \|_{L^{p}(T_{m-1})}
        \\&
        \leq 
        \left( \frac{ \vol(T_m) }{ \vol(T_{m-1}) } \right)^{\frac 1 p}
       ~\reflectionestimate(\calT)
        \| \nabla w \|_{L^{p}(T_{m-1})}
=
        \volumeratio(\calT)^{\frac 1 p}~\reflectionestimate(\calT)
        \| \nabla u \|_{L^{p}(T_{m-1})}
        .
    \end{align*}
Since $w\restriction_{T_{m}} = w''_{m} + w'_{m}$, we finally find 
    \begin{align*}
        \| w \|_{L^{p}(T_{m})}
        &
        \leq  
        \| w'_{m} \|_{L^{p}(T_{m})}
        + 
        \| w''_{m} \|_{L^{p}(T_{m})}
        \\&
        \leq  
        \volumeratio(\calT)^{\frac 1 p} 
        \| w \|_{L^{p}(T_{m-1})} 
        + 
        C_{\PF,T_{m},F_{m},p} 
        \left( 
            \| \nabla u \|_{L^{p}(T_{m})} 
            + 
            \volumeratio(\calT)^{\frac 1 p}~\reflectionestimate(\calT)
        \| \nabla u \|_{L^{p}(T_{m-1})}
        \right). 
    \end{align*}
    Therefrom, the first recursive estimate follows.

    Now we discuss the second recursive estimate. 
    Suppose again that $1 \leq m \leq M$. 
    We use the Poincar\'e--Friedrichs inequality over $\UPatch_{F_{m}}$,
    as given in Lemma~\ref{lemma:poincarefriedrichsoverfacepatch}, 
    to find $w_{F_m} \in W^{1,p}(\UPatch_{F_m})$ such that $\nabla w_{F_m} = \nabla u$ over $\UPatch_{F_m}$ and 
    \begin{align}\label{math:wFm_inequality}
        \| w_{F_m} \|_{L^{p}(\UPatch_{F_m})} \leq C_{{\PF},\UPatch_{F_m},p} \| \nabla w_{F_m} \|_{L^{p}(\UPatch_{F_m})}.
    \end{align}
We can define the constant
    \begin{align*}
    c_{m} := w\restriction_{\UPatch_{F_m}} - w_{F_{m}}.
    \end{align*}
    Now we observe that 
    \begin{gather*}
        \| w \|_{L^{p}(T_m)}
        \leq 
        \| w_{F_m} \|_{L^{p}(T_m)}
        +
        \| c_{m} \|_{L^{p}(T_m)},
        \\
        \| c_{m} \|_{L^{p}(T_m)}
        = 
        \frac{ \vol(T_m)^{\frac 1 p} }{ \vol(T_{{m-1}})^{\frac 1 p} }
        \| c_{m} \|_{L^{p}(T_{{m-1}})},
        \\ 
        \| c_{m} \|_{L^{p}(T_{{m-1}})}
        \leq 
        \| w \|_{L^{p}(T_{{m-1}})} + \| w_{F_{m}} \|_{L^{p}(T_{{m-1}})} 
        .
    \end{gather*}
    In combination, 
    \begin{align*}
        \| w \|_{L^{p}(T_m)}
        &
        \leq 
        \| w_{F_m} \|_{L^{p}(T_m)}
        \\&\qquad 
        +
        \frac{ \vol(T_m)^{\frac 1 p} }{ \vol(T_{{m-1}})^{\frac 1 p} }
        \| w_{F_{m}} \|_{L^{p}(T_{{m-1}})}
        +
        \frac{ \vol(T_m)^{\frac 1 p} }{ \vol(T_{{m-1}})^{\frac 1 p} }
        \| w \|_{L^{p}(T_{{m-1}})}
        .
    \end{align*}
    We sum the two integrals of $w_{F_m}$. 
When $1 < p < \infty$, recalling the complementary exponent $q = p/(p-1) \in (1,\infty)$, 
    we use H\"older's inequality to verify 
    \begin{align*}
        \| w \|_{L^{p}(T_m)}
        &
        \leq 
        \left( 1 + \frac{ \vol(T_m)^{\frac q p} }{ \vol(T_{{m-1}})^{\frac q p} } \right)^{\frac 1 q}
        \left( 
            \| w_{F_{m}} \|_{L^{p}(T_m)}^{p}
            +
            \| w_{F_{m}} \|_{L^{p}(T_{{m-1}})}^{p}
        \right)^{\frac 1 p}
        +
        \frac{ \vol(T_m)^{\frac 1 p} }{ \vol(T_{{m-1}})^{\frac 1 p} }
        \| w \|_{L^{p}(T_{{m-1}})}
        \\&
        = 
        \left( 1 + \frac{ \vol(T_m)^{\frac q p} }{ \vol(T_{{m-1}})^{\frac q p} } \right)^{\frac 1 q}
        \| w_{F_{m}} \|_{L^{p}(\UPatch_{F_m})} 
        +
        \frac{ \vol(T_m)^{\frac 1 p} }{ \vol(T_{{m-1}})^{\frac 1 p} }
        \| w \|_{L^{p}(T_{{m-1}})}
        .
    \end{align*}
    Note that in the limit cases $p=1$ and $p=\infty$ we get, respectively, 
    \begin{align*}
        \| w\|_{L^{1}(T_m)}
        &\leq 
        \max\left(
            1, \frac{ \vol(T_m) }{ \vol(T_{{m-1}}) } 
        \right)
        \| w_{F_m} \|_{L^{1}(\UPatch_{F_m})}
        +
        \frac{ \vol(T_m) }{ \vol(T_{{m-1}}) }
        \| w \|_{L^{1}(T_{{m-1}})},
        \\
        \| w \|_{L^{\infty}(T_m)}
        &\leq 
        2
        \| w_{F_m} \|_{L^{\infty}(\UPatch_{F_m})}
        +
        \| w \|_{L^{\infty}(T_{{m-1}})}
        .
    \end{align*}
    The local inequality~\eqref{math:wFm_inequality} now provides the second recursive estimate. 
    The proof is complete.
\end{proof}

The recursive construction of the gradient potential in the previous theorem, marching from simplex to simplex, can be associated with a concept of graph theory. Indeed, the face-neighbor relationship between adjacent $n$-simplices gives rise to an undirected graph that we call \emph{face-connection graph}.
Starting from an initial $n$-simplex $T_0$, any sequence of simplices corresponds to a path in that graph. In practice, we will pick a spanning tree for the undirected graph to describe the construction of the potentials.

We now use this formalism to describe an estimate for the Poincar\'e--Friedrichs constant of the gradient potential.
The formulation of the result is somewhat technical, but the underlying idea is as follows.
Unwrapping the recursion, we estimate the potential over a simplex in terms of the norm of the gradient vector field over numerous other simplices. 
Having estimated the potential on each simplex, a global estimate follows naturally and implies the desired Poincar\'e--Friedrichs inequality.

\begin{theorem}\label{theorem:fullrecursivesum:grad}
    Let $\calT$ be a face-connected $n$-dimensional finite triangulation
    and let the domain $\Omega$ be the interior of the underlying set of $\calT$. 
    Let $1 \leq p,q \leq \infty$ with $1 = 1/p + 1/q$,
    and suppose further that $u, w \in W^{1,p}(\Omega)$ with $\nabla w = \nabla u$.

    Suppose that we have an estimate of the form 
    \begin{align*}
        \| w \|_{L^{p}(T_{0})} &\leq A_{0} \| \nabla u \|_{L^{p}(T_{0})}
        .
    \end{align*}
    Suppose that the $n$-simplices are enumerated as $T_0, T_1, \dots, T_M$, 
    and for each $1 \leq m \leq M$ we have $0 \leq \varpi(m) \leq m-1$ 
    such that 
    \begin{align*}
        \| w \|_{ L^{p}(T_{m}) }
        &\leq  
        A_{m}
        \| \nabla u \|_{ L^{p}(T_{m}) }
        + 
        A'_{m}
        \| \nabla u \|_{ L^{p}(T_{\varpi(m)}) }
        +
        B_{m}
        \| w \|_{ L^{p}(T_{\varpi(m)}) }
        ,
        \quad 1 \leq m \leq M
        .
    \end{align*}
Then 
    \begin{gather*}
        \| w \|_{ L^{p}(\Omega) }
        \leq 
        \left(
            \sum_{m=0}^{M}
            \left( \sum_{\ell=0}^{L(m)} C_{m,\ell}^{q} \right)^{\frac p q}
        \right)^{\frac 1 p}
        \| \nabla u \|_{ L^{p}(\Omega) }
        ,
    \end{gather*}
    where $C_{m,\ell} \geq 0$ is as in the proof, with the obvious modifications if $p=1$ or $p=\infty$. 
\end{theorem}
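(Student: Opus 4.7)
The plan is to unfold the given recursion along the tree encoded by the parent map $\varpi$. For each $m$, set $m_0 := m$ and $m_{\ell+1} := \varpi(m_\ell)$; since $\varpi(k)<k$, this chain reaches $m_{L(m)} = 0$ in finitely many steps. By induction on $m$, I would establish a per-simplex bound of the linear form
\[
    \|w\|_{L^p(T_m)} \leq \sum_{\ell=0}^{L(m)} C_{m,\ell} \|\nabla u\|_{L^p(T_{m_\ell})},
\]
with coefficients $C_{m,\ell}$ defined recursively by $C_{m,0} := A_m$, $C_{m,1} := A'_m + B_m C_{m_1,0}$, and $C_{m,\ell} := B_m C_{m_1,\ell-1}$ for $\ell \geq 2$. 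Unrolling, one sees that for $\ell \geq 1$ the coefficient $C_{m,\ell}$ is a sum of two products of consecutive $B_{m_j}$'s, multiplied respectively by $A_{m_\ell}$ and by $A'_{m_{\ell-1}}$. The base case $m=0$ is precisely the first hypothesis of the theorem, and the induction step is immediate: substitute the bound for $\|w\|_{L^p(T_{\varpi(m)})}$ into the second hypothesis, then relabel summation indices using $(m_1)_\ell = m_{\ell+1}$.

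The second step is to convert this linear bound into a bound on the $p$-th power by applying H\"older's inequality with conjugate exponents $(q,p)$. This yields
\[
    \|w\|_{L^p(T_m)}^p \leq \left(\sum_{\ell=0}^{L(m)} C_{m,\ell}^q\right)^{p/q} \sum_{\ell=0}^{L(m)} \|\nabla u\|_{L^p(T_{m_\ell})}^p.
\]
Because the chain $m_0 > m_1 > \cdots > m_{L(m)} = 0$ is strictly decreasing, its terms index pairwise distinct $n$-simplices of $\calT$, so the inner sum is bounded by $\|\nabla u\|_{L^p(\Omega)}^p$, the latter equalling $\sum_{T \in \subsimplex_n(\calT)} \|\nabla u\|_{L^p(T)}^p$ since $\Omega$ is partitioned by the $n$-simplices up to a set of measure zero. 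Summing the resulting inequality over $m = 0, 1, \ldots, M$ and extracting the $p$-th root yields the claimed global estimate.

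The limit cases $p=1$ (so $q=\infty$) and $p=\infty$ (so $q=1$) require only cosmetic modifications: H\"older is replaced by the trivial bounds $\sum_\ell a_\ell x_\ell \leq (\max_\ell a_\ell)(\sum_\ell x_\ell)$ or $\sum_\ell a_\ell x_\ell \leq (\sum_\ell a_\ell)(\max_\ell x_\ell)$, and the outer assembly over simplices becomes either a straight sum or a maximum, matching the two endpoint interpretations of the $\ell^p$-norm on the right-hand side of the theorem. The main (and only real) obstacle is the combinatorial bookkeeping of the coefficients $C_{m,\ell}$ in the induction: once the correspondence between the recursion depth $\ell$ and the ancestor simplex $T_{m_\ell}$ is fixed, every remaining step reduces to H\"older's inequality and the disjoint-simplex decomposition of $\Omega$.
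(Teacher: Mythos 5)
Your proposal is correct and follows essentially the same route as the paper: unfold the recursion along the $\varpi$-chain to a per-simplex linear bound $\|w\|_{L^p(T_m)} \leq \sum_\ell C_{m,\ell}\|\nabla u\|_{L^p(T_{m_\ell})}$, apply H\"older on each simplex, and sum the $p$-th powers over the (essentially disjoint) simplices. The only differences are cosmetic: you index the ancestor chain descending from $m$ to $0$ and define $C_{m,\ell}$ by an explicit recursion proved by induction, whereas the paper indexes the chain ascending from $0$ to $m$ and writes the unwound coefficient formula directly; both give the same coefficients and the same final bound.
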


\begin{proof}
    Suppose that $1 \leq m \leq M$. 
    Starting from index $m$, we repeatedly apply the predecessor function $\varpi$ until we come back to index zero. 
    This produces a strictly ascending sequence of indices 
    \begin{align}
        0 = \recindex_{m,0}, \; \recindex_{m,1}, \; \dots, \; \recindex_{m,L(m)} = m
    \end{align}
    for some $L(m) \geq 1$ such that $\recindex_{m,\ell-1} = \varpi(\recindex_{m,\ell})$. 
Unwrapping the recursion for the norm over the $m$-th simplex, 
    the total expression is now written in the form
    \begin{align*}
        \begin{split}
            \| w \|_{L^{p}(T_{m})} 
            &
            \leq 
            \sum_{\ell=0}^{L(m)}   \left( B_{\recindex_{m,L(m)}} \cdots B_{\recindex_{m,\ell+1}} \right) A_{\recindex_{m,\ell  }} \| \nabla u \|_{L^{p}(T_{\recindex_{m,\ell}})}
            \\&\qquad 
            +
            \sum_{\ell=0}^{L(m)-1} \left( B_{\recindex_{m,L(m)}} \cdots B_{\recindex_{m,\ell+2}} \right) A'_{\recindex_{m,\ell+1}} \| \nabla u \|_{L^{p}(T_{\recindex_{m,\ell}})}
            \\&
            = 
            A_{m} \| \nabla u \|_{L^{p}(T_{\recindex_{m,L(m)}})}
            \\&\qquad 
            +
            \sum_{\ell=0}^{L(m)-1} 
            \left( B_{\recindex_{m,L(m)}} \cdots B_{\recindex_{m,\ell+2}} \right) 
            \left( B_{\recindex_{m,\ell+1}} A_{\recindex_{m,\ell}} + A'_{\recindex_{m,\ell+1}} \right) 
            \| \nabla u \|_{L^{p}(T_{\recindex_{m,\ell}})}
            \\&
            =:
            \sum_{\ell=0}^{M} C_{m,\ell} \| \nabla u \|_{L^{p}(T_{\ell})}
            .
        \end{split}
    \end{align*}
    Here, $C_{m,\ell}$ is the coefficient of $\| \nabla u \|_{L^{p}(T_{\ell})}$, possibly zero, as it appears in the unwrapped recursive estimate of $\| w \|_{L^{p}(T_{m})}$. 
    The global Poincar\'e--Friedrichs inequality follows via H\"older's inequality:
    \begin{align*}
        \begin{split}
            \| w \|_{L^{p}(\Omega)}^{p}
            &
            \leq 
            \sum_{m=0}^{M}
            \| w \|_{L^{p}(T_{m})}^{p}
            \\&
            \leq 
            \sum_{m=0}^{M}
            \left( \sum_{\ell=0}^{M} C_{m,\ell} \| \nabla u \|_{L^{p}(T_{\ell})} \right)^{p}
            \\&
            \leq 
            \sum_{m=0}^{M}
            \left( \sum_{\ell=0}^{M} C_{m,\ell}^{q} \right)^{\frac p q}
            \sum_{\ell'=0}^{M} \| \nabla u \|_{L^{p}(T_{\ell'})}^{p} 
            \leq 
            \left(
                \sum_{m=0}^{M}
                \left( \sum_{\ell=0}^{M} C_{m,\ell}^{q} \right)^{\frac p q}
            \right)
            \| \nabla u \|_{L^{p}(\Omega)}^{p} 
            ,
        \end{split}
    \end{align*}
    where $q \in [1,\infty]$ satisfies $1 = 1/p + 1/q$ and with obvious modifications if $p=1$ or $p=\infty$. 
\end{proof}

\begin{remark}
    The computable Poincar\'e--Friedrichs constants obtained in Theorem~\ref{theorem:fullrecursivesum:grad} 
    depend on only a few parameters of the given triangulation: 
    the length of any traversal from the root simplex, 
    the volume ratios of any pair of adjacent simplices along this path,
    and the Poincar\'e--Friedrichs constants on each simplex (Lemma~\ref{lemma:mixedbconsimplex}) or on face patch (Lemma~\ref{lemma:poincarefriedrichsoverfacepatch}), then possibly including shape regularity parameters.

These computable Poincar\'e--Friedrichs constants increasingly overestimate the best one as the number of $n$-simplices in the triangulation $\calT$ increases. 
    Hence, we conceive their main application to be local patches (stars), in particular non-convex boundary stars.
    The latter occur inevitably at reentrant corners. Clearly, the same building principle applies whenever we have any non-overlapping partition of $\overline \Omega$ into convex local patches $\{ \UPatch_m \}$ of $n$-simplices with an appropriate notion of connectivity. 
    The proof proceeds verbatim, where we merely replace the simplices $\{ T_{m} \}$ by the convex local patches $\{ \UPatch_m \}$. 
    This may allow for partitions of $\overline \Omega$ with significantly fewer elements, which enables a largely improved estimate of the best Poincar\'e--Friedrichs constant. 
\end{remark}

\section{Review of vector calculus and exterior calculus}\label{section:calculus}

We review in this section the Sobolev spaces of vector and exterior calculus with particular emphasis on their behavior under bi-Lipschitz transformations.
We refer the reader to Ern and Guermond~\cite{ern2021finite} and Hiptmair~\cite{hiptmair2002finite} for background material on Sobolev vector analysis and to Greub~\cite{greub1967multilinear} and Lee~\cite{lee2012smooth} for exterior algebra and exterior products. 

\subsection{Vector calculus}

Let $\Omega \subseteq \bbR^{3}$ be a bounded open set. 
We recall that $L^{p}(\Omega)$ denotes the space of scalar-valued $p$-integrable functions defined on $\Omega$
and that $\bfL^{p}(\Omega) := L^{p}(\Omega)^{n}$ stands for vector-valued functions with each component in $L^{p}(\Omega)$. 
In the three-dimensional setting, we are particularly interested in the Sobolev vector analysis. 
The space of scalar-valued $L^{p}(\Omega)$ functions with weak gradients in $\bfL^{p}(\Omega)$ is 
\begin{gather*}
    W^{p}(\grad,\Omega) := W^{1,p}(\Omega) = \{ u \in L^{p}(\Omega) \suchthat \grad v \in \bfL^{p}(\Omega) \}
    .
\end{gather*}
The space $\bfW^{p}(\curl,\Omega)$ of vector-valued $\bfL^{p}(\Omega)$ functions with weak curls in $\bfL^{p}(\Omega)$
and the space of vector-valued $\bfL^{p}(\Omega)$ functions with weak divergences in $L^{p}(\Omega)$ are then written 
\begin{gather*}
    \bfW^{p}(\curl,\Omega) = \{ \bfu \in \bfL^{p}(\Omega) \suchthat \curl \bfu \in \bfL^{p}(\Omega) \},
    \\ 
    \bfW^{p}(\divergence,\Omega) = \{ \bfu \in \bfL^{p}(\Omega) \suchthat \divergence \bfu \in L^{p}(\Omega) \}.
\end{gather*}
We are interested in transformations of these Sobolev vector fields from one domain onto another. 
Suppose that $\Omega, \Omega' \subset \mathbb{R}^3$ are open sets and suppose that $\phi: \Omega \to \Omega'$ is a bi-Lipschitz mapping.
We introduce the gradient-, curl-, and divergence-conforming Piola transformations, respectively, as the mappings 
$\phi^{\grad}: L^{p}(\Omega') \to L^{p}(\Omega)$,
$\phi^{\curl}: \bfL^{p}(\Omega') \to \bfL^{p}(\Omega)$, and 
$\phi^{\divergence}: \bfL^{p}(\Omega') \to \bfL^{p}(\Omega)$.
We also introduce 
$\phi^{\rm b}: L^{p}(\Omega') \to L^{p}(\Omega)$. 
These are defined 
for any $v \in L^{p}(\Omega')$ and $\bfw \in \bfL^{p}(\Omega')$ by setting 
\begin{subequations}\label{eq_definition_piola}
\begin{align*}
    \phi^{\grad}(v) &= v \circ \phi, \\
    \phi^{\curl}(\bfw) &= \Jacobian\phi^{T} (\bfw \circ \phi), \\
    \phi^{\divergence}(\bfw) &= \adj(\Jacobian\phi) \left( \bfw \circ \phi \right), \\  
    \phi^{\rm b}(v) &= \det(\Jacobian\phi) \left( v \circ \phi \right),
\end{align*}
\end{subequations}
Here, $\Jacobian\phi$ is the Jacobian matrix of $\phi$ (see also~\cite[Definition~9.8]{ern2021finite}), and $\adj(\Jacobian\phi) := \det(\Jacobian\phi) (\Jacobian\phi)^{\inv}$ denotes taking its adjugate matrix.
These transformations are invertible. 
Bounds on the Lebesgue norms will follow from a more general result below. 
We use the commutativity relations 
\begin{subequations}\label{eq_piola_commute}
\begin{align}
    \grad \phi^{\grad} (v) &= \phi^{\curl}(\grad v), 
    \\
    \curl \phi^{\curl}(\bfv) &= \phi^{\divergence}(\curl \bfv), 
    \\
    \divergence \phi^{\divergence}(\bfw) &= \phi^{\rm b}(\divergence \bfw),
\end{align}
\end{subequations}
where $v \in W^{p}(\grad,\Omega)$, $\bfv \in \bfW^{p}(\curl,\Omega)$, and $\bfw \in \bfW^{p}(\divergence,\Omega)$.  
We summarize this as a commuting diagram:
\begin{align*}
    \begin{CD}
        W^{p}(\grad,\Omega') @>{\grad}>> \bfW^{p}(\curl,\Omega') @>{\curl}>> \bfW^{p}(\divergence,\Omega') @>{\divergence}>> L^{p}(\Omega')
        \\
        @VV{\phi^{\grad}}V 
        @VV{\phi^{\curl}}V 
        @VV{\phi^{\divergence}}V 
        @VV{\phi^{\rm b}}V 
        \\
        W^{p}(\grad,\Omega ) @>{\grad}>> \bfW^{p}(\curl,\Omega ) @>{\curl}>> \bfW^{p}(\divergence,\Omega ) @>{\divergence}>> L^{p}(\Omega ).
    \end{CD}
\end{align*}

\begin{remark}
    The Piola transform goes into the opposite direction of the mapping $\phi : \Omega \rightarrow \Omega'$:
    scalar and vector fields over $\Omega'$ are transformed into scalar and vector fields over $\Omega$.
    This definition is in accordance with the notion of pullback, which we will review shortly. 
    One advantage of that definition is that it also makes sense whenever the transformation is not bijective. 
    However, the literature also knows the Piola transform in the direction of the original mapping. 
\end{remark}

\subsection{Exterior calculus}

We now move the discussion to exterior calculus, beginning with exterior algebra.
Let $V$ be a real vector space. 
Given an integer $k \geq 0$, we let $\Alt^{k}(V)$ denote the space of scalar-valued antisymmetric $k$-linear forms over $V$. 
Recall that any $k$-linear scalar-valued form $u$ over $V$ is called antisymmetric
if 
\begin{gather*} 
    u( v_{\pi(1)}, v_{\pi(2)}, \ldots, v_{\pi(k)} ) 
    = 
    \text{sign}(\pi) 
    u( v_1, v_2, \ldots, v_k ) 
\end{gather*}
for any $v_1, v_2, \dots, v_k \in V$ and any permutation $\pi$ of the indices \(\{1, 2, \ldots, k\}\). 
By definition, $\Alt^{1}(V)$ is just the dual space of $V$, and $\Alt^{0}(V)$ is the space of real numbers. 
Formally, we define $\Alt^{k}(V)$ to be the zero vector space when $k < 0$. 

The wedge product (or exterior product) of alternating multilinear forms is a fundamental operation in exterior algebra
(see~Chapter 14 in~\cite{lee2012smooth}). 
Given two alternating multilinear forms \( u_1 \in \Alt^{k}(V) \) and \( u_2 \in \Alt^{l}(V) \), 
their wedge product \( u_1 \wedge u_2 \) is a member of $\Alt^{k+l}(V)$
defined by the formula 
\begin{gather*}
    (u_1 \wedge u_2)(v_1, v_2, \ldots, v_{k+l}) 
    = 
    \frac{1}{k! l!} 
\sum_{ \pi } 
    \signum(\pi) 
    u_1( v_{\pi(  1)}, \ldots, v_{\pi(k  )} ) 
    u_2( v_{\pi(k+1)}, \ldots, v_{\pi(k+l)} ),
\end{gather*}
for any \( v_1, v_2, \ldots, v_{k+l} \in V \).
Here, the sum runs over all permutations $\pi$ of the index set \(\{ 1, 2, \ldots, k+l \}\).
The exterior product is bilinear and associative, and satisfies 
\begin{align*}
    u_1 \wedge u_2 = (-1)^{kl} u_2 \wedge u_1,
    \quad 
    \forall u_1 \in \Alt^{k}(V),
    \quad 
    \forall u_2 \in \Alt^{l}(V).
\end{align*}
The interior product is in some sense dual to the exterior product. Given $v \in V$ and $u \in \Alt^{k}(V)$, we define the interior product $v \lrcorner u \in \Alt^{k-1}(V)$ via 
\begin{align*}
    (v \lrcorner u)( v_1, v_2, \ldots, v_{k-1} ) = u( v, v_1, v_2, \ldots, v_{k-1} ),
    \quad 
    \forall v_1, v_2, \ldots, v_{k-1} \in V.
\end{align*}

We employ the exterior algebra only in the special case $V = \bbR^{n}$ of alternating forms over the $n$-dimensional Euclidean space. 
Here, it is customary to identify $\Alt^{k}(V)$ with the space of antisymmetric tensors in $k$ indices. 
Moreover, this particular setting comes with a canonical basis. 
We let \(\{\cartanx^1, \cartanx^2, \ldots, \cartanx^{n}\}\) be the basis dual to the canonical unit vectors.
This is a canonical basis of $\Alt^{1}(\bbR^{n})$. 
To define a canonical basis of $\Alt^{k}(\bbR^{n})$, 
we first introduce $\Sigma(k,n)$, the set of strictly ascending mappings $\sigma : \{1,\dots,k\} \rightarrow \{1,\dots,n\}$, where $k, n \in \bbZ$, 
and introduce the basic $k$-alternators 
\begin{align*}
    \cartanx^{\sigma} := \cartanx^{\sigma(1)} \wedge \dots \wedge \cartanx^{\sigma(k)}, 
    \quad 
    \forall \sigma \in \Sigma(k,n). 
\end{align*}
These define a basis of $\Alt^{k}(\bbR^{n})$.
Note that 
$\dim \Alt^{k}(\bbR^{n}) = \binom{n}{k}$. 
In particular, $\Alt^{k}(\bbR^{n})$ is the zero vector space whenever $k > n$.

We notice that the canonical scalar product on $\bbR^{n}$ gives rise to a scalar product on $\Alt^{1}(\bbR^{n})$,
which induces a scalar product on $\Alt^{k}(\bbR^{n})$.
The basic $k$-alternators are an orthonormal basis of $\Alt^{k}(\bbR^{n})$ with respect to that inner product. 

\subsection{Smooth differential forms}

We let $\Omega \subset \bbR^{n}$ be any bounded open set.
We write $C^{\infty}\Alt^{k}(\Omega)$ for the space of smooth differential $k$-forms over $\Omega$,
which is the vector space of smooth mappings from $\Omega$ into $\Alt^{k}(\bbR^{n})$.
The exterior derivative \( \cartan \) is an operator that takes a \( k \)-form \( \omega \in C^{\infty}\Alt^{k}(\Omega) \) 
to a \((k+1)\)-form \( \cartan\omega \in C^{\infty}\Alt^{k+1}(\Omega) \). 
Every \( k \)-form \( \omega \in C^{\infty}\Alt^{k}(\Omega) \) can be written 
\begin{gather}\label{math:canonicalrepresentation}
    \omega 
    = 
    \sum_{ \sigma \in \Sigma(k,n) } 
    \omega_{\sigma} \, 
    \cartanx^{\sigma}
    = 
    \sum_{ \sigma \in \Sigma(k,n) } 
    \omega_{\sigma} \, 
    \cartanx^{\sigma(1)} \wedge \cartanx^{\sigma(2)} \wedge \cdots \wedge \cartanx^{\sigma(k)},
\end{gather}
where \( \omega_{\sigma} : \Omega \rightarrow \bbR \) are smooth functions.
The exterior derivative \( \cartan\omega \) is defined by
\begin{gather*}
    \cartan\omega = 
    \sum_{ \sigma \in \Sigma(k,n) } 
    \sum_{j=1}^{n} 
    \frac{\partial \omega_{\sigma}}{\partial x^j} 
    \, \cartanx^j \wedge 
    \cartanx^{\sigma(1)} \wedge \cartanx^{\sigma(2)} \wedge \cdots \wedge \cartanx^{\sigma(k)}.
\end{gather*}
The exterior derivative is linear and nilpotent, which means 
\( \cartan(\cartan\omega) = 0 \) for any \( \omega \in C^{\infty}\Alt^{k}(\Omega) \).
Moreover, it satisfies the Leibniz rule:
\begin{gather*} 
    \cartan(\omega \wedge \eta) 
    = 
    \cartan\omega \wedge \eta + (-1)^{k} \omega \wedge \cartan\eta, 
    \qquad \forall \omega \in C^{\infty}\Alt^{k}(\Omega), 
    \qquad \forall \eta \in C^{\infty}\Alt^{l}(\Omega).
\end{gather*}
The integral of a differential $n$-form is uniquely defined via 
\begin{gather*}
    \int_{\Omega} \omega \cartanx^{1} \wedge \dots \wedge \cartanx^{n} = \int_{\Omega} \omega(x) \;dx
    . 
\end{gather*}

\begin{remark}
    In three dimensions, 
    the calculus of differential forms is in correspondence with classical vector calculus. 
    This is expressed formally as the commuting diagram 
    \begin{gather*}
    \begin{CD}
        C^\infty\Alt^0(\Omega) @>{\cartan}>> C^\infty\Alt^1(\Omega) @>{\cartan}>> C^\infty\Alt^2(\Omega) @>{\cartan}>> C^\infty\Alt^3(\Omega) 
        \\
        @VV{\varpi^{0}}V 
        @VV{\varpi^{1}}V 
        @VV{\varpi^{2}}V 
        @VV{\varpi^{3}}V 
        \\
        C^\infty(\Omega) @>{\grad}>> C^\infty(\Omega)^{3} @>{\curl}>> C^\infty(\Omega)^{3} @>{\divergence}>> C^\infty(\Omega),
    \end{CD}
\end{gather*}
    where $\varpi^{0}$ and $\varpi^{3}$ are the identity mappings and where 
    \begin{align*}
     \varpi^{1}\left( u_1 \cartanx^{1} + u_2 \cartanx^{2} + u_3 \cartanx^{3} \right) 
     &= 
     \left( u_1, u_2, u_3 \right), 
     \\
     \varpi^{2}\left( u_{12} \cartanx^{1} \wedge \cartanx^{2} + u_{13} \cartanx^{1} \wedge \cartanx^{3} + u_{23} \cartanx^{2} \wedge \cartanx^{3} \right) 
     &= 
     \left( u_{23}, - u_{13}, u_{12} \right).   
    \end{align*}
    In two dimensions, 
    the calculus of differential forms can be translated into 2D vector calculus in two different ways. 
    To the authors' best knowledge, neither convention is dominant over the other in the literature.
    We summarize the situation in the following commuting diagram: 
    \begin{gather*} 
    \begin{CD}
        C^\infty(\Omega) @>{\curl}>> C^\infty(\Omega)^{2} @>{\divergence}>> C^\infty(\Omega)
        \\
        @AA{\varkappa^{0}}A 
        @AA{\varkappa^{1}}A 
        @AA{\varkappa^{2}}A 
        \\
        C^\infty\Alt^0(\Omega) @>{\cartan}>> C^\infty\Alt^1(\Omega) @>{\cartan}>> C^\infty\Alt^2(\Omega) 
        \\
        @VV{\varpi^{0}}V 
        @VV{\varpi^{1}}V 
        @VV{\varpi^{2}}V 
        \\
        C^\infty(\Omega) @>{\grad}>> C^\infty(\Omega)^{2} @>{\rot}>> C^\infty(\Omega)
        .
    \end{CD}
\end{gather*}
    Here, $\varpi^{1}\left( u_1 \cartanx^{1} + u_2 \cartanx^{2} \right) = \left( u_1, u_2 \right)$ is the lower middle isomorphism. We introduce the rotation operator $J(x,y) = (y,-x)$ and define $\varkappa^{1} = J \varpi^{1}$ and $\rot = \divergence J$.
    The other vertical arrows are the identity. 
    The utility of exterior calculus is that the operators of vector calculus can be translated into a common framework that does not depend on the dimension.
\end{remark}

\subsection{Sobolev spaces of differential forms}

Let us now turn our attention to Sobolev spaces of differential forms. 
Since the exterior product space $\Alt^{k}(\bbR^{n})$ carries a norm, induced from the Euclidean norm on $\bbR^{n}$, there are pointwise norms of differential $k$-forms. 
We let $L^{p}\Alt^{k}(\Omega)$ be the space of differential $k$-forms over $\Omega$ with locally integrable coefficients 
such that its pointwise norm is $p$-integrable. 
The exterior derivative is defined in the sense of distributions and we introduce 
\begin{gather*}
    W^{p}\Alt^{k}(\Omega) 
    := 
    \{ u \in L^{p}\Alt^{k}(\Omega) \suchthat \cartan u \in L^{p}\Alt^{k+1}(\Omega) \}
    .
\end{gather*}
We observe that $u \in L^{p}\Alt^{k}(\Omega)$ has weak exterior derivative $f \in L^{p}\Alt^{k+1}(\Omega)$
if and only if for all $v \in C^{\infty}_{c}\Lambda^{n-k-1}(\Omega)$ we have the integration-by-parts formula
\begin{align*}
    \int_{\Omega} \cartan v \wedge u
    &=
    (-1)^{k(n-k)+1}
    \int_{\Omega} v \wedge f 
    .
\end{align*}
Lastly, we are also interested in differential forms whose trace vanishes along a part of the boundary. 
Suppose that $\Gamma \subseteq \partial\Omega$ is a relatively open subset of the boundary. 
We say that $u \in W^{p}\Alt^{k}(\Omega)$ has vanishing trace along $\Gamma$ if for all $x \in \Gamma$ there exists $r > 0$
such that for all $v \in C^{\infty}_{c}\Lambda^{n-k-1}(\bbR^{n})$ 
whose support lies in the open ball $B_r(x)$
we have the integration-by-parts formula
\begin{align*}
    \int_{B_r(x)} \cartan v \wedge \tilde u
    &=
    (-1)^{k(n-k)+1}
    \int_{B_r(x)} v \wedge \widetilde{\cartan u}
    ,
\end{align*} where $\tilde u : \bbR^{n} \rightarrow \Alt^{k}(\bbR^{n})$ and $\widetilde{\cartan u} : \bbR^{n} \rightarrow \Alt^{k+1}(\bbR^{n})$
are the extensions of $u$ and $\cartan u$ by zero.
If that condition is satisfied, we also write 
\begin{align*}
    \trace_{\Gamma} u = 0
    .
\end{align*}
Accordingly, we write $\trace_{\Gamma} u = \trace_{\Gamma} u'$ for $\trace_{\Gamma} (u-u') = 0$ whenever $u, u' \in W^{p}\Alt^{k}(\Omega)$.
Lastly, we introduce the closed subspaces 
\begin{gather*}
    W^{p}_{0}\Alt^{k}(\Omega) 
    := 
    \{ u \in W^{p}\Alt^{k}(\Omega) \suchthat \trace_{\partial \Omega} u = 0 \}
    .
\end{gather*}
We know that $u \in W^{p}_{0}\Alt^{k}(\Omega)$ if and only if its extension by zero $\tilde u : \bbR^{n} \rightarrow \Alt^{k}(\bbR^{n})$ is a member of $\tilde u \in W^{p}\Alt^{k}(\bbR^{n})$. Moreover, $\cartan W^{p}_{0}\Alt^{k}(\Omega) \subseteq W^{p}_{0}\Alt^{k+1}(\Omega)$. We also observe that $W^{p}\Alt^{n}(\Omega) = W^{p}_{0}\Alt^{n}(\Omega) = L^{p}(\Omega)$. We use the abbreviation $W^{1,p}_{0}(\Omega) := W^{p}\Lambda^{0}(\Omega)$.

\subsection{Transformations by bi-Lipschitz mappings}

We are interested in transformations of Sobolev tensor fields from one domain onto another. 
Suppose that $\Omega,\Omega' \subset \bbR^{n}$ are open sets and suppose that $\phi: \Omega \to \Omega'$ is a bi-Lipschitz mapping.
The pullback of $u \in L^{p}\Alt^{k}(\Omega')$ along $\phi$ is the (measurable) differential form 
\begin{align}\label{equation:pullback}
    \phi^{\ast}u\restriction_{x}( v_{1}, v_{2}, \ldots, v_{k} ) 
    := 
    u\restriction_{\phi(x)}( \Jacobian\phi\restriction_{x} \cdot v_1, \Jacobian\phi\restriction_{x} \cdot v_2, \ldots, \Jacobian\phi\restriction_{x} \cdot v_k ) 
\end{align}
for any $v_1, v_2, \dots, v_k \in \bbR^{n}$ and any $x \in \Omega$. 
One can show that $\phi^{\ast}u \in L^{p}\Alt^{k}(\Omega)$ and the following estimates.

\begin{proposition}\label{proposition:pullbackestimate}
    Let $\phi : \Omega \rightarrow \Omega'$ be a bi-Lipschitz mapping between open sets $\Omega, \Omega' \subseteq \bbR^{n}$.
    Let $p \in [1,\infty]$ and $u \in L^{p}\Lambda^{k}(\Omega')$. 
    Then $\phi^{\ast} u \in L^{p}\Lambda^{k}(\Omega)$ and 
    \begin{align}
        \| \phi^{\ast} u \|_{L^{p}\Lambda^{k}(\Omega)}
        &\leq 
        \| \Jacobian \phi \|^{k}_{L^{\infty}(\Omega)}
        \| \det\Jacobian \phi^{-1} \|^{\frac{1}{p}}_{L^{\infty}(\Omega')}
        \| u \|_{L^{p}\Lambda^{k}(\Omega')}
        .
    \end{align}
    If $\phi$ is affine and $\sigma_1 \geq \sigma_2 \geq \cdots \geq \sigma_n$ are the singular values of $\Jacobian\phi$, then 
    \begin{align}
        \| \phi^{\ast} u \|_{L^{p}\Lambda^{k}(\Omega)}
        &\leq 
        \sigma_1 \sigma_2 \cdots \sigma_k \cdot 
        \| \det\Jacobian \phi^{-1} \|^{\frac{1}{p}}_{L^{\infty}(\Omega')}
        \| u \|_{L^{p}\Lambda^{k}(\Omega')}
        .
    \end{align}
    Moreover, if $u \in W^{p}\Lambda^{k}(\Omega')$, then 
    \begin{align}\label{math:pullbackcommutes}
        \phi^{\ast} u \in W^{p}\Lambda^{k}(\Omega),
        \qquad 
        \cartan \phi^{\ast} u = \phi^{\ast} \cartan u.
    \end{align}
\end{proposition}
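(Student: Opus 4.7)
The plan is to prove the three claims (two norm estimates and one commutativity) in order, with the pointwise multilinear-algebraic norm bound doing most of the work for the estimates and a mollification argument handling the distributional commutativity.

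First I would establish the general bound by passing from a pointwise estimate to an integrated one. Since $\phi$ is Lipschitz, Rademacher's theorem provides $\Jacobian \phi|_x$ for almost every $x\in\Omega$, and at each such $x$ the definition~\eqref{equation:pullback} expresses $\phi^{\ast}u|_x$ as the pullback of the alternating form $u|_{\phi(x)}$ by the linear map $\Jacobian\phi|_x$ applied in each of $k$ slots. Testing against unit vectors immediately gives the pointwise estimate
\[
    |\phi^{\ast}u|_x|_{\Alt^k(\bbR^n)}
    \leq
    \matnorm{\Jacobian\phi|_x}^{k} \cdot |u|_{\phi(x)}|_{\Alt^k(\bbR^n)}.
\]
Raising this to the $p$-th power, bounding $\matnorm{\Jacobian\phi|_x}^{k}$ by its essential supremum, and then applying the bi-Lipschitz change of variables $y=\phi(x)$ (whose Jacobian factor $|\det\Jacobian\phi^{-1}(y)|$ is again bounded in $L^{\infty}$) yields the first inequality. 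The case $p=\infty$ follows by an obvious essential-supremum variant of the same pointwise argument.

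For the sharper affine bound I would invoke the singular value decomposition $\Jacobian\phi=U\Sigma V^{\top}$, so that the induced map on $\Alt^{k}(\bbR^{n})$ factors as $\Lambda^{k}U\cdot\Lambda^{k}\Sigma\cdot\Lambda^{k}V^{\top}$. The outer factors are orthogonal on $\Alt^{k}(\bbR^{n})$ because $U,V$ preserve the inner product on $\bbR^{n}$ and hence act orthogonally on the orthonormal basis $\{\cartanx^{\sigma}\}_{\sigma\in\Sigma(k,n)}$. The middle factor is diagonal in that basis with entries $\prod_{i\in\sigma(\{1,\dots,k\})}\sigma_{i}$, whose maximum is the ordered product $\sigma_{1}\sigma_{2}\cdots\sigma_{k}$. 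Substituting this sharper operator norm in place of $\matnorm{\Jacobian\phi}^{k}$ in the previous paragraph yields the second estimate.

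For the commutativity $\cartan\phi^{\ast}u=\phi^{\ast}\cartan u$, I would reduce to smooth forms by mollification. Choose $u_{\varepsilon}\in C^{\infty}\Lambda^{k}(\Omega'_{\varepsilon})$ with $u_{\varepsilon}\to u$ in $L^{p}\Lambda^{k}$ and $\cartan u_{\varepsilon}\to\cartan u$ in $L^{p}\Lambda^{k+1}$ on any compact set compactly contained in $\Omega'$. For smooth $u_{\varepsilon}$ and Lipschitz $\phi$, the Lipschitz pullback $\phi^{\ast}u_{\varepsilon}$ is differentiable a.e.\ by Rademacher, and the classical chain-rule computation gives $\cartan\phi^{\ast}u_{\varepsilon}=\phi^{\ast}\cartan u_{\varepsilon}$ pointwise a.e., hence as elements of $L^{p}\Lambda^{k+1}(\Omega)$. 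The $L^{p}$-continuity of the pullback operator, which is exactly the first estimate already established, allows passage to the limit in the distributional integration-by-parts identity that defines the weak exterior derivative, giving the result and simultaneously $\phi^{\ast}u\in W^{p}\Lambda^{k}(\Omega)$. The only delicate point is the classical commutativity for the Lipschitz pullback $\phi^{\ast}u_{\varepsilon}$: this is the step that genuinely uses the bi-Lipschitz hypothesis through the a.e.\ chain rule, but it is a standard result on Lipschitz pullbacks of differential forms and requires no new ideas.
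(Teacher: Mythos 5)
The paper delegates this proof entirely to the cited references (\cite{licht2019smoothed} and \cite[Corollary~6]{stern2013lp}), so you are providing a self-contained argument where the paper provides none. Your overall strategy — pointwise algebraic bound, then change of variables, then mollification for the commutativity — is the right one and mirrors what the references do.

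There is a small gap in your first paragraph. The phrase ``testing against unit vectors'' bounds $|\phi^{\ast}u|_x(v_1,\dots,v_k)|$ over unit vectors $v_i$, which controls the \emph{comass} norm of the pulled-back form, not the Euclidean norm $|\cdot|_{\Alt^{k}(\bbR^{n})}$ induced by the orthonormal basis $\{\cartanx^{\sigma}\}$ that the paper uses; these two norms differ by dimension-dependent factors once $1<k<n$. The clean justification is precisely the SVD argument you already give in the second paragraph, applied pointwise: writing $\Jacobian\phi|_{x}=U\Sigma V^{\top}$ (valid for a.e.\ $x$ by Rademacher), the pullback on $\Alt^{k}(\bbR^{n})$ factors as $\Lambda^{k}V\circ\Lambda^{k}\Sigma\circ\Lambda^{k}U^{\top}$, the outer factors are isometries for the $\Alt^{k}$ inner product, and the middle factor is diagonal with eigenvalues $\prod_{i\in[\sigma]}\sigma_{i}\le\sigma_{1}\cdots\sigma_{k}\le\matnorm{\Jacobian\phi|_{x}}^{k}$. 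This one computation yields both the general and the affine estimate at once; you should run the two paragraphs in that order rather than treating the SVD as a sharpening only in the affine case. The change-of-variables step and the case $p=\infty$ are fine.

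For the commutativity, you correctly identify the nontrivial point: for smooth $u_{\varepsilon}$ and merely Lipschitz $\phi$, the identity $\cartan\phi^{\ast}u_{\varepsilon}=\phi^{\ast}\cartan u_{\varepsilon}$ is not a naive pointwise chain rule, because $\Jacobian\phi$ has no pointwise derivative; the identity holds distributionally because the would-be second-derivative terms cancel by antisymmetry of the wedge. You state this as a black box, which is acceptable since it is exactly \cite[Corollary~6]{stern2013lp}, but be aware that this is where the real work lies: your passage to the limit then only needs the $L^{p}$-boundedness of $\phi^{\ast}$ that you established, and this part is correct.
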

\begin{proof}
    See~\cite{licht2019smoothed} and Corollary~6 in~\cite{stern2013lp}.
\end{proof}

\subsection{Some approximation properties}

We review a few approximation properties. 
Let $\mollifier$ be a non-negative scalar function whose integral equals one and whose support lies in the unit ball around the origin.
Define $\mollifier_{\eps}(x) := \eps^{-n} \mollifier(x/\eps)$. We collect several approximation results that involve convolution with a mollifier.
In what follows, convolutions of functions defined over domains tacitly assume that these functions have been extended by zero to all of Euclidean space.

\begin{lemma}
Let $\Omega \subseteq \bbR^{n}$ be a bounded open set and let $1 \leq p < \infty$. 
    If $u \in L^{p}(\Omega)$, then the convolution $\mollifier_{\eps} \star u \rightarrow u$ in $L^{p}(\Omega)$ as $\epsilon \rightarrow 0$.
\end{lemma}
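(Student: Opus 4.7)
The plan is to prove this classical mollification result by a density argument, reducing to the case of continuous compactly supported functions where convergence is essentially pointwise and uniform on a compact set.

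First, I would record the uniform $L^p$-boundedness of convolution: by Young's inequality for convolutions, for any $v \in L^p(\bbR^n)$ (viewing $u$ extended by zero), one has
\begin{equation*}
    \| \mollifier_{\eps} \star v \|_{L^p(\bbR^n)} \leq \| \mollifier_{\eps} \|_{L^1(\bbR^n)} \| v \|_{L^p(\bbR^n)} = \| v \|_{L^p(\bbR^n)},
\end{equation*}
since $\mollifier$ (and hence each $\mollifier_\eps$) integrates to one. In particular, the restriction to $\Omega$ gives $\| \mollifier_\eps \star v \|_{L^p(\Omega)} \leq \| v \|_{L^p(\bbR^n)}$.

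Next, I would establish the result for $v \in C_c(\bbR^n)$. Such a $v$ is uniformly continuous, so for any $\eta > 0$ there is $\delta > 0$ with $|v(x-y) - v(x)| < \eta$ for all $|y| < \delta$. Since $\mollifier_\eps$ is supported in the ball of radius $\eps$ and has integral one,
\begin{equation*}
    | (\mollifier_\eps \star v)(x) - v(x) | \leq \int_{|y| < \eps} \mollifier_\eps(y) | v(x-y) - v(x) | \, dy \leq \eta
\end{equation*}
for all $\eps < \delta$ and all $x$. Moreover, the supports of $\mollifier_\eps \star v$ and $v$ are contained in a fixed compact set $K \subset \bbR^n$ for all $\eps \leq 1$, so integrating over $K$ gives $\|\mollifier_\eps \star v - v\|_{L^p(\bbR^n)} \leq \eta \, \vol(K)^{1/p}$, which tends to zero with $\eta$.

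Finally, I would run the standard three-epsilon argument. Given $u \in L^p(\Omega)$ extended by zero to $\bbR^n$ and given $\eta > 0$, density of $C_c(\bbR^n)$ in $L^p(\bbR^n)$ (which requires $p < \infty$) yields $v \in C_c(\bbR^n)$ with $\|u - v\|_{L^p(\bbR^n)} < \eta$. Then
\begin{equation*}
    \| \mollifier_\eps \star u - u \|_{L^p(\Omega)} \leq \| \mollifier_\eps \star (u - v) \|_{L^p(\Omega)} + \| \mollifier_\eps \star v - v \|_{L^p(\Omega)} + \| v - u \|_{L^p(\Omega)},
\end{equation*}
and the first and third terms are each at most $\eta$ by the Young bound, while the middle term tends to zero by the continuous case. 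Letting $\eps \to 0$ and then $\eta \to 0$ concludes the proof. The only mild subtlety is the use of $p < \infty$ for density of compactly supported continuous functions in $L^p$; no step is genuinely hard, as this is textbook material.
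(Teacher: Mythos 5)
Your proof is correct and is the standard textbook argument: Young's inequality gives uniform $L^p$-boundedness of $v \mapsto \mollifier_\eps \star v$, the compactly supported continuous case is handled by uniform continuity, and the general case follows by density of $C_c(\bbR^n)$ in $L^p(\bbR^n)$ for $p < \infty$. The paper states this lemma without proof, treating it as a classical fact, so there is no paper argument to compare against; your write-up supplies exactly the expected details, including the tacit extension of $u$ by zero to $\bbR^n$ that the paper announces just before the lemma.
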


\begin{lemma}
    Let $\Omega \subseteq \bbR^{n}$ be a bounded open set and let $1 \leq p < \infty$. 
    Smooth forms are dense in $W^{p}\Lambda^{k}(\Omega)$.
    If $\Omega$ is convex, then $C^{\infty}_{c}\Lambda^{k}(\Omega)$ is dense in $W^{p}_{0}\Lambda^{k}(\Omega)$. 
\end{lemma}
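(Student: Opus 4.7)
The plan is to prove both statements by a Meyers--Serrin style mollification argument, exploiting that the exterior derivative is a constant-coefficient differential operator in the Cartesian basis and therefore commutes with convolution: for any $u \in W^p\Lambda^k$ defined on an open set where the relevant convolution makes sense, one has the distributional identity
\begin{align*}
    \cartan(\mollifier_\eps \star u) = \mollifier_\eps \star \cartan u.
\end{align*}
Applied componentwise to the canonical representation~\eqref{math:canonicalrepresentation}, the preceding convolution lemma gives $\mollifier_\eps \star u \to u$ in the $W^p\Lambda^k$-norm.

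For the first assertion, I would follow the classical Meyers--Serrin recipe. Fix $\delta > 0$ and introduce the exhaustion $\Omega_0 := \emptyset$ and $\Omega_j := \{x \in \Omega : \dist(x,\partial\Omega) > 1/j\}$ for $j \geq 1$ (using that $\Omega$ is bounded), together with the locally finite open cover $V_j := \Omega_{j+2} \setminus \overline{\Omega_j}$ of $\Omega$. Choose a smooth partition of unity $\{\chi_j\}$ subordinate to $\{V_j\}$. Each $\chi_j u$ belongs to $W^p\Lambda^k(\Omega)$ with compact support in $V_j$, since $\cartan(\chi_j u) = \cartan\chi_j \wedge u + \chi_j\,\cartan u \in L^p\Lambda^{k+1}(\Omega)$ by the Leibniz rule. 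Pick $\eps_j > 0$ small enough that $\mollifier_{\eps_j} \star (\chi_j u)$ is compactly supported in a small neighborhood of $V_j$ still contained in $\Omega$, and within $2^{-j}\delta$ of $\chi_j u$ in the $W^p\Lambda^k$-norm. The locally finite sum $u_\delta := \sum_j \mollifier_{\eps_j} \star (\chi_j u)$ is then a smooth differential form on $\Omega$ with $\|u_\delta - u\|_{W^p\Lambda^k(\Omega)} \leq \delta$.

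For the second assertion, convex geometry allows reduction to compactly supported forms. Assume without loss of generality that $0 \in \Omega$, with $B_\rho(0) \subseteq \Omega$ for some $\rho > 0$. Given $u \in W^p_0\Lambda^k(\Omega)$, extend by zero to $\tilde u \in W^p\Lambda^k(\bbR^n)$, and for $\lambda > 1$ set $v_\lambda := \phi_\lambda^{\ast} \tilde u$ under the dilation $\phi_\lambda(x) := \lambda x$. By Proposition~\ref{proposition:pullbackestimate} together with the commutation relation~\eqref{math:pullbackcommutes}, $v_\lambda \in W^p\Lambda^k(\bbR^n)$ with $\supp v_\lambda \subseteq \lambda^{-1}\overline\Omega$ and $\cartan v_\lambda = \phi_\lambda^{\ast}\cartan\tilde u$. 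A short convex-geometric argument shows that $\lambda^{-1}\overline\Omega$ is a compact subset of $\Omega$: for any $x \in \overline\Omega$ and any $w \in B_{(1-\lambda^{-1})\rho}(0)$, the point $\lambda^{-1}x + w$ is a convex combination of $x$ and a point of $B_\rho(0) \subset \Omega$ and hence belongs to $\overline\Omega$; since for open convex sets with nonempty interior the interior of $\overline\Omega$ coincides with $\Omega$, the whole ball $B_{(1-\lambda^{-1})\rho}(\lambda^{-1}x)$ lies in $\Omega$. Writing the coefficients of $v_\lambda$ in the canonical basis as $\lambda^k \tilde u_\sigma(\lambda\,\cdot)$ and using continuity of dilations in $L^p$ for $p < \infty$, we obtain $v_\lambda \to \tilde u$ and $\cartan v_\lambda \to \cartan\tilde u$ in $L^p$ on $\bbR^n$ as $\lambda \to 1^+$. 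Finally, mollifying $v_\lambda$ by $\mollifier_\eps$ for $\eps < \dist(\supp v_\lambda, \partial\Omega)$ produces an element of $C^\infty_c\Lambda^k(\Omega)$, and a diagonal extraction $\lambda \downarrow 1$, $\eps \downarrow 0$ concludes the argument.

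The main obstacle is the geometric fact that $\lambda^{-1}\overline\Omega$ is compactly contained in $\Omega$, which genuinely relies on convexity together with the interior ball condition $B_\rho(0) \subseteq \Omega$; for non-convex domains this dilation trick fails and one must resort to more delicate constructions (e.g., local straightening or reflection near the boundary). The remaining ingredients, namely the partition of unity, the commutation of $\cartan$ with convolution, and $L^p$-continuity of dilations for $p < \infty$, are routine.
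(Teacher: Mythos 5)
Your proof is correct and follows the same overall strategy as the paper: mollification for the first assertion, and dilation followed by mollification for the second. For the second assertion the two arguments are essentially identical (the paper also uses $\varphi_t(x)=tx$ for $t>1$ and notes $\varphi_t^\ast u\in W^p_0\Lambda^k(t^{-1}\Omega)\subseteq W^p_0\Lambda^k(\Omega)$); your write-up merely fills in the convex-geometry detail of why $\lambda^{-1}\overline\Omega$ is compactly contained in $\Omega$, which the paper takes for granted.

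For the first assertion you go noticeably further than the paper. The paper's proof stops after verifying the weak identity $\int (\mollifier_\eps\star u)\wedge\cartan v=\int(\mollifier_\eps\star\cartan u)\wedge v$ for $v\in C^\infty_c\Lambda^{n-k-1}(\Omega)$ and then asserts density. As written, that identity only holds for $\eps$ small relative to $\dist(\supp v,\partial\Omega)$ (otherwise the $y$-integration by parts has boundary contributions from the zero extension), so it yields interior convergence of the mollification but not $W^p\Lambda^k(\Omega)$-convergence up to $\partial\Omega$: the classical derivative of $\mollifier_\eps\star u$ need not agree with $\mollifier_\eps\star\widetilde{\cartan u}$ in the boundary collar $\Omega\setminus\Omega_\eps$. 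Your Meyers--Serrin construction — exhaustion $\Omega_j$, locally finite cover $V_j$, subordinate partition of unity $\chi_j$, the Leibniz observation $\cartan(\chi_j u)=\cartan\chi_j\wedge u+\chi_j\,\cartan u$, and cell-by-cell choice of $\eps_j$ — is exactly the missing step that converts the interior estimate into global density, so your argument is the more complete one here.
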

\begin{proof}
    We notice $\mollifier_{\epsilon} \star u \in C^{\infty}\Lambda^{k}(\bbR^{n})$. 
    By the dominated convergence theorem and because $u$ has a weak derivative,
    for any $v \in C^{\infty}_{c}\Lambda^{n-k-1}(\Omega)$
    \begin{align*}
        \int_{\bbR^{n}} ( \mollifier_{\epsilon} \star u ) \wedge \cartan v 
        &= 
        \int_{\bbR^{n}} \int_{\bbR^{n}} \mollifier(x-y) \wedge u(y) \wedge \cartan_{x} v(x) \;dx\,dy
        \\&= 
        \int_{\bbR^{n}} \int_{\bbR^{n}} \cartan_x\mollifier(x-y) \wedge u(y) \wedge v(x) \;dx\,dy
        \\&= 
        - \int_{\bbR^{n}} \int_{\bbR^{n}} \cartan_y\mollifier(x-y) \wedge u(y) \wedge v(x) \;dx\,dy
        \\&= 
        \int_{\bbR^{n}} \int_{\bbR^{n}} \mollifier(x-y) \wedge \cartan_y u(y) \wedge v(x) \;dx\,dy
        = 
        \int_{\bbR^{n}} ( \mollifier_{\epsilon} \star \cartan u ) \wedge v 
        .
    \end{align*}
    Hence, $C^{\infty}\Lambda^{k}(\Omega)$ is dense in $W^{p}\Lambda^{k}(\Omega)$.
    
    Next, suppose that $\Omega$ is convex. Without loss of generality, $0 \in \Omega$. 
    Let $u \in W^{p}_{0}\Lambda^{k}(\Omega)$ and extend $u$ trivially onto $\bbR^{n}$. 
    Define $\varphi_t(x) = tx$ for $t > 1$. 
    Then $\varphi_{t}^{\ast} u \in W^{p}_{0}\Lambda^{k}(t^{-1}\Omega) \subseteq W^{p}_{0}\Lambda^{k}(\Omega)$ 
    and $\varphi_{t}^{\ast} u$ converges to $u$ as $t$ decreases towards $1$. 
    Given any $t > 1$, by taking the convolution with $\mollifier_{\epsilon}$ for $\epsilon > 0$ small enough,
    we approximate $\varphi_{t}^{\ast} u$ through members of $C^{\infty}_{c}\Lambda^{k}(\Omega)$.
    The desired result follows. 
\end{proof}

\section{Regularized potentials over bounded convex sets}\label{section:potentialoperator}

We now develop bounds for Poincar\'e--Friedrichs constants for the exterior derivative over convex domains.
Here, we consider two special cases: 
either the $L^{p}$ de~Rham complex without boundary conditions, or the $L^{p}$ de~Rham with full boundary conditions. 
The corresponding linear potentials are known as the regularized Poincar\'e and regularized Bogovski\u{\i} potentials in the literature. 
We build upon the discussion spearheaded by Costabel and McIntosh~\cite{costabel2010bogovskiui},
who analyze them as pseudo-differential operators over domains star-shaped with respect to a ball. 
In comparison to their extensive work, our discussion is more modest:
we study potentials merely over convex sets, and we are only interested in their operator norms between Lebesgue spaces.
However, our goal is explicit bounds for the operator norms, giving the Poincar\'e--Friedrichs constants. 

In the remainder of this section, $\Omega \subseteq \bbR^{n}$ is a bounded convex open set with diameter $\diam(\Omega) > 0$.

\subsection{Regularized Poincar\'e and Bogovski\u{\i} operators}

We begin by introducing the Costabel--McIntosh kernel.
For any $\mia \in \{0,\dotsc,n\}$, we define the kernel $\calG_{\mia} : \bbR^{n} \times \bbR^{n} \rightarrow \bbR$ by
\begin{equation}\label{math:mainkernel}
  \calG_{\mia}(x,y) = \int_{1}^\infty (t-1)^{n-\mia}t^{\mia-1} \vol(\Omega)^{-1}\chi_{\Omega} \left(y+t(x-y)\right)\,dt
  ,
\end{equation}
where $\chi_{\Omega} : \Omega \rightarrow \{0,1\}$ denotes the characteristic function of the domain $\Omega$.
Given a differential form $u \in {C^\infty_c}(\bbR^n,\Alt^\mia)$, where \(1 \leq \mia \leq n\), 
we then define the integral operators
\begin{align*}
  \Poinc_{\mia} u(x) &= \int_{\Omega} \calG_{n-\mia+1}(y,x) \,(x-y)\lrcorner u(y)\,dy,
  \\
  \Bogov_{\mia} u(x) &= \int_{\Omega} \calG_{\mia}(x,y) \,(x-y)\lrcorner u(y)\,dy.
\end{align*}
We call $\Poinc_{\mia}$ the \emph{Poincar\'e operator} and $\Bogov_{\mia}$ the \emph{Bogovski\u{\i} operator}. 

We show that the integrals in the definition of $\Poinc_{\mia}$ and $\Bogov_{\mia}$ actually converge. 
In order to analyze the properties of the potentials,
we first rewrite the Costabel--McIntosh kernel $\calG_{\mia}$.
Letting $x,y \in \bbR^n$ with $x \neq y$, we find 
\begin{align*}
    \calG_{\mia}(x,y) 
    &= 
    \int_{0}^{\infty} t^{n-\mia} (t+1)^{\mia-1} \vol(\Omega)^{-1} \chi_{\Omega} \left( x+t(x-y) \right)\,dt
    \\
    &= 
    \int_{0}^{\infty} \sum_{\jeza=0}^{\mia-1} \tbinom{\mia-1}{\jeza} t^{n-\mia+\jeza} \vol(\Omega)^{-1} \chi_{\Omega} \left( x+t(x-y) \right)\,dt
    \\
    &= 
    \int_{0}^{\infty} \sum_{\jeza=0}^{\mia-1} \tbinom{\mia-1}{\mia-1-\jeza} t^{n-\mia+\mia-1-\jeza} \vol(\Omega)^{-1} \chi_{\Omega} \left( x+t(x-y) \right)\,dt
    \\
    &= 
    \int_{0}^{\infty} \sum_{\jeza=0}^{\mia-1} \tbinom{\mia-1}{\jeza} t^{n-\jeza-1} \vol(\Omega)^{-1} \chi_{\Omega} \left( x+t(x-y) \right)\,dt
    \\
    &= 
    \sum_{\jeza=0}^{\mia-1} \tbinom{\mia-1}{\jeza} \int_{0}^{\infty} t^{n-\jeza-1} \vol(\Omega)^{-1} \chi_{\Omega}\left( x+t(x-y) \right)\,dt 
    \\
    &= 
    \vol(\Omega)^{-1} \sum_{\jeza=0}^{\mia-1} \tbinom{\mia-1}{\jeza} \, |x-y|^{\jeza-n} \int_{0}^{\infty} r^{n-\jeza-1} \chi_{\Omega}\left(x+r\frac{x-y}{|x-y|}\right)\,dr
    .
\end{align*}
If $x \in \Omega$ and $x \neq y$, 
then we can restrict the inner integrals to the range $0 \leq r \leq \diam(\Omega)$, which gives 
\begin{align*}
    \calG_{\mia}(x,y) 
    &= 
    \vol(\Omega)^{-1} \sum_{\jeza=0}^{\mia-1}\tbinom{\mia-1}{\jeza} |x-y|^{\jeza-n} \int_{0}^{\diam(\Omega)} r^{n-\jeza-1} \,dr 
    \\
    &= 
    \vol(\Omega)^{-1} \sum_{\jeza=0}^{\mia-1}\tbinom{\mia-1}{\jeza} |x-y|^{\jeza-n} \frac{ \diam(\Omega)^{n-\jeza} }{n-\jeza}.
\end{align*}
We are now in a position to show that the potentials are bounded with respect to Lebesgue norms.

\subsection{An operator norm bound with respect to the Lebesgue norm: the Poincar\'e case}

We begin with the Poincar\'e operator. 
Let $B_{\diam(\Omega)}(0)$ be the $n$-dimensional ball centered at the origin.
Suppose that $u \in L^{\infty}\Alt^{\mia}(\Omega)$ with $1 \leq \mia \leq n$.
We estimate $\Poinc_{\mia} u(x)$ pointwise for any $x \in \Omega$ by the result of a convolution of a locally integrable function with $u$:
\begin{align*}
    \left| \Poinc_{\mia} u(x) \right|
    &=
    \left| 
        \int_{\Omega} \calG_{n-\mia+1}(x,y) \,(x-y)\lrcorner u(y)\,dy
    \right| 
\\&\leq 
    \int_{\Omega} \vol(\Omega)^{-1} \sum_{\jeza=0}^{ n-\mia }\tbinom{ n-\mia }{\jeza} \frac{ \diam(\Omega)^{n-\jeza} }{n-\jeza} |x-y|^{\jeza+1-n} \chi_{B_{\diam(\Omega)}(0)}(x-y) |u(y)| \,dy
    .
\end{align*}
We recall the radial integrals 
\begin{align*}
    \int_{B_{\diam(\Omega)}(0)} |z|^{\jeza+1-n} \,dz
    &
    =
    \vol_{n-1}(S_1) \int_{0}^{\diam(\Omega)} r^{\jeza+1-n} r^{n-1} \,dr
    \\&
    =
    \vol_{n-1}(S_1) \int_{0}^{\diam(\Omega)} r^{\jeza} \,dr
    =
    \vol_{n-1}(S_1) \frac{\diam(\Omega)^{\jeza+1}}{\jeza+1}
    ,
\end{align*}
where $S_1 \subseteq \bbR^{n}$ stands for the unit sphere of dimension $n-1$. 
One computes 
\begin{align*}
    &
    \int_{\bbR^{n}} \sum_{\jeza=0}^{n-\mia} \tbinom{n-\mia}{\jeza} \frac{ \diam(\Omega)^{n-\jeza} }{n-\jeza} \chi_{B_{\diam(\Omega)}(0)}(z) |z|^{\jeza+1-n} \,dz
    \\&
    =
    \sum_{\jeza=0}^{n-\mia} \tbinom{n-\mia}{\jeza} \frac{ \diam(\Omega)^{n-\jeza} }{n-\jeza} \int_{B_{\diam(\Omega)}(0)} |z|^{\jeza+1-n} \,dz
    \\&
    =
    \vol_{n-1}(S_1) \sum_{\jeza=0}^{n-\mia} \tbinom{n-\mia}{\jeza} \frac{ \diam(\Omega)^{n-\jeza} }{n-\jeza} \frac{\diam(\Omega)^{\jeza+1}}{\jeza+1}
=
    \vol_{n-1}(S_1) \diam(\Omega)^{n+1} \underbrace{ \sum_{\jeza=0}^{n-\mia} \frac{ \tbinom{n-\mia}{\jeza} }{ (n-\jeza)(\jeza+1) } }_{ =: C_{\Poinc}(n,\mia) \leq 2^{n-\mia}}
    .
\end{align*}
Here we introduce the numerical constant 
\begin{gather}\label{equation:A_Poinc}
    C_{\Poinc}(n,\mia) := \sum_{\jeza=0}^{n-\mia} \frac{ \tbinom{n-\mia}{\jeza} }{ (n-\jeza)(\jeza+1) },
\end{gather}
which depends only on $n$ and $\mia$ and which is bounded by $2^{n-\mia}$. 

In particular, the integral $\Poinc_{\mia} u(x)$ is absolutely convergent for any choice of $x \in \Omega$
and its magnitude is pointwise dominated by the convolution of $|u|$ against an integrable function.
Young's convolution inequality now implies: 
\begin{align*}
    \| \Poinc_{\mia} u \|_{L^{p}(\Omega)}
    &
    \leq 
    \vol_{n-1}(S_1) C_{\Poinc}(n,\mia) \frac{ \diam(\Omega)^{n} }{\vol(\Omega)} 
    \diam(\Omega)
    \| u \|_{L^{p}(\Omega)}
    \\&
    \leq 
    n C_{\Poinc}(n,\mia) \frac{ \vol_{}(B_{\diam(\Omega)}(0)) }{\vol(\Omega)} 
    \diam(\Omega)
    \| u \|_{L^{p}(\Omega)}
    .
\end{align*}
We have assumed so far that $u \in L^{\infty}\Alt^{\mia}(\Omega)$. 
Since that space is dense in the Lebesgue spaces, a density argument establishes the following: 
for any $1 \leq p \leq \infty$ we have a bounded linear operator 
\begin{gather*}
    \Poinc_{\mia} : L^{p}\Alt^{\mia}(\Omega) \rightarrow L^{p}\Alt^{\mia-1}(\bbR^n).
\end{gather*}

\begin{remark}
    As already explained in the original work of Costabel and McIntosh~\cite{costabel2010bogovskiui}, the operators $\Poinc_{\mia}$ preserve polynomial differential forms.
\end{remark}

\subsection{An operator norm bound with respect to the Lebesgue norm: the Bogovski\u{\i} case}

We analyze the Bogovski\u{\i} potential operator by similar means. 
Suppose that $u \in L^{\infty}\Alt^{\mia}(\bbR^{n})$ with $\supp u \subseteq \overline\Omega$ and that $x \in \bbR^{n}$.
First, if $x \notin \Omega$, then the convexity of $\Omega$ implies that $y + t( x - y ) \notin \Omega$ for all $t > 1$. Hence, $\calG_{\mia}(x,y) = 0$ and therefore $\Bogov_{\mia} u(x) = 0$ in that case.
Consider now the case $x \in \overline\Omega$. 
We estimate $\Bogov_{\mia} u(x)$ pointwise by 
\begin{align*}
    \left| \Bogov_{\mia} u(x) \right|
    &=
    \left| 
        \int_{\Omega} \calG_{\mia}(x,y) \,(x-y)\lrcorner u(y)\,dy
    \right| 
    \\&\leq 
    \int_{\Omega} \vol(\Omega)^{-1} \sum_{\jeza=0}^{\mia-1}\tbinom{\mia-1}{\jeza} \frac{ \diam(\Omega)^{n-\jeza} }{n-\jeza} |x-y|^{\jeza+1-n} |u(y)| \,dy
    \\&\leq 
    \int_{\bbR^{n}} \vol(\Omega)^{-1} \sum_{\jeza=0}^{\mia-1}\tbinom{\mia-1}{\jeza} \frac{ \diam(\Omega)^{n-\jeza} }{n-\jeza} \chi_{B_{\diam(\Omega)}(0)}(x-y) |x-y|^{\jeza+1-n} |u(y)| \,dy
    .
\end{align*}
Using once more the radial integrals discussed above, we compute 
\begin{align*}
    &
    \int_{\bbR^{n}} \sum_{\jeza=0}^{\mia-1} \tbinom{\mia-1}{\jeza} \frac{ \diam(\Omega)^{n-\jeza} }{n-\jeza} \chi_{B_{\diam(\Omega)}(0)}(z) |z|^{\jeza+1-n} \,dz
    \\&
    =
    \sum_{\jeza=0}^{\mia-1} \tbinom{\mia-1}{\jeza} \frac{ \diam(\Omega)^{n-\jeza} }{n-\jeza} \int_{B_{\diam(\Omega)}(0)} |z|^{\jeza+1-n} \,dz
    \\&
    =
    \vol_{n-1}(S_1) \sum_{\jeza=0}^{\mia-1} \tbinom{\mia-1}{\jeza} \frac{ \diam(\Omega)^{n-\jeza} }{n-\jeza} \frac{\diam(\Omega)^{\jeza+1}}{\jeza+1}
=
    \vol_{n-1}(S_1) \diam(\Omega)^{n+1} \underbrace{ \sum_{\jeza=0}^{\mia-1} \frac{ \tbinom{\mia-1}{\jeza} }{ (n-\jeza)(\jeza+1) } }_{ =: C_{\Bogov}(n,\mia) \leq 2^{\mia-1}}
    .
\end{align*}
Here we introduce the numerical constant 
\begin{gather}\label{equation:A_Bogov}
    C_{\Bogov}(n,\mia) := \sum_{\jeza=0}^{\mia-1} \frac{ \tbinom{\mia-1}{\jeza} }{ (n-\jeza)(\jeza+1) }, 
\end{gather}
which depends only on $n$ and $\mia$ and which is bounded by $2^{\mia-1}$. 
Similar as above,
the integral $\Bogov_{\mia} u(x)$ is absolutely convergent for any choice of $x \in \bbR^{n}$
and its magnitude is pointwise dominated by the convolution of $|u|$ against an integrable function.
Young's convolution inequality now implies: 
\begin{align*}
    \| \Bogov_{\mia} u \|_{L^{p}(\Omega)}
    &
    \leq 
    \vol_{n-1}(S_1) C_{\Bogov}(n,\mia) \frac{ \diam(\Omega)^{n} }{\vol(\Omega)} 
    \diam(\Omega)
    \| u \|_{L^{p}(\Omega)}
    \\&
    \leq 
    n C_{\Bogov}(n,\mia) \frac{ \vol_{}(B_{\diam(\Omega)}(0)) }{\vol(\Omega)} 
    \diam(\Omega)
    \| u \|_{L^{p}(\Omega)}
    .
\end{align*}
We have assumed so far that $u$ is essentially bounded.
Since that space is dense in the Lebesgue spaces, a density argument yields: 
for any $1 \leq p \leq \infty$ we have a bounded linear operator 
\begin{gather*}
    \Bogov_{\mia} : L^{p}\Alt^{\mia}(\Omega) \rightarrow L^{p}\Alt^{\mia-1}(\bbR^n).
\end{gather*}
Moreover, $\supp \Bogov_{\mia} u \subseteq \overline\Omega$,
that is, the reconstructed potential has support contained within $\overline\Omega$.

\subsection{Rewriting the potentials operators}

More properties of these operators become apparent after a change of variables. 
We write down the full definition of these operators and perform two substitutions.
For the Poincar\'e operator, we substitute $a = x + t(y-x)$, and then we substitute $s = (t-1)/t$,
leading to 
\begin{align*}
    \Poinc_{\mia} u(x) 
    &= 
    \vol(\Omega)^{-1}
    \int_{\Omega} \int_{1}^\infty (t-1)^{\mia-1}t^{n-\mia} 
    \chi_{\Omega}\left(x+t(y-x)\right) 
    (x-y)\lrcorner u(y) \,dt\,dy 
    \\
    &=
    \vol(\Omega)^{-1}
    \int_{\bbR^{n}} \chi_{\Omega}(a) \,(x-a)\lrcorner \int_{0}^{1} t^{\mia-1} u\left(a+t(x-a)\right)\,dt\,da
    .
\end{align*}
For the Bogovski\u{\i} operator, we first substitute $a = y + t(x-y)$, and then we substitute $s = t/(t-1)$,
leading to 
\begin{align*}
    \Bogov_{\mia} u(x) 
    &= 
    \vol(\Omega)^{-1}
    \int_{\Omega} \int_{1}^\infty (t-1)^{n-\mia}t^{\mia-1} 
    \chi_{\Omega}\left(y+t(x-y)\right) 
    (x-y)\lrcorner u(y) \,dt\,dy 
    \\
    &=
    - 
    \vol(\Omega)^{-1}
    \int_{\bbR^{n}} \chi_{\Omega}(a) \,(x-a)\lrcorner \int_{1}^\infty t^{\mia-1} u\left(a+t(x-a)\right)\,dt\,da
    .
\end{align*}
Given $a \in \Omega$, we introduce the potentials 
\begin{align*}
    \Poinc_{\mia,a} u(x) 
    &:= 
    (x-a)\lrcorner \int_{0}^{1} t^{\mia-1} u\left(a+t(x-a)\right)\,dt
    ,
    \\
    \Bogov_{\mia,a} u(x) 
    &:= 
    - (x-a)\lrcorner \int_{1}^\infty t^{\mia-1} u\left(a+t(x-a)\right)\,dt
    .
\end{align*}
By definition,
\begin{align*}
    \Poinc_{\mia} u(x) 
    =
    \vol(\Omega)^{-1}
    \int_{\Omega} \Poinc_{\mia,a} u(x) \,da,
    \quad 
    \Bogov_{\mia} u(x) 
    =
    \vol(\Omega)^{-1}
    \int_{\Omega} \Bogov_{\mia,a} u(x) \,da
    .
\end{align*}

\subsection{Interaction of potentials with the exterior derivative}

We study the interaction of the Poincar\'e and Bogovski\u{\i} operators with the exterior derivative in more detail.
The main arguments are well-known and establish the exactness of several de~Rham complexes. 
We recapitulate these arguments since our variants of the regularized potential operators are not yet included in the published literature. 

We make use of the following notation~\cite{licht2022basis}:
for any mapping $\sigma \in \Sigma(k,n)$, we let $[\sigma] := \{ \sigma(1), \dots, \sigma(k) \}$ be its image. 
When $p \in [\sigma]$, then the member of $\Sigma(k-1,n)$ with image $[\sigma] \setminus \{p\}$ is written $\sigma-p$.

Suppose that $u \in C^{\infty}\Alt^{\mia}(\bbR^n)$.
We rewrite the Poincar\'e potential,
\begin{align*}
    \Poinc_{\mia,a} u(x) 
    &= 
    (x-a)\lrcorner \int_{0}^{1} t^{\mia-1} u\left(a+t(x-a)\right)\,dt 
    \\&
    = 
    (x-a)\lrcorner 
    \sum_{\sigma \in \Sigma(\mia,n)}
    \int_{0}^{1} 
    t^{\mia-1} u_{\sigma}\left(a+t(x-a)\right) \cartanx^{\sigma} \,dt 
    \\&
    = 
    \sum_{\sigma \in \Sigma(\mia,n)} \sum_{i=1}^{\mia}
    \int_{0}^{1} 
    t^{\mia-1} u_{\sigma}\left(a+t(x-a)\right) (-1)^{i-1} (x-a)_{\sigma(i) } \,\cartanx^{\sigma-\sigma(i)} \,dt 
    ,
\end{align*}
and compute its exterior derivative:
\begin{align*}
    \cartan \Poinc_{\mia,a} u(x) 
    &= 
    \sum_{\sigma \in \Sigma(\mia,n)} 
    \int_{0}^{1} 
    \mia t^{\mia-1} u_{\sigma}\left(a+t(x-a)\right) \,\cartanx^{\sigma} \,dt 
    \\&\qquad
    + 
    \sum_{\substack{ \sigma \in \Sigma(\mia,n), \; 1 \leq i \leq \mia \\ 1 \leq j \leq n, \; j \notin [\sigma-\sigma(i)] }}
\int_{0}^{1} 
    t^{\mia} \frac{ \partial u_{\sigma} }{\partial x_{j}}\left(a+t(x-a)\right) (-1)^{i-1} (x-a)_{\sigma(i) } \,\cartanx^{j} \wedge \cartanx^{\sigma-\sigma(i)} \,dt 
    .
\end{align*}
We write the exterior derivative of $u$ as 
\begin{align}\label{math:exteriorderivative_of_u_in_hodgediscussion}
    \cartan u(x)
    =
    \sum_{\sigma \in \Sigma(\mia,n)} \sum_{j=1}^{n}
    \frac{ \partial u }{\partial x_{j}}(x) \,\cartanx^{j} \wedge \cartanx^{\sigma} 
    =
    \sum_{\substack{ \sigma \in \Sigma(\mia,n) \\ 1 \leq j \leq n, \; j \notin [\sigma] }}
    \frac{ \partial u }{\partial x_{j}}(x) \,\cartanx^{j} \wedge \cartanx^{\sigma} 
    ,
\end{align}
and apply the Poincar\'e potential operator to this result, which gives 
\begin{align*}
    \Poinc_{\mia+1,a} \cartan u(x)
    &=
    (x-a)\lrcorner 
    \sum_{\substack{ \sigma \in \Sigma(\mia,n) \\ 1 \leq j \leq n, \; j \notin [\sigma] }}
\int_{0}^{1} t^{\mia} \frac{ \partial u_{\sigma} }{\partial x_{j}}\left(a+t(x-a)\right) \,dt 
    \,\cartanx^{j} \wedge \cartanx^{\sigma}
    \\&
    = 
    \sum_{\substack{ \sigma \in \Sigma(\mia,n) \\ 1 \leq j \leq n, \; j \notin [\sigma] }} 
    \int_{0}^{1} t^{\mia} \frac{ \partial u_{\sigma} }{\partial x_{j}}\left(a+t(x-a)\right) \,dt (x-a)_{j}
    \,\cartanx^{\sigma} 
    \\&\qquad 
    - 
    \sum_{\substack{ \sigma \in \Sigma(\mia,n) \\ 1 \leq j \leq n, \; j \notin [\sigma] \\ 1 \leq i \leq k }}
(-1)^{i-1}
    \int_{0}^{1} t^{\mia} \frac{ \partial u_{\sigma} }{\partial x_{j}}\left(a+t(x-a)\right) \,dt 
    \,(x-a)_{\sigma(i)} 
    \,\cartanx^{j} \wedge \cartanx^{\sigma-\sigma(i)}
    .
\end{align*}
Next, we add the exterior derivative of the potential and the potential of the exterior derivative.
Taking into account cancellations, this gives the identity 
\begingroup\allowdisplaybreaks
\begin{align*}
    &
    \cartan \Poinc_{\mia,a} u(x)
    +
    \Poinc_{\mia+1,a} \cartan u(x)
    \\&
    =
    \sum_{\sigma \in \Sigma(\mia,n)} 
    \int_{0}^{1} 
    \mia t^{\mia-1} u_{\sigma}\left(a+t(x-a)\right) \,\cartanx^{\sigma-\sigma(i)} \,dt 
    \\&\qquad
    + 
    \sum_{\substack{ \sigma \in \Sigma(\mia,n), \; 1 \leq i \leq \mia \\ 1 \leq j \leq n, \; j \notin [\sigma-\sigma(i)] }}
\int_{0}^{1} 
    t^{\mia} \frac{ \partial u_{\sigma} }{\partial x_{j}}\left(a+t(x-a)\right) (-1)^{i-1} (x-a)_{\sigma(i) } \,\cartanx^{j} \wedge \cartanx^{\sigma-\sigma(i)} \,dt 
    \\&\qquad
    +
    \sum_{\substack{ \sigma \in \Sigma(\mia,n) \\ 1 \leq j \leq n, \; j \notin [\sigma] }} 
    \int_{0}^{1} t^{\mia} \frac{ \partial u_{\sigma} }{\partial x_{j}}\left(a+t(x-a)\right) \,dt \,(x-a)_{j} \,\cartanx^{\sigma}
    \\&\qquad
    - 
    \sum_{\substack{ \sigma \in \Sigma(\mia,n) \\ 1 \leq j \leq n, \; j \notin [\sigma] \\ 1 \leq i \leq k }}
(-1)^{i-1}
    \int_{0}^{1} t^{\mia} \frac{ \partial u_{\sigma} }{\partial x_{j}}\left(a+t(x-a)\right) \,dt 
    \,(x-a)_{\sigma(i)} \,\cartanx^{j} \wedge \cartanx^{\sigma-\sigma(i)}
    \\&
    =
    \sum_{\sigma \in \Sigma(\mia,n)} 
    \int_{0}^{1} 
    \mia t^{\mia-1} u_{\sigma}\left(a+t(x-a)\right) \,\cartanx^{\sigma-\sigma(i)} \,dt 
    \\&\qquad
    +
    \sum_{\substack{ \sigma \in \Sigma(\mia,n) \\ 1 \leq j \leq n, \; j \notin [\sigma] }} 
    \int_{0}^{1} t^{\mia} \frac{ \partial u_{\sigma} }{\partial x_{j}}\left(a+t(x-a)\right) \,(x-a)_{j} \,dt \,\cartanx^{\sigma}
    \\&\qquad
    + 
    \sum_{\substack{ \sigma \in \Sigma(\mia,n), \\ 1 \leq i \leq \mia }}
    \int_{0}^{1} 
    t^{\mia} \frac{ \partial u_{\sigma} }{\partial x_{\sigma(i)}}\left(a+t(x-a)\right) (-1)^{i-1} (x-a)_{\sigma(i) } \,\cartanx^{\sigma(i)} \wedge \cartanx^{\sigma-\sigma(i)} \,dt 
    \\&
    =
    \sum_{\sigma \in \Sigma(\mia,n)} 
    \int_{0}^{1} \frac{\partial}{\partial t} \left( t^{\mia} u_{\sigma}\left(a+t(x-a)\right) \right) \,dt \,\cartanx^{\sigma}
    =
    \sum_{\sigma \in \Sigma(\mia,n)} 
    \left( u_{\sigma}(x) - 0^k u_{\sigma}(a) \right) \,\cartanx^{\sigma}
    .
\end{align*}
\endgroup
We conclude that, 
\begin{alignat*}{4}
    u(x) &= \Poinc_{1,a} \cartan u(x) - u(a), \qquad                              && k = 0,
    \\
    u(x) &= \cartan \Poinc_{\mia,a} u(x) + \Poinc_{\mia+1,a} \cartan u(x), \qquad && 1 \leq \mia \leq n.
\end{alignat*}
In summary, after taking the average over $a \in \Omega$:
\begin{alignat}{4}\label{math:hodgedecomposition:poinc}
    u(x) &= \Poinc_{1} \cartan u(x) - \vol(\Omega)^{-1} \int_\Omega u(a) \,da,  \qquad && k = 0,
    \\
    u(x) &= \cartan \Poinc_{\mia} u(x) + \Poinc_{\mia+1} \cartan u(x),          \qquad && 1 \leq \mia \leq n.
\end{alignat}

Even though the discussion for the Bogovski\u{\i} operator is large analogous, some modifications are needed. 
Suppose that $u \in C^{\infty}\Alt^{\mia}(\bbR^n)$ with $\supp u \subseteq \overline\Omega$.
We rewrite the Bogovski\u{\i} potential,
\begin{align*}
    \minus 
    \Bogov_{\mia,a} u(x) 
    &= 
    (x-a)\lrcorner \int_{1}^\infty t^{\mia-1} u\left(a+t(x-a)\right)\,dt 
    \\&
    = 
    (x-a)\lrcorner 
    \sum_{\sigma \in \Sigma(\mia,n)}
    \int_{1}^\infty 
    t^{\mia-1} u_{\sigma}\left(a+t(x-a)\right) \,\cartanx^{\sigma} \,dt 
    \\&
    = 
    \sum_{\sigma \in \Sigma(\mia,n)} \sum_{i=1}^{\mia}
    \int_{1}^\infty 
    t^{\mia-1} u_{\sigma}\left(a+t(x-a)\right) (-1)^{i-1} \,(x-a)_{\sigma(i) } \,\cartanx^{\sigma-\sigma(i)} \,dt 
    .
\end{align*}
We want to take its exterior derivative, but we can generally only do that in the distributional sense. 
Away from the pivot point $a$, the form $\Bogov_{\mia,a} u(x)$ is differentiable in $x$, 
and so we compute its exterior derivative over $\Omega \setminus \{a\}$:
\begin{align*}
    \minus 
    \cartan \Bogov_{\mia,a} u(x) 
    &= 
    \sum_{\sigma \in \Sigma(\mia,n)} 
    \int_{1}^\infty 
    \mia t^{\mia-1} u_{\sigma}\left(a+t(x-a)\right) \,\cartanx^{\sigma} \,dt 
    \\&\qquad
    + 
    \sum_{\substack{ \sigma \in \Sigma(\mia,n), \; 1 \leq i \leq \mia \\ 1 \leq j \leq n, \; j \notin [\sigma-\sigma(i)] }}
\int_{1}^\infty 
    t^{\mia} \frac{ \partial u_{\sigma} }{\partial x_{j}}\left(a+t(x-a)\right) (-1)^{i-1} (x-a)_{\sigma(i) } \,\cartanx^{j} \wedge \cartanx^{\sigma-\sigma(i)} \,dt 
    .
\end{align*}
The derivative of $\Bogov_{\mia,a} u$ over the whole domain, which is what we need, can only be taken in the sense of distributions. 
Let $\phi \in C^{\infty}\Alt^{n-\mia}(\Omega)$ be smooth and compactly supported over $\Omega$. 
We let $\epsilon > 0$ and calculate 
\begin{align*}
    (-1)^{n(k-1)} \int_{ \Omega \setminus B_\epsilon(a) }
    \Bogov_{\mia,a} u(x) \wedge \cartan \phi 
    =
    \int_{ S_\epsilon(a) }
    \trace_{S_\epsilon(a)}{ \Bogov_{\mia,a} u(x) \wedge \phi } 
    -
    \int_{ \Omega \setminus B_\epsilon(a) }
    \cartan \Bogov_{\mia,a} u(x) \wedge \phi 
    ,
\end{align*}
where $\trace_{S_{\epsilon}(a)}$ denotes the trace onto the sphere $S_{\epsilon}(a)$. 
In the limit as $\epsilon$ goes to zero, the two integrals over $\Omega \setminus B_\epsilon(a)$ in the above equation converge to the integrals of the respective integrands over $\Omega \setminus \{a\}$. 
To understand the derivative of $\Bogov_{\mia,a} u$ over the domain $\Omega$, 
we study the remaining surface integral. 
We apply several substitutions:
\begin{align*}
    &\int_{ S_\epsilon(a) }
    \trace_{S_\epsilon(a)}{ \Bogov_{\mia,a} u(x) \wedge \phi(x) } 
    \\&\quad 
    =
\epsilon^{n-1}
    \int_{ S_1(a) }
    \trace_{S_1(a)}{ \Bogov_{\mia,a} u( \epsilon x + a - \epsilon a ) \wedge \phi( \epsilon x + a - \epsilon a ) }
    \\&\quad 
    =
    \epsilon^{n-1}
    \int_{ S_1(a) }
    \int_{1}^\infty 
    \trace_{S_1(a)}{ 
        \epsilon t^{\mia-1} (x-a)\lrcorner u\left( a + \epsilon t( x - a )\right) 
        \wedge \phi( \epsilon (x-a) + a ) 
    }
    \,dt 
    \\&\quad 
    =
\epsilon^{n-1}
    \int_{ S_1(a) }
    \int_{\epsilon}^\infty 
    \trace_{S_1(a)}{ 
        \epsilon^{-\mia+1} s^{\mia-1} (x-a)\lrcorner u\left( a + s( x - a )\right) 
        \wedge \phi( \epsilon (x-a) + a ) 
    }
    \,ds 
    \\&\quad 
    =
\epsilon^{n-\mia}
    \int_{ S_1(a) }
    \int_{\epsilon}^\infty 
    \trace_{S_1(a)}{ 
        s^{\mia-1} (x-a)\lrcorner u\left( a + s( x - a )\right) 
        \wedge \phi( \epsilon (x-a) + a ) 
    }
    \,ds 
    .
\end{align*}
We make a case distinction. When $k<n$, then the double integral itself is bounded uniformly in $\epsilon > 0$ and so the last expression vanishes as $\epsilon$ goes to zero. When instead $k=n$, then the last expression equals 
\begin{align*}
    & 
    \int_{ S_1(a) }
    \int_{\epsilon}^\infty 
    \trace_{S_1(a)}{ 
        s^{n-1} (x-a)\lrcorner u\left( a + s( x - a )\right) 
        \wedge \phi( \epsilon (x-a) + a ) 
    }
    \,ds 
    \\&\quad 
    =
    \int_{\bbR^{n} \setminus B_{\epsilon}(0)}
        u\left( y \right) 
        \wedge \phi\left( \epsilon \frac{y-a}{ \vecnorm{y-a} } + a \right) 
    \,dy 
    .
\end{align*}
The limit of this is $\phi(a) \int_\Omega u(x)$ as $\epsilon$ goes to zero. 
To complete the discussion, we write the exterior derivative of $u$ as in~\eqref{math:exteriorderivative_of_u_in_hodgediscussion}, 
and apply the Bogovski\u{\i} potential operator to this result, which gives 
\begin{align*}
    \minus 
    \Bogov_{\mia+1,a} \cartan u(x)
    &=
    (x-a)\lrcorner 
    \sum_{\sigma \in \Sigma(\mia,n)} \sum_{j=1}^{n}
    \int_{1}^\infty t^{\mia} \frac{ \partial u_{\sigma} }{\partial x_{j}}\left(a+t(x-a)\right) \,dt 
    \,\cartanx^{j} \wedge \cartanx^{\sigma}
    \\&
    = 
    \sum_{\substack{ \sigma \in \Sigma(\mia,n) \\ 1 \leq j \leq n, \; j \notin [\sigma] }} 
    \int_{1}^\infty t^{\mia} \frac{ \partial u_{\sigma} }{\partial x_{j}}\left(a+t(x-a)\right) \,dt \,(x-a)_{j} \,\cartanx^{\sigma}
    \\&\qquad 
    - 
    \sum_{\substack{ \sigma \in \Sigma(\mia,n) \\ 1 \leq j \leq n, \; j \notin [\sigma] \\ 1 \leq i \leq k }}
(-1)^{i-1}
    \int_{1}^\infty t^{\mia} \frac{ \partial u_{\sigma} }{\partial x_{j}}\left(a+t(x-a)\right) \,dt 
    (x-a)_{\sigma(i)} \,\cartanx^{j} \wedge \cartanx^{\sigma-\sigma(i)}
    .
\end{align*}
We add the (distributional) exterior derivative of the potential and the potential of the exterior derivative.
In a manner that fully analogous to the discussion of the averaged Poincar\'e operator save for the modification when $k=n$, 
we come to the conclusion that 
\begin{alignat*}{4}
    u(x) &= \cartan \Bogov_{\mia,a} u(x) + \Bogov_{\mia+1,a} \cartan u(x),          \qquad && 0 \leq \mia \leq n-1,
    \\
    u(x) &= \cartan \Bogov_{n,a} u(x) - \left( \int_{\Omega} u(a) \right) \delta_a, \qquad && k = n.
\end{alignat*}
Here, $\delta_a$ denotes the Dirac delta at $a$. 
In summary, after taking the average over $a \in \Omega$:
\begin{alignat}{4}\label{math:hodgedecomposition:bogov}
    u(x) &= \cartan \Bogov_{\mia} u(x) + \Bogov_{\mia+1} \cartan u(x),                                   \qquad && 0 \leq \mia \leq n-1,
    \\
    u(x) &= \cartan \Bogov_{n} u(x) - \left( \vol(\Omega)^{-1} \int_{\Omega} u(a) \right) \chi_{\Omega}, \qquad && k = n.
\end{alignat}

\subsection{Operator norms as bounds for the Poincar\'e--Friedrichs constants}

We are now ready to state the main results of this section. 
Recall that $\diam(\Omega) > 0$ is the diameter of $\Omega$ and that $B_{\diam(\Omega)}(0)$ is the $n$-dimensional ball centered at the origin.
The following upper bounds for the Poincar\'e--Friedrichs constants are proportional to the domain diameter and are independent of the Lebesgue exponent $1 \leq p \leq \infty$.
However, the space dimension $n$ and the form degree $\mia$ enter the estimates, 
namely through definitions~\eqref{equation:A_Poinc} and~\eqref{equation:A_Bogov} of respectively $C_{\Poinc}(n,\mia)$ and $C_{\Bogov}(n,\mia)$.

\begin{theorem}\label{theorem:bogovpoinc}
    Let $\Omega \subseteq \bbR^{n}$ be a bounded convex open set and let $1 \leq p \leq \infty$. 
    We have bounded operators 
    \begin{align*}
        \Poinc_{\mia} : L^{p}\Alt^{\mia}(\Omega) \rightarrow W^{p}\Alt^{\mia-1}(\Omega)
        ,
        \qquad 
        \Bogov_{\mia} : L^{p}\Alt^{\mia}(\Omega) \rightarrow W^{p}_{0}\Alt^{\mia-1}(\Omega)
        .
    \end{align*}
    They satisfy the operator norm bounds 
    \begin{align*}
        \| \Poinc_{\mia} u \|_{L^{p}(\Omega)}
        &\leq 
C_{{\PF},\Poinc,\Omega,k,p}
        \| u \|_{L^{p}(\Omega)}
        ,
        \\
        \| \Bogov_{\mia} u \|_{L^{p}(\Omega)}
        &\leq 
C_{{\PF},\Bogov,\Omega,k,p}
        \| u \|_{L^{p}(\Omega)}
        ,
    \end{align*}
    where 
    \begin{align}
        C_{{\PF},\Poinc,\Omega,k,p} &:= C_{\Poinc}(n,\mia) \vol_{n-1}(S_{1}(0)) \frac{\diam(\Omega)^{n}}{\vol(\Omega)} \diam(\Omega)
        ,
        \\
        C_{{\PF},\Bogov,\Omega,k,p} &:= C_{\Bogov}(n,\mia) \vol_{n-1}(S_{1}(0)) \frac{\diam(\Omega)^{n}}{\vol(\Omega)} \diam(\Omega)
        .
    \end{align}
    For any $u \in W^{p}\Alt^{\mia}(\Omega)$ it holds for a.e.\ $x \in \Omega$ that 
    \begin{alignat}{4}
        u(x) &= \Poinc_{1} \cartan u(x) - \vol(\Omega)^{-1} \int_\Omega u(a) \;da, && \qquad k = 0,
        \label{math:theorem:bogovpoinc:poincare:zero}
        \\
        u(x) &= \cartan \Poinc_{\mia} u(x) + \Poinc_{\mia+1} \cartan u(x),        && \qquad 1 \leq \mia \leq n.
        \label{math:theorem:bogovpoinc:poincare:k}    
    \end{alignat}
    In particular, if $u \in W^{p}    \Alt^{\mia}(\Omega)$ with $0 \leq \mia \leq n-1$, then $w = \Poinc_{\mia} \cartan u$ satisfies $\cartan w = \cartan u$.
    
    For any $u \in W^{p}_{0}\Alt^{\mia}(\Omega)$ it holds for a.e.\ $x \in \Omega$ that 
    \begin{alignat}{4}
        u(x) &= \cartan \Bogov_{\mia} u(x) + \Bogov_{\mia+1} \cartan u(x),
        \qquad && 0 \leq \mia \leq n-1,
        \label{math:theorem:bogovpoinc:bogov:k}
        \\
        u(x) &= \cartan \Bogov_{n} u(x) - \left( \vol(\Omega)^{-1} \int_{\Omega} u(a) \right) \chi_{\Omega},
        \qquad && k=n.
        \label{math:theorem:bogovpoinc:bogov:n}
    \end{alignat}
    In particular, if $u \in W^{p}_{0}\Alt^{\mia}(\Omega)$ with $0 \leq \mia \leq n-1$, then $w = \Bogov_{\mia} \cartan u$ satisfies $\cartan w = \cartan u$.

\end{theorem}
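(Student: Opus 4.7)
The statement bundles four assertions: Lebesgue norm bounds for $\Poinc_\mia$ and $\Bogov_\mia$, membership of their images in $W^p$ (resp.\ $W^p_0$), the Hodge-style decomposition identities, and the potential property. The first has essentially been proved already, so the plan is to promote the smooth-form work above to the claimed rough-form setting, and then harvest the last item as a corollary.

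For the $L^p$ norm bounds, the two estimates $\|\Poinc_\mia u\|_{L^p(\Omega)} \leq C_{\PF,\Poinc,\Omega,k,p}\|u\|_{L^p(\Omega)}$ and the analogous bound for $\Bogov_\mia$ have already been derived in the preceding subsections, via Young's convolution inequality applied to the pointwise majorants obtained from the Costabel--McIntosh kernel together with the radial integrals producing $C_{\Poinc}(n,\mia)$ and $C_{\Bogov}(n,\mia)$. I would invoke them verbatim and extend by continuity from the dense subclass $L^\infty\Alt^\mia(\Omega)$ to all of $L^p\Alt^\mia(\Omega)$.

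For the decomposition identities and the $W^p$/$W^p_0$ mapping properties the plan is a density argument. The identities~\eqref{math:hodgedecomposition:poinc} and~\eqref{math:hodgedecomposition:bogov} were proved pointwise for smooth $u$ (with support in $\overline\Omega$ in the Bogovskiĭ case). For $u \in W^p\Alt^\mia(\Omega)$ I would choose $u_m \in C^\infty\Alt^\mia(\Omega)$ with $u_m \to u$ and $\cartan u_m \to \cartan u$ in $L^p$, using the density lemma from Section~\ref{section:calculus}; for $u \in W^p_0\Alt^\mia(\Omega)$ I would use $u_m \in C^\infty_c\Alt^\mia(\Omega)$ instead, which is dense by the convexity of $\Omega$. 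Applying the operators $\Poinc_\mia,\Poinc_{\mia+1}$ (resp.\ $\Bogov_\mia,\Bogov_{\mia+1}$) and their already-proved $L^p\to L^p$ continuity, every non-derivative term in the smooth identity converges in $L^p$; since the distributional exterior derivative is closed under $L^p$-convergence, the remaining term $\cartan \Poinc_\mia u_m$ (resp.\ $\cartan \Bogov_\mia u_m$) also converges in $L^p$ and equals the distributional $\cartan$ of the limit. This simultaneously establishes the a.e.\ identity, the membership $\Poinc_\mia u \in W^p\Alt^{\mia-1}(\Omega)$, and its Bogovskiĭ analogue. For the $W^p_0$ claim, I would add that the explicit kernel formula already showed $\supp \Bogov_\mia u_m \subseteq \overline{\Omega}$ (via the convexity-based vanishing of $\calG_\mia(x,y)$ for $x \notin \Omega$); this support property passes to the $L^p$-limit, so $\Bogov_\mia u$ extends by zero to an element of $W^p\Alt^{\mia-1}(\bbR^n)$, i.e.\ lies in $W^p_0\Alt^{\mia-1}(\Omega)$.

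The potential assertions then follow in one line. Given $u \in W^p\Alt^\mia(\Omega)$ with $0 \leq \mia \leq n-1$, set $f := \cartan u \in W^p\Alt^{\mia+1}(\Omega)$ with $\cartan f = 0$; applying the identity to $f$ collapses it to $f = \cartan \Poinc_{\mia+1} f$, so $w := \Poinc_{\mia+1}\cartan u$ satisfies $\cartan w = \cartan u$, and the Bogovskiĭ version is entirely analogous. The main obstacle is the Bogovskiĭ case with $\mia = n$: the pre-averaging computation produced a Dirac delta at the pivot point $a$, which only disappears after integration over $a \in \Omega$ to yield the $L^p$ correction $\chi_\Omega$ in~\eqref{math:theorem:bogovpoinc:bogov:n}. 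One must therefore take care to run the density argument for the post-averaging identity rather than attempting to reproduce the distributional calculation at the level of $L^p$ forms, since the delta does not survive such a limit in the target space.
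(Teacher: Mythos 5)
Your proposal matches the paper's own argument: the norm bounds are inherited from the Young-inequality estimates already derived, and the identity together with the $W^p$ (resp.\ $W^p_0$) mapping property is obtained by approximating $u$ with smooth forms (compactly supported in the Bogovski\u{\i} case, which uses convexity) and passing to the limit; your appeal to the closedness of the distributional exterior derivative under $L^p$ convergence is exactly the content of the paper's integration-by-parts argument against test forms. Your observation that the potential assertion uses $\Poinc_{\mia+1}\cartan u$ (and $\Bogov_{\mia+1}\cartan u$), derived by applying the decomposition identity at degree $\mia+1$ to $f = \cartan u$, is correct; note that the theorem statement as printed writes $\Poinc_{\mia}\cartan u$, which does not type-check against the operator's domain and appears to be a misprint.
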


\begin{proof}
    Consider the case $1 \leq p < \infty$.
Because the domain is bounded, 
    the subspaces $C^{\infty}_{c}\Alt^{\mia}(\Omega)$ are dense in $L^{p}\Alt^{\mia}(\Omega)$,
    and so the stated operator norm bounds follow by an approximation argument. 
Taking the limit on both sides of equation then implies the inequality with $p = \infty$
    because the $L^{\infty}$ norm is the limit of the Lebesgue norms as $p$ goes to infinity.

    Consider now $u \in W^{p}\Alt^{\mia}(\Omega)$ with $1 \leq k \leq n$. 
    We write $\zeta := \Poinc_{\mia  } u$. 
    There exists a sequence $u_{i} \in C^{\infty}\Alt^{\mia}(\overline\Omega)$ that converges to $u$ in $W^{p}\Alt^{\mia}(\Omega)$. 
    For any test form $v \in C^{\infty}_{c}\Alt^{n-\mia-1}(\Omega)$, we verify 
    \begin{align*}
        \int_{\Omega} v \wedge u_{i} 
        &=
        \int_{\Omega} v \wedge \Poinc_{\mia+1} \cartan u_{i}
        +
        \int_{\Omega} v \wedge \cartan \Poinc_{\mia  } u_{i}
        \\&=
        \int_{\Omega} v \wedge \Poinc_{\mia+1} \cartan u_{i}
        +
        (-1)^{\mia(n-\mia)+1}
        \int_{\Omega} \cartan v \wedge \Poinc_{\mia  } u_{i}
        .
    \end{align*}
    By the continuity of bounded linear functionals, we find 
    \begin{align*}
        \int_{\Omega} v \wedge u 
        -
        \int_{\Omega} v \wedge \Poinc_{\mia+1} \cartan u 
        &=
        (-1)^{\mia(n-\mia)+1}
        \int_{\Omega} \cartan v \wedge \Poinc_{\mia  } u 
        .
    \end{align*}
    Hence, by definition, $\zeta \in W^{p}\Alt^{\mia-1}(\Omega)$ with $\cartan \zeta = u - \Poinc_{\mia+1} \cartan u$.
    This shows~\eqref{math:theorem:bogovpoinc:poincare:k}, and~\eqref{math:theorem:bogovpoinc:poincare:zero} follows by an approximation argument.

    Analogously, suppose that $u \in W^{p}_{0}\Alt^{\mia}(\Omega)$ with $0 \leq \mia \leq n-1$. 
    We write $\zeta := \Bogov_{\mia} u$. 
    There exists a sequence $u_{i} \in C^{\infty}_{c}\Alt^{\mia}(\overline\Omega)$ that converges to $u$ in $W^{p}\Alt^{\mia}(\Omega)$. 
    For any test form $v \in C^{\infty}\Alt^{n-\mia-1}(\Omega)$, 
    which is the restriction of some member of $C^{\infty}\Alt^{n-\mia-1}(\bbR^{n})$, 
    it holds that 
    \begin{align*}
        \int_{\Omega} v \wedge u_{i} 
        &=
        \int_{\Omega} v \wedge \Bogov_{\mia+1} \cartan u_{i}
        +
        \int_{\Omega} v \wedge \cartan \Bogov_{\mia  } u_{i}
        \\&=
        \int_{\Omega} v \wedge \Bogov_{\mia+1} \cartan u_{i}
        +
        (-1)^{\mia(n-\mia)+1}
        \int_{\Omega} \cartan v \wedge \Bogov_{\mia  } u_{i}
        .
    \end{align*}
    By the continuity of bounded linear functionals, we find 
    \begin{align*}
        \int_{\Omega} v \wedge u 
        -
        \int_{\Omega} v \wedge \Bogov_{\mia+1} \cartan u
        &=
        (-1)^{\mia(n-\mia)+1}
        \int_{\Omega} \cartan v \wedge \Bogov_{\mia  } u 
        .
    \end{align*}
    By definition, $\zeta \in W^{p}_{0}\Alt^{\mia-1}(\Omega)$ with $\cartan \zeta = u - \Bogov_{\mia+1} \cartan u$.
    This shows~\eqref{math:theorem:bogovpoinc:bogov:k}, and~\eqref{math:theorem:bogovpoinc:bogov:n} follows by an approximation argument.
    
    Everything else is now apparent, and the proof is complete. 
\end{proof}

These constants are generally not optimal. 
For example, when $p=2$ and when only the divergence is considered, 
we have the following improved estimate. 

\begin{lemma}\label{lemma:PFfordivergence}
    Let $\Omega \subseteq \bbR^{n}$ be a bounded open set. 
For each $\bfu \in \bfW^{2}(\divergence,\Omega)$ there exists $\bfw \in \bfW^{2}(\divergence,\Omega)$ with $\divergence \bfw = \divergence \bfu$ and 
    \begin{align*}
        \| \bfw \|_{L^{2}(\Omega)} 
        \leq 
        \diam(\Omega) \| \divergence \bfu \|_{L^{2}(\Omega)}
        .
    \end{align*}
\end{lemma}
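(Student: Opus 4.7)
The plan is to build $\bfw$ as the gradient of a Poisson potential with homogeneous Dirichlet boundary conditions, and then exploit the Friedrichs inequality to control the $L^{2}$ norm purely by the diameter of $\Omega$. Set $f := \divergence \bfu \in L^{2}(\Omega)$. By the Lax--Milgram theorem applied to the symmetric coercive bilinear form $(\nabla \phi, \nabla \psi)_{L^{2}(\Omega)}$ on $W^{1,2}_{0}(\Omega)$, there exists a unique $\phi \in W^{1,2}_{0}(\Omega)$ satisfying $-\Delta \phi = -f$ weakly, that is,
\[
    \int_{\Omega} \nabla \phi \cdot \nabla \psi \,dx \;=\; -\int_{\Omega} f \,\psi \,dx, \qquad \forall \psi \in W^{1,2}_{0}(\Omega).
\]
Define $\bfw := \nabla \phi \in L^{2}(\Omega)^{n}$. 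Then $\bfw \in \bfW^{2}(\divergence,\Omega)$ with $\divergence \bfw = \Delta \phi = f = \divergence \bfu$ in the distributional sense, which is the first required property.

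For the norm bound, I would test the equation against $\phi$ itself, yielding
\[
    \| \nabla \phi \|_{L^{2}(\Omega)}^{2} \;=\; -\int_{\Omega} f \,\phi \,dx \;\leq\; \| f \|_{L^{2}(\Omega)} \| \phi \|_{L^{2}(\Omega)}
\]
by Cauchy--Schwarz. The standard Friedrichs inequality for $W^{1,2}_{0}(\Omega)$ then gives $\| \phi \|_{L^{2}(\Omega)} \leq \diam(\Omega) \| \nabla \phi \|_{L^{2}(\Omega)}$: indeed, after extending $\phi$ by zero to a slab of width $\diam(\Omega)$ containing $\Omega$, one writes $\phi$ as the one-dimensional integral of a partial derivative from the boundary of the slab and applies Cauchy--Schwarz in that direction. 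Combining these two inequalities and dividing by $\| \nabla \phi \|_{L^{2}(\Omega)}$ yields $\| \bfw \|_{L^{2}(\Omega)} = \| \nabla \phi \|_{L^{2}(\Omega)} \leq \diam(\Omega) \| \divergence \bfu \|_{L^{2}(\Omega)}$, which is the claimed inequality.

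There is no real obstacle here; the argument is a one-line Hilbertian duality between the Friedrichs inequality on $W^{1,2}_{0}(\Omega)$ and the divergence-preimage problem, which is precisely the reason why the authors remark in the introduction that the $p=2$ case of the divergence Poincar\'e--Friedrichs constant follows directly from the gradient Friedrichs constant by duality but does not generalize to $p \neq 2$. The only minor point to be careful about is the choice of constant in the Friedrichs inequality: we use the crude bound $\diam(\Omega)$ rather than the sharper $\diam(\Omega)/\pi$, which is all that is needed to match the statement.
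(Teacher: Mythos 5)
Your proof is correct and is essentially the same argument as the paper's: both reduce the claim to the Friedrichs inequality on $W^{1,2}_0(\Omega)$ by exploiting that $-\divergence : \bfW^{2}(\divergence,\Omega) \to L^{2}(\Omega)$ is the Hilbert-space adjoint of $\nabla : W^{1,2}_0(\Omega) \to \bfL^{2}(\Omega)$. The only difference is presentational: the paper states this abstractly as equality of the smallest singular values of a closed densely-defined operator and its adjoint, whereas you realize the minimal-norm preimage explicitly as $\bfw = \nabla\phi$ with $\phi$ solving the Dirichlet--Poisson problem $\Delta\phi = \divergence\bfu$ and then read off the bound from the energy estimate.
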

\begin{proof}
    This is a reduction to the Friedrichs inequality. 
    The space $W^{1,2}_{0}(\Omega)$ is the closure of the smooth functions with support in $\Omega$ in the Hilbert space $W^{1,2}(\Omega)$. 
    Then $\nabla : W^{1,2}_{0}(\Omega) \subseteq L^{2}(\Omega) \rightarrow \bfL^{2}(\Omega)$ is a closed densely-defined linear operator 
    whose smallest singular value is bounded from below by $\diam(\Omega)^{-1}$, according to the Friedrichs inequality. 
    The adjoint is the closed densely-defined linear operator $-\divergence : \bfW^{2}(\divergence,\Omega) \subseteq \bfL^{2}(\Omega) \rightarrow L^{2}(\Omega)$,
    which has the same smallest singular value. 
\end{proof}

\begin{remark}\label{remark:reg_Poinc_Bog}
    The classical Poincar\'e operator is known for its role in proving the exactness of the smooth de~Rham complex over star-shaped domains~\cite{lee2012smooth}.
    The Bogovski\u{\i}-type operators were first studied for the divergence operator and are a staple in the mathematics of hydrodynamics~\cite{bogovskii1979solution}.
    Costabel and McIntosh~\cite{costabel2010bogovskiui} regularize the potentials by averaging over pivot points within an interior ball using a smooth compactly supported weight function, 
    which is why they can study domains star-shaped with respect to a ball. 
    Their operators are pseudo-differential operators of negative order, 
    because their averaging uses a smooth weight; this proves that the operators are bounded between a variety of function spaces. 
    Explicit bounds for the higher-order seminorms of these pseudo-differential operators 
    have been recently contributed by Guzman and Salgado~\cite{guzman2021estimation}. 
    Instead, we average over the entire domain, which requires a convex geometry, 
    and we are interested in boundedness in the Lebesgue $p$-norms. 
    We establish computable bounds on the Poincar\'e--Friedrichs constants,
    which had been absent from the literature, to the best of our knowledge.
\end{remark}

\section{Shellable triangulations of manifolds}\label{section:advancedtriangulations}

We return to the theory of triangulations, as our main objective requires some further concepts.
We are interested in simplicial complexes that triangulate domains and that are \emph{shellable}.
Such shellable simplicial complexes are constructed by successively adding simplices in a well-structured manner. 
Local patches (stars) within triangulations of dimension two or three are examples of shellable complexes. 
The monographs by Kozlov~\cite{kozlov2008combinatorial} and Ziegler~\cite{ziegler1995lectures} are our main references for this section. 
We also refer to Lee's monograph~\cite{lee2011topological} for any further background on manifolds. 

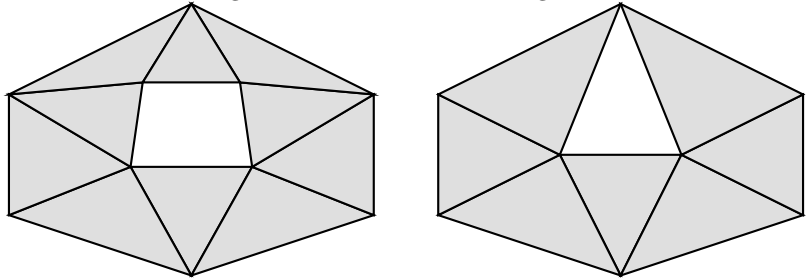
\begin{figure}[h]
\caption{Left: manifold triangulation of an annulus. Right: not a manifold triangulation.}\label{figure:not_manifold_triang}
\centering
\begin{tikzpicture}[scale=0.8]
\coordinate (B0)  at (0,-1); \coordinate (BL)  at (-3,0); \coordinate (BR)  at (3,0); \coordinate (CLL) at (-3,2); \coordinate (CL)  at (-1,0.8); \coordinate (CR)  at ( 1,0.8); \coordinate (CRR) at ( 3,2); \coordinate (Z)   at (0,3.5); \coordinate (ML) at  (-0.8,2.2);\coordinate (MR) at  ( 0.8,2.2);\draw[thick,fill=gray!25] (B0) -- (CL) -- (CR) -- cycle;
    \draw[thick,fill=gray!25] (B0) -- (CL) -- (BL) -- cycle;
    \draw[thick,fill=gray!25] (BL) -- (CL) -- (CLL) -- cycle;
    \draw[thick,fill=gray!25] (B0) -- (CR) -- (BR) -- cycle;
    \draw[thick,fill=gray!25] (BR) -- (CR) -- (CRR) -- cycle;
    \draw[thick,fill=gray!25] (ML) -- (CL) -- (CLL) -- cycle;
    \draw[thick,fill=gray!25] (MR) -- (CR) -- (CRR) -- cycle;
    \draw[thick,fill=gray!25] (Z) -- (ML) -- (CLL) -- cycle;
    \draw[thick,fill=gray!25] (Z) -- (MR) -- (CRR) -- cycle;
    \draw[thick,fill=gray!25] (Z) -- (ML) -- (MR) -- cycle;
\end{tikzpicture}
\qquad
\begin{tikzpicture}[scale=0.8]
\coordinate (B0)  at (0,-1); \coordinate (BL)  at (-3,0); \coordinate (BR)  at (3,0); \coordinate (CLL) at (-3,2); \coordinate (CL)  at (-1,1); \coordinate (CR)  at ( 1,1); \coordinate (CRR) at ( 3,2); \coordinate (Z)   at (0,3.5); \draw[thick,fill=gray!25] (B0) -- (CL) -- (CR) -- cycle;
    \draw[thick,fill=gray!25] (B0) -- (CL) -- (BL) -- cycle;
    \draw[thick,fill=gray!25] (BL) -- (CL) -- (CLL) -- cycle;
    \draw[thick,fill=gray!25] (B0) -- (CR) -- (BR) -- cycle;
    \draw[thick,fill=gray!25] (BR) -- (CR) -- (CRR) -- cycle;
    \draw[thick,fill=gray!25] (Z) -- (CL) -- (CLL) -- cycle;
    \draw[thick,fill=gray!25] (Z) -- (CR) -- (CRR) -- cycle;
\end{tikzpicture}
\end{figure}

\subsection{Triangulations of manifolds}\label{subsection:manifoldtriangulation}

Our discussion requires some notions and results concerning triangulated manifolds.
We define an $n$-dimensional simplicial complex to be a \emph{manifold triangulation} if the underlying set $\underlying{\calT}$ is an $n$-dimensional manifold, possibly with boundary.
We recall\footnote{In the usual terminology of topology and differential geometry, our notion of manifold is a manifold with boundary, even if the boundary is empty.}
that this means that for every $x \in \underlying{\calT}$
there exists an open neighborhood $U(x) \subseteq \underlying{\calT}$ and an embedding $\phi : U(x) \rightarrow \bbR^{n}$
such that $\phi(0) = 0$ and $\phi$ is an isomorphism either onto the open unit ball $\calB = \{ x \in \bbR^{n} \suchthat |x| < 1 \}$
or onto the half-ball $\{ x \in \calB \suchthat x_1 \geq 0 \}$.
In the former case, $x$ is called an \emph{interior point}, and in the latter case $x$ is called a \emph{boundary point}. 
Any simplicial complex that triangulates an $n$-dimensional manifold must be $n$-dimensional. 
Figure~\ref{figure:not_manifold_triang} shows an example of simplicial complex that triangulates a manifold and an example that does not.

The following special cases receive particular interest:
an \emph{$n$-ball triangulation} is any triangulation of a topological (closed) $n$-ball, and an \emph{$n$-sphere triangulation} is any triangulation of a topological $n$-sphere.

We know that any manifold $\calM$ has got a topological boundary $\partial\calM$, possibly empty. 
If $\calM$ is $n$-dimensional, then the $\partial\calM$ is a topological manifold without boundary of dimension $n-1$. 
We gather a few helpful observations on how these notions relate to triangulations.
While the reader might deem them obvious, we nevertheless include proofs. 

\begin{lemma}\label{lemma:boundarysimplices}
    Let $\calT$ be a finite $n$-dimensional simplicial complex whose underlying set is a manifold $\calM$. 
    Then the simplices contained in the boundary constitute a triangulation of the boundary.
    Moreover, if $F \in \calT$ is a face (i.e, $F$ has dimension $n-1$), then 
    \begin{itemize}
        \item $F$ is not contained in the boundary if and only if it is contained in exactly two $n$-simplices.
        \item $F$ is contained in the boundary if and only if it is contained in exactly one $n$-simplex.
\end{itemize}
\end{lemma}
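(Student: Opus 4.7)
The plan is to reduce both assertions to a local analysis near a relative-interior point of an $(n-1)$-face, exploiting the $n$-manifold structure of $\calM = \underlying{\calT}$.

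First, I would fix an $(n-1)$-face $F \in \calT$ and let $k \geq 1$ denote the number of $n$-simplices in $\calT$ containing $F$ (at least one, since $\calT$ is $n$-dimensional). For any point $x$ in the relative interior of $F$, the simplices of $\calT$ whose closure contains $x$ are exactly $F$ and the $n$-simplices $T_1,\dots,T_k$ having $F$ as a face, so a sufficiently small neighborhood $U$ of $x$ in $\underlying{\calT}$ is homeomorphic to $B^{n-1} \times C_k$, where $B^{n-1}$ is an open $(n-1)$-ball (parametrizing the relative interior of $F$ near $x$) and $C_k$ is the cone on $k$ points (a ``book'' with $k$ pages, one for each $T_i$). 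Since $\calM$ is an $n$-manifold, $U$ must also be homeomorphic either to $\bbR^n$ (if $x$ is an interior point of $\calM$) or to $\bbR^{n-1} \times [0,\infty)$ (if $x \in \partial\calM$); modulo the $B^{n-1}$ factor, this forces $C_k \cong \bbR$ or $C_k \cong [0,\infty)$. Since $C_2 \cong \bbR$ and $C_1 \cong [0,\infty)$, while removing the cone vertex from $C_k$ for $k \geq 3$ disconnects it into exactly $k$ components (whereas removing a point from $\bbR$ or from $[0,\infty)$ leaves at most two components), the only possibilities are $k=2$ (interior) and $k=1$ (boundary).

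Next, I would promote this pointwise conclusion to all of $F$. The count $k$ depends only on $F$ and not on $x$, so every relative-interior point of $F$ has the same type; closedness of $\partial\calM$ in $\calM$ then extends this to the closure of the relative interior, which equals $F$ itself. This yields both equivalences in the second part of the lemma.

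For the first assertion, let $\partial\calT := \{ S \in \calT : S \subseteq \partial\calM \}$. This is automatically a subcomplex since $\partial\calM$ is closed. To show $\underlying{\partial\calT} \supseteq \partial\calM$, I would take $x \in \partial\calM$ and let $S \in \calT$ be the unique simplex with $x$ in its relative interior. The face analysis rules out $\dim S = n$ (else $x$ would be interior to an $n$-simplex, hence to $\calM$). A neighborhood of any point in the relative interior of $S$ in $\underlying{\calT}$ is combinatorially modeled by the star of $S$, so the local type is constant along the relative interior of $S$; hence either that entire relative interior lies in $\partial\calM$ or none of it does. The former alternative holds at $x$, and closedness of $\partial\calM$ then gives $S \subseteq \partial\calM$, i.e.\ $S \in \partial\calT$. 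The main obstacle is the elementary justification that $C_k$ fails to be locally Euclidean at the cone vertex for $k \geq 3$; the component-counting argument above is the cleanest elementary route, and it can equivalently be encoded via local singular homology $H_n(U, U \setminus \{x\})$, which distinguishes the interior, boundary, and ``book''-singularity cases.
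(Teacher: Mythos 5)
Your approach to the face dichotomy is genuinely different from the paper's. The paper chooses a small ball $\mathring B_F$ around the barycenter $z_F$ of the face $F$; the intersection $F \cap \partial \mathring B_F$ is a sphere of dimension $n-2$ inside the $(n-1)$-sphere $\partial \mathring B_F$, Jordan--Brouwer forces the complement $\partial \mathring B_F \setminus F$ to have exactly two components, and counting components yields $K=2$ in the interior case. For a boundary face, the paper reduces to the interior case by doubling the manifold and the triangulation. You instead identify a local product model $U \cong B^{n-1} \times C_k$ near a relative-interior point of $F$ and compare directly against the Euclidean and half-space models, which handles both interior and boundary cases uniformly without doubling --- an economy the paper's proof does not have.

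There is, however, a genuine gap in the step you label the ``cleanest elementary route.'' From a homeomorphism $B^{n-1} \times C_k \cong \bbR^n$ you cannot deduce $C_k \cong \bbR$ by cancelling the $B^{n-1}$ factor: products of topological spaces do not admit such cancellation in general (the standard cautionary example is the Whitehead manifold $W$, which is not homeomorphic to $\bbR^3$ even though $W \times \bbR \cong \bbR^4$). Consequently your component count of $C_k \setminus \{v\}$ compares the wrong spaces: it shows $C_k \not\cong \bbR$ and $C_k \not\cong [0,\infty)$ for $k \geq 3$, but that is not the same as showing $B^{n-1}\times C_k$ is not locally Euclidean at $x$. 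A direct component count on $U$ itself is also delicate, because the spine $B^{n-1}\times\{v\}$ is a \emph{non-compact} codimension-one subset, and the number of components of the complement of such a set is not a topological invariant of the ambient space in any naive way. Your parenthetical fallback is the argument that actually works: excising to a neighborhood, one computes $H_n\bigl(\bbR^{n-1}\times C_k,\ \bbR^{n-1}\times C_k \setminus \{x\}\bigr) \cong \bbZ^{k-1}$ (the punctured book deformation retracts onto $k$ copies of $D^{n-1}$ glued along a common $S^{n-2}$, which is homotopy equivalent to a wedge of $k-1$ spheres $S^{n-1}$), whereas an $n$-manifold has local homology $\bbZ$ at interior points and $0$ at boundary points, forcing $k = 2$ or $k = 1$ respectively. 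You should promote the local-homology computation from a parenthetical remark to the actual argument, since the component-count version does not stand alone. The remainder of the proposal --- that the local type is constant along the relative interior of each simplex, and that boundary simplices therefore assemble into a triangulation of $\partial\calM$ --- is in line with the paper's proof.
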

\begin{proof}
    We prove these statements in several steps.
    \begin{enumerate}
    \item 
    Let $\mathring \calM := \calM \setminus \partial \calM$ denote the interior of the manifold. 
    We will use the following fact:\footnote{To see this, one easily constructs a continuous deformation of $\calM$ into itself to move any chosen point on $S$ to any other chosen point on $S$.} if $S \in \calT$ has an inner point that lies on $\partial \calM$, then all inner points of $S$ are on $\partial \calM$. 
    Since the boundary $\partial \calM$ is closed, every $S \in \calT$ is either a subset of the boundary or all its inner points lie in the interior $\mathring \calM$ of the manifold. 
    
    \item 
    We recall an auxiliary result.
    Suppose that $Y$ is a topological space homeomorphic to a sphere of dimension $m$ and that $X \subseteq Y$ is homeomorphic to a sphere of dimension $m-1$, where $m \geq 1$. 
    As a consequence of the Jordan--Brouwer separation theorem~\cite[Corollary IV.5.24]{mayer1989algebraische}, \cite[Corollary VIII.6.4]{massey1981algebraic}, 
    we know that $Y \setminus X$ has got two connected components.

    \item
    Let now $F \in \subsimplex_{n-1}(T)$ be a face and let $z_F \in F$ be its barycenter. 
    Since $\calT$ is finite, we let $\mathring B_F$ be an open neighborhood around $z_F$ 
    homeomorphic to an $n$-dimensional ball 
    so small that its closure $\overline{B_F}$ only intersects those $n$-simplices of $\calT$ that already contain $z_F$ and no faces other than $F$.
    Suppose there are distinct $n$-simplices $T_1, T_2, \dots, T_K$ that contain $z_F$. 
    The intersection of any two of them is $F$, but their interiors are disjoint because otherwise they would coincide. 
    
    If $z_F$ is an interior point of $\calM$, 
    then it follows by our assumptions that $\mathring B_F$ is homeomorphic to an open $n$-ball and $\partial \mathring B_F$ is homeomorphic to a sphere of dimension $n-1$. 
    Consider $X = F \cap \partial \mathring B_F$.
    If $n=1$, then $X$ is empty and $\partial \mathring B_F$ has $K$ distinct connected components. 
    If $n > 1$, then $X$ is homeomorphic to a sphere of dimension $n-2$
    and again $\partial \mathring B_F \setminus X$ has $K$ distinct connected components. 
    But by the auxiliary result above, $K = 2$. 
    We conclude that $F$ is contained in two $n$-simplices of $\calT$.
    
    Consider the case that $z_F$ lies on the boundary of $\calM$ and suppose that $F$ is contained in $K$ distinct $n$-simplices of $\calT$.
    By adding at least one dimension, we can double\footnote{The reader is referred to Lee's textbook~\cite{lee2012smooth} for more background and the technicalities.} the manifold $\calM$ along the boundary and obtain the doubled manifold $\calM'$. 
    Similarly, we can construct a doubling of the triangulation $\calT'$ such that $F$ is contained in exactly $2K$ distinct $n$-simplices of $\calT'$. 
    We know that $\calM'$ is a manifold without boundary, and hence $F$ is an interior simplex of $\calT'$.
    This implies $K=1$.
    So any boundary face can only be contained in one single $n$-simplex. 
    
    \item
    Clearly, the simplices of $\calT$ contained in the boundary constitute a simplicial complex. 
    Every $x \in \calM$ is an inner point of some simplex $S \in \calT$. 
    If $x \in \partial \calM$ is a boundary point, then $S$ must be a boundary simplex, 
    so the boundary simplices triangulate all of $\partial \calM$.
    \end{enumerate}
    All desired results are thus proven. 
\end{proof}

Suppose that $\calT$ is an $n$-dimensional simplicial complex that triangulates a manifold. 
Those simplices of the manifold triangulation that are subsets of the boundary of the underlying manifold are called \emph{boundary simplices}. 
All other simplices of the manifold triangulation are called \emph{inner simplices}. 
We have seen that the boundary simplices of a manifold triangulation constitute a triangulation of the manifold's boundary. 
We call this simplicial complex the \emph{boundary complex}. It has dimension $n-1$.

We continue with a few more observations about the topology of local patches (stars) of manifold triangulations. 
This topic is surprisingly non-trivial. We only gather some results that are hard to find in the literature. 

\begin{lemma}\label{lemma:startopology}
    Let $\calT$ be a finite $n$-dimensional simplicial complex whose underlying space is a manifold $\calM$.
    Suppose that $1 \leq n \leq 3$. Then the following holds:
    \begin{itemize}
        \item
        If $S \in \calT$ is an inner simplex, 
        then $\patch_{\calT}(S)$ is a simplicial $n$-ball with $S$ as an inner simplex
        and $\carapace_{\calT}(S)$ is a simplicial $(n-1)$-sphere. 
        \item
        If $S \in \calT$ is a boundary subsimplex, 
        then $\patch_{\calT}(S)$ is a simplicial $n$-ball with $S$ as a boundary simplex,
        and $\carapace_{\calT}(S)$ is a simplicial $(n-1)$-ball. \end{itemize}
\end{lemma}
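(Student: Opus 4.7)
The plan is to proceed by case analysis on $k := \dim S$, treating the top-dimensional case $k = n$ and the codimension-one case $k = n-1$ directly, and reducing the low-codimension cases $k \leq n-2$ to a topological identification of the link $\carapace_{\calT}(S)$.

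If $k = n$, then $\patch_{\calT}(S) = \subsimplex(S)$ is a single $n$-simplex and $\carapace_{\calT}(S) = \subsimplex(\partial S)$ is its boundary, giving the desired ball and sphere; by convention $S$ itself is an inner simplex here since it contains inner points of $\calM$. If $k = n-1$, Lemma~\ref{lemma:boundarysimplices} does the work: an interior face $S$ lies in exactly two $n$-simplices that meet only along $S$, so their union is a topological $n$-ball containing $S$ as interior face, with $\carapace_{\calT}(S)$ the full boundary $(n-1)$-sphere; a boundary face $S$ lies in a unique $n$-simplex $T$, so $\patch_{\calT}(S) = \subsimplex(T)$ and the carapace consists of the proper subsimplices of $T$ that avoid $S$, forming the star of the opposite vertex inside $\partial T$, i.e., the cone over $\partial S$, topologically an $(n-1)$-ball.

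For $k \leq n-2$, under the standing hypothesis $n \leq 3$ the only remaining cases are vertex stars ($k=0$, $n \in \{2,3\}$) and edge stars ($k=1$, $n=3$). I would show two things. First, $\underlying{\patch_{\calT}(S)} = S \ast \underlying{\carapace_{\calT}(S)}$ as topological spaces: each $n$-simplex containing $S$ decomposes canonically as the join of $S$ with its opposite subsimplex, and these opposite subsimplices glue along their boundaries to form precisely $\carapace_{\calT}(S)$. Second, the link identification: pick a point $x$ in the relative interior of $S$; because $\underlying{\calT}$ is an $n$-manifold, $x$ admits a Euclidean neighborhood (interior case) or a half-space neighborhood (boundary case). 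A small metric sphere around $x$ intersected with $\underlying{\patch_{\calT}(S)}$, once the $S$-direction is collapsed, is a topological $(n-k-1)$-sphere or $(n-k-1)$-ball, respectively. Combined with the join identification and the standard formulas $D^k \ast S^m = D^{k+m+1}$ and $D^k \ast D^m = D^{k+m+1}$, this yields the claimed ball/sphere structure on both $\patch_{\calT}(S)$ and $\carapace_{\calT}(S)$.

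The main obstacle, and the only place where the hypothesis $n \leq 3$ is essential, is the step of comparing the combinatorial link $\carapace_{\calT}(S)$ with the topological link obtained from the manifold structure. Because $n - k - 1 \leq 2$ in every remaining case, $\underlying{\carapace_{\calT}(S)}$ is at most a $2$-dimensional simplicial complex, and any topological $m$-sphere or $m$-ball with $m \leq 2$ that carries a simplicial structure is automatically a simplicial $m$-sphere or $m$-ball by the classical combinatorial classification of triangulated $1$- and $2$-manifolds. In dimension $n \geq 4$ this would fail in general, as vertex links need not be PL-standard. I would therefore structure the proof so that the PL-versus-topological identification is invoked only in dimensions where the classification is elementary, and cite this classification for the final step.
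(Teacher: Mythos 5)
Your direct handling of the top-codimension cases $\dim S \in \{n,\,n-1\}$ is correct, and the overall strategy of joining with the link is sound. But in the remaining cases there are two real problems.

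First, the join identity you use is wrong: you claim $\underlying{\patch_{\calT}(S)} = S \ast \underlying{\carapace_{\calT}(S)}$ and that the opposite subsimplices of $S$ inside the various $n$-simplices glue to form $\carapace_{\calT}(S)$. In fact they glue to form the classical \emph{link} $\operatorname{lk}(S)$ (simplices of $\patch_{\calT}(S)$ \emph{disjoint} from $S$), which is strictly smaller than the carapace (simplices not \emph{containing} $S$). The correct identities are $\patch_{\calT}(S) = \subsimplex(S) \ast \operatorname{lk}(S)$ and $\carapace_{\calT}(S) = (\partial\subsimplex(S)) \ast \operatorname{lk}(S)$. A dimension count already kills your formula: if $\dim S = k$ and $\carapace_{\calT}(S) \cong S^{n-1}$ (the conclusion you want for an inner simplex), then $D^k \ast S^{n-1} \cong D^{k+n}$, not $D^n$. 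In a concrete example, an interior edge of a triangulated $2$-disk with two adjacent triangles has carapace a circle, link two points, and patch a disk, so your formula would produce a $3$-ball rather than a $2$-ball.

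Second and more seriously, the "link identification" is asserted rather than proved, and it is exactly the heart of the lemma. The manifold hypothesis gives a Euclidean or half-space chart around an interior point $x$ of $S$, but there is no a priori compatibility between that chart and the simplicial join structure, so "collapsing the $S$-direction" on the metric sphere is not a well-defined operation, let alone one with the output you claim. That the link (equivalently, carapace of a vertex) in a triangulated topological $n$-manifold with $n \leq 3$ is a topological sphere or ball is a nontrivial theorem, and the paper's proof treats it as such: it reduces to vertex stars by stellar subdivision of $S$ at its barycenter (producing a complex with the same underlying set in which the barycenter is a vertex), cites Siebenmann--Sullivan for interior vertices, and handles boundary vertices by doubling the manifold along its boundary and invoking the topological Schoenflies theorem. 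Those steps are what is needed to make your sketch rigorous, and they are genuinely different from the case split you propose. A minor point: your final paragraph about upgrading a topological sphere to a simplicial sphere is unnecessary, since the paper's definitions of $n$-ball and $n$-sphere triangulations are purely topological.
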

\begin{proof}
    The statement is obvious if $n = 1$, so we assume $n \geq 2$ in what follows.
    We prove these statements in several steps. 
    The reader is assumed to have some background in topology. 
    \begin{enumerate}
    \item 
    Let $S$ be any simplex with vertices $v_0, v_1, \dots, v_k$, with barycenter $z_{S}$, and dimension $k$.
    Let $\calS := \patch_{\calT}(S)$ be its star. 
    Each $l$-dimensional simplex $T \in \calS$ that contains $S$ 
    has vertices $v_0$, $v_1$, $\dots$, $v_k$, $v_{k+1}^{S}$, $\dots$, $v_{l}^{S}$. 
    For any such simplex, we introduce a decomposition $T_{0}, \dots, T_{k}$, where each $T_{i}$ has vertices 
    $v_0$, $\dots$, $v_{i-1}$, $z_{S}$, $v_{i+1}$, $\dots$, $v_k$, $v_{k+1}^{S}$, $\dots$, $v_{l}^{S}$.
The collection $\calS^\ast$ of these simplices and their subsimplices constitute a simplicial complex 
    that triangulates the same underlying set as $\calS$.
    Moreover, $\calS^\ast = \patch_{\calS^\ast}(z_{S})$. 
    In particular, $z_{S}$ is a boundary vertex of $\calS^\ast$ if and only if $S$ is a boundary simplex of $\calS$. 
    So it remains to study the topology of vertex stars. 
    
    \item 
    Suppose that $2 \leq n \leq 3$ and that $\calM$ is a manifold without boundary. 
    Under these assumptions, 
    as explained in the proof of Theorem~1 in~\cite{Siebenmann1979},
    the set $\carapace_{\calT}(V)$ is a triangulation of a sphere of dimension $n-1$ for any inner vertex $V$. 
    There exists a homeomorphism from the closed cone of $\underlying{\carapace_{\calT}(V)}$ onto the local star $\underlying{\patch_{\calT}(V)}$.
    But then that closed cone and hence $\underlying{\patch_{\calT}(V)}$ are homeomorphic to an $n$-dimensional ball. 
    
    \item 
    If $2 \leq n \leq 3$ and $\calM$ has a non-empty boundary, then we use an approach as in the proof of Lemma~\ref{lemma:boundarysimplices}: 
    we let $\calM'$ denote the doubling of the manifold and $\calT'$ be the doubling of the triangulation $\calT$. 
    Let $V \in \calT$ be a vertex. 
    If $V$ is an inner vertex of $\calT$, then $\carapace_{\calT}(V) \subseteq \calT \subseteq \calT'$ triangulates a sphere of dimension $n-1$ and $\patch_{\calT}(V) \subseteq \calT \subseteq \calT'$ triangulates a ball of dimension $n$, as discussed above. 
    If $V$ is a boundary vertex of $\calT$, then it is an inner vertex of $\calT'$,
    and so $\carapace_{\calT}(V) \subseteq \calT'$ triangulates a sphere of dimension $n-1$ and $\patch_{\calT}(V) \subseteq \calT'$ triangulates a ball of dimension $n$.
    We also know that $\carapace_{\partial\calT}(V) \subseteq \partial\calT$ triangulates a sphere of dimension $n-2$ and $\patch_{\partial\calT}(V) \subseteq \partial\calT$ triangulates a ball of dimension $n-1$. 
    The embedding of $\carapace_{\partial\calT}(V) \subseteq \partial\calT$ is homeomorphic to the standard embedding of the $(n-2)$-dimensional unit sphere into the $(n-1)$-dimensional unit sphere,
    by the topological Schoenflies theorem~\cite{Moise1977geometric}
    It follows that $\carapace_{\calT}(V)$ triangulates a topological ball of dimension $n-1$.
    Since the closed cone of $\underlying{\carapace_{\calT}(V)}$ is homeomorphic to the star $\underlying{\patch_{\calT}(V)}$, we conclude that $\patch_{\calT}(V)$ triangulates an $n$-dimensional ball.

\end{enumerate}
    All relevant results are proven. 
\end{proof}

\begin{lemma}\label{lemma:connectivity}
    Let $\calT$ be a finite $n$-dimensional simplicial complex whose underlying space is a manifold $\calM$.
    If the underlying space of $\calT$ is connected, then $\calT$ is face-connected.
\end{lemma}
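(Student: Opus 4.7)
The plan is to argue by contradiction: suppose the $n$-simplices of $\calT$ split into face-connected classes $\calT_1,\dots,\calT_k$ with $k\geq 2$, and let $M_i$ denote the union of the $n$-simplices in $\calT_i$ together with all their subsimplices. These are finitely many closed subsets whose union equals the underlying manifold $\calM$.

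The first step is to control the pairwise intersections $M_i\cap M_j$ for $i\neq j$. Distinct face-connected classes share no $n$-simplex by construction. Moreover, Lemma~\ref{lemma:boundarysimplices} asserts that any $(n-1)$-face of $\calT$ lies in exactly one or exactly two $n$-simplices; in the latter case the two $n$-simplices are face-neighbors and therefore belong to the same face-connected class. Hence no $(n-1)$-face is shared between two distinct components, so
\[
    N := \bigcup_{i\neq j} (M_i \cap M_j)
\]
is contained in the $(n-2)$-skeleton of $\calT$ and in particular has topological dimension at most $n-2$.

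Next comes a short combinatorial observation. Each $M_i\setminus N$ is closed in $\calM\setminus N$, being the trace there of the closed set $M_i$; as one of finitely many pairwise disjoint sets whose union is all of $\calM\setminus N$, it is also open. It is nonempty because it contains the interior of every $n$-simplex belonging to $\calT_i$. Thus $\{M_i\setminus N\}_{i=1}^{k}$ is a clopen partition of $\calM\setminus N$ into $k\geq 2$ nonempty pieces, so $\calM\setminus N$ is disconnected.

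The main obstacle is then to prove that $\calM\setminus N$ is in fact connected, producing the desired contradiction. This is the classical statement that removing a closed subset of topological codimension at least $2$ from a connected topological manifold preserves connectedness. I plan to give a self-contained proof: for any clopen partition $A\sqcup B = \calM\setminus N$, the manifold structure supplies about every $x\in\calM$ a Euclidean chart $U_x$ in which $N$ appears as a finite union of affine simplices of dimension at most $n-2$; the complement of such a set in an open subset of $\bbR^{n}$ or of a closed half-space is path-connected, so $U_x\setminus N$ is connected and must lie entirely in $A$ or entirely in $B$. Defining $A^{\ast},B^{\ast}\subseteq\calM$ as the sets of points admitting such a chart with $U_x\setminus N\subseteq A$ or $U_x\setminus N\subseteq B$ respectively, a routine verification shows that $A^{\ast}$ and $B^{\ast}$ are open, pairwise disjoint, nonempty, and cover $\calM$, contradicting the connectedness of $\calM$. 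The crux of this step is the elementary local fact that finitely many affine simplices of codimension $\geq 2$ cannot separate an open subset of $\bbR^{n}$, which can be seen by an explicit perturbation of any broken-line path to avoid them.
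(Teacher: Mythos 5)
Your strategy is genuinely different from the paper's. The paper first proves by induction on the dimension that every vertex star is face-connected (using that the $n$-simplices of $\patch_{\calT}(V)$ biject with the $(n-1)$-simplices of $\carapace_{\calT}(V)$ in a face-adjacency-preserving way), and then covers an arbitrary continuous path between two barycenters by $n$-simplices so that consecutive ones share a vertex, bridging across each shared vertex via the face-connectedness of its star. Your proof instead argues by contradiction through a clopen partition and reduces everything to the topological fact that a connected $n$-manifold minus a closed set of codimension at least two is still connected. The combinatorial reduction (that distinct face-components of $\calT$ can only meet in the $(n-2)$-skeleton, via Lemma~\ref{lemma:boundarysimplices}) is correct and nicely isolates where the manifold hypothesis is used.

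The one step I would press you on is the self-contained proof of that topological fact. You assert that ``the manifold structure supplies about every $x\in\calM$ a Euclidean chart $U_x$ in which $N$ appears as a finite union of affine simplices of dimension at most $n-2$,'' and then perturb broken-line paths in the chart. This is unjustified as stated: the chart provided by the manifold structure is just a homeomorphism onto a ball or half-ball, and a homeomorphism need not send the piecewise-affine set $N$ to anything resembling affine simplices, so there is no broken line to perturb in the image. For interior points of $\calM$ you can sidestep this, because invariance of domain makes $\calM$ open in $\bbR^{n}$ there, so the literal Euclidean ball $B_\epsilon(x)\subseteq\calM$ serves as the chart and $N\cap B_\epsilon(x)$ genuinely is a finite union of convex pieces of affine simplices, where your perturbation argument works. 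But for boundary points of $\calM$ neither option is available: the identity does not give a half-ball, and the abstract half-ball chart does not preserve the simplicial picture. One clean way to close the gap is to first show $\mathring{\calM}$ is connected (e.g.\ via a collar of $\partial\calM$, or by the elementary local-connectedness argument), then observe that each $M_i\setminus N$ meets $\mathring\calM$ and run your clopen-partition contradiction entirely inside $\mathring\calM\setminus N$, where only interior charts are needed. Alternatively you may simply cite the dimension-theoretic result that removing a closed set of topological dimension at most $n-2$ from a connected $n$-manifold (with or without boundary) cannot disconnect it, which is homeomorphism-invariant and so bypasses the issue entirely, at the cost of invoking a nontrivial theorem rather than an elementary perturbation.
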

\begin{proof}
    We first show that each vertex star is face-connected via a short induction argument.
    Clearly, any simplicial $1$-ball and simplicial $1$-sphere are face-connected. 
    Now, if $n \geq 1$ and $V \in \calT$,
    then the $n$-simplices in $\patch_{\calT}(V)$ are in correspondence to the $(n-1)$-simplices in $\carapace_{\calT}(V)$.
    Hence, $\patch_{\calT}(V)$, a triangulation of dimension $n$, is face-connected if and only if $\carapace_{\calT}(V)$, a triangulation of dimension $n-1$, is face-connected.
The induction argument implies that each vertex star in $\calT$ is face-connected.

    We assume that the underlying space $\underlying{\calT}$ is connected, and hence path-connected. 
    Given $n$-simplices $S,S' \in \calT$, 
    there exists a path $\gamma : [0,1] \rightarrow \underlying{\calT}$ from the barycenter of $S$ to the barycenter of $S'$.
    We can choose a sequence of $n$-simplices $S = \hat S_0,\hat S_1,\dots,\hat S_m=S' \in \calT$ without repetitions 
    that cover the path and whose intersections with the path are homeomorphic to $[0,1]$.
    For any index $1 \leq i \leq m$, 
    the intersection $\gamma([0,1]) \cap S_{i}$ and $\gamma([0,1]) \cap S_{i+1}$ intersect at one point lying in a common subsimplex of $S_{i}$ and $S_{i+1}$.
    We deduce that each two consecutive simplices in the sequence $\hat S_0,\hat S_1,\dots,\hat S_m$ will have at least one vertex in common. 
    As each vertex star is face-connected, 
    there exists a sequence $S=S_0,S_1,\dots,S_{m}=S' \in \calT$ 
    where $S_{i} \cap S_{i-1}$ is a face of both $S_{i}$ and $S_{i-1}$ for all $1 \leq i \leq m$.
    This just means that $\calT$ is face-connected. 
\end{proof}

\begin{remark}
    Triangulations with the property that all vertex stars are homeomorphic to a ball are also called \emph{combinatorial}~\cite[Section~1]{Bagchi2005}.
    All manifolds of dimension up to three admit smooth structures and smooth manifolds admit combinatorial triangulations. 
    There are triangulations of manifolds in more than three dimensions where not every vertex star is homeomorphic to a ball. 
    
    Not every simplicial complex is the triangulation of some (embedded) topological manifold with or without boundary. 
    When the dimension is at least five, then there are manifolds for which no computer algorithm, given a finite simplicial complex as input, can decide whether the input is the triangulation of that manifold~\cite{chernavsky2006unrecognizability}.
    Going further, it has been shown that deciding whether a simplicial complex triangulates \emph{any} manifold cannot be decided by any computer algorithm~\cite{poonen2014undecidable}.     
We therefore are not in pursuit of any easy combinatorial property that indicates whether a simplicial complex (without any further specific assumptions) triangulates a manifold.

    Conversely, not all topological manifolds, even if compact, can be described as a triangulation. 
    Such manifolds, some even compact and simply-connected, appear in dimension four and higher~\cite{akbulut2014casson}.
\end{remark}

\subsection{Shellable simplicial complexes}\label{subsection:shellability}

The notions of shelling and shellable triangulation have been discussed widely in combinatorial topology and polytopal theory. 
Formally, a triangulation is shellable if its full-dimensional simplices can be enumerated such that each simplex intersects the union of the previously listed simplices in a codimension one triangulation of a manifold. 
This forces the intermediate triangulations to be particularly well-shaped. 
We build upon the notion of shelling as introduced in~\cite[Definition 8.1]{ziegler1995lectures},
where our definition of shelling is equivalent to the notion of the shellings of simplicial complexes, see also~\cite[Remark~8.3]{ziegler1995lectures}.

Suppose that $\calT$ is an $n$-dimensional simplicial complex and that we have an enumeration of the $n$-simplices $T_{0}, T_{1}, T_{2}, \dots \in \subsimplex_{n}(\calT)$.
For any enumeration, we call 
\begin{align*}
    \Gamma_m 
    := 
    \big( 
        T_{0} \cup T_{1} \cup \dots \cup T_{m-1} 
    \big) 
    \cap 
    T_{m}, 
    \qquad 
    1 \leq m,
\end{align*}
the $m$-th \emph{interface set}. 
We call the enumeration a \emph{shelling} if each interface set $\Gamma_m$ is a triangulated manifold of dimension $n-1$. 

\begin{remark}
    In our setting, this is equivalent to the interface set $\Gamma_{m}$ being a non-empty collection of faces of the simplex $T_{m}$.
    In particular, for $n=2$, this interface $\Gamma_{m}$ is a collection of edges and cannot include single vertices of $T_{m}$,
    and for $n=3$, the interface $\Gamma_{m}$ is a collection of faces that cannot include any single vertices or single edges of $T_{m}$.
\end{remark}
\begin{remark}
    We interpret a shelling as the construction of a triangulation 
    by successively attaching simplices such that the intermediate triangulations are well-behaved. 
    Conversely, the reverse enumeration describes a successive decomposition of the triangulation, hence the name ``shelling''.
\end{remark}

We now study some major consequences of the definition of shellable simplicial complexes.
Shellable simplicial complexes are constructed via successive adhesion of simplices.
The resulting succession of intermediate simplicial complexes consists of simplicial balls or spheres.
In particular, this applies to the full triangulation.
Consequently, a domain with a shellable triangulation must be contractible.

\begin{lemma}\label{lemma:shell_triang_man}
    Let $\calT$ be an $n$-dimensional simplicial complex
    with a shelling $T_{0}, T_{1}, T_{2}, \dots, T_{M}$,
    such that each simplex of dimension $n-1$ is contained in at most two simplices.
Then
    $$
        X_{m} := T_{0} \cup T_{1} \cup \dots \cup T_{m}
    $$
    is a triangulated manifold, possibly with boundary, for all $0 \leq m \leq M$.
In particular, $X_{m}$ is a topological $n$-ball when $m < M$,
    and
    $X_{M}$ is either a topological $n$-ball or topological $n$-sphere.
\end{lemma}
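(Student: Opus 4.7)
The plan is to induct on $m$, showing simultaneously that $X_m$ is a triangulated manifold and identifying its topological type. The base case $m=0$ is trivial since $X_0 = T_0$ is a closed $n$-simplex, which is a topological $n$-ball whose boundary is the standard triangulation of $S^{n-1}$ as the boundary of a simplex. For the inductive step, I would first observe that $\Gamma_m \subseteq \partial T_m$: indeed $\Gamma_m = X_{m-1} \cap T_m$ cannot contain any interior point of $T_m$ since distinct $n$-simplices have disjoint interiors, so $\Gamma_m$ is a subcomplex of $\partial T_m$, and by hypothesis it is a triangulated manifold of dimension $n-1$.

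The key combinatorial observation is that $\Gamma_m$ is contained in the boundary complex of $X_{m-1}$. This follows from the ``at most two'' hypothesis: any $(n-1)$-simplex $F$ lying in $\Gamma_m$ is a face of both $T_m$ and of some previously listed $T_i$, so it cannot be a face of a third $n$-simplex, and hence within $X_{m-1}$ it belongs to only one $n$-simplex. By Lemma~\ref{lemma:boundarysimplices} applied to the manifold triangulation $X_{m-1}$ (which is a manifold by the inductive hypothesis), $F \in \partial X_{m-1}$.

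Next I would classify $\Gamma_m$ topologically. Since $\partial T_m$ is an $(n-1)$-sphere and $\Gamma_m$ is a compact $(n-1)$-dimensional subcomplex that is itself a manifold, $\Gamma_m$ is either all of $\partial T_m$ or a proper nonempty union of faces of $T_m$. In the latter case, $\Gamma_m$ is a triangulated $(n-1)$-ball: this follows because it is a proper codimension-zero submanifold of the sphere $\partial T_m$, and any such is a ball (for $n \leq 3$ this is classical; more generally it is a consequence of the characterization of compact submanifolds of the sphere, but for the intended applications with shellable stars one can also argue directly). In the ``ball'' case, I would then apply a standard gluing argument: $X_m$ is obtained from $X_{m-1}$ by attaching the $n$-ball $T_m$ along the $(n-1)$-ball $\Gamma_m$ lying in both $\partial T_m$ and $\partial X_{m-1}$, and such a gluing of two $n$-balls along a common boundary ball yields an $n$-ball. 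In the ``sphere'' case $\Gamma_m = \partial T_m$, we have $\partial T_m \subseteq \partial X_{m-1}$, and since both are triangulated $(n-1)$-spheres with $X_{m-1}$ already a ball, we must have $\partial X_{m-1} = \partial T_m$; attaching $T_m$ then closes the boundary and produces an $n$-sphere. This latter case forces $m=M$, because once $X_m$ has empty boundary the interface set $\Gamma_{m+1}$ would need to be both a subcomplex of the empty boundary complex and a nonempty $(n-1)$-manifold, a contradiction.

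The main obstacle I expect is the intermediate topological step: showing that a proper codimension-zero submanifold of the boundary sphere of an $n$-simplex is a ball, and that the gluing of an $n$-ball to an $n$-ball along a boundary $(n-1)$-ball is again an $n$-ball. In dimensions two and three, which cover the principal applications of this manuscript, both statements follow from the topological Schoenflies theorem and from a direct geometric cone argument; in higher dimensions some care is required, but the statement of the lemma is naturally tied to manifold triangulations where Lemma~\ref{lemma:startopology} already restricts us to the low-dimensional regime. The inductive bookkeeping of how $\partial X_m$ is obtained as $(\partial X_{m-1} \setminus \Gamma_m) \cup (\partial T_m \setminus \Gamma_m)$ is routine but must be verified carefully to guarantee that the resulting complex remains a manifold triangulation rather than merely a simplicial complex.
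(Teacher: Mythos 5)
Your argument follows essentially the same approach as the paper: induction on $m$, using the hypothesis that each $(n-1)$-simplex lies in at most two $n$-simplices to place $\Gamma_m$ inside $\partial X_{m-1}$ via Lemma~\ref{lemma:boundarysimplices}, and then a case distinction between $\Gamma_m = \partial T_m$ (sphere case, forcing $m = M$) and $\Gamma_m \subsetneq \partial T_m$ (ball case). The paper is in fact terser than your proposal: in the proper case it does not state explicitly that $\Gamma_m$ is a ball nor spell out the ball-gluing step, and simply asserts that $X_{m-1} \cup T_m$ is still a topological $n$-ball.

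There is one concrete imprecision worth fixing. You justify the claim that $\Gamma_m$ is an $(n-1)$-ball in the proper case by asserting that ``any proper codimension-zero compact submanifold of the sphere $\partial T_m$ is a ball.'' That general statement is false: an annulus $S^1 \times [0,1]$ is a proper codimension-zero compact submanifold of $S^2$ that is not a $2$-ball. What actually forces $\Gamma_m$ to be a ball is the combinatorial structure you yourself identified, namely that $\Gamma_m$ is a nonempty proper union of faces of the \emph{single} $n$-simplex $T_m$. A union of $\ell$ faces of $T_m$ with $1 \leq \ell \leq n$ is the join of the boundary of an $(\ell-1)$-simplex with the complementary $(n-\ell)$-subsimplex, hence a piecewise-linear $(n-1)$-ball; equivalently it is the closed star of a subsimplex in the combinatorial sphere $\partial T_m$. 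You should invoke this specific combinatorial fact rather than the false blanket claim, since it is exactly the place where the shelling requirement (that $\Gamma_m$ be a union of faces and not, say, a lone vertex or edge) is used.
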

\begin{proof}
    We prove this claim by induction.
    Certainly, if $\calT$ contains only one single $n$-simplex, then it is a shellable triangulation of a topological $n$-ball.
    Next, let $1 \leq m \leq M$ and suppose that
    \begin{align*}
        X_{m-1} := T_{0} \cup T_{1} \cup \dots \cup T_{m-1}
    \end{align*}
    is already known to be a topological $n$-ball. Let $T_{m}$ be the next $n$-simplex in the shelling.
    By definition, $\Gamma_{m} := X_{m-1} \cap T_{m}$ is a submanifold of $\partial T_{m}$,
    triangulated by some faces of $T_{m}$ and their subsimplices.

    Let $F$ be such a face.
    By assumption, $F$ must be contained in exactly one $n$-simplex of the partial sequence $T_{0}, T_{1}, \dots, T_{m-1}$,
    and $F$ is in the boundary of $X_{m-1}$.
    We conclude that $\Gamma_{m}$ triangulates a submanifold of the boundary of $X_{m-1}$.

    On the one hand, if $\Gamma_{m}$ is the entire boundary of $T_{m}$, then it is a topological sphere of dimension $n-1$.
    Since $\Gamma_{m}$ is a submanifold of the boundary of $X_{m-1}$, which by induction assumption is also a topological sphere of dimension $n-1$,
    we conclude that $\Gamma_{m}$ is the whole boundary of $X_{m}$.
    By basic geometric topology, $X_{m}$ is a topological $n$-sphere, and $T_{m}$ can only be the last simplex in the shelling, $m=M$.
On the other hand,
    if $\Gamma_{m}$ is a proper subset of the entire boundary of $T_{m}$,
    then $X_{m-1} \cup T_{m}$ is still a topological $n$-ball.
\end{proof}

Each time an $n$-simplex is added, a local star in the full triangulation around some lower-dimensional simplex is completed.

\begin{lemma}\label{lemma:existenceofstar}
    Suppose that an $n$-dimensional manifold triangulation $\calT$ has a shelling $T_{0}, T_{1}, T_{2}, \dots, T_{M}$.
    For $0 \leq m \leq M$, write
    $$
        X_{m} := T_{0} \cup T_{1} \cup \dots \cup T_{m}.
    $$
    For $1 \leq m \leq M$, write
    $$
        \Gamma_{m} := X_{m-1} \cap T_{m}.
    $$
    Then $\Gamma_{m}$ is a union of $\ell$ different faces of $T_{m}$, $1 \leq \ell \leq n+1$.
    If $m < M$, then the intersection of those faces is an interior simplex $S_{m} \in \calT$ of dimension $n-\ell$ that satisfies
    $$
        \patch_{X_{m}}(S_{m}) = \patch_{\calT}(S_{m}).
    $$
\end{lemma}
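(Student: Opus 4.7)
The proof will proceed in three stages: identifying $\Gamma_m$ combinatorially as a union of $\ell$ faces of $T_m$, establishing that $S_m$ lies in the topological interior of $\Gamma_m$ as an $(n-1)$-manifold, and then transferring this interiority to $X_m$ and hence to $\underlying{\calT}$.

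I would begin by observing that $\Gamma_m = X_{m-1} \cap T_m$ is automatically a simplicial subcomplex of $\subsimplex(T_m)$, because $X_{m-1}$ is a union of simplices of $\calT$ and each intersection $T_j \cap T_m$ (for $j<m$) is a common subsimplex of both. As a triangulated $(n-1)$-manifold, $\Gamma_m$ must be pure of dimension $n-1$, so it is generated by some $\ell$ codimension-one faces $F_{i_1}, \dots, F_{i_\ell}$ of $T_m$, where $F_{i_k}$ denotes the face opposite the vertex $v_{i_k}$. Their common intersection equals the convex hull of the remaining vertices $\{v_0,\dots,v_n\} \setminus \{v_{i_1},\dots,v_{i_\ell}\}$, a subsimplex $S_m$ of dimension $n-\ell$. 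If $\ell = n+1$ then $\Gamma_m = \partial T_m$, and Lemma~\ref{lemma:shell_triang_man} would force $X_m$ to be a topological $n$-sphere and therefore $m = M$; under the hypothesis $m < M$, we obtain $1 \leq \ell \leq n$, so that $S_m$ has nonnegative dimension.

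Next I would show that the relative interior of $S_m$ belongs to the manifold interior of $\Gamma_m$. The case $\ell = 1$ is immediate because then $\Gamma_m = S_m$ is a single $(n-1)$-simplex, whose manifold interior coincides with its relative interior. For $\ell \geq 2$, the link of $S_m$ in each face $F_{i_k}$ is the $(\ell-2)$-simplex on the vertex set $\{v_{i_1},\dots,v_{i_\ell}\} \setminus \{v_{i_k}\}$; taking their union yields the link of $S_m$ in $\Gamma_m$, namely the full boundary of the $(\ell-1)$-simplex on $\{v_{i_1},\dots,v_{i_\ell}\}$, which is a combinatorial $(\ell-2)$-sphere. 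Consequently a neighborhood of $S_m$ in $\Gamma_m$ is homeomorphic to the join of an $(n-\ell)$-ball with an $(\ell-2)$-sphere, an $(n-1)$-ball in whose topological interior the relative interior of $S_m$ sits.

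Finally, I would transfer interiority from $\Gamma_m$ to $X_m$ and $\underlying{\calT}$. By Lemma~\ref{lemma:shell_triang_man}, both $X_{m-1}$ and $T_m$ are topological $n$-balls. A point $p$ in the relative interior of $\Gamma_m$ lies in $\partial X_{m-1}$ and in $\partial T_m$, inheriting an open $n$-dimensional half-ball neighborhood from each, with common flat piece the interior of $\Gamma_m$ near $p$. Since $X_{m-1} \cap T_m = \Gamma_m$, these half-balls sit on opposite sides of $\Gamma_m$ in $\underlying{\calT}$ and glue into a full open $n$-ball neighborhood of $p$ in $X_m$, placing $p$ in the manifold interior of $X_m$ and hence of $\underlying{\calT}$. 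This shows $S_m$ is an interior simplex of $\calT$. To conclude $\patch_{\calT}(S_m) \subseteq \patch_{X_m}(S_m)$, let $T' \in \calT$ be any $n$-simplex containing $S_m$: an interior point $q$ of $T'$ chosen close enough to $S_m$ lies in the interior of $X_m$, so $q \in T_j$ for some $j \leq m$; since distinct $n$-simplices of $\calT$ meet only in a proper common subsimplex and $q$ belongs to no proper subsimplex of $T'$, one must have $T_j = T'$, giving $T' \in X_m$. The principal obstacle will be the topological gluing step across $\Gamma_m$, which rests on knowing that $T_m$ and $X_{m-1}$ meet only along $\Gamma_m$ in $\underlying{\calT}$, a fact encoded in the very definition of the interface set.
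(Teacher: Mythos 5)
Your proof is correct and follows essentially the same strategy as the paper's: identify $\Gamma_m$ as a union of $\ell$ faces of $T_m$ with common intersection $S_m$, argue that $S_m$ becomes an interior simplex of $X_m$, and conclude that no later simplex can contain $S_m$, yielding the star equality. The paper's own proof is terse and asserts the interiority without topological justification, whereas you supply the details (the combinatorial link computation showing the relative interior of $S_m$ lies in the manifold interior of $\Gamma_m$, and the gluing of half-balls from $X_{m-1}$ and $T_m$ across $\Gamma_m$); your version also corrects what appears to be a systematic index shift in the paper's proof, which writes $X_m$ and $X_{m+1}$ where the lemma statement and its later use in Section~\ref{section:poincarefriedrichs} require $X_{m-1}$ and $X_m$.
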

\begin{proof}
    By definition, $\Gamma_{m}$ is a triangulated submanifold of the boundary of $T_{m}$,
    and so it must be a collection of $\ell$ faces of $T$, $1 \leq \ell \leq n+1$.
    Note that $\ell = n + 1$ can only happen for the last enumerated simplex, $m = M$, if $\calT$ triangulates an $n$-sphere.
    $\Gamma_{m}$ also constitutes a local patch (star) of $(n-1)$-dimensional simplices around some simplex $S_{m}$ of dimension $n-\ell$ in $\Gamma_{m}$.
    By definition, $S_{m}$ is a boundary simplex of $X_{m}$,
    and it is an interior simplex of $X_{m+1}$.
    But then $S_{m}$ cannot be a subsimplex of any of the simplices $T_{m+1}, \dots, T_{M}$,
    which means that $\patch_{X_{m+1}}(S_{m}) = \patch_{\calT}(S_{m})$.
\end{proof}

We collect important examples and counterexamples of shellable triangulations.
Essentially, in two space dimensions, interesting triangulations are shellable, but non-shellable triangulations occur in dimension three and higher.
In practice, triangulations of domains in 3D are shellable except for special cases that we consider as rare.
Our main interest are local patches (stars) within triangulations: these are shellable up to three space dimensions.

\begin{example}\label{example:shell_simpl}
    Any simplex $T$ (trivially) has a shelling, consisting only of $T$ itself. 
    The boundary complex $\partial \calT(T)$ has a shelling:
    any enumeration of the boundary faces of $T$ constitutes a shelling; see Example 8.2.(iii) in~\cite{ziegler1995lectures}.
\end{example}
\begin{example}
    The standard triangulation of the $3$-dimensional cube by six tetrahedra, the Kuhn triangulation~\cite{kuhn1960some}, is shellable, as are its higher-dimensional generalizations.\footnote{We remark that Kuhn attributes this triangulation to Lefschetz~\cite{lefschetz2015introduction}.}
\end{example}
\begin{example}
    There exists a non-shellable triangulation of a tetrahedron and of a cube in $n=3$, see~\cite[Example 8.9]{ziegler1995lectures}.
\end{example}

\begin{lemma}\label{lemma:shell_2D}
    Any simplicial $2$-\disk\ is shellable.
    Any simplicial $2$-sphere is shellable. 
\end{lemma}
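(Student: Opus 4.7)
The plan is to prove the 2-ball case by induction on the number of 2-simplices and then reduce the 2-sphere case to the ball case.

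For the reduction of the sphere case, let $\calT$ be a simplicial 2-sphere and pick any 2-simplex $T_M \in \subsimplex_2(\calT)$. By Lemma~\ref{lemma:boundarysimplices}, every face of $T_M$ is shared with exactly one other 2-simplex of $\calT$, so removing only the open simplex $T_M$ while retaining every proper face yields a simplicial subcomplex $\calT' \subseteq \calT$ whose underlying set is a topological 2-ball with boundary $\partial T_M$. The ball case produces a shelling $T_0, \dots, T_{M-1}$ of $\calT'$, and appending $T_M$ gives a shelling of $\calT$: the final interface set is $\Gamma_M = T_M \cap \underlying{\calT'} = \partial T_M$, a triangulated 1-sphere and hence a closed 1-manifold of dimension $n - 1 = 1$.

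For the 2-ball case, I induct on $M = |\subsimplex_2(\calT)|$. The base case $M = 1$ is trivial. For the inductive step with $M \geq 2$, it suffices to exhibit a 2-simplex $T_M \in \calT$ such that removing $T_M$ together with any subsimplex contained only in $T_M$ yields a simplicial 2-ball $\calT^\circ$ with $M - 1$ triangles, and such that $\Gamma_M = T_M \cap \underlying{\calT^\circ}$ is a 1-manifold; by inspection of a triangle, this means one edge or two adjacent edges of $T_M$. If some boundary vertex $v$ has valence $1$, i.e., lies in a unique 2-simplex $T^\ast$, then by Lemma~\ref{lemma:startopology} both edges of $T^\ast$ at $v$ are boundary edges of $\calT$; removing $T^\ast$ deletes $v$ and those two edges, the third edge of $T^\ast$ becomes the new boundary edge, and $\Gamma_M$ is that single edge. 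Otherwise every boundary vertex has valence at least $2$, and I claim there exists a boundary triangle $T^\ast$ whose vertex opposite to its unique boundary edge lies in the interior of $\calT$; removing such a $T^\ast$ turns its two formerly interior edges into boundary edges and its opposite vertex into a new boundary vertex, yielding a smaller 2-ball with $\Gamma_M$ equal to the single boundary edge of $T^\ast$.

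The main obstacle is the existence claim for $T^\ast$ in the second case. One must rule out configurations in which no valence-one boundary vertex exists and simultaneously every boundary triangle has its opposite vertex on the boundary, since removing any such triangle would pinch the disk at that vertex and destroy the manifold property. This is a classical combinatorial fact for triangulated 2-disks that can be proved by combining the two-ears theorem for triangulations without interior vertices with a chord-reduction argument and Euler's identity $V - E + F = 1$ on a 2-disk; an expedient alternative is to invoke the classical shellability of simplicial 2-balls established in Ziegler~\cite[Chapter~8]{ziegler1995lectures}.
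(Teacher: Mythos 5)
Your proposal follows the same strategy as the paper's proof: reduce the sphere case to the ball case by deleting one triangle, then construct the shelling for a disk in reverse by repeatedly removing a boundary triangle whose interface with the remainder is one edge or two adjacent edges. Your two cases are exactly the two ways a triangle can be ``good'' in the paper's sense: a valence-one boundary vertex produces a triangle with two boundary edges, and otherwise you want a boundary triangle with one boundary edge and an \emph{interior} opposite vertex. So far the structure matches.

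The genuine gap is that you never prove the existence claim in the second case, which is the mathematical heart of the lemma. You correctly identify it as ``the main obstacle'' and wave at two routes without executing either. The chord-reduction route is essentially what the paper actually does: it proves, by induction on the number of triangles, that every triangulated $2$-disk with at least two triangles contains at least two good triangles. The inductive step assumes fewer than two good triangles exist, observes there are at least two triangles with a boundary edge, deduces that some such triangle $T'$ must intersect $\partial M$ in one edge \emph{plus} the opposite vertex, and cuts along $T'$ to split the disk into two smaller disks; each piece has two good triangles by the induction hypothesis, at least one of which is still good in $\calT$, a contradiction. You allude to this cutting argument but never carry it out, and without it the induction has no engine. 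Your second route --- citing Ziegler~\cite[Chapter~8]{ziegler1995lectures} --- is also problematic: that chapter does not state shellability of arbitrary simplicial $2$-balls (Bruggesser--Mani concerns polytope boundaries, which reaches $2$-spheres via Steinitz but not general $2$-balls), so the citation would be either circular or a misattribution.
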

\begin{proof}
    First, 
    let $\calS$ be any triangulation of a $2$-sphere. 
    By removing any triangle $S \in \calS$, we obtain a triangulation $\calT$ of a $2$-\disk.
    Any shelling of that triangulation $\calT$ can be extended to a shelling of $\calS$ by re-inserting the first triangle $S$.
    So it remains to show that any triangulation $\calT$ of a two-dimensional \disk\ is shellable. 
    We will construct the shelling in reverse. 
    
    Write $M = \underlying{\calT}$. 
    There is nothing to show if $\calT$ contains only one triangle. 
    We call a triangle $T \in \calT$ \emph{good in $\calT$} if it intersects the boundary $\partial M$ in a non-empty union of edges. 
    Hence, a triangle is good in $\calT$ if its intersection with $\partial M$ is either one, two, or three edges,
    and a triangle is not good in $\calT$ if that intersection is either empty, only some of its vertices, or a vertex and the opposite edge.
We show by an induction argument over the number of triangles that every triangulation of a $2$-\disk\ that contains at least two triangles also contains at least two good triangles. 

    Clearly, this is the case if the triangulation of the $2$-\disk\ contains two triangles. 
    Now suppose the induction claim is true when the triangulation includes at most $N$ triangles,
    and assume that $\calT$ includes $N+1$ triangles. 
    As we travel along the boundary, we traverse along edges of at least two simplices, 
    and therefore there are at least two triangles with an edge on the boundary. 
    Suppose that $\calT$ does not have at least two triangles that are good in $\calT$.
    Then there exists a triangle $T'$ that intersects $\partial M$ 
    in one edge and its opposite vertex.
    Removing $T'$ splits the manifold into two face-connected components, each of which is a topological $2$-\disk. 
    By the induction assumption, each of those components contains at least two triangles 
    that are good in the respective component. 
    So each component has at least one triangle that is also good in $\calT$. 
    Hence, $\calT$ contains two good triangles, which completes the induction step. 
    
    We conclude that whenever $\calT$ triangulates a $2$-\disk,
    it contains a good triangle $T$. 
    If $T$ has three edges in the boundary, then $T = M$ and we are trivially done. 
    If $T$ intersects with the boundary in exactly one or two edges, 
    then $\overline{M \setminus T}$ is still a topological $2$-\disk.
    The triangulation $\calT'$ that is obtained by removing $T$
    is a triangulation of some $2$-\disk\ that intersects $T$ only at either two or one edges.
    Any shelling of $\calT'$ can in this way be extended to a shelling of $\calT$, and the proof is complete. 
\end{proof}

\begin{lemma}
    Let $\calT$ be a $3$-dimensional manifold triangulation and $S \in \calT$.
    Then $\patch_{\calT}(S)$ is shellable. 
\end{lemma}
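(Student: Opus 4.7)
The plan is to proceed by case analysis on $k := \dim S \in \{0, 1, 2, 3\}$. The cases $k = 3$ and $k = 2$ are immediate: $\patch_\calT(S)$ then consists of one, respectively at most two, tetrahedra, so any enumeration of these is a shelling. For the remaining two cases $k \in \{0,1\}$, the strategy is to observe that every tetrahedron in $\patch_\calT(S)$ has vertex set equal to $\Vertices(S)$ together with a unique ``opposite simplex'' of dimension $2-k$ sitting inside $\carapace_\calT(S)$, and to lift a shelling of the subcomplex of opposite simplices to a shelling of $\patch_\calT(S)$ by coning over $S$.

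For $k = 1$, write $S = E$ with endpoints $a, b$. The opposite simplices form a $1$-dimensional subcomplex $\calL$ of $\carapace_\calT(E)$. Since $\carapace_\calT(E)$ is a simplicial $2$-sphere or $2$-ball by Lemma~\ref{lemma:startopology} and is combinatorially the join of the edge $\{a,b\}$ with $\calL$, the complex $\calL$ must be a $1$-sphere (cycle) when $E$ is interior and a $1$-ball (path) otherwise. Both are trivially shellable, and lifting each opposite edge $e = \{w_{m-1}, w_m\}$ in the shelling order to the tetrahedron with vertex set $\{a, b, w_{m-1}, w_m\}$ produces an enumeration of the tetrahedra of $\patch_\calT(E)$ whose interfaces are single triangles $\{a,b,w_{m-1}\}$, except possibly for the last tetrahedron in the cyclic case, whose interface is the union of two triangles sharing the edge $E$ and hence still a triangulated $2$-disk.

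For $k = 0$, $S = V$, the opposite simplices exhaust all top-dimensional simplices of $\carapace_\calT(V)$, which by Lemma~\ref{lemma:startopology} is a simplicial $2$-sphere or $2$-ball and thus shellable by Lemma~\ref{lemma:shell_2D}. Given a shelling $F_0, \dots, F_M$ of $\carapace_\calT(V)$, I would set $T_m$ to be the tetrahedron with vertex set $\{V\} \cup \Vertices(F_m)$ and claim that the resulting enumeration is a shelling of $\patch_\calT(V)$. The main verification, and the central technical obstacle, is that the interface of $T_m$ with $T_0 \cup \dots \cup T_{m-1}$ is always a triangulated $2$-manifold consisting of faces of $T_m$. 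This interface equals the cone from $V$ over the interface of $F_m$ in the shelling of $\carapace_\calT(V)$, which by the shelling property is a union of one, two, or three edges of $F_m$. Coning over $V$ yields respectively a single $2$-face of $T_m$, a pair of $2$-faces sharing the edge through $V$ at their common vertex, or the three $2$-faces of $T_m$ containing $V$ --- each a triangulated $2$-disk. Assembling these cases completes the proof.
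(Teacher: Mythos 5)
Your proof is correct and follows essentially the same approach as the paper's: case analysis by $\dim S$, disposing of the face and tetrahedron cases trivially, and for edge and vertex stars, identifying the lower-dimensional complex of ``opposite simplices'' (a cycle/path for an edge, the carapace $\carapace_{\calT}(V)$ for a vertex), shelling it via Lemma~\ref{lemma:shell_2D}, and lifting by coning over $S$. In fact you are more careful than the paper: you explicitly verify that the coned enumeration has interfaces that are triangulated $2$-disks (including the union-of-two-faces situation for the last tetrahedron in a cyclic edge star), a step the paper compresses into a single assertion about the tetrahedron--triangle correspondence; the only slip is that $\carapace_{\calT}(E)$ is the join of $\calL$ with the two \emph{endpoints} $\partial E$ of the edge, not with the edge $E$ itself (that join would be $\patch_{\calT}(E)$), which does not affect your conclusion about the topology of $\calL$.
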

\begin{proof}
    The statement is trivially true if $S$ is a tetrahedron. 
    The statement is clear if $S$ is an inner or boundary face of $\calT$,
    where we only need to enumerate either one or two tetrahedra. 
    The statement is still easily verified if $S$ is an inner or boundary edge of $\calT$:
    one chooses a starting tetrahedron (with a boundary face if $S$ is a boundary edge) and rotates around the edge in a fixed direction to create a suitable enumeration. 
    When $S$ is an inner vertex, then the faces (triangles) of $\patch_{\calT}(S)$ that do not contain $V$ constitute a simplicial $2$-sphere.
    Similarly, when $S$ is a boundary vertex, then the faces (triangles) of $\patch_{\calT}(S)$ that do not contain $V$ constitute a simplicial $2$-\disk.
    Both these $2$-dimensional complexes are shellable by Lemma~\ref{lemma:shell_2D}, and any such shelling immediately yields a shelling of $\patch_{\calT}(S)$ since there is a one-to-one correspondence between the tetrahedra in $\calT$ and the triangles.\footnote{For $S$ an inner vertex,~\cite[Lemma~B.1]{ern2020stable} also yields the claim.}
\end{proof}

\begin{lemma}\label{lemma:stars_are_shellable}
    Let $\calT$ be an $n$-dimensional shellable triangulation and $V \in \Vertices(\calT)$ be a vertex.
    Then $\patch_{\calT}(V)$ is shellable. 
\end{lemma}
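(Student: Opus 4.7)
The plan is to construct a shelling of $\patch_{\calT}(V)$ by restricting a shelling of $\calT$. Let $T_{0}, T_{1}, \dots, T_{M}$ be a shelling of $\calT$, and let $T_{i_{0}}, T_{i_{1}}, \dots, T_{i_{K}}$ (with $i_{0} < i_{1} < \dots < i_{K}$) be the subsequence consisting of those $n$-simplices that contain $V$. By definition of $\patch_{\calT}(V)$, this enumerates exactly the $n$-simplices of $\patch_{\calT}(V)$. I will show that this induced enumeration is itself a shelling, that is, that
\[
 \Gamma'_{j} := \big( T_{i_{0}} \cup \cdots \cup T_{i_{j-1}} \big) \cap T_{i_{j}}
\]
is a non-empty union of $(n-1)$-faces of $T_{i_{j}}$ for every $1 \le j \le K$.

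The first step is to compare $\Gamma'_{j}$ with the original interface $\Gamma_{i_{j}} = (T_{0} \cup \cdots \cup T_{i_{j}-1}) \cap T_{i_{j}}$, which by the shelling hypothesis is a union of faces of $T_{i_{j}}$. Clearly $\Gamma'_{j} \subseteq \Gamma_{i_{j}}$. I claim, conversely, that $\Gamma'_{j}$ equals the union of precisely those faces $F$ of $T_{i_{j}}$ contained in $\Gamma_{i_{j}}$ with $V \in F$. For $(\supseteq)$: if $F$ is a face of $T_{i_{j}}$ in $\Gamma_{i_{j}}$ containing $V$, then $F \subseteq T_{\ell}$ for some $\ell < i_{j}$; since $V \in F \subseteq T_{\ell}$, the simplex $T_{\ell}$ lies in $\patch_{\calT}(V)$ and hence appears as some $T_{i_{j'}}$ with $j' < j$, so $F \subseteq \Gamma'_{j}$. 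For $(\subseteq)$: any point $x \in \Gamma'_{j}$ lies in some $T_{i_{j'}} \cap T_{i_{j}}$, which is a common subsimplex of two simplices both containing $V$, hence a subsimplex containing $V$; this subsimplex lies in $\Gamma_{i_{j}}$ and is therefore contained in at least one face of $T_{i_{j}}$ that appears in $\Gamma_{i_{j}}$ and contains $V$. So $\Gamma'_{j}$ is exactly the union of $V$-containing faces of $\Gamma_{i_{j}}$; in particular, it is a union of $(n-1)$-faces of $T_{i_{j}}$.

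The remaining step is to check that this collection is non-empty. By Lemma~\ref{lemma:existenceofstar}, $\Gamma_{i_{j}}$ consists of $\ell$ faces of $T_{i_{j}}$ meeting in a common subsimplex $S_{i_{j}}$ of dimension $n-\ell$. I split into cases: if $V \in S_{i_{j}}$, every face of $\Gamma_{i_{j}}$ contains $V$, so $\Gamma'_{j} = \Gamma_{i_{j}}$. If $V \notin S_{i_{j}}$, then $V$ is one of the $\ell$ vertices of $T_{i_{j}}$ that are not in $S_{i_{j}}$, and the faces of $\Gamma_{i_{j}}$ containing $V$ are the $\ell-1$ faces that omit one of the \emph{other} $\ell - 1$ such vertices. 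This would be empty only if $\ell = 1$, i.e., if $\Gamma_{i_{j}}$ consisted of a single face $F$ with $V \notin F$; but any pair of simplices $T_{i_{j'}}, T_{i_{j}}$ containing $V$ intersect in a subsimplex containing $V$ (as in a simplicial complex the intersection is spanned by the common vertices), so $V \in \Gamma'_{j} \subseteq \Gamma_{i_{j}}$, contradicting $V \notin F$. Hence $\ell \ge 2$ in this case and $\Gamma'_{j}$ is non-empty.

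The only subtle point, and what I view as the main obstacle, is the comparison $\Gamma'_{j} = \{F \in \Gamma_{i_{j}} : V \in F\}$; one must use twice that intersections in a simplicial complex are spanned by common vertices, first to conclude $V \in \Gamma'_{j}$ and second to propagate subsimplices of $\Gamma'_{j}$ into $V$-containing faces of the original interface. Once this bookkeeping is in place, the two-case analysis on whether $V \in S_{i_{j}}$ finishes the argument and exhibits the required shelling of $\patch_{\calT}(V)$.
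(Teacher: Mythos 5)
Your proof is correct and takes the standard approach (restricting the shelling of $\calT$ to the subsequence of $n$-simplices containing $V$), which is exactly what Ziegler's Lemma~8.7 does --- the paper itself gives no argument and simply cites that lemma. Both directions of your key identification $\Gamma'_j = \bigcup\{F \subseteq \Gamma_{i_j} : F \in \Faces(T_{i_j}), V \in F\}$ check out: for $(\supseteq)$ a $V$-containing face $F$ of $\Gamma_{i_j}$ is a face of some earlier $T_\ell$ which must then lie in $\patch_{\calT}(V)$; for $(\subseteq)$, a barycenter argument shows any common subsimplex $T_{i_{j'}} \cap T_{i_j} \subseteq \Gamma_{i_j}$ lies inside a single face of $\Gamma_{i_j}$, which must contain $V$.

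One blemish worth fixing: your non-emptiness step invokes Lemma~\ref{lemma:existenceofstar}, which is stated for \emph{manifold} triangulations, whereas the present lemma only assumes $\calT$ is shellable (and in general a shellable complex need not be a manifold triangulation --- e.g., three triangles glued along one common edge). The detour is not needed. You yourself observe, at the end of that paragraph, that $V \in T_{i_{j'}} \cap T_{i_j} \subseteq \Gamma'_j$ for any $j' < j$ because $V$ is a common vertex; since $j \geq 1$, taking $j' = 0$ already gives $V \in \Gamma'_j$, so $\Gamma'_j \neq \emptyset$. Combined with the fact that $\Gamma'_j$ is a union of $(n-1)$-faces, this finishes the argument. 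The whole case analysis on whether $V \in S_{i_j}$ and the appeal to Lemma~\ref{lemma:existenceofstar} can be deleted.
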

\begin{proof}
    This is Lemma~8.7 in~\cite{ziegler1995lectures}.
\end{proof}

\begin{lemma}
    The barycentric refinement of any triangulation of a three-dimensional convex domain is shellable.
    The second barycentric refinement of any triangulation of a convex domain is shellable.
\end{lemma}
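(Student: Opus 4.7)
\textit{Proof proposal.} The plan is to construct explicit shellings of the refined triangulations by a generic linear sweep, combining the convexity of $\Omega$ with the shellability of local stars established in the lemmas immediately preceding this statement.

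For the 3D claim, let $\calT$ triangulate a convex domain $\Omega \subseteq \mathbb{R}^3$, and let $\calT'$ denote its barycentric refinement. I would choose a linear functional $\ell : \mathbb{R}^3 \to \mathbb{R}$ whose restriction to the vertex set $\Vertices(\calT')$ is injective, and then enumerate the tetrahedra $T_0, T_1, \ldots, T_M$ of $\calT'$ by increasing value of $\ell$ at their barycenters. As the sweep level $c$ increases, the sublevel sets $\Omega_c := \{x \in \Omega : \ell(x) \leq c\}$ form a monotonically growing family of convex 3-balls, and the combinatorial state changes only when $c$ passes through the $\ell$-value of some vertex $V$ of $\calT'$. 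At each such transition the tetrahedra being added are precisely those in the upper half of $\patch_{\calT'}(V)$, and the interface with the already-shelled complex is the corresponding lower half, which is a subcomplex of $\carapace_{\calT'}(V)$. The latter is a simplicial 2-disk or 2-sphere (by Lemma~\ref{lemma:startopology} applied inside $\calT'$, viewed as a triangulation of the 3-manifold $\Omega$), hence shellable by Lemma~\ref{lemma:shell_2D}, and this local shelling order can be chosen compatibly with the partition induced by $\ell$ so as to produce an admissible shelling of the entire $\patch_{\calT'}(V)$ starting from the already-built interface. Concatenating these local shellings across all vertex events produces the global shelling of $\calT'$.

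For the general-dimensional claim, the same linear-sweep strategy applies to the second barycentric refinement $\calT''$ of any triangulation $\calT$ of a convex domain $\Omega \subseteq \mathbb{R}^n$. The second refinement is needed because in dimensions $n \geq 4$ there exist triangulations of convex polytopes whose first barycentric refinement may fail to be shellable; passing to $\calT''$ ensures that every local star $\patch_{\calT''}(V)$ is shellable, because the link of any vertex of $\calT''$ is itself realized as a first barycentric refinement of a subcomplex of $\calT'$, and repeated application of the classical fact that the barycentric subdivision of the boundary of a simplex is shellable (Example~\ref{example:shell_simpl}) propagates shellability to all stars in $\calT''$. With local stars shellable, the sweep procedure from the 3D case carries over verbatim: at each vertex crossing event, the interface is a lower-star which is shellable as a subcomplex of a shellable $(n-1)$-manifold, and compatibility with the running shelling is realized by choosing local shelling orders that start along this interface.

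The main obstacle will be verifying the interface condition at each sweep transition, that is, showing both that the newly added collection of $n$-simplices is exactly $\patch_{\calT^{(j)}}(V) \cap \{\ell \geq \ell(V)\}$ for $j \in \{1,2\}$, and that the corresponding lower-star is a shellable triangulated $(n-1)$-manifold which can serve as the starting interface of a shelling of the whole vertex star. In 3D this verification reduces to Lemma~\ref{lemma:shell_2D} together with Lemma~\ref{lemma:startopology}, but in higher dimensions the argument depends essentially on the second refinement: for $\calT'$ alone, the $(n-1)$-dimensional lower-star at a vertex crossing event need not be shellable, whereas for $\calT''$ its structure as a barycentric subdivision of a shellable complex guarantees shellability, closing the inductive step of the sweep.
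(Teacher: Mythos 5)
The paper does not prove this lemma; it cites \cite[Theorem A]{Adi_Ben_subd_shellable_17} (Adiprasito--Benedetti) together with \cite[Proposition 1]{Burg_Mani_71} (Bruggesser--Mani). Your proposal attempts an independent proof by a linear-sweep (Bruggesser--Mani style) argument, and this approach has a genuine gap.

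The Bruggesser--Mani sweep yields shellings of \emph{boundary complexes of convex polytopes}: one rotates a hyperplane around a generic line and records the order in which facets become visible. This exploits the fact that each facet lies in a single supporting hyperplane of the polytope. Nothing analogous holds for a triangulation of the \emph{interior} of a convex domain. If you enumerate the full-dimensional simplices of a triangulated convex body by a linear functional, there is no reason that each new simplex meets the union of the previous ones in a pure $(n-1)$-dimensional subcomplex of its boundary, which is precisely the shelling condition the paper defines; a new simplex may attach along a single edge or vertex. Indeed, there are triangulations of a tetrahedron (Rudin's ball, and the paper's own reference~\cite[Example 8.9]{ziegler1995lectures}) that are not shellable at all, so no combinatorially oblivious sweep procedure can work for general triangulations of convex domains, and the burden is entirely on the barycentric refinement — which is exactly what your sweep argument never uses in an essential way. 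Your treatment of what happens ``when $c$ passes through $\ell(V)$'' assumes that the lower-star of $V$ in the already-built complex is a shellable $(n-1)$-disk which can be extended to a shelling of all of $\patch(V)$ compatible with the running order, but this is unproven and is in fact the crux of the matter; it cannot be reduced to Lemma~\ref{lemma:shell_2D} and Lemma~\ref{lemma:startopology} because those control the topology of stars, not whether a sweep-induced enumeration is a shelling.

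The explanation you give for why the \emph{second} barycentric refinement suffices in general dimension is also incorrect: vertex links in a barycentric subdivision $\calT''$ are not barycentric subdivisions of subcomplexes of $\calT'$; they are joins of order complexes of upper and lower intervals in the face poset, a more intricate structure. The actual Adiprasito--Benedetti proof of Theorem~A does not proceed by a geometric sweep; it uses a careful combinatorial induction on the face poset together with results on collapsibility and relative shellability of subdivisions. Reproducing that would be a substantial undertaking and is well beyond the scope of this manuscript — which is precisely why the paper cites the result rather than proving it.
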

\begin{proof}
    See \cite[Theorem A]{Adi_Ben_subd_shellable_17};
    see also \cite[Proposition 1]{Burg_Mani_71}
\end{proof}

\begin{lemma}
    Let $\calM$ be a $3$-dimensional manifold homeomorphic to a $3$-simplex.
    Then any triangulation of $\calM$ has a shellable refinement.
\end{lemma}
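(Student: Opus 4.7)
The plan is to reduce the statement to the preceding lemma by straightening the triangulation via a PL homeomorphism. Since $\calM$ is a $3$-manifold homeomorphic to a $3$-simplex, it is a topological $3$-ball, and the given triangulation $\calT$ endows $\calM$ with the structure of a combinatorial $3$-manifold. By Moise's theorem on PL structures of $3$-manifolds (equivalently, the PL Schoenflies theorem in dimension $3$), the triangulated manifold $(\calM, \calT)$ is PL homeomorphic to the standard $3$-simplex $\Delta^{3}$ equipped with its trivial one-simplex triangulation; I would fix such a PL homeomorphism $\phi : \calM \to \Delta^{3}$.

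Next, I would invoke the classical PL-topology fact that any PL homeomorphism admits a common simplicial subdivision of source and target: there exist subdivisions $\calT'$ of $\calT$ and $\calS$ of $\{\Delta^{3}\}$ such that $\phi$ restricts to an affine isomorphism from each simplex of $\calT'$ onto a simplex of $\calS$. Consequently, $\calT'$ and $\calS$ are combinatorially isomorphic as abstract simplicial complexes, while $\calS$ is itself a straight triangulation of the convex domain $\Delta^{3}$. Applying the preceding lemma to $\calS$ shows that its barycentric refinement is shellable. Since shellability depends only on the combinatorial structure of a simplicial complex and barycentric refinement commutes with combinatorial isomorphisms, the barycentric refinement of $\calT'$ is shellable as well. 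As $\calT'$ is a subdivision of $\calT$, this produces the desired shellable refinement, completing the proof.

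The main obstacle is justifying the two ingredients used above. The first is the existence of the PL homeomorphism $\phi$, which requires Moise's theorem: every topological $3$-manifold carries a unique PL structure, so any triangulation of $\calM \cong \Delta^{3}$ is automatically a PL triangulation of the standard PL $3$-ball. The second is the existence of a common simplicial subdivision realizing $\phi$, which is a standard but non-elementary fact of PL topology. Both invocations lie somewhat outside the combinatorial-analytic framework of the rest of the paper, so they warrant an explicit reference in the proof rather than an elementary derivation.
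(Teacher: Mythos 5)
Your proposal is correct and follows essentially the same route as the paper: both invoke Moise's three-dimensional Hauptvermutung to produce a common simplicial subdivision identifying (a refinement of) $\calT$ with a straight triangulation of the $3$-simplex, then apply the preceding lemma (barycentric refinement of a triangulation of a $3$D convex domain is shellable), and transfer shellability back through the combinatorial isomorphism. The paper states this more tersely (and with a small notational slip, conflating $\calT_0$ and $\calS$), whereas you make explicit the two PL-topology inputs being used, which is a helpful clarification but not a different argument.
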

\begin{proof}
    Let $\calT$ be a triangulation of $\calM$ and let $\calT_{0}$ be a triangulation of the $3$-simplex.
According to the three-dimensional ``Hauptvermutung''~\cite[Theorem 4]{moise1952affine},
    the triangulations $\calT$ and $\calS$ admit respective refinements $\calT'$ and $\calS'$ such that $\calT$ is isomorphic to $\calS$.
The first barycentric refinement of $\calS'$ is shellable~\cite[Theorem A]{Adi_Ben_subd_shellable_17}.
    We conclude that $\calT$ has a refinement that is shellable.
\end{proof}

\begin{remark}
    Not all triangulable sets admit a triangulation that is also shellable.
    Even if a set admits some shellable triangulation, some of its other triangulations might not be shellable.
    We refer to~\cite[Example 8.9]{ziegler1995lectures} for examples of non-shellable triangulations of cubes and tetrahedra in three dimensions. 
    Moreover, if we extend the non-shellable triangulation of the tetrahedron from~\cite[Example 8.9]{ziegler1995lectures} to a triangulation of a hypertetrahedron by suspending it from a new point $v^\star$, then the resulting new triangulation is non-shellable and coincides with the patch around $v^\star$.
    This demonstrates that patches around boundary simplices are not necessarily shellable when the space dimension $n$ is larger than three.
\end{remark}

\begin{remark}
    Whether a simplicial complex is shellable can be checked, in principle, simply by trying out all the possible enumerations.
    That we cannot do much better than this is captured in the fact that testing general simplicial complexes (with some fixed dimension) for shellability is NP-complete~\cite{goaoc2019shellability}.
    This complexity class even holds
    if we only test two-dimensional simplicial complexes for shellability,
    or if we only test shellability of contractible three-dimensional simplicial complexes.
\end{remark}

\section{Reflections and deformations on shellable stars}\label{section:extension}

This section is devoted to geometric operations that are crucial for our main result in Section~\ref{section:poincarefriedrichs} below. 
Consider the situation where we have an $n$-dimensional local patch (star) around some simplex $S$ and some $n$-dimensional simplex $T$ within that local star. 
We construct a homeomorphism going from the simplex $T$ onto its complement within the local star around $S$.
This homeomorphism, which we interpret as a nonlinear reflection, preserves the interface. 
We ensure that the homeomorphism is bi-Lipschitz, and we are particularly interested in the norms of its Jacobian.
This nonlinear reflection, which is similar to the two- and three-color maps in~\cite[Sections~5.3 and~6.3]{ern2020stable} and the symmetrization maps in~\cite[Section~7.6]{Chaum_Voh_p_rob_3D_H_curl_24}, will be used subsequently in generalizing the discussion in Section~\ref{section:gradient} to the setting of differential forms.

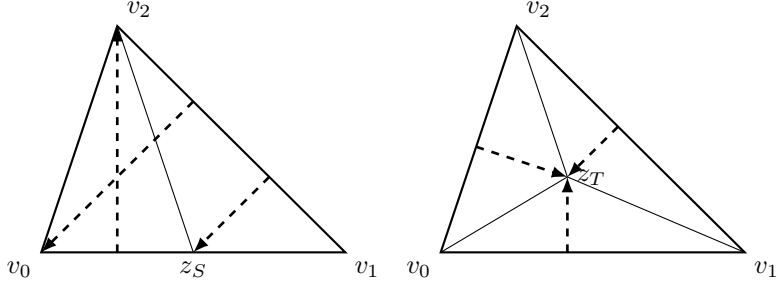
\begin{figure}[t]
    \caption{Illustration of Lemma~\ref{lemma:stardivision}. Left:
    the triangle $T = [ v_0, v_1, v_2 ]$ is bisected at the edge $S = [ v_0, v_1 ]$, leading to two new triangles.
    The height vector to $v_2$ in all three triangles remains the same. The height vector to $z_S$ in the new triangle $[z_S, v_1, v_2]$ is one half of the height vector to $v_0$ in the original triangle.
    Right:
    the triangle $T$ is trisected, leading to three new triangles.
    The height vector to $z_T$ from the opposite edge in any triangle is one third of the original height vector of that edge.}
    \begin{center}
    \begin{tikzpicture}
\coordinate (A) at (0,0); \coordinate (B) at (4,0); \coordinate (C) at (1,3); \coordinate (M) at ($(A)!0.5!(B)$); \coordinate (H_A) at ($(C)!(A)!(B)$); \coordinate (H_B) at ($(C)!(B)!(A)$); \coordinate (H_C) at ($(A)!(C)!(B)$); \coordinate (H_M) at ($(B)!(M)!(C)$); \draw[thick] (A) -- (B) -- (C) -- cycle;
\draw[] (C) -- (M);
        \draw[dashed,->,line width=1.0pt,>=latex] (H_A) -- (A); \draw[dashed,->,line width=1.0pt,>=latex] (H_M) -- (M); \draw[dashed,->,line width=1.0pt,>=latex] (H_C) -- (C); \node[above right] at (C) {$v_2$};
        \node[below left] at (A) {$v_0$};
        \node[below right] at (B) {$v_1$};
        \node[below] at (M) {$z_S$};
    \end{tikzpicture}
    \begin{tikzpicture}

\coordinate (A) at (0,0); \coordinate (B) at (4,0); \coordinate (C) at (1,3); \coordinate (G) at (1.666666666,1); \draw[thick] (A) -- (B) -- (C) -- cycle;
\draw[] (A) -- (G);
        \draw[] (B) -- (G);
        \draw[] (C) -- (G);
\coordinate (H_GA) at ($(B)!(G)!(C)$); \coordinate (H_GB) at ($(A)!(G)!(C)$); \coordinate (H_GC) at ($(A)!(G)!(B)$); 

\draw[dashed,->,line width=1.0pt,>=latex] (H_GA) -- (G); \draw[dashed,->,line width=1.0pt,>=latex] (H_GB) -- (G); \draw[dashed,->,line width=1.0pt,>=latex] (H_GC) -- (G); 

\node[above right] at (C) {$v_2$};
        \node[below left] at (A) {$v_0$};
        \node[below right] at (B) {$v_1$};
        \node[right] at (G) {$z_T$};
    \end{tikzpicture}
    \end{center}
\end{figure}

The following observation, which we state without proof,
controls the volume and some heights when a simplex is partitioned via barycentric subdivision.

\begin{lemma}\label{lemma:stardivision}
    Let $T$ be an $n$-dimensional simplex with an $\ell$-dimensional subsimplex $S$ ($\ell \geq 0$) and let $z_S$ be the barycenter of $S$.
    Let $T'$ be one of the $n$-dimensional simplices obtained by splitting $T$ in accordance with the barycentric subdivision of $S$ at $z_{S}$.
    \begin{itemize}
        \item $\vol(T') = \vol(T) / (\ell+1)$. 
        \item The height vector of $v \in \Vertices(T) \setminus \Vertices(S)$ in $T'$ is the height vector of $v$ in $T$.
        \item The height vector of $z_S$ in $T'$ is the height vector of the single vertex $v \in \Vertices(T) \setminus \Vertices(T')$ in $T$,
scaled by $(\ell+1)^{-1}$.
    \hfill\qed
    \end{itemize}
\end{lemma}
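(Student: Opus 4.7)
\medskip
\noindent\textbf{Proof plan.} Without loss of generality, label the vertices of $T$ as $v_{0}, v_{1}, \ldots, v_{n}$ so that $S$ has vertices $v_{0}, \ldots, v_{\ell}$, whence $z_{S} = (\ell+1)^{-1} \sum_{j=0}^{\ell} v_{j}$. The star subdivision of $T$ at $z_{S}$ produces exactly the $\ell+1$ simplices
\[
    T'_{i} := \convexhull\left( \{ z_{S}, v_{0}, \ldots, v_{n} \} \setminus \{ v_{i} \} \right),
    \qquad i = 0, 1, \ldots, \ell,
\]
obtained by replacing the vertex $v_{i}$ of $S$ by $z_{S}$. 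After fixing such an $i$, we write $T' = T'_{i}$; the single vertex in $\Vertices(T) \setminus \Vertices(T')$ is then $v_{i}$.

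I would begin with the second bullet. Fix $v = v_{j}$ for some $j \in \{\ell+1, \ldots, n\}$, so that $v \in \Vertices(T) \cap \Vertices(T')$. The face of $T$ opposing $v$ has vertex set $\{v_{0}, \ldots, v_{n}\} \setminus \{v\}$, while the face of $T'$ opposing $v$ has vertex set $\{z_{S}, v_{0}, \ldots, v_{n}\} \setminus \{v, v_{i}\}$. Since $z_{S}$ lies in the affine hull of $v_{0}, \ldots, v_{\ell}$, which is contained in the affine hull of $\{v_{0}, \ldots, v_{n}\} \setminus \{v\}$, both opposing faces share the same affine hull. Consequently, the orthogonal projection of $v$ onto this affine hull is the same in $T$ and in $T'$, and the height vector of $v$ is identical in the two simplices.

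I would next address the third bullet. The face of $T'$ opposing $z_{S}$ has vertex set $\{v_{0}, \ldots, v_{n}\} \setminus \{v_{i}\}$, which coincides with the face of $T$ opposing $v_{i}$. Write $F$ for its affine hull and let $\pi : \bbR^{n} \to F$ denote orthogonal projection; then $\pi(v_{j}) = v_{j}$ for every $j \neq i$. Exploiting linearity of $\pi$ as an affine map restricted to the affine combination,
\[
    z_{S} - \pi(z_{S})
    = \frac{1}{\ell+1} \sum_{j=0}^{\ell} \bigl( v_{j} - \pi(v_{j}) \bigr)
    = \frac{1}{\ell+1} \bigl( v_{i} - \pi(v_{i}) \bigr),
\]
which expresses the height vector of $z_{S}$ in $T'$ as $(\ell+1)^{-1}$ times the height vector of $v_{i}$ in $T$, as required.

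Finally, for the volume claim, I would observe that $T'$ and $T$ share the common $(n-1)$-dimensional face opposing $z_{S}$ and $v_{i}$ respectively; hence the elementary volume formula $\vol = \frac{1}{n}(\text{base area}) \cdot (\text{height})$ together with the height identity from the third bullet immediately yields $\vol(T') = \vol(T)/(\ell+1)$. Since these arguments reduce entirely to standard affine geometry and the linearity of orthogonal projection, no real obstacle is anticipated; the only care needed is the bookkeeping of which vertex is missing in $T'$ and verifying that the relevant opposing face indeed lies in a hyperplane containing $z_{S}$'s projection.
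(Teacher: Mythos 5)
Your proof is correct. The paper states Lemma~\ref{lemma:stardivision} explicitly without proof (``The following observation, which we state without proof\ldots''), so there is no authorial argument to compare against; your write-up would stand as a valid proof of the claim. Each bullet is handled soundly: the second follows because the affine hulls of the two opposing faces coincide (your containment argument, combined with the observation --- implicit but worth making explicit --- that both faces are $(n-1)$-simplices, hence their affine hulls have the same dimension and containment forces equality); the third follows from the affinity of orthogonal projection onto $F$, which kills every term $v_j - \pi(v_j)$ with $j \neq i$ in the expansion of $z_S - \pi(z_S)$; and the first then reduces to the base-times-height volume formula applied to the shared face $F$ together with the scalar factor $(\ell+1)^{-1}$ from the third bullet. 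The only bookkeeping to double-check, which you correctly handle, is that $z_S$ does \emph{not} lie in the affine hull of $\Vertices(T)\setminus\{v_i\}$ for $i \leq \ell$ (since the coefficient of $v_i$ in the unique affine expansion of $z_S$ over $\Vertices(T)$ is $(\ell+1)^{-1} \neq 0$), so that each $T'_i$ is genuinely an $n$-simplex.
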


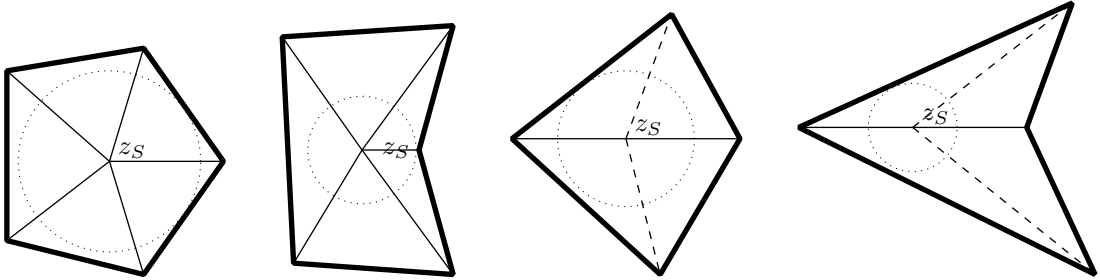
\begin{figure}[t]
    \caption{Illustration of Proposition~\ref{proposition:stars_are_starshaped} and its proof:
    for a convex vertex star, for a non-convex vertex star, both cases where the barycentric refinement of $S$ has no effect,
    for a convex edge star, and for a non-convex edge star.
    }\label{figure:stars_are_starshaped}
    \centering
    \begin{tabular}{c}
        \begin{tikzpicture}
[line join=bevel,x={( 1.5cm, 0mm)},y={( 0mm, 1.5cm)},z={( 1.5*3.85mm, -1.5*3.85mm)}]
            \coordinate (O) at (  0.0,  0.0, 0.0);
            \coordinate (A) at (  1.0,  0.0, 0.0);
            \coordinate (B) at (  0.3,  1.0, 0.0);
            \coordinate (C) at ( -0.9,  0.8, 0.0);
            \coordinate (D) at ( -0.9, -0.7, 0.0);
            \coordinate (E) at (  0.3, -1.0, 0.0);
            \draw[dotted] (0,0) circle [radius=0.8];
            \node at (0.2, 0.1) {$z_S$};
            \draw[line width=0.05em] (O) -- (A);
            \draw[line width=0.05em] (O) -- (B);
            \draw[line width=0.05em] (O) -- (C);
            \draw[line width=0.05em] (O) -- (D);
            \draw[line width=0.05em] (O) -- (E);
            \draw[line width=0.2em] (A) -- (B) -- (C) -- (D) -- (E) -- (A) -- cycle;
        \end{tikzpicture}\qquad
        \begin{tikzpicture}
[line join=bevel,x={( 1.5cm, 0mm)},y={( 0mm, 1.5cm)},z={( 1.5*3.85mm, -1.5*3.85mm)}]
            \coordinate (O) at (  0.0,  0.0, 0.0);
            \coordinate (A) at ( -0.7,  1.0, 0.0);
            \coordinate (B) at (  0.8,  1.1, 0.0);
            \coordinate (C) at (  0.5,  0.0, 0.0);
            \coordinate (D) at (  0.8, -1.1, 0.0);
            \coordinate (E) at ( -0.6, -1.0, 0.0);
            \draw[dotted] (0,0) circle [radius=0.475];
            \node at (0.3, 0.0) {$z_S$};
            \draw[line width=0.05em] (O) -- (A);
            \draw[line width=0.05em] (O) -- (B);
            \draw[line width=0.05em] (O) -- (C);
            \draw[line width=0.05em] (O) -- (D);
            \draw[line width=0.05em] (O) -- (E);
            \draw[line width=0.2em] (A) -- (B) -- (C) -- (D) -- (E) -- (A) -- cycle;
        \end{tikzpicture}\qquad
        \begin{tikzpicture}
[line join=bevel,x={( 1.5cm, 0mm)},y={( 0mm, 1.5cm)},z={( 1.5*3.85mm, -1.5*3.85mm)}]
            \coordinate (O) at (  0.0,  0.0, 0.0);
            \coordinate (A) at (  1.0,  0.0, 0.0);
            \coordinate (B) at ( -1.0,  0.0, 0.0);
            \coordinate (C) at (  0.4,  1.1, 0.0);
            \coordinate (D) at (  0.3, -1.2, 0.0);
            \node at (0.2, 0.1) {$z_S$};
            \draw[dotted] (0,0) circle [radius=0.60];
            \draw[line width=0.05em] (O) -- (A);
            \draw[line width=0.05em] (O) -- (B);
            \draw[dashed,line width=0.05em] (O) -- (C);
            \draw[dashed,line width=0.05em] (O) -- (D);
            \draw[line width=0.2em] (A) -- (C) -- (B) -- (D) -- (A) -- cycle;
        \end{tikzpicture}\qquad
        \begin{tikzpicture}
[line join=bevel,x={( 1.5cm, 0mm)},y={( 0mm, 1.5cm)},z={( 1.5*3.85mm, -1.5*3.85mm)}]
            \coordinate (O) at (  0.0,  0.0, 0.0);
            \coordinate (A) at (  1.0,  0.0, 0.0);
            \coordinate (B) at ( -1.0,  0.0, 0.0);
            \coordinate (C) at (  1.4,  1.1, 0.0);
            \coordinate (D) at (  1.6, -1.3, 0.0);
            \node at (0.2, 0.1) {$z_S$};
            \draw[dotted] (0,0) circle [radius=0.39];
            \draw[line width=0.05em] (O) -- (A);
            \draw[line width=0.05em] (O) -- (B);
            \draw[dashed,line width=0.05em] (O) -- (C);
            \draw[dashed,line width=0.05em] (O) -- (D);
            \draw[line width=0.2em] (A) -- (C) -- (B) -- (D) -- (A) -- cycle;
        \end{tikzpicture}\end{tabular}
\end{figure}

The following auxiliary result on the geometry of local stars is of independent interest.

\begin{proposition}\label{proposition:stars_are_starshaped}
    Let $\calT$ be a triangulation of an $n$-dimensional domain.
    Let $S \in \calT$ be an inner simplex of dimension $\ell < n$.
    Then $\underlying{\patch_{\calT}(S)}$ is star-shaped with respect to the ball $\Ball_{h/(\ell+1)}(z_S)$,
    where $z_S$ denotes the barycenter of $S$,
    and where $h > 0$ denotes the maximum height of any vertex of $S$ within any $n$-simplex of $\patch_{\calT}(S)$.
\end{proposition}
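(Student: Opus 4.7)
The plan is to reduce to the special case of a vertex star via a barycentric subdivision at $z_S$, and then to prove the vertex-star version by showing that the inscribed ball lies inside the intersection of the inner halfspaces of the outer faces, and that this intersection lies in the kernel of the star. I read $h$ as $\min_{v \in \Vertices(S),\, T \in \patch_{\calT}(S)} \height_T(v)$, since this is the value for which the ball of radius $h/(\ell+1)$ at $z_S$ fits inside $\underlying{\patch_{\calT}(S)}$.

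For the reduction, I would invoke Lemma~\ref{lemma:stardivision} to split each $n$-simplex $T \in \patch_{\calT}(S)$ at $z_S$ into $\ell+1$ new $n$-simplices $T'(T,v)$, one for each $v \in \Vertices(S)$; each such $T'(T,v)$ has $z_S$ as a vertex with height $\height_{T'(T,v)}(z_S) = (\ell+1)^{-1}\height_T(v) \geq h/(\ell+1)$. Since $S$ is an inner simplex, $z_S$ lies in the interior of $\underlying{\calT}$ and becomes an interior vertex of the refined triangulation, whose vertex star underlies the same set $\underlying{\patch_{\calT}(S)}$. It therefore suffices to show that, for any interior vertex $V$ of a triangulation with $\rho := \min_T \height_T(V)$, the set $\underlying{\patch_{\calT}(V)}$ is star-shaped with respect to $\Ball_\rho(V)$.

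For this vertex-star version, I would show that $\Ball_\rho(V) \subseteq \bigcap_T H_T^-$, where $H_T^- := \{y : (y - V) \cdot \hat n_T \leq \height_T(V)\}$ is the closed halfspace bounded by $\mathrm{aff}(F_T)$ and containing $V$, with $F_T$ the face of $T \in \patch_{\calT}(V)$ opposite $V$ and $\hat n_T$ its outward unit normal. This follows from Cauchy--Schwarz since $(b - V) \cdot \hat n_T \leq |b - V| \leq \rho \leq \height_T(V)$. I would then show by a contradiction argument that every $x_0 \in \bigcap_T H_T^-$ in the star lies in its kernel: if $[x_0, y] \not\subseteq \underlying{\patch_{\calT}(V)}$ for some $y$ in the star, there is a maximal interval $(t_a, t_b) \subseteq (0, 1)$ on which $q(t) := (1-t)x_0 + t y$ is outside $\underlying{\patch_{\calT}(V)}$, and by a density argument I may assume $q(t_b)$ lies in the relative interior of some codimension-one outer face $F_j$ with transversal re-entry $(y - x_0) \cdot \hat n_j < 0$. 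Substituting $x_0 - q(t_b) = -t_b(y - x_0)$ yields
\[
    (x_0 - q(t_b)) \cdot \hat n_j = -t_b\,(y - x_0) \cdot \hat n_j > 0,
\]
contradicting $x_0 \in H_j^-$. Taking $x_0 \in \Ball_\rho(V)$ completes the proof.

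The main subtle point is handling re-entries at lower-dimensional or non-transversal parts of $\partial \underlying{\patch_{\calT}(V)}$; this is resolved by first establishing the conclusion on a dense subset of $y \in \underlying{\patch_{\calT}(V)}$ for which all boundary crossings of the segment are transversal and across codimension-one outer faces, and then passing to the limit using the closedness of $\underlying{\patch_{\calT}(V)}$.
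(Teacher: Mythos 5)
Your proof is correct and uses the same first step as the paper (barycentric subdivision of $S$ at $z_S$, via Lemma~\ref{lemma:stardivision}, to reduce to the vertex-star case), but a genuinely different argument for the vertex-star case. The paper slices $\underlying{\patch_{\calT}(S)}$ by every two-dimensional plane through $z_S$, observes that each slice is a planar polygon star-shaped with respect to the slice of the ball, and cites a classical planar-geometry fact (that the kernel of a polygon is the intersection of the inner half-planes of its edges). You instead give the intrinsic $n$-dimensional version of that fact directly: the ball lies in the intersection $\bigcap_T H_T^-$ of the inner half-spaces of the outer faces, and any point of this intersection lies in the kernel of the star, shown by a transversality/contradiction argument at a re-entry crossing, with a density-and-closedness step to dispose of degenerate crossings. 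Your route is more self-contained (it avoids both the citation and the implicit claim that the two-dimensional slice is a single simply connected polygon, which is true but not entirely immediate), at the cost of having to articulate the genericity argument; the paper's route is shorter but leans on a reference and the 2D reduction. One further point in your favor: you implicitly correct the statement by reading $h$ as the \emph{minimum} height of a vertex of $S$ over the $n$-simplices of $\patch_{\calT}(S)$, which is indeed what the paper's own proof uses (``$\varrho > 0$ is at most the \emph{minimum} height of $z_S$\ldots''); the word ``maximum'' in the proposition statement appears to be a typographical error, since with the maximum the ball $\Ball_{h/(\ell+1)}(z_S)$ need not even be contained in $\underlying{\patch_{\calT}(S)}$.
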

\begin{proof}
    Let $\calR$ denote the simplicial complex obtained from $\patch_{\calT}(S)$ by barycentric refinement of $S$.
    Then $\calR$ is its own vertex star around $z_{S}$.
    Taking into account Lemma~\ref{lemma:stardivision},
    the desired result follows for $\ell > 0$ if we show it for $\ell = 0$.

    So we consider the case $\ell = 0$.
    Any codimension one boundary face $F \in \patch_{\calT}(S)$ corresponds to some $n$-simplex $T \in \patch_{\calT}(S)$,
    where $F$ is the face of $T$ opposite to its vertex $z_{S}$.
    For any such codimension one boundary face $F \in \patch_{\calT}(S)$,
    we let $H_{F}$ denote its affine hull, which is a half-plane.

    Assume now that $\varrho > 0$ is at most the minimum height of $z_{S}$ in any $n$-simplex $T \in \patch_{\calT}(S)$.
    If $x \in \Ball_{\varrho}(z_S)$ and $F \in \patch_{\calT}(S)$ is a boundary face,
    then our choice of $\varrho$ implies that $x$ and $z_S$ must lie on the same side of $H_{F}$.
    In particular, $\Ball_{\varrho}(z_S)$ cannot intersect any boundary face of $\patch_{\calT}(S)$,
    which shows $\Ball_{\varrho}(z_S) \subseteq \underlying{\patch_{\calT}(S)}$.

    To show that $\underlying{\patch_{\calT}(S)}$ is star-shaped with respect to $\Ball_{\varrho}(z_S)$,
    it suffices to show that for every two-dimensional hyperplane $P$ through $z_{S}$
    the intersection $\underlying{\patch_{\calT}(S)} \cap P$ is star-shaped with respect to $\Ball_{\varrho}(z_S) \cap P$.
    The latter is deduced observing that $\underlying{\patch_{\calT}(S)} \cap P$ is a two-dimensional polygon within the plane $P$
    and using a well-known result of planar geometry~\cite{lee1979optimal}.
\end{proof}

We now provide the desired bi-Lipschitz transformation, which maps a local star onto itself.
We interpret it as a nonlinear reflection across the interface between some fixed simplex of the local star and the remainder of that star.
We give detailed estimates for the singular values of the Jacobian.

\begin{figure}[t]
    \caption{Sketch of the geometric situation in the proof of Proposition~\ref{proposition:starreflection}.
    The simplex $T$ completes the star around an interior vertex, and $\UPatch$ is the complement of the simplex $T$ within that star. 
    Here, $\Gamma_1$ denotes the interface between $T$ and $\UPatch$.
    The reflection $\Xieins$ maps $T$ into $\UPatch$ while being the identity the interface $\Gamma_1$.
    The mapping $\Xieins$ is the identity outside of $\calK$ and $\calK^{c}$, only affecting the latter two sets.
    }\label{figure:geometricconstruction}
    \centering
    \begin{tabular}{cc}
        \begin{tikzpicture}
[line join=bevel,x={( 1.5cm, 0mm)},y={( 0mm, 1.5cm)},z={( 1.5*3.85mm, -1.5*3.85mm)}]

            \coordinate (O) at (  0.0,  0.0, 0.0);
            \coordinate (A) at (  1.0,  0.0, 0.0);
            \coordinate (B) at (  0.3,  1.0, 0.0);
            \coordinate (C) at ( -0.9,  0.8, 0.0);
            \coordinate (D) at ( -0.9, -0.7, 0.0);
            \coordinate (E) at (  0.3, -1.0, 0.0);
            \coordinate (F) at (  0.3, -1.0, 0.0);

            \fill[gray!30] (A) -- (B) -- (O) -- cycle;

            \node at (-0.15, -0.5) {$\UPatch$};
            \node at (0.4, 0.35) {$T$};
            \node at (0.45, -0.2) {$\Gamma_1$};
            \node at (-1.10, 0) {$\Gamma_2$};

            \draw[dashed,line width=0.2em] (O) -- (A); \draw[dashed,line width=0.2em] (O) -- (B); \draw[] (A) -- (B); \draw[line width=0.2em] (B) -- (C) -- (D) -- (E) -- (A);
        \end{tikzpicture}\qquad
        \begin{tikzpicture}
[line join=bevel,x={( 1.5cm, 0mm)},y={( 0mm, 1.5cm)},z={( 1.5*3.85mm, -1.5*3.85mm)}]

            \coordinate (O) at (  0.0,  0.0, 0.0);
            \coordinate (A) at (  1.0,  0.0, 0.0);
            \coordinate (B) at (  0.3,  1.0, 0.0);
            \coordinate (C) at ( -0.9,  0.8, 0.0);
            \coordinate (D) at ( -0.9, -0.7, 0.0);
            \coordinate (E) at (  0.3, -1.0, 0.0);
            \coordinate (F) at (  0.3, -1.0, 0.0);
            \coordinate (AB) at ( 0.65,  0.5, 0.0);
            \coordinate (nAB) at (-0.65, -0.5, 0.0);

            \fill[gray!30] (A) -- (B) -- (O) -- cycle;
            \fill[gray!10] (A) -- (O) -- (nAB) -- cycle;
            \fill[gray!10] (B) -- (O) -- (nAB) -- cycle;

            \node at ( 0.28,  0.5) {$\calK$};
            \node at (-0.15, -0.5) {$\calK^{c}$};

            \draw[dotted] (O) -- (A); \draw[dotted] (O) -- (B); \draw[dotted] (O) -- (AB); \draw[dotted] (O) -- (nAB); \draw[dotted] (nAB) -- (A); \draw[dotted] (nAB) -- (B); \draw[line width=0.2em] (B) -- (C) -- (D) -- (E) -- (A) -- cycle;
        \end{tikzpicture}\end{tabular} 
\end{figure}
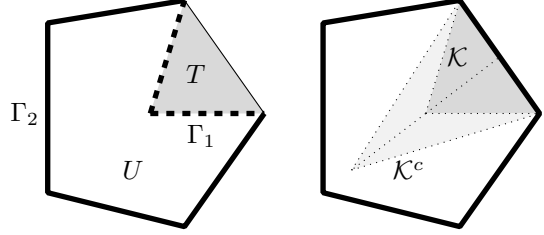

\begin{proposition}\label{proposition:starreflection}
    Let $\calT$ be a triangulation of an $n$-dimensional domain. 
    Let $S \in \calT$ be an inner simplex of dimension $\ell < n$,
    let $T \in \patch_{\calT}(S)$ be of dimension $n$,
    and let 
    \begin{align*}
        \UPatch := \overline{ \underlying{\patch_{\calT}(S)} \setminus T },
        \qquad 
        {\Gamma_1} := \UPatch \cap T.
    \end{align*}
    The following holds, where the constants on the right-hand sides are as stated in the proof. 

    There exists a bi-Lipschitz piecewise affine mapping
    \begin{align*}
        \Xieins : T \rightarrow \Xieins(T) \subseteq \UPatch 
    \end{align*}
    which is the identity along ${\Gamma_1} = T \cap \UPatch$. 
    At any point, the singular values $\sigma_1 \geq \dots \geq \sigma_n$ of its Jacobian satisfy 
    \begin{gather*}
        \sigma_1 \leq \Cfive{n}{\ell}(\calT),
        \quad 
        \sigma_2,\dots,\sigma_n \leq \Cfiveprime{n}{\ell}(\calT),
        \quad 
        | \det\Jacobian\Xieins |      \leq \Cdetfive{n}{\ell}(\calT),
        \\
        \sigma_n^{-1} \leq \Csechs{n}{\ell}(\calT),
        \quad 
        \sigma_1^{-1},\dots,\sigma_{n-1}^{-1} \leq \Csechsprime{n}{\ell}(\calT),
        \quad 
        | \det\Jacobian\Xieins^{-1} | \leq \Cdetsechs{n}{\ell}(\calT).
    \end{gather*}
    Moreover, for any $0 \leq k \leq n$ and $p \in [1,\infty]$,
    \begin{align*}
        \sigma_{1}\cdots\sigma_{k} | \det\Jacobian\Xieins |^{-\frac 1 p}
        \leq 
        \Cfivetrafo{n}{k}{\ell}{p}(\calT),
        \qquad 
        \sigma_{n}^{-1}\cdots\sigma_{n-k+1}^{-1} | \det\Jacobian\Xieins |^{\frac 1 p}
        \leq 
        \Csechstrafo{n}{k}{\ell}{p}(\calT).
    \end{align*}
\end{proposition}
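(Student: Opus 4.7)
The plan is to construct $\Xi_1$ as a piecewise affine bi-Lipschitz mapping realizing a ``nonlinear reflection'' of $T$ through $z_S$ into the complement $\UPatch$, leveraging the star-shapedness established in Proposition~\ref{proposition:stars_are_starshaped}. We place $z := z_S$ at the origin and observe that all vertices of $T$ lie in $\Gamma_1$, so any affine map fixing those vertices is forced to be the identity; a piecewise affine construction with additional pivot points in the relative interior of $\Gamma_2$ is therefore necessary, since only such interior points can be displaced into $\UPatch$.

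Concretely, I would choose an auxiliary point $m$ in the relative interior of $\Gamma_2$ (for instance, the centroid of $\Gamma_2$ or of one of its faces), together with a subdivision of $\Gamma_2$ into subsimplices having $m$ as a new vertex. Coning from $z_S$ over this subdivision yields a decomposition of $T$ into $n$-subsimplices whose vertices are $z_S$, $m$, and some of the $v_j$. The image pivot $m^*$ is defined as $m^* = -\alpha m$ (a reflection through $z_S$ scaled by $\alpha > 0$), with $\alpha$ chosen small enough that $m^*$ lies inside the interior ball $B_r(0)$ provided by Proposition~\ref{proposition:stars_are_starshaped}, where $r = h/(\ell+1)$. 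Star-shapedness then guarantees that the corresponding ``reflected'' region $\calK^{c}$, obtained by coning $m^*$ over the same face subdivision, lies entirely in the patch, and the choice of sign ensures $\calK^{c} \subseteq \UPatch$.

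The piecewise affine mapping $\Xi_1$ is defined on each subsimplex by fixing the vertices lying in $\Gamma_1$ and sending $m \mapsto m^*$; continuity across shared faces of the subdivision follows from the rigidity of affine maps once vertex images are consistently prescribed, and identity on $\Gamma_1$ is automatic since every face of $\Gamma_1$ has all of its vertices fixed. The singular values of the Jacobian on each piece then split into a ``radial'' direction aligned with $m - m^*$, which stretches by a factor scaling like $\diam(T)/r$, and $n-1$ tangential directions preserved up to numerical constants depending on $n$ and $\ell$. Using Proposition~\ref{proposition:stars_are_starshaped} and Lemma~\ref{lemma:measurerelationships}, the ratio $\diam(T)/r$ is controlled by $(\ell+1)\aspectratio(\calT)$, yielding the bounds $\sigma_1 \leq \Cfive{n}{\ell}(\calT)$ and $\sigma_2, \ldots, \sigma_n \leq \Cfiveprime{n}{\ell}(\calT)$; the determinant is a ratio of subsimplex volumes bounded in the same terms. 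Analogous estimates for $\Xi_1^{-1}$ and the mixed norms $\sigma_1\cdots\sigma_k\, |\det|^{-1/p}$ follow by symmetry and direct algebraic manipulation, producing the remaining constants $\Csechs{n}{\ell}(\calT)$, $\Csechsprime{n}{\ell}(\calT)$, $\Cfivetrafo{n}{k}{\ell}{p}(\calT)$, and $\Csechstrafo{n}{k}{\ell}{p}(\calT)$.

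The main obstacle is guaranteeing that the piecewise affine construction defines a genuine bijection onto $\calK^{c} \subseteq \UPatch$ while the Jacobian remains uniformly controlled by the shape measures. The tension is that $\alpha$ must be small enough for $m^*$ to lie inside $B_r(0)$ (so that $\calK^{c}$ stays in the patch), yet not so small that the Jacobian degenerates; this is resolved by scaling $\alpha$ proportionally to $r/\diam(T)$, which is precisely the ratio that emerges in the final estimates. A further subtlety is the choice of the subdivision itself: it must be compatible with the face structure of $\Gamma_2$ so the pieces glue together consistently, which for general $\ell$ requires coning from both $z_S$ and $m$ over a suitable cellular decomposition of $\Gamma_2$, rather than a single barycentric subdivision.
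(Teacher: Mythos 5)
Your construction matches the paper's step for step: refine $T$ barycentrically at $z_S$ and then at the barycenter $z_{S'}$ of the complementary subsimplex $S'$ (your $m$), reflect $z_{S'}$ through the origin to $y = -\alpha z_{S'}$ lying in the interior ball of Proposition~\ref{proposition:stars_are_starshaped}, and define $\Xieins$ piecewise affinely by fixing all other vertices; the paper imposes the same constraint on $\alpha$ (its $\rho$) and resolves the tension you identify in the same way. However, your heuristic for the singular values would not survive the actual Jacobian computation, and this is where the real work of the proposition lies.

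The claim that the ``radial'' direction aligned with $m - m^{*}$ stretches by a factor of order $\diam(T)/r$ while tangential directions are preserved cannot be right: along $z_{S'}$ the affine map contracts by $\alpha$, and if your picture held, the determinant of the Jacobian would exceed $1$, contradicting the fact that it equals $\alpha < 1$. On each subsimplex $K$ the Jacobian is the rank-one perturbation $\mathrm{Id} - (1+\alpha)\,\hat z_{S'} \otimes \hat h_z^{\,t} / \langle \hat h_z, \hat z_{S'}\rangle$, where $h_z$ is the height vector of $z_{S'}$ in $K$. It acts as the identity on an $(n-2)$-dimensional subspace and as a $2\times 2$ shear-plus-scaling on $\mathrm{span}(z_{S'}, h_z)$. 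The large singular value is driven by $\tan\beta = \|b_z\|/\|h_z\| \leq (n-\ell)\aspectratio(T)$, not by $\alpha^{-1} \sim (\ell+1)\aspectratio(T)$, and the smallest singular value behaves like $\alpha/\sigma_{\max}$, so $\sigma_n^{-1}$ picks up both the shear factor $(n-\ell)\aspectratio$ and the contraction factor $(\ell+1)\aspectratio$. Your bound $(\ell+1)\aspectratio(\calT)$ only accounts for one of these and misses the shear, which is the dominant effect when the simplex is oblique. A secondary imprecision: the premise that every vertex of $T$ lies in $\Gamma_1$ fails for $\ell = n-1$; in that case $S'$ is a single vertex, $z_{S'} = S'$ already lies outside $\Gamma_1$, and no new pivot is needed.
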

\begin{proof}
    We derive the estimate in several steps. 
    In what follows, we use the notation $\hat z$ for the normalization of any vector $z \in \bbR^{n}$.
    \begin{itemize}
        \item 
        Without loss of generality, after shifting the coordinate system, the barycenter $z_{S}$ of $S$ is the origin, $z_{S} := 0$. 
        
        We fix the subsimplex $S' \subseteq T$ that is complementary to $S$ within the simplex $T$.
        Note that 
        ${\Gamma_1} = \UPatch \cap T$ is the union of exactly those faces of $T$ that contain $S$, 
        We let $z_{S'}$ be the midpoint of $S'$.
        
        We apply barycentric refinement to the simplex $S$ and then the complementary simplex $S'$.
        This produces a new triangulation $\calK$ of $T$ whose $n$-dimensional simplices contain the vertices $z_{S}$ and $z_{S'}$.
        
        \item 
        We use Proposition~\ref{proposition:stars_are_starshaped}:
        $\patch_{\calT}(S)$ is star-shaped with respect to the closed ball $\overline{\Ball_{\varrho}(z_S)}$,
        where $\varrho = h / (\ell+1)$ and $h$ is the maximum height of any vertex of $S$ within any $n$-simplex $T \in \patch_{\calT}(S)$.

        \item 
        Let $\rho \in (0,1]$, yet to be determined. 
        We define $y := - \rho z_{S'}$ as vector in the opposite direction of $z_{S'}$ and with length $\rho \vecnorm{ z_{S'} }$.
        Henceforth, we assume $\rho \leq \varrho / \vecnorm{ z_{S'} }$ so that $y \in \overline{\Ball_{\varrho}(z_S)}$.  
        By construction, the closed line segment from $z_{S}$ to $y$ is contained in $\overline{\Ball_{\varrho}(z_S)}$,
        and we conclude that the convex closure of $T$ and $y$ must lie in $\patch_{\calT}(S)$.
        
        We define another triangulation $\calK^{c}$ as follows:
        given any $n$-simplex $K \in \calK$,
        we replace its vertex $z_{S}'$ by the vertex $y$ at the opposite position,
        thus obtaining a new $n$-simplex $K^{c}$.
        Indeed, that $\calK^c$ is a simplicial complex follows easily from $\calK$ being a simplicial complex.
        
        We let the simplicial complex $\calK^\ast$ be the union of $\calK$ and $\calK^c$.
        By construction, all its $n$-simplices contain $z_{S}$ as a subsimplex, which is an inner vertex of that triangulation. 
        In particular, $\calK^\ast$ is its own star around $z_{S}$.

        \item 
        We introduce a new mapping $\Theta : \underlying{\calK} \rightarrow \underlying{\calK^c}$ between the underlying sets as follows:
        if $K \in \calK$ be an $n$-simplex and $K^{c} \in \calK^{c}$ is constructed from $K$,
        then $\Theta\restriction_{K}$ is the unique affine bijection that maps the vertex $z_{S'}$ to the vertex $y$ and leaves all other vertices the same.
It follows that 
        \begin{align*}
            |\det( \Jacobian      \Theta\restriction_{K} )| = \frac{\vol(K^{c})}{\vol(K)}
            .
        \end{align*}
        We want to further characterize the singular values of this transform's Jacobian.
        Let $h_{z} \in \bbR^{n}$ be the height vector of $z_{S'}$ inside the simplex $K$,
        that is, the vector pointing to $z_{S'}$ and standing orthogonally on the affine hull of the face $K$ that opposes $z_{S'}$.
        Let us verify that
        \begin{align*}
            \Theta\restriction_{K}(x)
            = 
            x
            - 
            (1+\rho) \frac{\langle \hat h_{z}, x \rangle}{\langle \hat h_{z}, \hat z_{S'} \rangle}
            \hat z_{S'}
            .
        \end{align*}
        Indeed, the right-hand side equals $x$ whenever $x$ lies in the plane orthogonal to $h_{z}$,
        and when $x = z_{S'}$, then 
        \begin{align*}
            \Theta\restriction_{K}\left( z_{S'} \right)
            &
            =
            z_{S'}
            - 
            (1+\rho) \hat z_{S'} \frac{\langle \hat h_{z}, z_{S'} \rangle}{\langle \hat h_{z}, \hat z_{S'} \rangle}
            \\&
            =
            z_{S'}
            - 
            (1+\rho) \hat z_{S'} \vecnorm{z_{S'}}
            =
            z_{S'}
            - 
            (1+\rho) z_{S'} 
            =
            - \rho z_{S'} = y
            .
        \end{align*}
        If we orthogonally decompose $z_{S'} = h_{z} + b_{z}$, where $b_{z} \in \bbR^{n}$, then
        \begin{align*}
            \Theta\restriction_{K}( h_{z} )
            &= 
            h_{z}
            - 
            (1+\rho) \frac{\langle h_{z}, h_{z} \rangle}{\langle h_{z}, z_{S'} \rangle} z_{S'}
            \\&
            = 
            h_{z}
            - 
            (1+\rho) \frac{\langle h_{z}, z_{S'} \rangle}{\langle h_{z}, z_{S'} \rangle} z_{S'}
= 
            h_{z}
            - 
            (1+\rho) z_{S'}
= 
            - \rho h_{z}
            - 
            (1+\rho) b_{z}
            .
        \end{align*}
        Evidently, the transformation $\Theta\restriction_{K}$ equals the identity on the orthogonal complement of the span of $h_{z}$ and $b_{z}$.
        Let $\beta$ be the angle between $z_{S'}$ and $h_{z}$. 
        Then $\vecnorm{ h_{z} } = \cos(\beta) \vecnorm{ z_{S'} }$ and $\vecnorm{ b_{z} } = \sin(\beta) \vecnorm{ z_{S'} }$. 
        It remains to study the singular values of the matrix
        \begin{align*}
            M_{\Theta,K}
            := 
            \begin{pmatrix}
            -\rho                                         & 0
            \\ 
            -(1+\rho) \vecnorm{ b_{z} }/\vecnorm{ h_{z} } & 1
            \end{pmatrix}
            =
            \begin{pmatrix}
            -\rho                 & 0
            \\ 
            -(1+\rho) \tan(\beta) & 1
            \end{pmatrix}
            .
        \end{align*}
        Its two singular values are:
        \begin{align}
            \sigma_{\max}(\Theta,K)
            := 
            \frac 1 2 \sqrt{ \left( 1 + \rho \right)^{2} + (1+\rho)^{2} \tan(\beta)^{2} } + \frac 1 2 \sqrt{ \left( 1 - \rho \right)^{2} + (1+\rho)^{2} \tan(\beta)^{2} }
            \label{math:theta_max}
            ,
            \\
            \sigma_{\min}(\Theta,K)
            := 
            \frac 1 2 \sqrt{ \left( 1 + \rho \right)^{2} + (1+\rho)^{2} \tan(\beta)^{2} } - \frac 1 2 \sqrt{ \left( 1 - \rho \right)^{2} + (1+\rho)^{2} \tan(\beta)^{2} }
            \label{math:theta_min}
            .
        \end{align}
        The singular values $\sigma_{\max}(\Theta,K) \geq 1$ and $\sigma_{\min}(\Theta,K) \leq 1$ are monotonically increasing and decreasing, respectively, in $\tan(\beta)$.
        All other singular values of $\Jacobian\Theta\restriction_{K}$ equal $1$, hence \eqref{math:theta_max} and \eqref{math:theta_min} must the extremal maximal singular values of $\Jacobian\Theta\restriction_{K}$.
        It is evident that $\rho = \vol(K^{c}) / \vol(K)$.
        
        We develop explicit bounds for these singular values. 
        The definition of the tangent and the decomposition $z_{S'} = h_{z} + b_{z}$ 
        imply that $\tan(\beta) = \vecnorm{ b_{z} } / \vecnorm{ h_{z} }$.
        
        Recall that $K \in \calK$ is obtained from $T$ via barycentric subdivisions:
        first at $z_{S}$, leading to some intermediate $n$-simplex $T'$, and then at $z_{S'}$, leading to $K \subseteq T'$.
        Let $F_{z} \subseteq K$ be the face opposite to the vertex $z_{S'}$, which is some face of $T'$.
        Now, the height of $F_{z}$ in $K$ is $(n-\ell)$-th of the height of $F_{z}$ in $T'$
        since $S'$ has dimension $n-\ell-1$. 
        The height of the face $F_{z}$ in $T'$ is just the height of its opposing vertex, which lies in $S'$,
        and which equals the height of that vertex in $T$.
Thus, 
        \begin{align*}
            \frac{ \vecnorm{ b_{z} } }{ \vecnorm{ h_{z} } }
            = 
            \frac{ \vecnorm{ b_{z} } }{ (n-\ell)^{-1} \vecnorm{ (n-\ell) h_{z} } }
            \leq
            \frac{ \vecnorm{ z_{S'} } }{ (n-\ell)^{-1} \vecnorm{ (n-\ell) h_{z} } }
            \leq 
            (n-\ell) \aspectratio(T)
.
        \end{align*}
        We abbreviate 
        \begin{align}\label{math:upper_bound_for_theta}
            \mu_{T,\ell} 
            := 
            \frac 1 2 \sqrt{ \left( 1 + \rho \right)^{2} + (1+\rho)^{2} (n-\ell)^{2} \aspectratio(T)^{2} } 
            + 
            \frac 1 2 \sqrt{ \left( 1 - \rho \right)^{2} + (1+\rho)^{2} (n-\ell)^{2} \aspectratio(T)^{2} }
            .
        \end{align}
        This establishes bounds on the singular values of the transformation. 
        In summary, the singular values of the Jacobian of $\Theta$ at almost every $x$ satisfy 
        \begin{align}
            \sigma_{1}(\Theta,x) \leq \mu_{T,\ell},
            \quad 
            \sigma_{2}(\Theta,x) = \dots = \sigma_{n-1}(\Theta) = 1,
            \quad 
            \sigma_{n}(\Theta,x)^{-1} = \frac{ \sigma_{1}(\Theta,x) }{ \left| \det \Jacobian\Theta\restriction_{K} \right| } \leq \mu_{T,\ell} / \rho.
        \end{align}
        Finally, we obtain $\Xieins$ by extending $\Theta$ to all of the patch as the identity.
    \end{itemize}
    Having shown all the desired estimates, and the proof is complete. 
\end{proof}

\begin{remark}
    We notice that $\Xieins$ above is not only bi-Lipschitz, but in fact also piecewise affine with respect to an essentially non-overlapping simplicial decomposition of its domain.
    Whilst the reflection mapping in Proposition~\ref{proposition:starreflection} serves our purpose,
    it might be possible to improve the analysis or construction and lower the Jacobian estimates. 
\end{remark}

\begin{remark}\label{remark:improved_starreflection}
    The estimates in Proposition~\ref{proposition:starreflection} are by no means optimal.
    Some improvements are immediately possible if $\ell=n-1$, corresponding to patches around faces.
    Then the reflection $\Xi$ in Lemma~\ref{lemma:volumecomparison} satisfies the same properties as the mapping $\Xieins$,
    mapping the simplex $T$ bijectively onto the $\UPatch$ and being the identity along the common face $S$.
    However, at any point, the singular values $\sigma_1 \geq \dots \geq \sigma_n$ of the Jacobian of $\Xi$ satisfy 
\begin{gather*} 
        | \det\Jacobian\Xi^{  } |      \leq \volumeratio(\calT),
        \\
        \sigma_{1}, \sigma_{n}^{-1}
        \leq 
        \frac{1}{2} 
        \sqrt{ \left( \diameterratio(\calT) \aspectratio(\calT) + 1 \right)^{2} + \aspectratio(\calT)^{2} }
        +
        \frac{1}{2} 
        \sqrt{ \left( \diameterratio(\calT) \aspectratio(\calT) - 1 \right)^{2} + \aspectratio(\calT)^{2} }
        ,
        \\
        \sigma_2,\dots,\sigma_{n-1},\sigma_{n} \leq 1.
    \end{gather*}
    In particular,
    \begin{align*}
        \sigma_{1}\cdots\sigma_{k} | \det\Jacobian\Xizwei |^{-\frac 1 p}
        \leq {} &
        \left( 
            \frac{1}{2} 
            \sqrt{ \left( \diameterratio(\calT) \aspectratio(\calT) + 1 \right)^{2} + \aspectratio(\calT)^{2} }\right.\\
            {} & +
            \left. \frac{1}{2} 
            \sqrt{ \left( \diameterratio(\calT) \aspectratio(\calT) - 1 \right)^{2} + \aspectratio(\calT)^{2} }
        \right)
        \volumeratio(\calT)^{\frac 1 p}
        .
    \end{align*}
    These estimates improve over the ones in Proposition~\ref{proposition:starreflection} when the reflection is over a single face.
    We believe that better estimates can be computed from the geometric data also in the other cases.
\end{remark}

\section{Constructive estimates of Poincar\'e--Friedrichs constants}\label{section:poincarefriedrichs}

We are now in the position to develop Poincar\'e--Friedrichs constants for the exterior derivative over domains with shellable triangulations; this includes the curl and divergence operators in three dimensions. 

Our estimates use local Poincar\'e--Friedrichs constants over simplices with various boundary conditions.
The analysis of the Poincar\'e potential operators in Section~\ref{section:potentialoperator} has produced Poincar\'e--Friedrichs constants for bounded convex domains and their analogs for the case of full boundary conditions. 
Now, we also prove a Poincar\'e--Friedrichs inequality for differential forms over a simplex but subject to homogeneous boundary conditions along a collection of faces.

\begin{lemma}\label{lemma:mixedbconsimplex:exteriorderivative}
    Let $T$ be an $n$-simplex 
    and let $\Gamma = F_{0} \cup \dots \cup F_{\ell}$ be the union of $\ell+1$ different faces ($\ell < n$) of $T$.
    Suppose that $u \in W^{p}\Lambda^{k}(T)$ such that 
    $\trace_{T,\Gamma} u = 0$.
    Then there exists $w \in W^{p}\Lambda^{k}(T)$ such that 
    $\trace_{T,\Gamma} w = 0$
    and  
    \begin{align*}
        \cartan w = \cartan u,
        \quad 
        \| w \|_{L^{p}(\Omega)} 
        \leq 
        C_{\PF,T,\Gamma,\ell,k,p}
        \| \cartan u \|_{L^{p}(\Omega)}
        .
    \end{align*}
    Here, $C_{\PF,T,\Gamma,\ell,k,p} > 0$ is a constant such that 
    \begin{align*}
        C_{\PF,T,\Gamma,\ell,k,p}
        \leq 
        n! \cdot 2^{\ell+1} 
        C_{\Bogov}(n,k+1) 
        \vol_{n-1}(S_1(0))
        \cdot 
        \algebraicshapemeasure(T)^{k-1}
        \Ceins{n}(T) 
        \diam(T)
        .
    \end{align*}
\end{lemma}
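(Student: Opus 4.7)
The plan is to reduce to the reference simplex $\hat T = \Delta^n$ via an affine change of variables, build a convex reflected superdomain $\hat U$ on which Theorem~\ref{theorem:bogovpoinc} applies, and then exploit equivariance under the induced reflection group to extract a $k$-form on $\hat T$ with the desired partial trace. First I would choose an affine diffeomorphism $\varphi:\hat T \to T$ arranging the vertex labeling so that the $\ell+1$ faces $F_0,\ldots,F_\ell$ pull back to the coordinate faces $\hat F_i := \hat T \cap \{x_i=0\}$ for $i=1,\ldots,\ell+1$ (possible since $\ell+1 \le n$ while $\hat T$ has $n+1$ faces). Setting $\hat u := \varphi^* u$ yields $\hat u \in W^p\Lambda^k(\hat T)$ with $\trace_{\hat\Gamma}\hat u = 0$ on $\hat\Gamma := \bigcup_{i\le\ell+1}\hat F_i$. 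Proposition~\ref{proposition:pullbackestimate} together with Lemma~\ref{lemma:measurerelationships} will translate an $L^p$-estimate on $\hat T$ back to $T$, contributing the factors $\algebraicshapemeasure(T)^{k-1}$, $\Ceins{n}(T)$, $\diam(T)$ in the stated constant and the combinatorial factor $n!$ coming from $\vol(\hat T)=1/n!$.

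Let $G := \{\pm 1\}^{\ell+1}$ act on $\bbR^n$ by reflecting the first $\ell+1$ coordinates and set $\hat U := \bigcup_{g\in G} g\hat T$, a convex, $G$-invariant domain with $\vol(\hat U)=2^{\ell+1}/n!$ and $\diam(\hat U)=2$ consisting of $2^{\ell+1}$ essentially disjoint congruent copies of $\hat T$. Using the commutation of trace with exterior differentiation, $\trace_{\hat\Gamma}\hat u = 0$ gives $\trace_{\hat\Gamma}(\cartan \hat u) = 0$; extend $\cartan \hat u$ antisymmetrically to $\bar f \in L^p\Lambda^{k+1}(\hat U)$ by setting $\bar f|_{g\hat T} := \det(g)\cdot g^*(\cartan \hat u)$ on each reflected copy. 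The trace condition is exactly the compatibility needed for the pieces to glue across the interior reflection hyperplanes into a single distributionally closed $L^p$ form satisfying $g^*\bar f = \det(g)\bar f$ for all $g \in G$. Now set $\hat w := \Bogov_{k+1}\bar f \in W^p_0\Lambda^k(\hat U)$ using Theorem~\ref{theorem:bogovpoinc}. The integral defining $\Bogov_{k+1}$ is natural under the $G$-action on the $G$-invariant $\hat U$, so $\Bogov_{k+1}$ is $G$-equivariant and $\hat w$ inherits the antisymmetry $g^*\hat w = \det(g)\hat w$. Antisymmetry under each individual coordinate reflection $R_i$ ($i\le\ell+1$) forces the tangential components of $\hat w$ to vanish on $\{x_i=0\}$, giving $\trace_{\hat\Gamma}(\hat w|_{\hat T}) = 0$. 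Then $w := (\varphi^{-1})^*(\hat w|_{\hat T})$ is the desired $k$-form on $T$, and the claimed $L^p$ bound follows by combining the Bogovski\u i estimate of Theorem~\ref{theorem:bogovpoinc} on $\hat U$ (contributing $C_\Bogov(n,k+1)\vol_{n-1}(S_1(0))$ and a $\diam(\hat U)^{n+1}/\vol(\hat U)$ factor providing $n!$ and a power of $2$), the scaling $\|\bar f\|_{L^p(\hat U)} \le 2^{\ell+1}\|\cartan \hat u\|_{L^p(\hat T)}$, and the pullback estimates.

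The main obstacle will be verifying $\cartan\hat w = \bar f$ in $L^p\Lambda^{k+1}(\hat U)$, since $\bar f$ lies only in $L^p\Lambda^{k+1}(\hat U)$ and not in $W^p_0\Lambda^{k+1}(\hat U)$, so the inverse Bogovski\u i identity \eqref{math:theorem:bogovpoinc:bogov:k} does not apply to $\bar f$ directly. I would resolve this by approximating $\bar f$ with $G$-antisymmetric smooth closed forms $f_m \in C^\infty_c\Lambda^{k+1}(\hat U)$ obtained via mollification and a cutoff respecting the antisymmetry; for such $f_m$ the pointwise derivation preceding Theorem~\ref{theorem:bogovpoinc} gives $\cartan\Bogov_{k+1}f_m = f_m - \Bogov_{k+2}\cartan f_m = f_m$, and passing to the $L^p$-limit via the continuity of $\Bogov_{k+1}$ from Theorem~\ref{theorem:bogovpoinc} then yields $\cartan\hat w = \bar f$.
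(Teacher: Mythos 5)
Your plan inverts the paper's key geometric choice, and this inversion breaks the argument in a way that the proposed approximation step cannot repair.

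Concretely: you map all of $F_0,\dots,F_\ell$ to coordinate faces $\{x_i=0\}$, $i=1,\dots,\ell+1$, and reflect along those $\ell+1$ axes. Then the boundary $\partial\hat U$ consists of the remaining coordinate faces $\{x_i=0\}$, $i>\ell+1$, and the reflected copies of the ``diagonal'' face --- none of which lie in $\hat\Gamma$. Consequently $\bar f$, the antisymmetric extension of $\cartan\hat u$, has a nonvanishing trace on $\partial\hat U$ in general, i.e.\ $\bar f\notin W^p_0\Lambda^{k+1}(\hat U)$. The identity \eqref{math:theorem:bogovpoinc:bogov:k} requires exactly this membership, so it does not give $\cartan\Bogov_{k+1}\bar f=\bar f$. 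Worse, this conclusion is actually \emph{impossible}: if $\hat w:=\Bogov_{k+1}\bar f$ lay in $W^p_0\Lambda^k(\hat U)$ with $\cartan\hat w=\bar f$, then $\bar f\in\cartan W^p_0\Lambda^k(\hat U)\subseteq W^p_0\Lambda^{k+1}(\hat U)$, contradicting the previous sentence. A tiny direct check confirms the failure: on $\hat U=(-1,1)$ in $\bbR^1$ one computes $\Bogov_1(dx^1)=0$, so $\cartan\Bogov_1(dx^1)=0\neq dx^1$ even though $dx^1$ is closed. Your remedy --- approximating $\bar f$ in $L^p$ by $G$-antisymmetric compactly supported closed forms $f_m$ --- cannot succeed, because any $f_m\in C^\infty_c\Lambda^{k+1}(\hat U)$ with $\cartan f_m=0$ is exact with compact support (since $H^{k+1}_{c,\mathrm{dR}}(\hat U)=0$ for $k+1<n$, and $H^n_{c,\mathrm{dR}}$ is detected by the total integral, which vanishes for $k+1=n$), hence annihilated by pairing against any \emph{constant} $(n-k-1)$-form $\omega$, whereas $\int_{\hat U}\bar f\wedge\omega$ is in general nonzero. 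Indeed, setting $p$ to any finite value, $\int_{\hat U}(f_m-\bar f)\wedge\omega\not\to 0$ obstructs $L^p$ convergence. (Your stated constant is also off: with $\vol(\hat U)=2^{\ell+1}/n!$ and $\diam(\hat U)=2$, the Bogovski\u\i\ factor $\diam^{n+1}/\vol$ gives $n!\,2^{n-\ell}$, not the claimed $n!\,2^{\ell+1}$.)

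The paper avoids all of this by reflecting along the \emph{complementary} $n-\ell$ axes --- those whose coordinate faces are \emph{not} in $\Gamma$ --- and by keeping $F_0$ on the diagonal face. After this reflection, every face of $\partial\widetilde U$ corresponds to a face in $\Gamma$, so the pullback extension of the $k$-form $\hat u$ itself (not of $\cartan\hat u$) lands in $W^p_0\Lambda^k(\widetilde U)$. One then applies $\Bogov_{k+1}$ to $\cartan\tilde u$, which automatically lies in $W^p_0\Lambda^{k+1}(\widetilde U)$ since $\cartan W^p_0\subseteq W^p_0$, so identity \eqref{math:theorem:bogovpoinc:bogov:k} applies cleanly. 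A symmetrization over the reflection group then restores the symmetry so that the restriction to $\Delta^n$, pushed forward to $T$, has the correct derivative and vanishing trace on $\Gamma$. If you want to retain the spirit of your approach, you should extend the form $\hat u$ rather than its exterior derivative, and reflect along the axes corresponding to the faces where no trace condition is imposed, so that $\partial\widetilde U$ lands entirely in the trace-zero set.
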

\begin{proof}
    There exists an affine bijection $\varphi : \Delta^n \rightarrow T$ 
    that maps the convex closure of the $n$ unit vectors onto the face $F_0$.
    We can also assume that the face of $\Delta^n$ orthogonal to the $i$-th coordinate axis, $1 \leq i \leq n$
    is mapped onto the face $F_i$. 
    In what follows, we let $\widetilde \UPatch$ be the convex set obtained from reflecting $\Delta^n$ along the coordinate axes $\ell+1$ through $n$. 
    We see that $\vol(\widetilde \UPatch) = 2^{n-\ell} \vol(\Delta^n) = 2^{n-\ell}/(n!)$
    
    We let $\hat u := \varphi^{\ast} u \in W^{p}\Lambda^{k}(\Delta^n)$ and define $\hat g \in L^{p}\Lambda^{k+1}(\Delta^n)$ via $\hat g := \cartan \varphi^{\ast} u$. 
    Then $\cartan \hat u = \hat g$. 
    We let $\tilde u \in W^{p}\Lambda^{k}(\widetilde \UPatch)$ be the extension of $\hat u$ onto $\widetilde \UPatch$ by reflection across the coordinate axes,
    and let $\tilde g \in L^{p}\Lambda^{k+1}(\widetilde \UPatch)$ be the extension of $\hat g$ onto $\widetilde \UPatch$ by reflection across the coordinate axes. 
    By construction, $\tilde u \in W^{p}_{0}\Lambda^{k}(\widetilde \UPatch)$ with $\tilde g = \cartan \tilde u$.
    We observe 
    \begin{align*}
        \| \tilde g \|_{L^{p}(\widetilde \UPatch)}
        \leq 
        2^{\frac {n-\ell} p}
        \| \hat g \|_{L^{p}(\Delta^n)}
        .
    \end{align*}
    By the analysis for the Bogovski\u{\i} potential operators, Theorem~\ref{theorem:bogovpoinc}, 
    there exists $\mathring w \in W^{p}_{0}\Lambda^{k}(\widetilde \UPatch)$
    with $\cartan \mathring w = \cartan \tilde u$ satisfying the bounds 
    \begin{align}\label{math:mixedbconsimplex:exteriorderivative:internalconstant}
        \| \mathring w \|_{L^{p}(\widetilde \UPatch)}
        &\leq 
        C_{{\PF},\Bogov,\widetilde \UPatch,k,p}
        \| \tilde g \|_{L^{p}(\widetilde \UPatch)}
.
    \end{align}
    Here,
    \begin{align*}
        C_{{\PF},\Bogov,\widetilde \UPatch,k,p} 
        &
        = 
        C_{\Bogov}(n,\mia+1) \vol_{n-1}(S_{1}(0)) \frac{\diam(\widetilde \UPatch)^{n}}{\vol(\widetilde \UPatch)} \diam(\widetilde \UPatch)
        \\&
        = 
        C_{\Bogov}(n,\mia+1) \vol_{n-1}(S_{1}(0)) \frac{2^{n}}{2^{n-\ell}/n!} 2
        = 
        C_{\Bogov}(n,\mia+1) \vol_{n-1}(S_{1}(0)) \cdot n! 2^{\ell+1} 
        .
    \end{align*}
    We let $\tilde w \in W^{p}\Lambda^{k}(\widetilde \UPatch)$ be defined by averaging the $2^{n - \ell}$ reflections of $\mathring w$ along the coordinate axes $\ell+1$ through $n$. Thus,
    \begin{align*}
        \| \tilde w \|_{L^{p}(\Delta^{n})}
        =
        2^{-\frac{n-\ell}{p}}
        \| \tilde w \|_{L^{p}(\widetilde \UPatch)}
        ,
        \qquad 
        \| \tilde w \|_{L^{p}(\widetilde \UPatch)}
        \leq 
        \| \mathring w \|_{L^{p}(\widetilde \UPatch)}
        .
    \end{align*}
    Since $\tilde g$ has got the same symmetries, $\cartan \tilde w = \tilde g$.
    
    We let $w \in W^{p}\Lambda^{k}(T)$ be defined by $w := \varphi^{-\ast} ( \tilde w\restriction_{\Delta^n} )$.
    By construction, $\cartan w = \cartan u$,
    and $w$ has vanishing trace along $F_{0} \cup \dots \cup F_{\ell}$.
    
    To complete the discussion, we recall by Proposition~\ref{proposition:pullbackestimate} that 
    \begin{align*}
        \| w \|_{L^{p}(T)}
        &\leq 
        |\det(\Jacobian \varphi)|^{\frac 1 p} 
        \matnorm{ \Jacobian \varphi^{-1} }^{k}
        \| \tilde w \|_{L^{p}(\Delta^n)}
        ,
        \\
        \| \hat g \|_{L^{p}(\Delta^n)}
        &\leq 
        |\det( \Jacobian \varphi^{-1} )|^{\frac 1 p} 
        \matnorm{\Jacobian \varphi}^{k+1}
        \| \cartan u \|_{L^{p}(T)}
        .
    \end{align*}
    It remains to use Lemma~\ref{lemma:measurerelationships}, and the desired inequality is shown. 
\end{proof}

\begin{remark}\label{remark:mixedbconsimplex:exteriorderivative:hilbert}
    Under the assumptions of Lemma~\ref{lemma:mixedbconsimplex:exteriorderivative}, 
    the special case $p=2$ may allow for a better estimate for the auxiliary constant in~\eqref{math:mixedbconsimplex:exteriorderivative:internalconstant}.
    We conjecture the Poincar\'e--Friedrichs constants over bounded convex domains are non-increasing in the form degree $k$, as is already known for smoothly bounded convex domains~\cite{guerini2004eigenvalue}.
The complete inequality then reads: 
    there exists $w \in W^{2}\Lambda^{k}(T)$ such that 
    $\trace_{T,\Gamma} w = 0$
    and  
    \begin{align*}
        \cartan w = \cartan u,
        \quad 
        \| w \|_{L^{2}(\Omega)} 
        \leq 
        C_{\PF,T,\Gamma,\ell,k,2}
        \| \cartan u \|_{L^{2}(\Omega)}
        ,
    \end{align*}
    where 
    \begin{align*}
        C_{\PF,T,\Gamma,\ell,k,2}
        \leq 
        \frac{2}{\pi}
        \algebraicshapemeasure(T)^{k-1}
        \Ceins{n}(T) 
        \diam(T)
        .
    \end{align*}
    We conjecture that the constant can be improved further, to be independent of the tetrahedron's eccentricity,
    and that the result can be extended to Lebesgue exponents $1 \leq p \leq \infty$.
\end{remark}

We prepare some further notation for our two main results. 
Whenever $\calT$ is an $n$-dimensional triangulation, we use the abbreviation
\begin{align*}
    C_{\PF,\Gamma,k,p}(\calT) 
    :=
    \max\limits_{ \substack{ T \in \calT \\ \dim(T)=n \\ 0 \leq l \leq n } }
    C_{\PF,T,\Gamma,l,k,p}
    .
\end{align*}
Suppose that $\calT$ is an $n$-dimensional triangulation and has a shelling $T_{0}, T_{1}, \dots, T_{M}$. 
For each $0 \leq m \leq M$, we write 
\begin{align*}
    X_{m} := T_0 \cup T_1 \cup \dots \cup T_{m}.
\end{align*}
By Lemma~\ref{lemma:shell_triang_man}, each $X_{m}$ is a triangulated $n$-dimensional submanifold with boundary. 
For each $1 \leq m \leq M$, the interface of the new simplex to the previous intermediate triangulation is 
\begin{align*}
    \Gamma_{m} := T_{m} \cap X_{m-1} .
\end{align*}
According to Lemma~\ref{lemma:existenceofstar}, 
for each $1 \leq m \leq M$, 
there exists an interior simplex $S_{m} \in \calT$ 
such that $T_{m}$ is the last $n$-simplex in the shelling that belongs to $\patch_{\calT}(S_{m})$.
In particular, already $\patch_{\calT}(S_{m}) \subseteq X_{m}$. 

In addition to that, we introduce 
\begin{align*}
    \UPatch_{m-1} := \patch_{\calT}(S_{m}) \setminus T_{m}
\end{align*}
for the complement $\UPatch_{m-1} \subseteq X_{m-1}$ of the simplex $T_{m}$ in the star $\patch_{\calT}(S_{m})$. 

This manuscript's two main results for shellable triangulations of domains, not necessarily convex or even star-shaped, are the following theorems.
The first one (Theorem~\ref{theorem:poincarefriedrichsestimate:exterior}) is inspired by the recursive estimate in Theorem~\ref{theorem:poincarefriedrichsestimate:grad}: we always construct a potential over the new simplex using the trace value already known along the connecting faces, thereby constructing a potential over larger and larger subdomains and ensuring the correct continuity conditions.
The second one (Theorem~\ref{theorem:fullrecursivesum:exterior}) then unfolds the recursive estimates into the final Poincar\'e--Friedrichs inequality.

\begin{theorem}\label{theorem:poincarefriedrichsestimate:exterior}
Let $\calT$ be a shellable $n$-dimensional triangulation, and let the domain $\Omega \subseteq \bbR^{n}$ be the interior of the underlying set of $\calT$.
    Let $T_0, T_1, \dots, T_M$ be a shelling of $\calT$.
    Then for any $u \in W^{p}\Alt^{k}(\Omega)$, where $1 \leq p \leq \infty$, 
    there exists $w \in W^{p}\Alt^{k}(\Omega)$ with $\cartan w = \cartan u$
    and such that the following estimates hold:
    \begin{gather*}
        \| w \|_{L^{p}(T_{0})} \leq C_{{\PF},\Poinc,T_{0},k,p} \| \cartan u \|_{L^{p}(T_{0})}
        ,
    \end{gather*}
    and for each $T_{m} \in \calT$, $1 \leq m \leq M$, we have the recursive estimate 
    \begin{align*}
        \| w \|_{L^{p}(T_{m})}
        &
        \leq  
        C_{\PF,T_{m},\Gamma_{m},k,p}(\calT) 
        \left( 
            \| \cartan u      \|_{L^{p}(T_{m})} 
            +
            \Cfivetrafo{n}{k+1}{\ell}{p}(\calT)
\| \cartan u \|_{L^{p}(\UPatch_{m-1})}
        \right)
+ 
        \Cfivetrafo{n}{k}{\ell}{p}(\calT)
\| w \|_{L^{p}(\UPatch_{m-1})}
        ,
    \end{align*}
    where $0 \leq \ell < n$ is such that $T_{m}$ has $n - \ell$ faces in common with the previous simplices. 
\end{theorem}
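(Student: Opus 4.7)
The plan is to construct $w$ inductively along the shelling $T_0, T_1, \dots, T_M$, mimicking the cell-by-cell strategy from Theorem~\ref{theorem:poincarefriedrichsestimate:grad} but replacing its additive-constant adjustment (available only because $\ker\grad$ is one-dimensional) with a nonlinear trace-matching procedure based on the reflection of Proposition~\ref{proposition:starreflection} and the partial-boundary-condition Lemma~\ref{lemma:mixedbconsimplex:exteriorderivative}. The base case is immediate: on the single simplex $T_0$, the decomposition~\eqref{math:theorem:bogovpoinc:poincare:k} from Theorem~\ref{theorem:bogovpoinc} together with $\cartan \cartan = 0$ shows that $w\restriction_{T_0} := \Poinc_{k+1}(\cartan u\restriction_{T_0})$ satisfies $\cartan w\restriction_{T_0} = \cartan u\restriction_{T_0}$ with the claimed norm bound.

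For the inductive step, suppose $w \in W^{p}\Alt^{k}(X_{m-1})$ has already been constructed with $\cartan w = \cartan u$ on $X_{m-1}$. Let $S_m \in \calT$ be the interior simplex supplied by Lemma~\ref{lemma:existenceofstar}, so that $T_m$ completes $\patch_{\calT}(S_m)$ and $\UPatch_{m-1} := \underlying{\patch_{\calT}(S_m)}\setminus T_m$ lies inside $X_{m-1}$. Proposition~\ref{proposition:starreflection} yields a bi-Lipschitz piecewise-affine map $\Xieins : T_m \to \UPatch_{m-1}$ that is the identity along the common interface $\Gamma_m$. Define the pullback $w'_m := \Xieins^{\ast}(w\restriction_{\UPatch_{m-1}}) \in W^{p}\Alt^{k}(T_m)$; by construction $\trace_{\Gamma_m} w'_m = \trace_{\Gamma_m} w\restriction_{\UPatch_{m-1}}$, so the trace across $\Gamma_m$ is already correct, but the exterior derivative generally is not. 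To repair it, introduce the residual $\tilde v_m := u\restriction_{T_m} - \Xieins^{\ast}(u\restriction_{\UPatch_{m-1}})$, which lies in $W^{p}\Alt^{k}(T_m)$ with $\trace_{\Gamma_m} \tilde v_m = 0$ (because $u$ has a single global trace on $\Gamma_m$ and $\Xieins$ is the identity there) and satisfies $\cartan \tilde v_m = \cartan u\restriction_{T_m} - \cartan w'_m$ via~\eqref{math:pullbackcommutes} and the inductive hypothesis. Apply Lemma~\ref{lemma:mixedbconsimplex:exteriorderivative} on $T_m$ with boundary part $\Gamma_m$ (a union of $n-\ell$ faces) to produce $w''_m \in W^{p}\Alt^{k}(T_m)$ with $\trace_{\Gamma_m} w''_m = 0$, $\cartan w''_m = \cartan \tilde v_m$, and the stated norm bound in terms of $\|\cartan \tilde v_m\|_{L^p(T_m)}$. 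Setting $w\restriction_{T_m} := w'_m + w''_m$ gives $\cartan w\restriction_{T_m} = \cartan u\restriction_{T_m}$ by cancellation and $\trace_{\Gamma_m} w\restriction_{T_m} = \trace_{\Gamma_m} w\restriction_{\UPatch_{m-1}}$, ensuring $w \in W^{p}\Alt^{k}(X_m)$.

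The recursive norm estimate is then immediate from the triangle inequality. Proposition~\ref{proposition:pullbackestimate} combined with the singular-value bounds of Proposition~\ref{proposition:starreflection} yields $\|w'_m\|_{L^p(T_m)} \leq \Cfivetrafo{n}{k}{\ell}{p}(\calT)\, \|w\|_{L^p(\UPatch_{m-1})}$, and the same estimate applied to the $(k+1)$-form $\cartan u\restriction_{\UPatch_{m-1}}$ gives $\|\cartan \tilde v_m\|_{L^p(T_m)} \leq \|\cartan u\|_{L^p(T_m)} + \Cfivetrafo{n}{k+1}{\ell}{p}(\calT)\, \|\cartan u\|_{L^p(\UPatch_{m-1})}$; combining these with the bound from Lemma~\ref{lemma:mixedbconsimplex:exteriorderivative} produces the claimed recursion. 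The main obstacle is conceptual rather than computational: the infinite-dimensionality of $\ker \cartan$ for $k \geq 1$ prohibits the simple constant-adjustment trick used for the gradient, so one must supply an explicit extension of $w$ carrying the correct trace before correcting the exterior derivative. The interplay between the geometric reflection $\Xieins$ (which supplies a trace-matching but exterior-derivative-mismatched candidate) and the partial-BC Bogovski\u{\i}-type estimate (which supplies a trace-free correction of the exterior derivative) is precisely what makes the construction close. Crucially, this also explains why shellability (and not mere face-connectedness as for the gradient) is required: one needs $T_m$ to be attached along a full codimension-one submanifold $\Gamma_m$ of its boundary so that the auxiliary problem on $T_m$ with Dirichlet trace on $\Gamma_m$ is well-posed in the sense of Lemma~\ref{lemma:mixedbconsimplex:exteriorderivative}.
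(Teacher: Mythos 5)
Your proof is correct and follows the same strategy as the paper: a base case over $T_0$ via Theorem~\ref{theorem:bogovpoinc}, followed by an inductive step that uses the reflection $\Xieins$ of Proposition~\ref{proposition:starreflection} together with Lemma~\ref{lemma:mixedbconsimplex:exteriorderivative} and the pullback estimates of Proposition~\ref{proposition:pullbackestimate}.

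One point where your version is more carefully written than the paper's own proof. The paper sets the new potential over $T_m$ to be $\widetilde w_m := u\restriction_{T_m} + \Xieins^\ast\bigl((w_{m-1}-u)\restriction_{\UPatch_{m-1}}\bigr)$, then splits the norm as $\|\Xieins^\ast w_{m-1}\| + \|u - \Xieins^\ast u\|$ and invokes Lemma~\ref{lemma:mixedbconsimplex:exteriorderivative} as though it bounded $\|u - \Xieins^\ast u\|_{L^p(T_m)}$ directly by $C\,\|\cartan(u - \Xieins^\ast u)\|_{L^p(T_m)}$. But that lemma is an existence statement: it produces a \emph{different} trace-free field $w''_m$ with $\cartan w''_m = \cartan(u - \Xieins^\ast u)$ and the claimed norm bound, not a bound on $u - \Xieins^\ast u$ itself --- to that residual one may add any trace-free closed form without changing its exterior derivative, so no such direct bound holds in general. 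You correctly substitute $w''_m$ for the residual in the potential, i.e., you set $w\restriction_{T_m} := w'_m + w''_m$ with $w'_m := \Xieins^\ast(w\restriction_{\UPatch_{m-1}})$, and then check that the trace across $\Gamma_m$ and the exterior derivative come out as required. This is the rigorous way to close the induction, and it also accounts for the stray ``$w_m := \widetilde w_m + w''_m$'' in the paper's proof, where $w''_m$ is never actually defined --- the potential you wrote down is clearly the one intended.
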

\begin{proof}
    Let $u \in W^{p}\Alt^{k}(\Omega)$. 
    First, from Theorem~\ref{theorem:bogovpoinc}, there exists $w_0 \in W^{p}\Alt^{k}(T_{0})$ satisfying $\cartan w_0 = \cartan u$ over $T_{0}$ together with
    \begin{gather*}
        \| w_0 \|_{L^{p}(T_{0})} \leq C_{{\PF},\Poinc,T_{0},k,p} \| \cartan u \|_{L^{p}(T_{0})}.
    \end{gather*}
    Suppose that $0 < m \leq M$ such that there exists $w_{m-1} \in W^{p}\Alt^{k}(X_{m-1})$ with $\cartan w_{m-1} = \cartan u$ over $X_{m-1}$. 
    This is already true for $m = 1$.
    By assumption, $T_{m}$ and $X_{m-1}$ share the interface $\Gamma_{m}$, which is a collection of faces of $T_{m}$. 
    In accordance to Lemma~\ref{lemma:existenceofstar}, adding $T_{m}$ completes a star in $\calT$ around some simplex $S_{m}$, 
    and we let $\UPatch_{m-1} \subseteq X_{m-1}$ be the complement of $T_{m}$ in that newly completed star. 
    Write $\ell$ for the dimension of $S_{m}$.
    
    Similar as in~\cite[Equations~{\char`\(}5.12{\char`\)} and~{\char`\(}5.14{\char`\)}]{ern2020stable} or~\cite[Equations~{\char`\(}6.7{\char`\)} and~{\char`\(}6.9{\char`\)}]{Chaum_Voh_p_rob_3D_H_curl_24}, 
    we now define the field $\widetilde w_{m} \in W^{p}\Alt^{k}(T_{m})$ via 
    \begin{align}\label{equation:w''_ext_der}
        {\widetilde w}_{m} := u\restriction_{T_m} + \Xieins^{\ast}\left( (w_{m-1} - u)\restriction_{\UPatch_{m-1} } \right).
    \end{align} With $u \in W^{p}\Lambda^{k}(\Omega)$, Equation~\eqref{math:pullbackcommutes}, 
    and the mapping $\Xieins$ of Proposition~\ref{proposition:starreflection} being the identity along $\Gamma_m$,
    one verifies that 
    \begin{align*}
        \trace_{{\Gamma_m}} {\widetilde w}_{m}
        &
        = 
        \trace_{{\Gamma_m}} u\restriction_{T_m} + \trace_{{\Gamma_m}} \Xieins^{\ast} w_{m-1|\UPatch_{m-1} } - \trace_{{\Gamma_m}} \Xieins^{\ast} u\restriction_{\UPatch_{m-1} }
\\&
        = 
        \trace_{{\Gamma_m}} \Xieins^{\ast} w_{m-1|\UPatch_{m-1} }
        \\&
        = 
        \trace_{{\Gamma_m}} w_{m-1|\UPatch_{m-1}}
        .
    \end{align*}
    Moreover, again by \eqref{math:pullbackcommutes} 
    \begin{align*}
        \cartan {\widetilde w}_{m} 
        &
        = 
        \cartan u\restriction_{T_m} + \cartan \Xieins^{\ast} w_{m-1|\UPatch_{m-1} } - \cartan \Xieins^{\ast} u\restriction_{\UPatch_{m-1} } 
        \\&
        = 
        \cartan u\restriction_{T_m} + \Xieins^{\ast} \cartan w_{m-1|\UPatch_{m-1} } - \Xieins^{\ast} \cartan u\restriction_{\UPatch_{m-1} } 
        = 
        \cartan u\restriction_{T_m}
        ,
    \end{align*}
    since $\cartan w_{m-1} = \cartan u$ over $X_{m-1}$ and thus in particular over $\UPatch_{m-1}$.
    Setting $w_{m} := w_{m-1}$ over $X_{m-1}$ and $w_{m} := {\widetilde w}_{m} + w''_{m}$ over $T_{m}$,
    we thus verify that $w_{m} \in W^{p}\Alt^{k}(X_{m})$ with $\cartan w_{m} = \cartan u$ over $X_{m}$. 
    
    We now estimate norms.
    By construction,
    \begin{align*}
        \| w_{m} \|_{L^{p}(T_{m})}
        \leq  
        \| \Xieins^{\ast} w_{m}\restriction_{\UPatch_{m-1}} \|_{L^{p}(T_{m})}
        + 
        \| u - \Xieins^{\ast} u\restriction_{\UPatch_{m-1}} \|_{L^{p}(T_{m})}
        .
    \end{align*}
    Due to Lemma~\ref{lemma:mixedbconsimplex:exteriorderivative}, 
    which applies since $\trace_{\Gamma_{m}} \left( u\restriction_{T_m} - \Xieins^{\ast} u\restriction_{\UPatch_{m-1}} \right) = 0$, 
    and again using \eqref{math:pullbackcommutes} we have
    \begin{align*}
        \| u - \Xieins^{\ast} u\restriction_{\UPatch_{m-1}} \|_{L^{p}(T_{m})} 
        &
        \leq 
        C_{\PF,T_{m},\Gamma_{m},\ell,k,p} 
        \left( 
            \| \cartan u \|_{L^{p}(T_{m})} + \| \cartan \Xieins^{\ast} u\restriction_{\UPatch_{m-1}} \|_{L^{p}(T_{m})} 
        \right)
        \\&
        = 
        C_{\PF,T_{m},\Gamma_{m},\ell,k,p} 
        \left( 
            \| \cartan u \|_{L^{p}(T_{m})} + \| \Xieins^{\ast} \cartan u\restriction_{\UPatch_{m-1}} \|_{L^{p}(T_{m})} 
        \right)
        .
    \end{align*}
    Proposition~\ref{proposition:pullbackestimate} and Proposition~\ref{proposition:starreflection} now show 
    \begin{align*}
        \| \Xieins^{\ast} w_{m} \restriction_{\UPatch_{m-1}} \|_{L^{p}(T_{m})}
        &\leq 
        \Cfivetrafo{n}{k}{\ell}{p}(\calT)
\| w_{m} \|_{L^{p}(\UPatch_{m-1})}
        ,
        \\
        \| \Xieins^{\ast} \cartan u\restriction_{\UPatch_{m-1}} \|_{L^{p}(T_{m})}
        &\leq 
        \Cfivetrafo{n}{k+1}{\ell}{p}(\calT)
\| \cartan w_{m} \|_{L^{p}(\UPatch_{m-1})}
        .
    \end{align*}
    We have assumed that $\cartan w_{m-1} = \cartan u\restriction_{X_{m-1}}$. 
    The existence of $w \in W^{p}\Alt^{k}(\Omega)$ satisfying the recursive estimate follows 
    by setting $w\restriction_{T_m} := w_{m}$ for all $0 \leq m \leq M$.
\end{proof}

We finally derive an estimate for the Poincar\'e--Friedrichs constant, in analogy to Theorem~\ref{theorem:fullrecursivesum:grad}.
For ease of discussion, we choose to present the result for a general class of recursive estimates.
Of course, implementations should make use of the simplifications warranted by the specific recursive structure,
such as the recursion presented in Theorem~\ref{theorem:poincarefriedrichsestimate:exterior}. 

\begin{theorem}\label{theorem:fullrecursivesum:exterior}
Let $\calT$ be a shellable $n$-dimensional triangulation, and let the domain $\Omega \subseteq \bbR^{n}$ be the interior of the underlying set of $\calT$.
    Let $T_0, T_1, \dots, T_M$ be a shelling of $\calT$.
    Suppose that $u, w \in W^{p}\Alt^{k}(\Omega)$, where $1 \leq p \leq \infty$, 
    such that $\cartan w = \cartan u$ and the following recursive estimate holds:
    \begin{gather*}
        \| w \|_{L^{p}(T_{0})} 
        \leq 
        A_{0,0}
        \| \cartan u \|_{L^{p}(T_{0})}
        ,
    \end{gather*}
    and for each $T_{m} \in \calT$, $1 \leq m \leq M$, we have the recursive estimate 
    \begin{align*}
        \| w \|_{L^{p}(T_{m})}
        &
        \leq  
        \sum_{\ell=0}^{m} A_{\ell} \| \cartan u \|_{L^{p}(T_{\ell})} 
        +
        \sum_{\ell=0}^{m-1} B_{m,\ell} \| w \|_{L^{p}(T_{\ell})} 
        .
    \end{align*}
    Then we have an inequality of the form 
    \begin{align*}
        \| w \|_{L^{p}(\Omega)}
        &
        \leq 
        \left(
            \sum_{m=0}^{M}
            \left( \sum_{\ell=0}^{M} C_{m,\ell}^{q} \right)^{\frac p q}
        \right)^{\frac 1 p}
        \| \cartan u \|_{L^{p}(\Omega)}
        ,
    \end{align*}
    where $q \in [1,\infty]$ satisfies $1 = 1/p + 1/q$ and with obvious modifications if $p=1$ or $p=\infty$. 
    Here, 
    \begin{align*}
        C_{m,\ell} = \sum_{ m = i_L > \dots > i_1 \geq \ell } B_{i_{L},i_{L-1}} \cdots B_{i_{2},i_{1}} A_{i_{1},\ell}.
    \end{align*}
\end{theorem}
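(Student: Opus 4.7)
The plan is to mimic the argument for Theorem~\ref{theorem:fullrecursivesum:grad}: first unwrap the recursion to express $\| w \|_{L^p(T_m)}$ as a pure linear combination of the quantities $\| \cartan u \|_{L^p(T_\ell)}$, and then combine the simplex-wise estimates into a global one by Hölder's inequality. The only genuine difference from the gradient case is combinatorial: here each simplex's estimate may involve all previously enumerated simplices, not just a single predecessor, so the ``path'' in the spanning tree is replaced by an arbitrary strictly descending chain of indices.

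Concretely, write $b_m := \| w \|_{L^p(T_m)}$ and $a_m := \| \cartan u \|_{L^p(T_m)}$. I would prove by induction on $m$ that
\begin{equation*}
    b_m \leq \sum_{\ell=0}^{m} C_{m,\ell}\, a_\ell,
\end{equation*}
with $C_{m,\ell}$ as in the statement. The base case $m=0$ is immediate from $b_0\leq A_{0,0} a_0$ and the fact that the only chain with $i_L=0$ and $i_1\geq 0$ is $L=1$, $i_1=0$, giving $C_{0,0}=A_{0,0}$. For the inductive step, substitute the hypothesis into the recursive bound:
\begin{equation*}
    b_m \leq \sum_{\ell=0}^{m} A_{m,\ell}\, a_\ell + \sum_{j=0}^{m-1} B_{m,j} \sum_{\ell=0}^{j} C_{j,\ell}\, a_\ell
    = \sum_{\ell=0}^{m} \Bigl( A_{m,\ell} + \sum_{j=\ell}^{m-1} B_{m,j}\, C_{j,\ell} \Bigr)\, a_\ell,
\end{equation*}
so the task reduces to verifying the recurrence $C_{m,\ell} = A_{m,\ell} + \sum_{j=\ell}^{m-1} B_{m,j}\, C_{j,\ell}$. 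This is a direct combinatorial check: in the sum defining $C_{m,\ell}$, isolate the length-one chain $i_L=m$ (contributing $A_{m,\ell}$) and classify the remaining chains by the value $j := i_{L-1} \in \{\ell,\dots,m-1\}$, each contributing $B_{m,j} C_{j,\ell}$.

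Once the pointwise bound $b_m \leq \sum_\ell C_{m,\ell} a_\ell$ is established, I would conclude exactly as in the proof of Theorem~\ref{theorem:fullrecursivesum:grad}. For $1<p<\infty$ with $1/p+1/q=1$, Hölder's inequality gives
\begin{equation*}
    b_m^p \leq \Bigl( \sum_{\ell=0}^{M} C_{m,\ell}^{q} \Bigr)^{p/q} \sum_{\ell=0}^{M} a_\ell^p.
\end{equation*}
Summing over $m$, using $\sum_m b_m^p = \| w \|_{L^p(\Omega)}^p$ and $\sum_\ell a_\ell^p = \| \cartan u \|_{L^p(\Omega)}^p$, and taking $p$-th roots yields the announced inequality. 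The endpoint cases $p=1$ and $p=\infty$ follow by trivial modifications: for $p=1$ one takes $\max_\ell C_{m,\ell}$ in place of the $\ell^q$ norm, and for $p=\infty$ one takes $\sum_\ell C_{m,\ell}$ together with $\max_m$ of the resulting sums.

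There is no substantial analytic obstacle, as all the analysis has been absorbed into the hypothesised recursion; the only care needed is the bookkeeping of indices in the identity $C_{m,\ell} = A_{m,\ell} + \sum_j B_{m,j} C_{j,\ell}$ and the correct statement of the endpoint cases.
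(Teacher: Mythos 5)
Your proof is correct and follows essentially the same route as the paper: both unwrap the recursion to express $\| w \|_{L^p(T_m)}$ as a pure linear combination of the quantities $\| \cartan u \|_{L^p(T_\ell)}$ with coefficients $C_{m,\ell}$, and then apply Hölder's inequality term-by-term and sum over $m$. Your explicit induction with the combinatorial identity $C_{m,\ell} = A_{m,\ell} + \sum_{j=\ell}^{m-1} B_{m,j} C_{j,\ell}$ is simply a more careful account of the ``unwrapping'' that the paper states directly, and your interpretation of the endpoint cases $p \in \{1,\infty\}$ agrees with the paper's.
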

\begin{proof}    
Unwrapping the recursion, we obtain an estimate of the form 
    \begin{align*}
        \| w \|_{L^{p}(T_{m})} 
        \leq 
        \sum_{\ell = 0}^{m}
        \underbrace{ 
            \left( 
                \sum_{ m = i_L > \dots > i_1 \geq \ell }
                B_{i_{L},i_{L-1}} \cdots B_{i_{2},i_{1}} A_{i_{1},\ell}
            \right)
        }_{ =: C_{m,\ell} }
        \| \cartan u \|_{L^{p}(T_{\ell})} 
    \end{align*}
    Here, $C_{m,\ell}$ denotes the coefficient of $\| \cartan u \|_{L^{p}(T_{\ell})}$, possibly zero, 
    as it appears in the unwrapped recursive estimate of $\| w \|_{L^{p}(T_{m})}$. 
    Once again, The global Poincar\'e--Friedrichs inequality follows via H\"older's inequality:
    \begin{align*}
        \| w \|_{L^{p}(\Omega)}^{p}
        &
        = 
        \sum_{m=0}^{M}
        \| w \|_{L^{p}(T_{m})}^{p}
\leq 
        \sum_{m=0}^{M}
        \left( \sum_{\ell=0}^{M} C_{m,\ell}^{q} \right)^{\frac p q}
        \sum_{\ell'=0}^{M} \| \cartan u \|_{L^{p}(T_{\ell'})}^{p} 
        \leq 
        \left(
            \sum_{m=0}^{M}
            \left( \sum_{\ell=0}^{M} C_{m,\ell}^{q} \right)^{\frac p q}
        \right)
        \| \cartan u \|_{L^{p}(\Omega)}^{p} 
        ,
    \end{align*}
    where $q \in [1,\infty]$ is as described above. The proof is complete. 
\end{proof}

\section{Numerical examples}\label{section:numericalexamples}

We wish to assess the practical quality of our upper bounds, with exclusive focus on the Hilbert space case $p=2$.
To that end, we compare our estimates for the Poincar\'e--Friedrichs constants with the exact constants of a few examples domains in dimension 2 and 3.
In lieu of the exact values, finite element eigenvalue computations on a refined mesh provide reference values as a proxy for the exact value.

Theorem~\ref{theorem:fullrecursivesum:exterior} estimates the gradient, curl and divergence Poincar\'e--Friedrichs constants.
In addition, Theorem~\ref{theorem:fullrecursivesum:grad} provides another upper bound for the gradient Poincar\'e--Friedrichs constant.
We remark that the Poincar\'e--Friedrichs constant for the divergence can always be estimated by less than the diameter of the domain (see Lemma~\ref{lemma:mixedbconsimplex}). 

\subsection{Software and algorithms}

All computations were implemented with an in-house C++ code implementing finite element exterior calculus.
The reference Poincar\'e constants are the inverse square roots of reference eigenvalues, which are computed as solutions of finite element eigenvalue problems in mixed formulation over sufficiently refined meshes.

To compute the upper bound for the Poincar\'e constant of Theorem~\ref{theorem:fullrecursivesum:grad}, we need to find paths between simplices.
One quickly sees that a spanning tree of the face-connection graph provides paths that minimize the estimate.
We choose a depth-first search that attempts to minimize the resulting constant.
While this is feasible for small two-dimensional examples, the computation takes considerably longer for our three-dimensional examples.

We need a shelling to apply Theorem~\ref{theorem:fullrecursivesum:grad} and to estimate the Poincar\'e--Friedrichs constant.
Finding a shelling optimizing a geometric target quantity is a challenging problem in computational geometry and theoretical computer science.
Brute-force enumeration of shellings is still feasible for small two-dimensional triangulations but becomes practically infeasible in dimension three because the number of shellings sees a combinatorial explosion.
Instead, we employ a backtracking search guided by a greedy strategy:
Starting with a single simplex, we successively add simplices to the partial shellings until a complete shelling is found.
Among the candidates for the next simplex, our practical heuristic prefers those that introduce the smallest possible geometric constants. 
We always prioritize the completion of face stars because then the induced constants are easier to control via Lemma~\ref{lemma:volumecomparison}.
Repeating this backtracking search for all possible initial simplices and generating up to $10$ shellings each time, we pick the optimal shelling among those found.
From our computations, we have observed that the Poincar\'e--Friedrichs estimates from different shellings may differ by orders of magnitudes;
that said, the method may not necessarily produce the global optimum.

In order to ensure tight estimates, we compute the relevant geometric parameters, such as heights, individually for each simplex and use individualized estimates for each local star. These estimates are numerically tighter than the ones stated in Theorems~\ref{theorem:fullrecursivesum:grad} and~\ref{theorem:fullrecursivesum:exterior}, as we directly implement the calculations used in the proofs.

\subsection{Estimates for partial boundary conditions}

The Poincar\'e--Friedrichs constants over simplices subject to boundary conditions along a few faces frequently enter our estimates.
We therefore prefer to have tight upper estimates for them.

Whenever $T$ is a simplex, the Poincar\'e--Friedrichs constants have upper bounds of the form $C \diam(T)$ for some numerical constant $C > 0$.
Lemma~\ref{lemma:mixedbconsimplex:exteriorderivative} and the subsequent remark (using $p=2$) have established that $C = 2/\pi \approx 0.45015815807$. In the special case of the gradient potential, where the Poincar\'e constant can be characterized variationally, the Poincar\'e constant without boundary conditions is already an upper bound for the Poincar\'e constant with boundary conditions, and one can choose $C = 1/\pi \approx 0.31830988618$.
This conforms to the well-known fact that the eigenvalues of the scalar Laplacian with mixed boundary conditions are between the Neumann and Dirichlet eigenvalues.
Finally, the factor $1/\pi$ in the previous two estimates has a known improvement when the simplex is a triangle:
then we can replace the factor $1/\pi$ by $1/j_{1,1} \approx 0.2609803592$, where $j_{1,1}$ is the zero of a Bessel function; see~\cite{laugesen2010minimizing}.

For the purpose of comparison, we compute reference Poincar\'e--Friedrichs constants for $p=2$ over the reference tetrahedron via a higher-order finite element method over a mesh after three iterations of uniform refinement.
These data are tabulated in Table~\ref{table:referencepoincarefriedrichs}. 
The upper bound provided by Lemma~\ref{lemma:mixedbconsimplex:exteriorderivative} and Remark~\ref{remark:mixedbconsimplex:exteriorderivative:hilbert} has an overestimate by a factor of at most six.
The estimate seems within practical range, at least on simplices with good shape regularity. 
We expect the overestimate to worsen as the eccentricity of the tetrahedron increases.
The results indicate room for improvement and future research should tighten the gap towards the practically observed values, including for ill-shaped simplices.

\begin{table}[t]
    \caption{
    Reference $L^2$ Poincar\'e--Friedrichs constants for the gradient, curl, and divergence operators over the reference tetrahedron,
    where boundary conditions are imposed along the boundary part $\Gamma$, 
    computed via lowest-order finite element methods on a refined mesh. 
    Notably, the top-left should and the bottom-right should asymptotically coincide, as should the bottom-left and the top-right.
    By comparison, Lemma~\ref{lemma:mixedbconsimplex:exteriorderivative} and Remark~\ref{remark:mixedbconsimplex:exteriorderivative:hilbert} 
    give the upper bound $2/\pi \cdot \diam(\Delta^3) \approx 0.90031631615$.
    }\label{table:referencepoincarefriedrichs}
    \centering
    \begin{tabular}{|l|c|c|c|}
        \hline
                                                  & $\grad$           & $\curl$           & $\divergence$   \\ \hline
        $\Gamma = F_0 \cup F_1 \cup F_2 \cup F_3$ & 0.0862501765      & 0.1453729386      & 0.2601720480    \\ \hline
        $\Gamma =          F_1 \cup F_2 \cup F_3$ & 0.1093817645      & 0.1829680131      & 0.3493931507    \\ \hline
        $\Gamma =                   F_2 \cup F_3$ & 0.1450219664      & 0.2428248005      & 0.1845090722    \\ \hline
        $\Gamma =                            F_3$ & 0.2057601732      & 0.1527746636      & 0.1223402289    \\ \hline
        $\Gamma =                      \emptyset$ & 0.2631059409      & 0.1458215887      & 0.0874066554    \\ \hline
    \end{tabular}
\end{table}

\subsection{Two-dimensional examples}

We consider the following example domains in two dimensions:
the unit square $\Omega_Q$, the L-shaped domain $\Omega_L$, and the slit domain $\Omega_{S}$:
\begin{gather*}
    \Omega_{Q} = ( 0,1)^2,
    \qquad 
    \Omega_{L} = (-1,1)^2 \setminus [0,1)^2,
    \\
    \Omega_{S} = (-1,1)^2 \setminus ( [0,1) \times \{0\} ).
\end{gather*}
We consider the following triangulations; see also Figure~\ref{figure:triangulations2D}:
\begin{itemize}
    \item $\calT_{Q}$:  square triangulation with two triangles
    \item $\calT_{L}$:  L-shaped domain triangulation with four triangles
    \item $\calT_{S}$:  Slit domain triangulation, five triangles 
    \item $\calT_{S'}$: Slit domain triangulation, 8 triangles, all of which touch the origin.
    \item $\calT_{S''}$: Slit domain triangulation, 8 triangles, four of which touch the origin.
\end{itemize}

The de~Rham complex of a planar domain has only two differential operators,
and their Poincar\'e--Friedrichs constants are the inverse square roots of the smallest non-zero Dirichlet and Neumann Laplacians. 
Standard finite element eigenvalue computations on any sufficiently refined mesh provide acceptable reference values for these;
we choose four steps of uniform refinement.

We compare the reference Poincar\'e constants with the gradient potential estimate in Theorem~\ref{theorem:fullrecursivesum:grad}.
Moreover, we compare the reference Poincar\'e--Friedrichs constants with the bounds computed in Theorem~\ref{theorem:fullrecursivesum:exterior}. Here, we take into account Lemma~\ref{lemma:volumecomparison} and Proposition~\ref{proposition:starreflection}.

The shelling-based estimate for the Poincar\'e constant of the gradient potential outperforms the estimate in Section~\ref{section:gradient} in our examples where the triangles are not all congruent to each other ($\calT_{L}$ and $\calT_{S,5}$).
That is a result of our implementation, where the face-based reflections are not necessarily onto, which allows for some improved estimates.
The results are summarized in Table~\ref{table:estimates2D}. The maximal overestimation factor is below 16, which we deem reasonable.

\begin{figure}
    \caption{Triangulations of 2D domains used in our numerical experiments. Dashed lines represent interior triangle edges.}\label{figure:triangulations2D}
    \centering
    \begin{tikzpicture}
\coordinate (A) at (0,0);
      \coordinate (B) at (2,0);
      \coordinate (C) at (2,2);
      \coordinate (D) at (0,2);

\draw[line width=0.4mm] (A) -- (B) -- (C) -- (D) -- cycle;
      \draw[dashed] (A) -- (C);
    \end{tikzpicture}
    \qquad
    \begin{tikzpicture}
\coordinate (A) at (0,0);
      \coordinate (B) at (2,0);
      \coordinate (C) at (2,1);
      \coordinate (D) at (1,1);
      \coordinate (E) at (1,2);
      \coordinate (F) at (0,2);

\draw[line width=0.4mm] (A) -- (B) -- (C) -- (D) -- (E) -- (F) -- cycle;

\draw[dashed] (A) -- (D);
      \draw[dashed] (F) -- (D);
      \draw[dashed] (B) -- (D);
    \end{tikzpicture}
    \qquad
    \begin{tikzpicture}
\coordinate (A) at (0,0);
      \coordinate (B) at (2,0);
      \coordinate (C) at (2,1);
      \coordinate (D) at (1,1); \coordinate (E) at (2,2);
      \coordinate (F) at (0,2);

\draw[line width=0.4mm] (C) -- (D);
      \draw[line width=0.4mm] (A) -- (B) -- (C);
      \draw[line width=0.4mm] (C) -- (E) -- (F) -- (A);

\draw[dashed] (A) -- (D); \draw[dashed] (B) -- (D); \draw[dashed] (E) -- (D); \draw[dashed] (F) -- (D); \end{tikzpicture}
    \qquad
    \begin{tikzpicture}
\coordinate (A) at (0,0);
      \coordinate (B) at (2,0);
      \coordinate (C) at (2,1);
      \coordinate (D) at (1,1); \coordinate (E) at (2,2);
      \coordinate (F) at (0,2);

      \coordinate (U) at (1,0);
      \coordinate (L) at (0,1);
      \coordinate (O) at (1,2);

\draw[line width=0.4mm] (C) -- (D);
      \draw[line width=0.4mm] (A) -- (B) -- (C);
      \draw[line width=0.4mm] (C) -- (E) -- (F) -- (A);

\draw[dashed] (A) -- (D); \draw[dashed] (B) -- (D); \draw[dashed] (E) -- (D); \draw[dashed] (F) -- (D); \draw[dashed] (U) -- (D); \draw[dashed] (L) -- (D); \draw[dashed] (O) -- (D); \end{tikzpicture}
    \qquad
    \begin{tikzpicture}
\coordinate (A) at (0,0);
      \coordinate (B) at (2,0);
      \coordinate (C) at (2,1);
      \coordinate (D) at (1,1); \coordinate (E) at (2,2);
      \coordinate (F) at (0,2);

      \coordinate (U) at (1,0);
      \coordinate (L) at (0,1);
      \coordinate (O) at (1,2);

\draw[line width=0.4mm] (C) -- (D);
      \draw[line width=0.4mm] (A) -- (B) -- (C);
      \draw[line width=0.4mm] (C) -- (E) -- (F) -- (A);

\draw[dashed] (U) -- (L); \draw[dashed] (L) -- (O); \draw[dashed] (U) -- (C); \draw[dashed] (O) -- (C); \draw[dashed] (U) -- (D); \draw[dashed] (L) -- (D); \draw[dashed] (O) -- (D); \end{tikzpicture}
\end{figure}
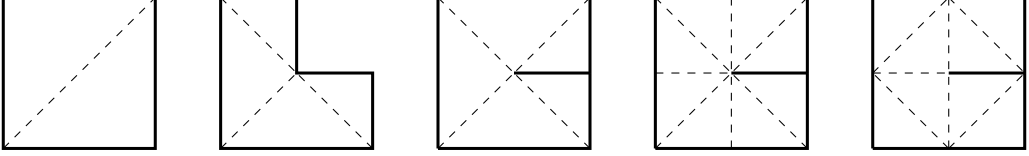

\begin{table}[t]
    \caption{
    Estimates for $L^2$ Poincar\'e--Friedrichs constants over various triangulated 2D domains.
    Reference values for the gradient and divergence (2nd and 7th column) computed with finite element methods together with estimates and ratios: 
    Theorem~\ref{theorem:fullrecursivesum:grad} (3rd and 4th column), using Theorem~\ref{theorem:fullrecursivesum:exterior} with $k=0$ (5th and 6th column), 
    and using Theorem~\ref{theorem:fullrecursivesum:exterior} with $k=1$ (8th and 9th column).
    Note that the divergence constant can always be estimated using Lemma~\ref{lemma:mixedbconsimplex}.
    }\label{table:estimates2D}
    \centering
    \begin{tabular}{|r||r|r|r|r|r||r|r|r|}
        \hline
                        & $\grad$ ref  & $\grad$ est & $\grad$ ratio       & $\grad$ est   & $\grad$ ratio      & $\divergence$ ref  & $\divergence$ est  & $\divergence$ ratio \\ 
        \hline
        $\calT_{Q}$     & 0.318        & 0.904       & 2.842               & 0.904         & 2.842              & 0.225              & 1.339              &  5.953         \\
        \hline
        $\calT_{L}$     & 0.822        & 2.381       & 2.896               & 1.421         & 1.729              & 0.322              & 2.505              &  7.781         \\
        \hline
        $\calT_{L'}$    & 0.822        & 2.391       & 2.909               & 2.391         & 2.909              & 0.322              & 4.053              & 12.587         \\
        \hline
        $\calT_{S}$     & 0.978        & 2.752       & 2.814               & 1.779         & 1.819              & 0.346              & 3.717              & 10.744         \\ 
        \hline $\calT_{S'}$    & 0.978        & 3.131       & 3.202               & 3.131         & 3.202              & 0.346              & 5.453              & 15.762         \\
        \hline $\calT_{S''}$   & 0.978        & 2.761       & 2.824               & 2.761         & 2.824              & 0.346              & 4.662              & 13.476         \\
        \hline
    \end{tabular}
\end{table}

\subsection{Three-dimensional examples}

We consider the following example domains in three dimensions:
the unit cube $\Omega_C$, the Fichera corner domain $\Omega_F$, and the crossed bricks domain $\Omega_{B}$:
\begin{gather*}
    \Omega_{C}  = ( 0,1)^3,
    \qquad 
    \Omega_{F}  = (-1,1)^3 \setminus [0,1)^3,
    \\
    \begin{aligned}
    \Omega_{B} &= 
    \left( (-1,0) \times (-1,0) \times (-1,1) \right)
    \\&\qquad
    \cup 
    \left( (-1,0) \times (-1,1) \times (-1,0) \right)
    \\&\qquad
    \cup 
    \left( (-1,1) \times (-1,0) \times (-1,0) \right)
    .
    \end{aligned}
\end{gather*}
The following triangulations are used in computations: 
\begin{itemize}
    \item $\calT_{C,5}$: cube triangulation with five tetrahedra
    \item $\calT_{C,K}$: Kuhn triangulation of the cube, consisting of six tetrahedra
    \item $\calT_{B,5}$: crossed bricks, four copies of $\calT_{C,5}$
    \item $\calT_{B,K}$: crossed bricks, four copies of $\calT_{C,K}$
    \item $\calT_{F}$: Fichera corner, $24$ simplices 
\end{itemize}

The reference Poincar\'e--Friedrichs constants for the gradient, curl, and divergence operators are found via standard finite element eigenvalue computations, using the lowest-order finite element de~Rham complex over a sufficiently refined mesh (four steps of uniform refinement).
Again, we compare these reference values with estimates obtained Theorem~\ref{theorem:fullrecursivesum:exterior}, and in the special case of the gradient, with Theorem~\ref{theorem:fullrecursivesum:grad}.
The results, summarized in Tables~\ref{table:estimates3D:grad} and~\ref{table:estimates3D:curldiv}, are less satisfactory than in the two-dimensional case; we deem the overestimation factors as still reasonable for patches with few tetrahedra, but they are very high for the Fichera corner with 24 simplices.

\begin{table}[t]
    \caption{
    Estimates for $L^2$ Poincar\'e--Friedrichs constants of the gradient over various triangulated 3D domains.
    Reference values for the gradient (2nd column) computed with finite element methods together with estimates and ratios: 
    Theorem~\ref{theorem:fullrecursivesum:grad} (3rd and 4th column), and using Theorem~\ref{theorem:fullrecursivesum:exterior} with $k=0$ (5th and 6th column).
    }\label{table:estimates3D:grad}
    \centering
    \begin{tabular}{|r||r|r|r|r|r||r|r|r|}
        \hline
                        & $\grad$ ref  & $\grad$ est  & $\grad$ ratio       & $\grad$ est  & $\grad$ ratio  \\
        \hline
        $\calT_{C,5}$   & 0.317        & 4.317        & 13.581              & 3.246        & 10.214         \\
        \hline
        $\calT_{C,K}$   & 0.317        & 3.571        &  11.23              & 11.034       & 34.722         \\
        \hline
        $\calT_{B,5}$   & 1.022        & 7.698        & 7.5285              & 25.972       & 25.397         \\
        \hline
        $\calT_{B,K}$   & 1.022        & 10.106       & 9.8824              & 53.763       & 52.573         \\
        \hline
        $\calT_{F}$     & 0.711        & 10.622       & 14.927              &  268.071     & 377.033        \\ 
        \hline
    \end{tabular} 
\end{table}
  
\begin{table}[t]  
    \caption{
    Estimates for $L^2$ Poincar\'e--Friedrichs constants over various triangulated 3D domains.
    Reference values for the curl and divergence (2nd and 5th column) computed with finite element methods together with estimates and ratios
    that rely on Theorem~\ref{theorem:fullrecursivesum:exterior} with $k=1$ (3rd and 4th column) and with $k=2$ (5th and 6th column).
    Note that the divergence constant can always be estimated using Lemma~\ref{lemma:mixedbconsimplex}.
    }\label{table:estimates3D:curldiv}
    \centering
    \begin{tabular}{|r||r|r|r||r|r|r|}
        \hline
                        & $\curl$ ref  & $\curl$ est & $\curl$ ratio          & $\divergence$ ref & $\divergence$ est & $\divergence$ ratio \\ 
        \hline
        $\calT_{C,5}$   & 0.225        & 141.148     & 627.310                & 0.183             & 3.391             & 18.453          \\
        \hline
        $\calT_{C,K}$   & 0.225        & 12.157      & 54.030                 & 0.183             & 25.899            & 140.920         \\
        \hline
        $\calT_{B,5}$   & 0.331        & 113.084     & 341.056                & 0.233             & 152.886           & 655.993         \\
        \hline
        $\calT_{B,K}$   & 0.331        & 162.687     & 490.655                & 0.233             & 512.273           & 2198.026        \\
        \hline
        $\calT_{F}$     & 0.554        & 25752.342   & 46403.302              & 0.310             & 8958.467          & 28835.266       \\ 
        \hline
    \end{tabular}
\end{table}

\section{Outlook}\label{section:outlook}

This manuscript contributes upper bounds for Poincar\'e--Friedrichs constants. The main application is the curl operator over local patches in low spatial dimensions (i.e., $n=2$ or $n=3$).
However, we believe there are ample opportunities to refine our estimates of Poincar\'e--Friedrichs constants, both conceptually and algorithmically, and extend the technique to further applications.

We use local Poincar\'e--Friedrichs inequalities over single simplices, subject to boundary conditions along some faces.
We believe that these inequalities can be tightened.
In particular, we conjecture that the improvements for triangles~\cite{laugesen2010minimizing} have analogs for tetrahedra.

The gradient Poincar\'e--Friedrichs constant already dominates the Poincar\'e--Friedrichs constants of the $\Lebesgue^2$ de~Rham complex without boundary conditions (or with full boundary conditions) 
over bounded smooth convex domains~\cite{guerini2004eigenvalue}.
We believe that this relationship extends to the entire range of Lebesgue exponents $1 \leq p \leq \infty$ over bounded convex domains;
this would improve on the Poincar\'e--Friedrichs constants derived via the regularized Poincar\'e and Bogovski\u{\i} operators.

Our method hinges on extending differential forms onto a simplex from the complement of that simplex within a local star.
The present manuscript realizes this extension via pullback along a bi-Lipschitz mapping.
It seems worthwhile to explore other techniques, e.g., carefully analyzing trace and extension theorems.

On the algorithmic part, merely checking whether a general triangulation has a shelling is known to be an NP-complete problem and hence computationally infeasible in practice~\cite{goaoc2019shellability}.
Nevertheless, there might be more efficient algorithms for constructing shellings for practically relevant triangulations while optimizing geometric target heuristics.

We anticipate the present results to generalize to shellable polytopal complexes.
Apart from the intrinsic interest in polytopal complexes, such extensions are already relevant for simplicial triangulations:
As a rule of thumb, the estimates deteriorate as the number of simplices increases.
Lumping simplices into a few polytopal subdomains for which the Poincar\'e--Friedrichs constants are controllable could improve the estimates.
For example, the crossed bricks domain easily partitions into four quadrilaterals.

Lastly, estimating Poincar\'e--Friedrichs constants over domains and manifolds with shellable triangulations is fundamentally restricted to topological balls and spheres.
However, we expect our estimates over local patches to serve as subcomponents in estimating Poincar\'e--Friedrichs constants of general $n$-dimensional triangulated manifolds.

\section*{Acknowledgments}
MWL appreciates helpful remarks by Abdellatif Aitelhad, Martin Costabel, John M.\ Lee, and G\"unther M.\ Ziegler.

\appendix

\section{Outline of an alternative estimate}

We explore a family of alternative estimates, based on a variation of the results of Section~\ref{section:extension} and Section~\ref{section:poincarefriedrichs}.
Though we consider these ideas to be of interest, the resulting estimates for Poincar\'e--Friedrichs constants are not competitive in practice.

We begin with an analogue of Proposition~\ref{proposition:starreflection}.
Domains with shellable triangulations are homeomorphic to a ball and hence contractible.
Inspired by that observation, we show that shellable triangulations can be continuously retracted into a single simplex via a sequence of bi-Lipschitz transformations.
At each step, this bi-Lipschitz transformation is defined as contracting a simplex $T$ into a local star around one of the subsimplices of $T$.

\begin{proposition}\label{proposition:deformation}
    Let $\calT$ be a triangulation of an $n$-dimensional domain.
    Let $S \in \calT$ be an inner simplex of dimension $\ell < n$,
    let $T \in \patch_{\calT}(S)$ be of dimension $n$,
    and let
    \begin{align*}
        \UPatch := \overline{ \underlying{\patch_{\calT}(S)} \setminus T },
        \qquad
        {\Gamma_2} := \overline{ \partial \UPatch \setminus \partial T }.
    \end{align*}
    The following holds, where the constants on the right-hand sides are as stated in the proof.
    
    There exists a bi-Lipschitz piecewise affine mapping
    \begin{align*}
        \Xizwei : \underlying{\patch_{\calT}(S)} \rightarrow \Xizwei( \underlying{\patch_{\calT}(S)} ) \subseteq \UPatch
    \end{align*}
    which is the identity along $\partial \UPatch \setminus \partial T$.
    At any point, the singular values $\sigma_1 \geq \dots \geq \sigma_n$ of its Jacobian satisfy
    \begin{gather*}
        \sigma_1 \leq \Csieb{n}{\ell}(\calT),
        \quad
        \sigma_2,\dots,\sigma_n \leq \Csiebprime{n}{\ell}(\calT),
        \quad
        | \det\Jacobian\Xizwei |      \leq \Cdetsieb{n}{\ell}(\calT),
        \\
        \sigma_n^{-1} \leq \Cacht{n}{\ell}(\calT),
        \quad
        \sigma_1^{-1},\dots,\sigma_{n-1}^{-1} \leq \Cachtprime{n}{\ell}(\calT),
        \quad
        | \det\Jacobian\Xizwei^{-1} | \leq \Cdetacht{n}{\ell}(\calT).
    \end{gather*}
    Moreover, for any $0 \leq k \leq n$ and $p \in [1,\infty]$,
    \begin{align*}
        \sigma_{1}\cdots\sigma_{k} | \det\Jacobian\Xizwei |^{-\frac 1 p}
        \leq
        \Csiebtrafo{n}{k}{\ell}{p}(\calT),
        \qquad
        \sigma_{n}^{-1}\cdots\sigma_{n-k+1}^{-1} | \det\Jacobian\Xizwei |^{\frac 1 p}
        \leq
        \Cachttrafo{n}{k}{\ell}{p}(\calT).
    \end{align*}
\end{proposition}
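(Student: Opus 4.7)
The plan is to construct $\Xizwei$ via a combination of barycentric refinement and multi-vertex displacement, extending the single-simplex strategy of Proposition~\ref{proposition:starreflection} to the entire star. The broad structure parallels Proposition~\ref{proposition:starreflection}: we will work in a refinement of the star in which key vertices (the barycenters $z_S$, $z_{S'}$, and possibly others) appear as vertices; pick a target displacement $y \in \UPatch$ via Proposition~\ref{proposition:stars_are_starshaped}; define $\Xizwei$ piecewise affinely by moving certain vertices while fixing $\Gamma_2$; and verify the singular-value bounds via the same affine analysis as in Proposition~\ref{proposition:starreflection}.

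Concretely, I would first translate so $z_S = 0$ and pick $y = -\rho z_{S'} \in \overline{\Ball_{h/(\ell+1)}(0)} \cap \UPatch$ for $\rho > 0$ small enough, exactly as in the proof of Proposition~\ref{proposition:starreflection}. I would then apply the two-step barycentric refinement used in that proof to produce $\calK$ on $T$ and its reflected counterpart $\calK^c$ inside $\UPatch$. The subtlety is that $\Xizwei$ must be defined on the entire star, not just on $T$, so I would further refine the complement $\UPatch$ compatibly -- introducing $z_S$, $y$, and the other barycentric vertices into the triangulation of $\UPatch$, and subdividing -- so that the combined refinement of the star is globally consistent and $\calK^c$ appears as a subcomplex of the refined $\UPatch$. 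On this common refinement, $\Xizwei$ is defined piecewise affinely: on the $T$-side it equals the vertex-displacement map $\Theta$ from Proposition~\ref{proposition:starreflection} (sending $z_{S'} \mapsto y$ and fixing every other vertex), while on the $\UPatch$-side it is a compatible piecewise affine map obtained by a further coordinated vertex displacement that ``makes room'' for $\underlying{\calK^c}$ inside $\UPatch$, fixes every vertex on $\Gamma_2$, and matches $\Theta$ along the common interface lying in $\partial T$.

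The singular-value bounds then follow as in the proof of Proposition~\ref{proposition:starreflection}. At each interior point the Jacobian of $\Xizwei$ factors as the Jacobian of a piecewise affine map between simplices that moves a single vertex, and hence has at most two singular values differing from one; the nontrivial singular values are bounded by expressions of the type~\eqref{math:theta_max}--\eqref{math:theta_min} involving the displacement magnitude, the local heights, and the aspect ratios of the simplices. Collecting these bounds across the entire star produces the constants $\Csieb{n}{\ell}(\calT)$, $\Csiebprime{n}{\ell}(\calT)$, $\Cdetsieb{n}{\ell}(\calT)$, $\Cacht{n}{\ell}(\calT)$, $\Cachtprime{n}{\ell}(\calT)$, $\Cdetacht{n}{\ell}(\calT)$, and the mixed transformation constants $\Csiebtrafo{n}{k}{\ell}{p}(\calT)$, $\Cachttrafo{n}{k}{\ell}{p}(\calT)$, all of which now depend on global patch data such as $\aspectratio(\calT)$, $\diameterratio(\calT)$, and $\volumeratio(\calT)$ rather than only on the geometry of $T$.

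The hard part will be the construction and analysis of the piecewise affine deformation on the $\UPatch$-side. Unlike the localized displacement in Proposition~\ref{proposition:starreflection}, which operates inside a single simplex $T$, this deformation must coordinate vertex moves across potentially many simplices of $\UPatch$ while simultaneously (i) fixing $\Gamma_2$, (ii) matching the $T$-side displacement along the interface, (iii) remaining globally injective, and (iv) having image inside $\UPatch$. Establishing these requirements simultaneously reduces, via Proposition~\ref{proposition:stars_are_starshaped} applied to suitably augmented subregions of $\UPatch$, to a star-shapedness argument on each piece of the refinement, but tracking the dependencies between adjacent simplices is the technical heart of the proof. This is also precisely why the resulting constants are generally looser than those of Proposition~\ref{proposition:starreflection}, in line with the appendix's remark that these alternative estimates are not competitive in practice.
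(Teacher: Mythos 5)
Your proposal misses the key device the paper uses, and the gap it leaves open is precisely the step that would make your route fail or at best produce hopeless constants.

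You propose setting $\Xizwei$ equal to $\Theta$ (the reflection from Proposition~\ref{proposition:starreflection}) on $T$, and then finding a ``compatible piecewise affine map'' on $\UPatch$ that makes room for the image $\Theta(T) = \underlying{\calK^c}$. But $\Theta$ is the identity on the interface $\Gamma_1 = T \cap \UPatch$ and maps $T$ \emph{onto} the full reflected wedge $\underlying{\calK^c} \subset \UPatch$. The complementary map $\Psi$ you would then need on $\UPatch$ must therefore be a bi-Lipschitz homeomorphism from $\UPatch$ onto $\UPatch \setminus \operatorname{int}(\underlying{\calK^c})$ that is the identity on $\partial\UPatch$ and agrees with $\Theta$ on $\Gamma_1$. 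Producing such a $\Psi$ with Jacobian bounds depending only on local shape regularity is not a local-star-shapedness observation; it is a global cavity-squeeze problem in which every simplex of $\UPatch$ touching $\underlying{\calK^c}$ is involved, and the distortion compounds across the patch. You flag this as the ``technical heart'' but do not resolve it, so the proposal is incomplete exactly where the work is.

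The paper takes a different route and thereby sidesteps the hard part entirely. It does \emph{not} reuse $\Theta$. Instead it introduces a new affine map $\Phi_K\colon K \cup K^c \to K^c$ on each bipyramid $K \cup K^c$ (where $K^c \subset \UPatch$ is the reflected simplex already present from Proposition~\ref{proposition:starreflection}), which \emph{contracts} the union onto the small piece $K^c$ while fixing the face of $K^c$ that faces away from $T$. The union of the $\Phi_K$ gives $\Phi\colon T \cup \underlying{\calK^c} \to \underlying{\calK^c}$, and because $\Phi$ equals the identity on $\partial\bigl(T \cup \underlying{\calK^c}\bigr)\setminus\partial T$ — the internal interface to the rest of the star — the extension of $\Phi$ to all of $\underlying{\patch_{\calT}(S)}$ by the identity is immediate. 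Nothing in $\UPatch \setminus \underlying{\calK^c}$ moves, no further refinement of $\UPatch$ is needed, and no multi-vertex coordination arises. The crucial observation you missed is that the proposition only requires $\Xizwei$ to fix $\Gamma_2 = \overline{\partial\UPatch\setminus\partial T}$, not $\Gamma_1$; by choosing a map that moves $\Gamma_1$ inward rather than preserving it, the paper keeps the whole construction local to $T \cup \underlying{\calK^c}$ and the singular-value analysis reduces to the same two-by-two matrix computation as in Proposition~\ref{proposition:starreflection}.
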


\begin{proof}
    This proof is to be read as a continutation of the proof of Proposition~\ref{proposition:starreflection},
    with the same notation and definitions.

    Notice that $\overline{\partial\underlying{\patch_{\calT}(S)} \setminus {\Gamma_2}}$ is the union of exactly those faces of $T$ that contain $S'$.

    We introduce another mapping $\Phi : \underlying{\calK^\ast} \rightarrow \underlying{\calK^{c}}$ as follows.
    Consider any $n$-simplex $K \in \calK$ and let $K^c \in \calK^{c}$ be its counterpart as defined earlier in the proof. We construct a bi-Lipschitz mapping
    \begin{align*}
        \Phi_{K} : K \cup K^{c} \rightarrow K^{c}.
    \end{align*}
    The construction will be such that the union of $\Phi_{K}$ for all $n$-simplices $K \in \calK$
    will define the desired bi-Lipschitz mapping $\Phi : \underlying{\calK^\ast} \rightarrow \underlying{\calK^{c}}$,
    and $\Phi$ will be the identity along $\partial\underlying{\calK^\ast} \setminus \partial\underlying{\calK}$.

    Once again, $h_{z}$ denotes the height of $z_{S'}$ within $K$.
    Here, we let $Q \subseteq K \cap K^{c}$ be the subsimplex
    that is complementary to the line segment from the origin to $z_{S'}$ in $K$.
    Equivalently, $Q$ is complementary to the line segment from the origin to $y$ in $K^{c}$,
    and $Q$ is complementary to the origin within the face $K \cap K^{c}$.
    From the definition of simplices, we now conclude that any $x \in K \cup K^{c}$
    has a unique representation
    \begin{align*}
        x = \lambda z_{S'} + \mu x_{Q}, \quad \lambda \in [-\rho,1], \quad \mu \in [0,1], \quad |\lambda| + \mu \leq 1, \quad x_{Q} \in Q.
    \end{align*}
    Since $\mu x_{Q}$ lies in the hyperplane spanned by the origin and $Q$,
    on which $h_{z}$ stands orthogonally, we have
    \begin{align*}
        \lambda = \lambda(x) := \frac{\langle h_{z},x\rangle}{\langle h_{z},z_{S'}\rangle}
        .
    \end{align*}
    Based on that observation, we define
    \begin{align*}
        \Phi_{K}(x)
        &
        :=
        \mu x_{Q} + \frac{\rho}{\rho+1} \left( \lambda(x) - 1 \right) z_{S'}
        \\&
        =
        \mu x_{Q} + \frac{\rho}{\rho+1} \lambda(x) z_{S'} - \frac{\rho}{\rho+1} z_{S'}
        \\&
        =
        x - \frac{\rho+1}{\rho+1} \lambda(x) z_{S'} + \frac{\rho}{\rho+1} \lambda(x) z_{S'} - \frac{\rho}{\rho+1} z_{S'}
        \\&
        =
        x - \frac{1}{\rho+1} \lambda(x) z_{S'} - \frac{\rho}{\rho+1} z_{S'}
        \\&
        =
        x - \frac{1}{\rho+1} \frac{\langle h_{z},x\rangle}{\langle h_{z},z_{S'}\rangle} z_{S'} - \frac{\rho}{\rho+1} z_{S'}
        .
    \end{align*}
    We readily verify that this transformation is a bi-Lipschitz mapping from $K \cup K^{c}$ onto $K^{c}$
    that satisfies the desired mapping properties.
    It remains to analyze its Jacobian and get explicit estimates for its singular values.

    We once more introduce an orthogonal decomposition $h_{z} + b_{z} = z_{S'}$, where $b_{z} \in \bbR^{n}$.
    With that,
    \begin{align*}
        \Jacobian \Phi_{K}(x)
        &=
        \Id
        -
        \frac{ 1 }{ 1 + \rho }
        \frac{ z_{S'} \otimes h_{z}^{t} }{ \langle h_{z},z_{S'}\rangle }
        \\
        &=
        \Id
        -
        \frac{ 1 }{ 1 + \rho }
        \frac{ z_{S'} \otimes h_{z}^{t} }{ \langle h_{z},h_{z}\rangle }
        \\
        &=
        \Id
        -
        \frac{ 1 }{ 1 + \rho }
        \frac{ h_{z} \otimes h_{z}^{t} }{ \langle h_{z},h_{z}\rangle }
        -
        \frac{ 1 }{ 1 + \rho }
        \frac{ b_{z} \otimes h_{z}^{t} }{ \langle h_{z},h_{z}\rangle }
        \\
        &=
        \Id
        -
        \frac{ 1 }{ 1 + \rho }
        \hat h_{z} \otimes \hat h_{z}^{t}
        -
        \frac{ \vecnorm{ b_{z} } / \vecnorm{ h_{z} } }{ 1 + \rho }
        \hat b_{z} \otimes \hat h_{z}^{t}
        .
    \end{align*}
    The Jacobian acts as the identity over the orthogonal complement of the span of $h_{z}$ and $z_{S'}$.
    We write $\beta$ for the angle between $h_{z}$ and $z_{S'}$.
    Hence, $\tan(\beta) = \vecnorm{ b_{z} } / \vecnorm{ h_{z} }$.
    Direct computation shows
    \begin{align}
        \Jacobian \Phi_{K}(x) \cdot \hat b_{z} = \hat b_{z}
        ,
        \qquad
        \Jacobian \Phi_{K}(x) \cdot \hat h_{z} = \frac{ \rho }{ 1 + \rho } \hat h_{z} - \frac{ \vecnorm{ b_{z} } / \vecnorm{ h_{z} } }{ 1 + \rho } \hat b_{z}
        .
    \end{align}
    It remains to study the singular values of the matrix
    \begin{align*}
        M_{\Phi,K}
        =
        \begin{pmatrix}
                \frac{ \rho }{ 1 + \rho }    & 0
            \\
            -\frac{ \tan(\beta) }{1+\rho} & 1
        \end{pmatrix}
        .
    \end{align*}
    Computing the eigenvalues of the symmetric matrix $M_{\Phi,K}^{t} M_{\Phi,K}$,
    we obtain the singular values
    \begin{align*}
        \sigma_{\max}(\Phi,K)
        &
        =
        \frac{1}{2(1+\rho)} \sqrt{ (2\rho + 1)^{2} + \tan(\beta)^2 } + \frac{1}{2(1+\rho)} \sqrt{ 1 + \tan(\beta)^2 }
        ,
        \\
        \sigma_{\min}(\Phi,K)
        &
        =
        \frac{1}{2(1+\rho)} \sqrt{ (2\rho + 1)^{2} + \tan(\beta)^2 } - \frac{1}{2(1+\rho)} \sqrt{ 1 + \tan(\beta)^2 }
        .
    \end{align*}
    These are monotonically increasing from $1$ and decreasing from $0.5$, respectively, in $\tan(\beta)$.
    Hence, these are also the maximal and minimal singular values of the Jacobian $\Jacobian \Phi_{K}$, the remaining singular values being equal to $1$.
    Notice that
    \begin{align*}
        \det \Jacobian\Phi = \sigma_{\max}(\Phi,K) \sigma_{\min}(\Phi,K) = \frac{\rho(1+\rho)}{(1+\rho)^2} = \frac{\rho}{1+\rho}.
    \end{align*}
    We now recall that the height of $h_{z}$ in $K \in \calK$ equals $(\ell+1)^{-1}$ multiplied with the height of some vertex of $S$ within $T$.
    Similar as above, we use the upper bound
    \begin{align*}
        \tan(\beta) = \frac{\vecnorm{ b_{z} }}{\vecnorm{ h_{z} }} \leq (\ell+1) \aspectratio(T).
    \end{align*}
    We conclude that the singular values of the Jacobian of $\Phi$ at almost every $x$ satisfy
    \begin{gather}
        \sigma_{1}(\Phi,x) \leq
        \frac{1}{2(1+\rho)} \sqrt{ (2\rho + 1)^{2} + (\ell+1)^2 \aspectratio(T)^2 }
        +
        \frac{1}{2(1+\rho)} \sqrt{               1 + (\ell+1)^2 \aspectratio(T)^2 }
        \label{math:phi_max}
        ,
        \\
        \sigma_{2}(\Phi,x) = \dots = \sigma_{n-1}(\Phi,x) = 1,
        \\
        \sigma_{n}(\Phi,x)^{-1} =
        \frac{ \sigma_{1}(\Phi,x) }{ \left| \det \Jacobian\Phi \right| }
        \leq
\frac{1}{2\rho}
        \sqrt{ (2\rho + 1)^{2} + (\ell+1)^2 \aspectratio(T)^2 }
        +
\frac{1}{2\rho}
        \sqrt{               1 + (\ell+1)^2 \aspectratio(T)^2 }
        \label{math:phi_min}
        .
    \end{gather}
    This finishes the discussion of the transformation $\Phi$.
    Finally, we obtain $\Xizwei$ by extending $\Phi$ to the entire patch as the identity.
\end{proof}

\begin{remark}
    We notice that, similar to the mapping $\Xieins$, the mapping $\Xizwei$ is not only bi-Lipschitz onto its image but also piecewise affine with respect to some essentially non-overlapping decomposition of its respective domains into simplices.
\end{remark}

We are now ready to describe alternative estimates for potential operators that rely on Proposition~\ref{proposition:deformation}.
Let $\calT$ be a shellable $n$-dimensional triangulation, and let the domain $\Omega \subseteq \bbR^{n}$ be the interior of the underlying set of $\calT$.

Let $T_0, T_1, \dots, T_M$ be a shelling of $\calT$.
For each simplex $T_1, \dots, T_M$ there exists a subsimplex $S_1 \subseteq T_1, \dots, S_M \subseteq T_M$ such that adding $T_m$ to the intermediate triangulation ($1 \leq m \leq M$) completes the local star $\patch_{\calT}(S_{m})$ around $S_m$.
Proposition~\ref{proposition:deformation} shows that for each index, there exists a bi-Lipschitz contraction which maps $\patch_{\calT}(S)$ onto $\overline{ \patch_{\calT}(S_{m}) \setminus S_m }$ and is the identity on all other simplices joined so far.

Evidently, the composition of these bi-Lipschitz contractions contracts $\Omega$ onto the interior of $T_{0}$.
Every shellable triangulation can be transformed into a single simplex along a sequence of local bi-Lipschitz (in fact, piecewise linear) deformations, and their bi-Lipschitz constants are controlled by the shape regularity of the triangulation.

Suppose now we want to find a potential for some differential form of degree $k+1$ over the triangulated domain $\Omega$.
Using successive pullbacks along the local contractions,
the original potential problem over the domain is reduced to a potential problem for some right-hand side over the first simplex of the shelling.
The potential over that first simplex, a $k$-form, is then extended along successive reverse pullbacks to the whole domain.
Since pullbacks commute with the exterior derivative, we thus obtain a solution to the original potential problem.

Potential estimates over the first single simplex, which is convex, are well-established.
Together with norm estimates for the pullback of forms (Proposition~\ref{proposition:pullbackestimate}),
we derive estimates for the solution.
We summarize this in the following theorem.

\begin{theorem}
    Let $\calT$ be a shellable $n$-dimensional triangulation, and let the domain $\Omega \subseteq \bbR^{n}$ be the interior of the underlying set of $\calT$.
    Let $T_0, T_1, \dots, T_M$ be a shelling of $\calT$
    and let $0 \leq \ell_{m} < n$, for $1 \leq m \leq M$, be such that $T_{m}$ has $n - \ell_{m}$ faces in common with the previous simplices.
    Then for any $u \in W^{p}\Alt^{k}(\Omega)$, where $1 \leq p \leq \infty$,
    there exists $w \in W^{p}\Alt^{k}(\Omega)$ with $\cartan w = \cartan u$
    such that
    \begin{align*}
        \| w \|_{L^{p}(\Omega)}
        \leq
\left(
        \prod_{m=1}^{M}
            \Cachttrafo{n}{k+1}{\ell_{m}}{p}(\calT)
            \Csiebtrafo{n}{k  }{\ell_{m}}{p}(\calT)
        \right)
        C_{{\PF},T_{0},k,p}
        \| \cartan u \|_{L^{p}(\Omega)}
        .
    \end{align*}
\end{theorem}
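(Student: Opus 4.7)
The strategy, foreshadowed in the discussion preceding the statement, is to assemble the local retractions of Proposition~\ref{proposition:deformation} into a single global bi-Lipschitz contraction $\Phi:\Omega\to T_0$, solve the $\cartan$-potential problem on the convex simplex $T_0$ via the regularized Poincar\'e operator of Theorem~\ref{theorem:bogovpoinc}, and pull the resulting potential back to $\Omega$ by means of $\Phi^{\ast}$.

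Writing $X_m:=T_0\cup T_1\cup\dots\cup T_m$, the shelling axiom together with Lemma~\ref{lemma:existenceofstar} ensures that for each $1\le m\le M$ the addition of $T_m$ completes the star $\patch_{\calT}(S_m)$ around a uniquely determined interior simplex $S_m\in\calT$ of dimension $\ell_m$. Proposition~\ref{proposition:deformation} supplies a bi-Lipschitz piecewise affine map that retracts $\underlying{\patch_{\calT}(S_m)}$ into $\UPatch_m:=\overline{\underlying{\patch_{\calT}(S_m)}\setminus T_m}$ while fixing $\partial\UPatch_m\setminus\partial T_m$; extending by the identity on $X_m\setminus\underlying{\patch_{\calT}(S_m)}$ defines a piecewise affine bi-Lipschitz homeomorphism $\Psi_m:X_m\to X_{m-1}$. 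Composing yields the bi-Lipschitz map
\begin{align*}
    \Phi:=\Psi_1\circ\Psi_2\circ\cdots\circ\Psi_M:\Omega\longrightarrow T_0,
\end{align*}
which is piecewise affine with respect to a common refinement of the subdivisions underlying the individual $\Psi_m$.

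Given $u\in W^p\Alt^k(\Omega)$, I set $f:=\cartan u$ and, using that pullback commutes with $\cartan$ (Proposition~\ref{proposition:pullbackestimate}), pass to the closed form $g:=(\Phi^{-1})^{\ast}f\in L^p\Alt^{k+1}(T_0)$. Theorem~\ref{theorem:bogovpoinc} then supplies $w_0:=\Poinc_{k+1}g\in W^p\Alt^k(T_0)$ satisfying $\cartan w_0=g$ and $\|w_0\|_{L^p(T_0)}\le C_{\PF,T_0,k,p}\,\|g\|_{L^p(T_0)}$. Setting $w:=\Phi^{\ast}w_0$ gives $\cartan w=\Phi^{\ast}g=\Phi^{\ast}(\Phi^{-1})^{\ast}f=f=\cartan u$, as desired. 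For the norm, applying Proposition~\ref{proposition:pullbackestimate} piecewise, combined with the uniform singular-value estimates of Proposition~\ref{proposition:deformation} and the observation that $\Psi_m$ reduces to the identity outside $\underlying{\patch_{\calT}(S_m)}$, one obtains
\begin{align*}
    \|\Psi_m^{\ast}v\|_{L^p(X_m)}\le\Csiebtrafo{n}{k}{\ell_m}{p}(\calT)\,\|v\|_{L^p(X_{m-1})},\quad \|(\Psi_m^{-1})^{\ast}v'\|_{L^p(X_{m-1})}\le\Cachttrafo{n}{k+1}{\ell_m}{p}(\calT)\,\|v'\|_{L^p(X_m)}
\end{align*}
for every $k$-form $v$ and $(k+1)$-form $v'$. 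Iterating along $\Phi^{\ast}=\Psi_M^{\ast}\cdots\Psi_1^{\ast}$ and $(\Phi^{-1})^{\ast}=(\Psi_1^{-1})^{\ast}\cdots(\Psi_M^{-1})^{\ast}$ and chaining with the $T_0$ bound delivers exactly the stated product estimate.

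\textbf{Main obstacle.} The delicate point is the geometric bookkeeping needed to make the two-sided composition rigorous: one must verify that each $\Xizwei_m$ is in fact surjective onto $\UPatch_m$ (Proposition~\ref{proposition:deformation} as stated asserts only inclusion), so that $\Psi_m$ is a genuine bi-Lipschitz homeomorphism $X_m\to X_{m-1}$ and $\Phi^{-1}$ exists, and that the several piecewise affine structures of the $\Psi_m$'s can be assembled into a common subdivision of $\Omega$ on which Proposition~\ref{proposition:pullbackestimate} applies componentwise and the $L^p$ bounds accumulate multiplicatively along the chain. Once these points are in hand, the remainder is routine.
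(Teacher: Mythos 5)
Your proposal follows exactly the route the paper sketches in the paragraphs preceding the theorem: extend each local retraction $\Xizwei$ of Proposition~\ref{proposition:deformation} by the identity to get bi-Lipschitz maps $\Psi_m:X_m\to X_{m-1}$, compose into $\Phi:\Omega\to T_0$, transport $\cartan u$ to $T_0$ via $(\Phi^{-1})^{\ast}$, invoke Theorem~\ref{theorem:bogovpoinc} there, and pull the resulting potential back with $\Phi^{\ast}$, multiplying the pullback bounds from Propositions~\ref{proposition:pullbackestimate} and~\ref{proposition:deformation} along the chain. The chain of estimates, the choice of degree $k+1$ for the inverse pullbacks of the right-hand side and degree $k$ for the forward pullbacks of the potential, and the closure argument ($g=(\Phi^{-1})^{\ast}\cartan u=\cartan\bigl((\Phi^{-1})^{\ast}u\bigr)$, hence $\cartan g=0$, hence $\cartan\Poinc_{k+1}g=g$) are all correct and match the paper's intent.

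The \textquotedblleft{}main obstacle\textquotedblright{} you flag is in fact settled by the construction in the proof of Proposition~\ref{proposition:deformation}: there $\Phi_K:K\cup K^c\to K^c$ is onto by construction, the union over $K\in\calK$ gives a bijection $\Phi:\underlying{\calK^{\ast}}\to\underlying{\calK^c}$, and extending by the identity on $\UPatch\setminus\underlying{\calK^c}$ produces a surjection $\Xizwei:\underlying{\patch_{\calT}(S)}\to\UPatch$; the inclusion symbol in the statement of Proposition~\ref{proposition:deformation} is just conservatively phrased. Since $\Xizwei$ fixes $\Gamma_2=\overline{\partial\UPatch\setminus\partial T}$, the extension $\Psi_m$ of $\Xizwei$ by the identity on $X_m\setminus\underlying{\patch_{\calT}(S_m)}$ is a well-defined piecewise-affine bi-Lipschitz homeomorphism $X_m\to X_{m-1}$, and the common-refinement and piecewise application of Proposition~\ref{proposition:pullbackestimate} that you describe is exactly the bookkeeping needed; nothing further is missing.
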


However, in our numerical examples (see Section~\ref{section:numericalexamples}),
the estimate performs significantly worse than the estimate derived in Theorem~\ref{theorem:poincarefriedrichsestimate:exterior},
which is why we do not recommend this method for practical computations.

\end{document}